\documentclass[reqno]{amsart}
\usepackage{amsfonts,amsmath,amssymb,color}

\numberwithin{equation}{section}

\usepackage[babel=true,kerning=true]{microtype}

\usepackage{amsthm,leqno}
\usepackage{hyperref}
\usepackage{bigints}

\usepackage{amsfonts,amsmath,amssymb,enumerate}
\usepackage{graphicx}
\usepackage{tikz}
\usetikzlibrary{decorations.markings}
\usetikzlibrary{plotmarks}
\usetikzlibrary{patterns}

\newtheorem{theo}{Theorem}[section]
\newtheorem{prop}[theo]{Proposition}

\newtheorem{corol}[theo]{Corollary}
\newtheorem{claim}[theo]{Claim}

\theoremstyle{remark}

\newcommand{\be}{\begin{equation*}}
\newcommand{\ee}{\end{equation*}}
\newcommand{\ben}{\begin{equation}}
\newcommand{\een}{\end{equation}}
\newcommand{\begincal}{\begin{eqnarray*}}
\newcommand{\fincal}{\end{eqnarray*}}
\newcommand{\bal}{\begin{aligned}}
\newcommand{\eal}{\end{aligned}}
\newcommand{\ds}{\displaystyle}

\newcommand{\pui}{\frac{n-2}{2}}

\newcommand{\ua}{u_\alpha}
\newcommand{\va}{v_\alpha}
\newcommand{\tua}{\tilde{u}_\alpha}
\newcommand{\cua}{\check{u}_\alpha}
\newcommand{\tva}{\tilde{v}_\alpha}
\newcommand{\hua}{\hat{u}_\alpha}
\newcommand{\hva}{\hat{v}_\alpha}
\newcommand{\Ba}{B_\alpha}
\newcommand{\Ra}{R_\alpha}
\newcommand{\tBa}{\tilde{B}_\alpha}

\newcommand{\tRa}{\tilde{R}_\alpha}

\newcommand{\cBia}{\check{B}_{i,\alpha}}

\newcommand{\Wa}{W_\alpha}
\newcommand{\tWa}{\tilde{W}_\alpha}
\newcommand{\hWa}{\hat{W}_\alpha}
\newcommand{\cWa}{\check{W}_\alpha}
\newcommand{\Za}{Z_\alpha}
\newcommand{\Va}{V_\alpha}
\newcommand{\Pak}{P_{\alpha,k}}
\newcommand{\NRa}{\Vert \Ra \Vert_{\infty}}
\newcommand{\NtRa}{\Vert \tRa \Vert_{\infty}}

\newcommand{\Lg}{\mathcal{L}_g}
\newcommand{\Lx}{\mathcal{L}_\xi}
\newcommand{\Lh}{\mathcal{L}_h}

\newcommand{\Dg}{\overrightarrow{\triangle}_g }
\newcommand{\Dx}{\overrightarrow{\triangle}_\xi }
\newcommand{\Dh}{\overrightarrow{\triangle}_h }

\newcommand{\Xa}{X_{\alpha}}

\newcommand{\Ya}{Y_{\alpha}}

\newcommand{\xa}{x_\alpha}

\newcommand{\cxia}{\check{x}_{i,\alpha}}

\newcommand{\ya}{y_\alpha}
\newcommand{\za}{z_\alpha}

\newcommand{\ma}{\mu_\alpha}
\newcommand{\cmia}{\check{\mu}_{i,\alpha}}
\newcommand{\na}{\nu_\alpha}
\newcommand{\la}{\lambda_\alpha}
\newcommand{\rra}{r_\alpha}
\newcommand {\da}{\delta_\alpha}
\newcommand{\ta}{\theta_\alpha}
\newcommand{\vea}{\varepsilon_\alpha}
\newcommand{\bak}{\beta_{\alpha,k}}
\newcommand{\ba}{\beta_\alpha}
\newcommand{\ra}{\rho_\alpha}

\newcommand{\vpa}{\varphi_\alpha}
\newcommand{\psa}{\psi_\alpha}

\newcommand{\dda}{d_\alpha}

\newcommand{\RR}{\mathbb{R}}

\newcommand{\ve}{\varepsilon}
\newcommand{\vp}{\varphi}

\numberwithin{equation}{section}

\begin{document}
\title[The Einstein-Lichnerowicz constraints system]
{Stability and instability of the Einstein-Lichnerowicz constraint system}

\author{Bruno Premoselli } 
\address{Bruno Premoselli, Universit\'e de Cergy-Pontoise 
D\'epartement de Math\'ematiques 
2 avenue Adolphe Chauvin 
95302 Cergy-Pontoise cedex 
France}
\email{bruno.premoselli@u-cergy.fr}

\begin{abstract} 

We investigate the relevance of the conformal method by investigating stability issues for the Einstein-Lichnerowicz conformal constraint system in a nonlinear scalar-field setting. We prove the stability of the system with respect to arbitrary perturbations of generic focusing physics data on closed locally conformally flat manifolds, in any dimension. We also show that our stability result is sharp by constructing explicit instability examples when its assumptions are not satisfied. Our results apply to a more general class of constraint-like systems.

\end{abstract}

\maketitle 


\section{Introduction}

The constraint equations arise in General Relativity, in the analysis of the initial-value problem for the Einstein equations. Given a $n$-manifold $M$, $n\ge3$,  and a potential $V$ -- a smooth function in $\mathbb{R}$ -- in a nonlinear scalar-field setting the constraint equations write as follows:
\begin{equation} \label{cont}   \tag{$\mathcal{C}_{-}$}
\left \{
\begin{aligned}
& R(\tilde g) + (\mathrm{tr}_{\tilde g} \tilde K)^2 - \| \tilde K \|_{\tilde g}^2 = {\tilde \pi}^2 + |\nabla \tilde \psi|_{\tilde g}^2 + 2 V(\tilde \psi), \\
& \nabla^j \tilde K_{ij} - \nabla_i (\mathrm{tr}_{\tilde g} \tilde K)  = \tilde \pi \nabla_i \tilde \psi .
\end{aligned}
\right.
\end{equation}
The unknowns of \eqref{cont} are the Riemannian metric $\tilde g$ in $M$, $\tilde K$ a $(2,0)$-symmetric tensor field and $\tilde \psi$ and $\tilde \pi$ two functions in $M$, representing the scalar-field and its future-directed temporal derivative. Also, $\tilde{\nabla}$ is the Levi-Civita connection of $\tilde g$ and $R(\tilde g)$ is its scalar curvature. A solution $(\tilde g, \tilde K, \tilde \psi, \tilde \pi)$ of \eqref{cont} is called an \emph{initial data set}. This terminology has its roots in the celebrated well-posedness results of Choquet-Bruhat \cite{ChoBru} and Choquet-Bruhat-Geroch \cite{ChoGe}, that assert that every initial data set $(\tilde g, \tilde K, \tilde \psi, \tilde \pi)$ possesses a unique maximal globally hyperbolic spacetime development $(\mathcal{M},h,\Psi)$ satisfying the Einstein scalar-field equations. The system \eqref{cont} is therefore of paramount importance in General Relativity since it is a necessary and sufficient condition for the resolution of the Einstein equations. Furthermore, addressing the resolution of \eqref{cont} exactly amounts to determining the initial data set of the evolution.

\medskip

One of the most successful techniques developed to overcome the underdetermination of \eqref{cont} is the conformal method. Initiated by Lichnerowicz \cite{Lich} and improved by Choquet-Bruhat and York \cite{ChoYo}, it aims at turning \eqref{cont} into a determined system by parametrizing the unknowns in terms of prescribed background physics data. We will assume from now on that $M$ is a closed manifold -- that is, compact without boundary -- endowed with a reference Riemannian metric $g$. We let a potential $V$ and $(\psi, \pi, \tau, \sigma)$ be fixed physics data in $M$, where $\psi, \pi, \tau$ are functions and $\sigma $ is a traceless and divergence-free $(2,0)$ symmetric tensor in $M$. We look for solutions of \eqref{cont} under the following parametrization:
\begin{equation} \label{dataparam}
(\tilde g,\tilde K,\tilde \psi, \tilde \pi ) = \bigg( \varphi^{\frac{4}{n-2}}g, \frac{\tau}{n}\varphi^{\frac{4}{n-2}}g + \varphi^{-2}(\sigma + \mathcal{L}_g W), \psi, \varphi^{- \frac{2n}{n-2}} \pi \bigg), 
\end{equation}
where $\vp > 0$ is a positive function in $M$, $W$ is a field of $1$-forms and 
\begin{equation} \label{conflie}
  \mathcal{L}_gW_{ij} = W_{i,j} + W_{j,i} - \frac{2}{n} \left(\mathrm{div}_g W\right) g_{ij}\hskip.1cm
\end{equation}
is the conformal Killing derivative of $W$. As easily checked, $(\tilde g,\tilde K,\tilde \psi, \tilde \pi )$ given by \eqref{dataparam} solves \eqref{cont} if and only if $(\vp,W)$ solves the following \emph{Einstein-Lichnerowicz conformal constraint system of physics data $D = (\psi, \pi, \tau, \sigma)$ and $V$}. Namely:
\begin{equation} \label{cconf} \tag{$C_{D}$} 
\left \{ 
\begin{aligned} 
&    \frac{4(n-1)}{n-2}\triangle_g \varphi + \mathcal{R}_\psi  \varphi  = \mathcal{B}_{\tau, \psi, V} \varphi^{2^*-1} + \frac{\left(  |\sigma + \mathcal{L}_g W |_g^2 + \pi^2 \right)}{ \varphi^{2^*+1}}~,  \\ 
 & \Dg  W  = - \frac{n-1}{n}\varphi^{2^*} \nabla\tau - \pi\nabla \psi~. 
\end{aligned}
\right.
\end{equation}
In \eqref{cconf}, the coefficients express in terms of the physics data  as: 
\begin{equation} \label{coefficients}
\begin{aligned}
& \mathcal{R}_{\psi}  = R(g) - |\nabla \psi|_g^2 ~~,~~\mathcal{B}_{\tau,\psi,V}  =  2 V(\psi) - \frac{n-1}{n} \tau^2 . \\
\end{aligned} \end{equation}
Also, $2^* = \frac{2n}{n-2}$ is the critical exponent for the embedding of the Sobolev space $H^1(M)$ into Lebesgue spaces, $\triangle_g = - \mathrm{div}_g (\nabla \cdot)$ is the Laplace-Beltrami operator and $\Dg$ is the Lam\'e operator acting on $1$-forms:
\[\Dg W = - \mathrm{div}_g (\mathcal{L}_g W).\]
Since $M$ is closed, the operator $\Dg$ is elliptic and self-adjoint on $TM$ and possesses the usual regularizing fetaures of elliptic operators. Fields of $1$-forms $W$ satisfying $\Lg W = 0$ in $M$ will be called conformal Killing $1$-forms, see \eqref{defK} below. System \eqref{cconf} is in particular invariant up to the addition of any conformal Killing $1$-form to $W$. In the following we shall therefore always considers solutions $(\vp,W)$ of \eqref{cconf} \emph{up to conformal Killing $1$-forms}, that is we shall assume that $W$ is $L^2$-orthogonal to the space of such conformal Killing $1$-forms.

\medskip

The conformal method, starting from given physics data, generates initial data sets that consequently provide us with space-time developments. The whole procedure can be summed up in the following $3$-steps construction, that we shall call the Choquet-Bruhat-Geroch-Lichnerowicz (CBGL) formalism:
\[ \begin{array}{cl}
\textrm{Freely chosen physics data } (\psi, \pi, \tau, \sigma) \textrm{ and } V & \\  
\bigg{\downarrow}  & \textrm{Step } 1\textrm{: Solving } \eqref{cconf} \\
\textrm{Solution(s) } (\vp,W) \textrm{ of } \eqref{cconf} & \\
 \bigg{\downarrow}  &  \textrm{Step } 2 \textrm{: Parametrization } \eqref{dataparam} \\
\textrm{Initial data set(s) } \left( \tilde g, \tilde K, \tilde \psi, \tilde \pi \right) & \\
 \bigg{\downarrow}  &  \textrm{Step } 3\textrm{: Existence result \cite{ChoGe}}  \\
\textrm{Maximal space-time development(s) } (\mathcal{M}, h,  \Psi) & \\ 
\end{array} \]
Space-times obtained via the latter construction capture the features of the specific initial data sets $( \tilde g, \tilde K, \tilde \psi, \tilde \pi )$ obtained through the conformal method. Investigating the validity of the conformal method therefore reduces to the investigation of the physical relevance of the CBGL formalism. We address it here through the following fundamental question:
\medskip

\noindent \textbf{Question:} \emph{is the CBGL formalism robust with respect to the initial choice of the physics data $(\psi, \pi, \tau, \sigma)$ and $V$ of the conformal method? }

\medskip

More specifically, 
we do not expect minor perturbations of the physics data $(\psi, \pi, \tau, \sigma)$ and $V$ to create dramatic changes in the geometry of the resulting space-times. In the $3$-steps CBGL construction, once an initial data set $(\tilde g, \tilde K, \tilde \psi, \tilde \pi)$ is given, continuity for Step $3$ is ensured by the notion of Cauchy stability (see Ringstr\"om, \cite{Ringstrom}). Clearly, Step $2$ is continuous. Hence, the crucial point lies in the proof of the continuity of Step $1$. Proving the robustness of the CBGL formalism and of the conformal method therefore boils down to proving the stability of system \eqref{cconf}, that is the continuous dependence of the set of solutions of \eqref{cconf} on the choice of the physics data $(\psi, \pi, \tau, \sigma)$ and $V$.  
\medskip

We introduce the following terminology: we shall say that physics data $V$ and $(\psi, \pi, \tau, \sigma)$ are
\ben \label{focusing}
 \textrm{\emph{focusing} if } \mathcal{B}_{\tau,\psi,V} > 0 \textrm{ in } M \quad \textrm{ and } \quad \textrm{ \emph{defocusing} if } \mathcal{B}_{\tau,\psi,V} \le 0 \textrm{ in } M, 
 \een
where $\mathcal{B}_{\tau,\psi,V}$ is as in \eqref{coefficients}. 

\medskip

In the present work we establish the stability of system \eqref{cconf} in strong topologies with respect to arbitrary perturbations of focusing physics data on locally conformally flat manifolds, in any dimension. It is the content of our main result:
\begin{theo} \label{grosth}
Let $(M,g)$ be a closed locally conformally flat Riemannian manifold of dimension $n \ge 3$. Consider a smooth potential $V$ and smooth physics data $D = (\psi, \pi, \tau, \sigma)$. Assume that the data are focusing as in \eqref{focusing} and that $\pi \not \equiv 0$. If $ n \ge 6$, assume in addition that $\tau$ and $\psi$ have no common critical points in $M$. Let $(V_\alpha)_\alpha$ and $(D_\alpha)_\alpha$, $D_\alpha = (\psi_\alpha, \pi_\alpha, \tau_\alpha, \sigma_\alpha)_\alpha$ be sequences of potentials and of physics data converging respectively to $V$ and $D$ in the following topology:
\ben \label{convstabi}
 \Vert V_\alpha-V \Vert_{C^2} +   \Vert \tau_\alpha - \tau \Vert_{C^3} + \Vert \psi_\alpha - \psi \Vert_{C^2}  + \Vert \pi_\alpha - \pi \Vert_{C^0} + \Vert \sigma_\alpha - \sigma \Vert_{C^0}  \underset{\alpha \to +\infty}{\longrightarrow} 0 .\een
Consider $(\vpa, \Wa)_\alpha$, $\vpa > 0$, a sequence of solutions of the Einstein-Lichnerowicz constraints system of physics data $D_\alpha$ and $V_\alpha$:
\begin{equation} \label{cconfa} \tag{$C_{D_\alpha}$} 
\left \{ 
\begin{aligned} 
&   \frac{4(n-1)}{n-2} \triangle_g \vpa + \mathcal{R}_{\psi_\alpha}  \vpa  = \mathcal{B}_{\tau_\alpha, \psi_\alpha, V_\alpha} \vpa^{2^*-1} + \frac{\pi_\alpha^2 + \left| \sigma_\alpha + \Lg W_\alpha \right|_g^2}{ \vpa^{2^*+1}}~,  \\ 
 & \Dg  \Wa  = - \frac{n-1}{n}\vpa^{2^*} \nabla\tau_\alpha - \pi_\alpha \nabla \psi_\alpha~,  
\end{aligned}
\right.
\end{equation}
where the coefficients express as in \eqref{coefficients}.  Then, up to a subsequence and up to conformal Killing $1$-forms, the sequence $(\vpa,\Wa)_\alpha$ converges in $C^{1,\eta}(M)$, for any $0 < \eta < 1$, to some solution $(\vp_0,W_0)$, $\vp_0 > 0$, of the limiting Einstein-Lichnerowicz constraints system of equations \eqref{cconf}.
\end{theo}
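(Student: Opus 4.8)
The plan is to run a compactness–blow-up analysis on the sequence $(\vpa, \Wa)_\alpha$. First I would establish the needed \emph{a priori} lower and upper bounds. The lower bound $\vpa \ge C^{-1} > 0$ uniformly should follow from the maximum principle applied to the first equation of \eqref{cconfa}, using that the right-hand side contains the positive term $\pi_\alpha^2 / \vpa^{2^*+1}$ (here the hypothesis $\pi \not\equiv 0$, together with a lower bound on $\Vert \pi_\alpha\Vert_{L^2}$ away from zero, is essential), and that $\mathcal R_{\psi_\alpha}$ is bounded; this rules out the trivial bubbling-to-zero phenomenon. The real issue is the upper bound: one would argue by contradiction and assume $\Vert \vpa\Vert_\infty \to +\infty$ along a subsequence. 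Elliptic estimates for the Lamé operator $\Dg$ give $W_\alpha$ in terms of $\vpa^{2^*}\nabla\tau_\alpha + \pi_\alpha\nabla\psi_\alpha$, so $|\Lg W_\alpha|_g$ is controlled by $\Vert \vpa\Vert_\infty^{2^*}$ in the relevant norms, and the system effectively reduces, near a concentration point, to a scalar Lichnerowicz-type equation with a critical nonlinearity.

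Next I would perform the standard blow-up decomposition: set $\ma = \Vert \vpa\Vert_\infty^{-\frac{2}{n-2}} \to 0$, pick $\xa$ a maximum point of $\vpa$, and rescale $\hat v_\alpha(x) = \ma^{\frac{n-2}{2}} \vpa(\exp_{\xa}(\ma x))$. Using the local conformal flatness of $(M,g)$ — which lets one work in a conformal chart where the metric is flat and the conformal Laplacian is the Euclidean Laplacian — the rescaled equation converges to the critical equation $\frac{4(n-1)}{n-2}\triangle_\xi \hat v = \mathcal B_\infty \hat v^{2^*-1}$ on $\RR^n$ (the $\vpa^{-(2^*+1)}$ term and the linear term scale away, and the $|\Lg W|^2$ term also scales away by the gradient bound on $W$), whose positive solutions are the standard bubbles by Caffarelli–Gidas–Spruck, with $\mathcal B_\infty = \mathcal B_{\tau,\psi,V}(x_0) > 0$ by the focusing assumption. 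This forces a bubble to form. Then the heart of the argument is the \emph{pointwise estimate} / \emph{sharp asymptotics}: one shows $\vpa(x) \le C\, \ma^{\frac{n-2}{2}} \big(\ma^2 + d_g(x,\xa)^2\big)^{-\frac{n-2}{2}}$ globally (a single-bubble estimate, after iterating to handle multiple bubbles and showing bubbles do not interact), and then derives the fundamental Pohozaev-type identity on a small ball $B_\delta(x_0)$.

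The contradiction comes from balancing the two sides of the Pohozaev identity. On one side, the boundary terms are controlled by the $C^1$-convergence of $\vpa$ away from the blow-up point (obtained from standard elliptic theory once one knows blow-up is isolated and simple) and tend to a finite limit of definite sign; on the other side, the interior terms involve integrals of $\big(x - x_0, \nabla(\text{coefficients})\big)\vpa^{2^*}$ and of $\vpa^2$, whose leading-order behavior as $\alpha \to \infty$ is dictated by the bubble profile. In dimension $n = 3,4,5$ the sign obstruction typically comes from the term involving $\mathcal R_\psi = R(g) - |\nabla\psi|^2$ or, more precisely, from a combination forced by $\pi \not\equiv 0$; in dimension $n \ge 6$ the decisive term is the one carrying $\nabla \tau$ and $\nabla \psi$, and this is exactly where the hypothesis that $\tau$ and $\psi$ have \emph{no common critical point} enters — it guarantees that the gradient term in the second equation of \eqref{cconfa} does not degenerate at the concentration point, producing a strictly-signed contribution that cannot be matched. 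Establishing these sharp asymptotics and the matching Pohozaev computation, including the exclusion of non-simple and clustered blow-up, is the main obstacle; it is the technical core of the paper.

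Once the uniform bound $C^{-1} \le \vpa \le C$ is in hand, the conclusion is routine: bootstrapping via elliptic regularity for $\triangle_g$ and $\Dg$ (the $W_\alpha$ equation has a bounded right-hand side, so $W_\alpha$ is bounded in $C^{1,\eta}$ after projecting off conformal Killing $1$-forms, which in turn bounds the right-hand side of the $\vpa$ equation in $C^0$, then $C^{0,\eta}$, improving $\vpa$ to $C^{2,\eta}$), one extracts a subsequence converging in $C^{1,\eta}(M)$ to a limit $(\vp_0, W_0)$ with $\vp_0 \ge C^{-1} > 0$, and passing to the limit in \eqref{cconfa} using \eqref{convstabi} shows $(\vp_0,W_0)$ solves \eqref{cconf}.
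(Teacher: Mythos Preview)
Your overall architecture—contradiction, rescaling to a bubble, sharp pointwise estimates, Pohozaev identity, bootstrap—matches the paper. But there is a genuine gap in how you handle the coupling, and it is precisely the gap the paper is written to fill.

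You write that ``elliptic estimates for the Lam\'e operator $\Dg$ give $W_\alpha$ in terms of $\vpa^{2^*}\nabla\tau_\alpha + \pi_\alpha\nabla\psi_\alpha$, so $|\Lg W_\alpha|_g$ is controlled by $\Vert \vpa\Vert_\infty^{2^*}$'' and that ``the $|\Lg W|^2$ term also scales away by the gradient bound on $W$''. Global elliptic estimates only yield $|\Lg W_\alpha|_g \le C\mu_\alpha^{-n}$, which under the rescaling $\hat v_\alpha(x)=\mu_\alpha^{\frac{n-2}{2}}\vpa(\mu_\alpha x)$ gives $\mu_\alpha^{2n}|\Lg W_\alpha|^2 \le C$—bounded, not $o(1)$. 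So the term $|\Lg W_\alpha|^2/\vpa^{2^*+1}$ does \emph{not} scale away, and you cannot reduce to a scalar Lichnerowicz equation by this route. Showing that the rescaled $|\Lx \hat Z_\alpha|\to 0$ locally is nontrivial: the paper first feeds the positivity of the $a_\alpha/\vpa^{2^*+1}$ term in the scalar equation through a Green's representation to get an integral bound $\int |x-y|^{2-n}|\Lx \hat Z_\alpha|^2\,dy \le C$, picks a good radius where the boundary flux is small, and only then applies a Green's representation for $\Dx$ to obtain pointwise decay of $|\Lx \hat Z_\alpha|$.

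Beyond the local limit, the paper's core is a \emph{simultaneous} iterative (``ping-pong'') expansion: one develops $\Lx Z_\alpha$ to first and second order in explicit model $1$-forms $V_\alpha$, $P_{\alpha,k}$ (solving $\Dx V_\alpha = B_\alpha^{2^*}\tilde X_\alpha(0)$, etc.), and at the same time estimates $v_\alpha - B_\alpha$; each step feeds into the other (Claims~\ref{claimcont1}--\ref{claimcont2}). This is where the radius-of-influence $r_\alpha$ is defined and where the dimensional hypotheses enter (Claim~\ref{controlera}): the condition on $\nabla\tau$ and $\nabla\psi$ forces $r_\alpha = O(\mu_\alpha^{\frac{n-2}{n-1}})$ in dimension $n\ge 6$, and only then does the Pohozaev balance in Claim~\ref{raegalerhoa} close. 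Your proposal treats the two equations sequentially, which the paper explicitly identifies as insufficient in the genuinely coupled regime $\nabla\tau\not\equiv 0$.

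Finally, you use local conformal flatness only to flatten the scalar conformal Laplacian. In the paper it is also essential for the \emph{vector} side: one needs uniform estimates on the Neumann Green $1$-forms of $\Dx$ on small balls (Section~\ref{Greenfunc}), and the scaling argument there relies on the Euclidean kernel of $\Lx$ being finite-dimensional and rigid. Without this, the pointwise control on $|\Lx Z_\alpha|$ breaks down.
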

Theorem \ref{grosth} 
 is also a compactness result. It establishes in particular that sequences of solutions of \eqref{cconf} for perturbations of given physics data do not blow-up. Although not explicitly stated in Theorem \ref{grosth}, the result still holds true if we also allow perturbations of the geometry of $M$ in the locally conformally flat category in strong topologies. The consequence of Theorem \ref{grosth} in terms of the CBGL formalism is as follows:
\begin{corol} \label{corol}
The CBGL formalism is stable with respect to the choice of generic focusing initial data $(\psi, \pi, \tau, \sigma)$ and $V$ in any locally conformally flat geometry in $M$. 
\end{corol}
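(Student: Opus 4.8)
The plan is to argue by contradiction: suppose $(\vpa,\Wa)_\alpha$ does not converge in $C^{1,\eta}$ to a solution of \eqref{cconf}. The natural first step is to establish an \emph{a priori} $H^1$-bound. Testing the first equation of \eqref{cconfa} against $\vpa$ and the second against $\Wa$, and using that the data are focusing (so $\mathcal{B}_{\tau_\alpha,\psi_\alpha,V_\alpha}>0$ for $\alpha$ large by \eqref{convstabi}), one should control $\Vert \nabla\vpa\Vert_2$, $\Vert\vpa\Vert_{2^*}$ and $\Vert\Wa\Vert_{H^1}$ in terms of the $L^1$-norm of $\vpa^{2^*}$ via the $\Dg$-estimates; a positivity/coercivity argument on the conformal Laplacian together with the zeroth-order term should close this. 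Granted $H^1$-boundedness, elliptic regularity applied to the $W$-equation gives $\Wa$ bounded in $C^{1,\eta}$ locally in terms of $\Vert\vpa\Vert_{2^*}$, so all the analysis reduces to the scalar equation for $\vpa$.

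The heart of the argument is then a blow-up analysis for the (subcritical-looking but critical) scalar-field Lichnerowicz equation. If the sequence is not bounded in $L^\infty$, one rescales around concentration points $\xa$ with $\ua(\xa)=\max\vpa\to+\infty$ and $\ma$ chosen so that $\ma^{(n-2)/2}\ua(\xa)=1$, defining $\hua(x)=\ma^{(n-2)/2}\vpa(\xa+\ma x)$. Using local conformal flatness to flatten the metric, the rescaled equation converges to the critical Yamabe-type equation $\frac{4(n-1)}{n-2}\triangle_\xi U = \mathcal{B}_0 U^{2^*-1}$ on $\mathbb{R}^n$ (the $\pi^2\vpa^{-2^*-1}$ term drops because it carries a favorable power of $\ma$), whose positive solutions are the standard bubbles $B_{\mu}$. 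One then sets up the standard machinery: a finite number of bubbles, a pointwise estimate $\vpa(x)\le C\sum_i \big(\ma^{(n-2)/2}|x-\xia|^{n-2}+\ma^{-(n-2)/2}\big)^{-1}$ (the so-called "fundamental estimate" controlling the profile away from and between bubbles), and a sharp asymptotic expansion of the energy. The main obstacle is precisely ruling out this bubbling. The Pohozaev identity, applied on small balls around each concentration point, produces a sign condition: the leading term involves $\mathcal{B}_{\tau,\psi,V}$ at the blow-up point against a positive bubble integral, balanced against a \emph{negative} contribution. In dimensions $3\le n\le5$ the negative contribution comes from the mass term / the geometry and the positivity of $\mathcal{B}_{\tau,\psi,V}$; for $n\ge 6$ one needs a more refined expansion where the Pohozaev sign is controlled by $|\nabla\tau|^2+|\nabla\psi|^2$-type quantities at the blow-up point, which is exactly why the hypothesis that $\tau$ and $\psi$ have no common critical point is imposed: it guarantees a definite-sign lower-order term that contradicts the Pohozaev balance. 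This is the technically heaviest part and is where the bulk of the paper's estimates will be spent.

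Once blow-up is excluded, $(\vpa)_\alpha$ is bounded in $L^\infty$; a lower bound $\vpa\ge c>0$ follows from the maximum principle applied to the first equation (the right-hand side forces $\vpa$ away from zero because of the $\pi^2>0$, $\pi\not\equiv0$, hence $\pi_\alpha\not\equiv0$ for large $\alpha$, singular term — this is where the hypothesis $\pi\not\equiv0$ is used, to prevent $\vpa\to0$). With uniform two-sided bounds on $\vpa$, the right-hand sides of \eqref{cconfa} are bounded in $C^0$, so elliptic regularity (Calderón–Zygmund plus Schauder bootstrap, using $V_\alpha\to V$ in $C^2$, $\tau_\alpha\to\tau$ in $C^3$, etc.) upgrades convergence to $C^{1,\eta}$ for every $\eta\in(0,1)$. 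Passing to the limit in \eqref{cconfa} yields a limit $(\vp_0,W_0)$ with $\vp_0\ge c>0$ solving \eqref{cconf}, and uniqueness of the $L^2$-orthogonal representative modulo conformal Killing $1$-forms makes the statement about "up to conformal Killing $1$-forms" precise. This contradicts the assumption and proves the theorem.
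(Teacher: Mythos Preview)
The corollary itself is not proved separately in the paper: it is stated as an immediate consequence of Theorem~\ref{grosth} together with the three-step CBGL discussion (Cauchy stability handles Step~3, Step~2 is obviously continuous, and Theorem~\ref{grosth} handles Step~1). What you have written is a sketch of a proof of Theorem~\ref{grosth}, so let me compare to that.

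There is a genuine structural gap in your outline. You propose to first bound $\Wa$ in $C^{1,\eta}$ from an $H^1$ bound on $\vpa$, and then ``reduce all the analysis to the scalar equation''. This does not work. First, the $H^1$ bound itself is not available: in the focusing case $\mathcal{B}_{\tau,\psi,V}>0$, testing the scalar equation against $\vpa$ gives
\[
\tfrac{4(n-1)}{n-2}\int|\nabla\vpa|^2 + \int\mathcal{R}_{\psi_\alpha}\vpa^2 \;=\; \int\mathcal{B}_{\tau_\alpha,\psi_\alpha,V_\alpha}\vpa^{2^*} + \int(\pi_\alpha^2+|\sigma_\alpha+\Lg\Wa|_g^2)\vpa^{-2^*},
\]
and both terms on the right are \emph{positive}; there is no absorption and no energy bound follows. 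Second, even granting an $H^1$ bound, $\Dg\Wa$ contains $\vpa^{2^*}\nabla\tau_\alpha$, which is only in $L^1$ uniformly; you cannot get $\Lg\Wa$ bounded in $L^\infty$ from that, and indeed $|\Lg\Wa|_g$ blows up together with $\ua$ (this is exactly the content of the weak estimate \eqref{contptsconc} in Proposition~\ref{proppointconc}). The term $|\sigma_\alpha+\Lg\Wa|_g^2\,\vpa^{-2^*-1}$ therefore does \emph{not} disappear in the rescaling the way your ``$\pi^2\vpa^{-2^*-1}$ drops'' remark suggests, and the scalar equation never decouples. The paper's entire Section~\ref{analyseasymptotique} is devoted to a simultaneous ping-pong analysis of the defects of compactness of \emph{both} $\va$ and $\Lx\Za$: integral controls on $|\Lx\Za|^2\va^{-2^*-1}$ feed back into pointwise estimates on $\Lx\Za$ via Green formulas for $\Dx$ with Neumann boundary conditions (Claims~\ref{claimint}--\ref{claimcont2}), and these in turn refine the scalar picture.

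Your description of the $n\ge 6$ mechanism is also off. The hypothesis that $\tau$ and $\psi$ have no common critical points does not enter as a ``$|\nabla\tau|^2+|\nabla\psi|^2$-type'' sign term in a Pohozaev identity. In the paper's framework (Theorem~\ref{Th1}) it corresponds to $X_0=-\tfrac{n-1}{n}\nabla\tau$ and $\nabla f_0$ having no common zero. The condition $X_0(x_0)\neq 0$ forces $\vea=|\tilde X_\alpha(0)|_\xi\not\to 0$, and the Green-function estimate \eqref{estea1} for the \emph{vector} equation then pins the bubble radius as $\rra=O(\ma^{(n-1)/n})$; similarly $\nabla f_0(x_0)\neq 0$ gives $\rra=O(\ma^{(n-2)/(n-1)})$ via a first Pohozaev identity \eqref{gradf0}. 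Only \emph{after} these radius controls are in place does the final Pohozaev identity in Claim~\ref{raegalerhoa} produce the contradiction $H(0)\le 0$. So the role of the no-common-critical-point assumption is to bound the extension radius of each bubble through the coupling, not to provide a sign in the scalar Pohozaev balance directly.
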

The focusing case investigated here and defined in \eqref{focusing} covers the general physical setting where nontrivial nongravitational data are considered. Among the prominent cases allowed one finds for instance the positive cosmological constant case or the nontrivial Klein-Gordon field case. Note that in small dimensions Theorem \ref{grosth} only requires the non-staticness assumption $\pi \not \equiv 0$ while in dimensions $n \ge 6$ stability is ensured by a higher order stationarity condition on the scalar-field $\psi$ and the mean curvature $\tau$. These assumptions are in particular generic on the set of all focusing physics data $ (\psi, \pi, \tau, \sigma)$ and $V$. 

\medskip

Obviously, solutions of \eqref{cconf} exist, at least in some cases. Partial existence results for the vacuum defocusing case were obtained in Isenberg \cite{IseCMC},  Holst-Nagy-Tsogtgerel \cite{HoNaTso}, Maxwell \cite{Maxwell} and Dahl-Gicquaud-Humbert \cite{DaGiHu}. Only recently the more involved focusing case, which is the case of interest in Theorem \ref{grosth}, has been addressed: we refer to Hebey-Pacard-Pollack \cite{HePaPo}, Premoselli \cite{Premoselli1, Premoselli2} and Holst-Meier \cite{HolstMeier}. Stability results for system \eqref{cconf} had been obtained in specific cases, see Druet-Hebey \cite{DruHeb}, Premoselli \cite{Premoselli2} and Druet-Premoselli \cite{DruetPremoselli}.
Note also that, unlike all the known existence results, Theorem \ref{grosth} requires no smallness assumptions on the physics data and allows, in full generality, the manifold $M$ to possess non-trivial conformal Killing $1$-forms in $M$. As a remark, for geometric nonlinear critical elliptic equations, low-regularity  perturbations of the metric and/or of the coefficients may lead to the existence of blowing-up sequences of solutions. See for instance Berti-Malchiodi \cite{BertiMalchiodi}, Druet-Laurain \cite{DruetLaurain} and Druet-Hebey-Laurain \cite{DruetHebeyLaurain} for examples of such phenomena for nonlinear stationary Schr\"{o}dinger equations. In dimensions $3$, the convergence of $(V_\alpha)_\alpha$ and $(D_\alpha)_\alpha$ can be lowered from $C^2$ to $C^1$.

\medskip

Note also that, since system \eqref{cconf} is elliptic, as the regularity of the convergence of $(D_\alpha)_\alpha$ to $D$ and of $(V_\alpha)_\alpha$ to $V$ increases, the regularity of convergence of the solutions increases accordingly. For physics data converging in $C^\infty(M)$ the convergence of solutions in Theorem \ref{grosth} holds in $C^\infty(M)$ too.

\medskip

Let us point out one important fact. We are investigating here a specific
notion of elliptic stability (see Hebey \cite{HebeyZLAM} for a reference
in book form), which turns out to be particularly well-suited for the conformal
method and the CBGL construction. As already noticed, in our setting, the
notion precisely measures the robustness of the CBGL construction with respect
to the choice of the upstream physics data of the problem. There are other very
natural (and more historical) notions of stability which arise when
investigating the stability of specific solutions of the Einstein equations.
There is a huge 
litterature in this deep direction. Without pretending to be exhaustive, we
mention the groundbreaking global nonlinear stability of the Minkowski
space-time by Christodoulou-Klainerman \cite{ChristodoulouKlainerman} (see also
Klainerman-Nicol\`o \cite{KlainermanNicolo} or Lindblad-Rodnianski
\cite{LindbladRodnianski}) and refer to Dafermos-Rodnianski
\cite{DafermosRodnianski} and the references therein for a detailed account of
the existing work on the problem of the stability of black holes.

\bigskip 

The proof of Theorem \ref{grosth} goes through the proof of a stability result for a general class of constraint-like systems, see \eqref{systype} and Theorem \ref{Th1} in section \ref{PDEframework} below. 
The proof proceeds by contradiction and requires an involved asymptotic analysis of blowing-up sequences of solutions of \eqref{systype} around concentration points. In Section \ref{preuvethstabi} we isolate the regions of $M$ where loss of compactness may occur and we show that concentration points do not actually appear in $M$.  The proof of this fact -- that concludes the proof of Theorem \ref{grosth} -- crucially relies on an accurate pointwise asymptotic description of the behavior of blowing-up sequences of solutions around concentration points. Such an asymptotic description is by far the core of the analysis in this paper and, for the sake of clarity, is postponed to Section \ref{analyseasymptotique}. Unlike in the fully decoupled case (where $\nabla \tau \equiv 0$), treated in Druet-Hebey \cite{DruHeb} and Premoselli \cite{Premoselli2}, obtaining a sharp pointwise description of blowing-up sequences of \eqref{cconf} in full generality as we do here requires an involved simultaneous analysis of the defects of compactness that occur in each of the two equations of \eqref{cconf} and of their interactions. We perform this analysis in Section \ref{analyseasymptotique}. Finally, section \ref{instabilite} is aimed at showing that the assumptions of Theorem \ref{Th1} below are sharp and is concerned with the construction of blowing-up sequences of solutions of constraint-like systems of equations, and sections \ref{HI} to \ref{Greenfunc} gather some technical results used throughout the paper.

\section{A PDE framework} \label{PDEframework}

We investigate the Einstein-Lichnerowicz conformal constraint system \eqref{cconf} in an elliptic PDE framework and prove a general stability result for an extended class of constraint-like systems. We consider $(M,g)$ a closed locally conformally flat manifold of dimension $n \ge 3$ and consider a sequence $(u_\alpha, W_\alpha)_\alpha$ of solutions of:
\begin{equation} \label{systype}
\left \{ \begin{aligned}
& \triangle_g \ua + h_\alpha \ua = f_\alpha \ua^{2^*-1} + \frac{a_\alpha(\Wa)}{\ua^{2^*+1}}, \\
& \Dg \Wa = \ua^{2^*} \Xa + \Ya,
\end{aligned} \right. 
\end{equation}
with $\ua > 0$ in $M$, where 
\[a_\alpha(\Wa) = b_\alpha + \left| U_\alpha + \Lg \Wa \right|_g^2,\]
$h_\alpha, f_\alpha, b_\alpha$ are smooth functions in $M$, $\Xa, \Ya$ are smooth $1$-forms in $M$ and $U_\alpha$ is a smooth symmetric  
$(2,0)$ tensor field in $M$. Here, as before, $2^* = \frac{2n}{n-2}$, $\triangle_g = - \textrm{div}_g (\nabla \cdot)$,
\[ \Lg W_{ij} = \nabla_i W_j + \nabla_j W_i - \frac{2}{n} (\textrm{div}_g W) g_{ij} \]
and $\Dg = - \textrm{div}_g \left( \Lg \cdot \right)$. We assume that there holds
\ben \label{convdonnees}
\left( h_\alpha, f_\alpha, b_\alpha, U_\alpha, \Xa, \Ya \right)_\alpha \to (h_0, f_0, b_0, U_0, X_0, Y_0) 
\een
as $\alpha \to \infty$, where all the convergences take place in $C^0(M)$ except the convergence of $(f_\alpha)_\alpha$ and $(\Xa)_\alpha$ to $f_0$ and $X_0$ which take place in $C^2(M)$. We assume in addition that $\triangle_g + h_0$ is coercive and that $f_0 > 0$.

\medskip

We let $K_g$ denote the set of conformal Killing $1$-forms in $M$:
\ben \label{defK}
K_g = \{ W \in H^1(M) \textrm{ st } \Lg W = 0 \},
\een
where $H^1(M)$ denotes the usual Sobolev space on $1$-forms. Since the only quantity depending on $\Wa$ that appears in system \eqref{systype} is $\Lg \Wa$ the system is invariant under the addition to $\Wa$ of elements of $K_g$. In the following we may therefore consider sequences of solutions $(\ua, \Wa)_\alpha$ \emph{up to conformal Killing $1$-forms}, that is we may assume that $\Wa$ is orthogonal to $K_g$ for the $L^2$-scalar product.

\medskip

Our main result is an \emph{a priori} boundedness result:

\begin{theo} \label{Th1}
Let $( h_\alpha, f_\alpha, b_\alpha, U_\alpha, \Xa, \Ya )_\alpha$ be a sequence of coefficients, converging to some limiting coefficients $(h_0, f_0, b_0, U_0, X_0, Y_0)$, and satisfying the convergence conditions of \eqref{convdonnees}. Assume that $\triangle_g + h_0$ is coercive and that $f_0 > 0$ in $M$. If $3 \le n \le 5$, assume that either $b_0 \not \equiv 0$ or $|X_0|_g > 0$ in $M$. If $n \ge 6$, assume that $X_0$ and $\nabla f_0$ have no common zero or, if they do, assume that there holds at these zeroes:
\ben \label{condzeros}
 h_0 <  \frac{n-2}{4(n-1)} R(g) - C(n) f_0 ^{-1}\triangle_g f_0 , 
 \een
where $C(n)$ denotes some positive constant only depending on $n$ explicitly given in \eqref{defC(n)} below, and $R(g)$ is the scalar curvature of $g$. Let $(\ua, \Wa)_\alpha$ be a sequence of solutions of \eqref{systype}. Let $0 < \eta < 1$. There exists then a positive constant $C(\eta)$ that does not depend on $\alpha$ such that, up to a subsequence and up to conformal Killing $1$-forms, 
\[ \Vert \ua \Vert_{L^\infty(M)} + \Vert \Wa \Vert_{C^{1,\eta}(M)} \le C(\eta), \]
for all $\alpha$. 
\end{theo}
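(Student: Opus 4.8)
The plan is to argue by contradiction following the standard blow-up analysis for critical elliptic systems, adapted to the coupled constraint structure. Suppose the conclusion fails; then, after passing to a subsequence, $\Vert \ua \Vert_{L^\infty(M)} \to +\infty$ (the bound on $\Wa$ in $C^{1,\eta}$ will follow from the bound on $\ua$, since the second equation of \eqref{systype} is linear in $\Wa$ with right-hand side controlled by $\ua^{2^*}$ plus the converging data $\Xa,\Ya$, and elliptic regularity for the Lamé operator $\Dg$ yields $W^{2,p}$ and hence $C^{1,\eta}$ estimates; here one uses the normalization that $\Wa \perp K_g$ to control the kernel). So the real task is to rule out blow-up in the first equation, a Lichnerowicz-type scalar equation with a critical nonlinearity $f_\alpha \ua^{2^*-1}$ and a singular term $a_\alpha(\Wa) \ua^{-2^*-1}$ whose coefficient $a_\alpha(\Wa) = b_\alpha + |U_\alpha + \Lg\Wa|_g^2$ is itself only $C^0$-convergent and couples back to $\Wa$.

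First I would set up the concentration structure. Since $\triangle_g + h_0$ is coercive and $f_0 > 0$, one has a uniform $H^1$-type a priori bound on $\ua$ away from the nonlinear term, so the loss of compactness is concentration-type: there exist finitely many concentration points and scales, and $\ua$ decomposes as a sum of rescaled standard bubbles (solutions of the Yamabe-type equation $\triangle_\xi U = f_0(p)\, U^{2^*-1}$ on $\RR^n$, using local conformal flatness to pass to the flat model) plus a lower-order remainder. The singular term $a_\alpha(\Wa) \ua^{-2^*-1}$ is \emph{subcritical} in the blow-up regime — at a concentration point $\ua \to +\infty$ so this term $\to 0$ pointwise — which is why the bubbling profile is governed purely by $f_0$; but one must still track $a_\alpha(\Wa)$ carefully on the intermediate and far regions, and in particular its value and first behavior \emph{at} the concentration point, because the sign condition that forbids blow-up is a Pohozaev-type balance in which $a_0(W_0)(p) = b_0(p) + |U_0(p) + \Lg W_0(p)|_g^2$ enters. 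This is where the refined pointwise asymptotic description of Section \ref{analyseasymptotique} is used: it gives sharp two-sided estimates on $\ua$ (and on $\Wa$, $\Lg\Wa$) in a neighborhood of each concentration point, of the form $\ua \sim \ma^{-\frac{n-2}{2}}(\ma^2 + d_g(x,\xa)^2)^{-\frac{n-2}{2}}$ up to controlled corrections, together with the corresponding expansion of $\Wa$ solving the linear equation with the bubbling source $\ua^{2^*}\Xa$.

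The heart of the argument is then a local Pohozaev identity centered at a concentration point $\xa$, on a geodesic ball of fixed small radius. Integrating the first equation against $x\cdot\nabla\ua + \frac{n-2}{2}\ua$ (in conformal normal coordinates) produces, after using the sharp asymptotics to evaluate each term: a leading contribution from the critical nonlinearity involving $\nabla f_0(p)$; a contribution involving $h_0(p)$, $R(g)(p)$ and $\triangle_g f_0(p)$ at the relevant order in the blow-up parameter $\ma$ (with the precise constant $C(n)$ of \eqref{defC(n)}); a contribution from $a_0(W_0)(p)$ coming from the singular term; and negligible boundary and remainder terms. The dimensional split is exactly the familiar one: in dimensions $n\le 5$ the curvature/potential term dominates and the obstruction reduces to showing that the limiting data cannot all vanish at $p$, which is guaranteed by $b_0\not\equiv 0$ or $|X_0|_g>0$ (forcing $a_0(W_0)(p)>0$, since if $b_0$ vanishes identically one uses $\Lg W_0 \ne 0$ near $p$ coming from the source $X_0$); in dimensions $n\ge 6$ the term with $\nabla f_0(p)$ is the leading one, so blow-up can only occur at a common zero of $X_0$ and $\nabla f_0$, and there the next-order balance yields precisely the forbidden inequality — the hypothesis \eqref{condzeros} is exactly the negation, so no such point exists and blow-up is impossible. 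I expect the main obstacle to be the bookkeeping in this Pohozaev balance: controlling the coupled remainder terms — in particular showing that the feedback of $\Wa$ into $a_\alpha(\Wa)$ contributes only at negligible order, and handling the low regularity ($C^0$ only) of $b_\alpha, U_\alpha, \Xa$-interactions through $\Lg\Wa$ — which is exactly why the simultaneous sharp asymptotic analysis of both equations in Section \ref{analyseasymptotique} is indispensable and cannot be reduced to the decoupled case treated in \cite{DruHeb, Premoselli2}.
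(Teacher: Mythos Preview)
Your high-level strategy is right --- contradiction, reduce to a scalar blow-up, sharp asymptotics, Pohozaev --- but the way you assign roles to the hypotheses is off in a way that would make the argument fail.

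The main gap is in how $b_0 \not\equiv 0$ (or $|X_0|_g > 0$) is used in low dimensions. You say the obstruction is that these force $a_0(W_0)(p) > 0$ at the concentration point and that this enters the Pohozaev balance as a main term. In the paper the singular term $a_\alpha(\Wa)\ua^{-2^*-1}$ is in fact shown to be \emph{negligible} in the Pohozaev identity (see the estimates on $K_3$ and $L_3$ in Claims \ref{controlera} and \ref{raegalerhoa}); there is no $a_0(W_0)(p)$-contribution to balance. The actual mechanism is different and more global: from $b_0 \not\equiv 0$ one deduces via a Green's-function minoration that $\ua \ge \ve_0 > 0$ everywhere on $M$ (equation \eqref{minorua}), and this forces the radius of influence $\rra$ of the bubble to satisfy $\rra = O(\ma^{1/2})$. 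Likewise $|X_0|_g > 0$ at $x_0$ is exploited through the first-order expansion of $\Za$ (Claim \ref{claimcont1}, estimate \eqref{estea1}) to bound $\rra$, not through a Pohozaev term. The hypotheses of the theorem are thus used in Claim \ref{controlera} to control $\rra$ \emph{before} the Pohozaev identity, and the Pohozaev in Claim \ref{raegalerhoa} is then used for a different purpose: to show $\rra = \ra$, i.e.\ the bubble description extends to the full local scale.

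There is also a structural step you skipped: even after the sharp description $\ua \sim B_\alpha$ on $B_{\xa}(\ra)$, the contradiction does not come directly from the Pohozaev. One returns to the family of concentration points of Proposition \ref{proppointconc}, rescales by the minimal mutual distance $\dda$, and rules out having two or more points by a two-bubble interaction estimate (the argument leading to \eqref{contrafinale}); only then does one conclude $\dda \not\to 0$, hence finitely many points and boundedness. Without this step your outline produces a local picture but no global contradiction.
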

The limiting system associated to \eqref{systype}--\eqref{convdonnees} is:
\ben \label{systlimite}
\left \{ 
\bal
& \triangle_g u_0 + h_0 u_0 = f_0 u_0^{2^*-1} + \left( b_0 + \left| U_0 + \Lg W_0\right|_g^2\right)u_0^{-2^*-1}, \\
& \Dg W_0 = u_0^{2^*} X_0 + Y_0.
\eal
\right. 
\een
As a consequence of Theorem \ref{Th1} we have the following compactness result:
 \begin{corol} \label{corol1}
Under the assumptions of Theorem \ref{Th1} there holds:
\begin{enumerate}
\item either $\Vert \ua \Vert_{L^\infty(M)} \to 0 $ as $\alpha \to \infty$, in which case $b_0 \equiv 0$, $\textrm{div}_g U_0 = Y_0$ in $M$ and, up to a subsequence and up to conformal Killing $1$-forms, $\Wa \to W_0$ in $C^{1, \eta}(M)$ for any $0 < \eta < 1$, where $\Dg W_0 = Y_0$ in $M$, 
\item or $\Vert \ua \Vert_{L^\infty(M)} \not \to 0 $ as $\alpha \to \infty$. In this case, up to a subsequence and up to conformal Killing $1$-forms, $(\ua, \Wa) \to (u_0, W_0)$ in $C^{1,\eta}(M)$ for any $0 < \eta < 1$ with $u_0 > 0$ in $M$, where $(u_0,W_0)$ is a solution of the limiting system \eqref{systlimite}.
\end{enumerate}
\end{corol}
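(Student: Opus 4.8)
The plan is to derive Corollary~\ref{corol1} from the a priori bound of Theorem~\ref{Th1} by a standard elliptic bootstrap, splitting into the two alternatives according to whether $\Vert\ua\Vert_{L^\infty(M)}$ stays bounded away from $0$ or not. First I would fix $0<\eta<1$. By Theorem~\ref{Th1}, up to a subsequence and a choice of conformal Killing representatives, one has $\Vert\ua\Vert_{L^\infty(M)}+\Vert\Wa\Vert_{C^{1,\eta}(M)}\le C(\eta)$ for all $\alpha$. Elliptic regularity applied to the scalar equation in \eqref{systype} then upgrades this: since the right-hand side $f_\alpha\ua^{2^*-1}+a_\alpha(\Wa)\ua^{-2^*-1}$ is bounded in $C^0(M)$ (here the lower bound on $\ua$ needed to control the $\ua^{-2^*-1}$ term is obtained below; if $\ua\to 0$ one must be more careful and use that $a_\alpha(\Wa)\ua^{-2^*-1}$ is nonnegative and that $\Vert\ua\Vert_{L^\infty}\to 0$ forces $b_0\equiv 0$), $C^{1,\eta}$ elliptic estimates give a uniform $C^{2,\eta}$ bound on $\ua$, hence up to a further subsequence $\ua\to u_0$ in $C^2(M)$ and $\Wa\to W_0$ in $C^{1,\eta'}(M)$ for any $\eta'<\eta$; relabelling $\eta$ gives convergence in $C^{1,\eta}(M)$ for every $\eta<1$. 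Passing to the limit in \eqref{systype}, using \eqref{convdonnees}, shows $(u_0,W_0)$ solves \eqref{systlimite} with $u_0\ge 0$.

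Next I would treat case (2), $\Vert\ua\Vert_{L^\infty(M)}\not\to 0$. After passing to a subsequence, $\Vert\ua\Vert_{L^\infty(M)}\ge\delta>0$ for all $\alpha$. The point is to produce a uniform positive lower bound $\ua\ge c>0$. This is where the structure of the equation helps: the term $a_\alpha(\Wa)\ua^{-2^*-1}=(b_\alpha+|U_\alpha+\Lg\Wa|_g^2)\ua^{-2^*-1}$ blows up wherever $\ua$ is small, which is incompatible with $\triangle_g\ua+h_\alpha\ua$ being bounded — unless $a_\alpha(\Wa)$ also degenerates there. Concretely, at a minimum point $x_\alpha$ of $\ua$ one has $\triangle_g\ua(x_\alpha)\le 0$, so $a_\alpha(\Wa)(x_\alpha)\ua(x_\alpha)^{-2^*-1}\le -h_\alpha(x_\alpha)\ua(x_\alpha)+f_\alpha(x_\alpha)\ua(x_\alpha)^{2^*-1}\le C$, giving $\ua(x_\alpha)^{2^*+2}\ge C^{-1}a_\alpha(\Wa)(x_\alpha)$. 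One then argues that $a_\alpha(\Wa)(x_\alpha)$ cannot go to $0$: if $\min_M\ua\to 0$, then $a_\alpha(\Wa)\to 0$ at those points; combined with $\Vert\ua\Vert_{L^\infty}\ge\delta$ and a Harnack-type or De Giorgi–Nash–Moser iteration on the scalar equation — controlling $\ua$ from below on balls where the right-hand side is bounded — one derives a contradiction, along the lines used to rule out the vanishing alternative in coupled Lichnerowicz-type arguments (cf. the references Druet–Hebey, Premoselli in the text). Granted $\ua\ge c>0$, the right-hand side of the scalar equation is uniformly bounded in $C^0$, so the bootstrap of the previous paragraph goes through cleanly, and the limit $u_0\ge c>0$ solves \eqref{systlimite}.

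Finally, case (1): $\Vert\ua\Vert_{L^\infty(M)}\to 0$. Then $\ua\to 0$ in $C^0(M)$. Testing the scalar equation against $\ua^{2^*+1}$ (or simply evaluating at a maximum point of $\ua$, where $\triangle_g\ua\ge 0$) gives $a_\alpha(\Wa)\le \ua^{2^*+1}(-h_\alpha\ua+f_\alpha\ua^{2^*-1})+\ldots\to 0$, and more precisely $\int_M a_\alpha(\Wa)\ua^{-2^*-1}\,dv_g$ is controlled and the pointwise argument shows $a_\alpha(\Wa)\to 0$ uniformly, whence in the limit $b_0+|U_0+\Lg W_0|_g^2\equiv 0$, i.e. $b_0\equiv 0$ and $\Lg W_0=-U_0$, so $\Dg W_0=-\divg U_0$ (recalling $\Dg=-\divg(\Lg\cdot)$). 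Meanwhile, from the $\Wa$ equation $\Dg\Wa=\ua^{2^*}\Xa+\Ya\to Y_0$ in $C^0(M)$, and the uniform $C^{1,\eta}$ bound on $\Wa$ from Theorem~\ref{Th1} together with $\Dg$-elliptic regularity give $\Wa\to W_0$ in $C^{1,\eta}(M)$ with $\Dg W_0=Y_0$; comparing the two expressions for $\Dg W_0$ yields $\divg U_0=Y_0$. I expect the main obstacle to be case (2), specifically the proof of the uniform lower bound $\ua\ge c>0$ preventing the scalar field coefficient from degenerating — this is the only step genuinely using the coupled nonlinear structure rather than off-the-shelf elliptic theory, and it must be handled by a careful minimum-principle plus Moser-iteration argument.
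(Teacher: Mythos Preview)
Your outline matches the paper's closely, and the split into the two alternatives is exactly right. The two places where your argument is incomplete are also where the paper does something specific.

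\textbf{Case (2): the lower bound $\ua\ge c>0$.} You correctly identify this as the crux, but your proposed minimum-principle plus ``Harnack-type or De Giorgi--Nash--Moser iteration'' is not enough as stated. The difficulty is that the standard Harnack inequality applies to linear (or at least bounded-coefficient) equations, and here the term $a_\alpha(\Wa)\ua^{-2^*-1}$ is not a priori bounded if $\ua$ is small. The paper resolves this by invoking a Harnack inequality \emph{tailored to the Lichnerowicz equation} (Proposition~\ref{prop-HI} in Section~\ref{HI}): if $u>0$ solves $\triangle_\xi u+hu=fu^{2^*-1}+a/u^{2^*+1}$ on $B_0(2)$ with $a\ge 0$, $f\ge 0$ and $\|u\|_{L^\infty}\le M$, then $\sup_{B_0(1)}u\le C(M)\inf_{B_0(1)}u$. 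The proof is a Moser iteration, but the negative-power nonlinearity requires care, and the constant genuinely depends on the $L^\infty$ bound from Theorem~\ref{Th1}. Once you have this, a covering gives $\sup_M\ua\le C\inf_M\ua$ and the lower bound is immediate from $\|\ua\|_{L^\infty}\not\to 0$. Your minimum-point computation by itself does not yield a contradiction without this ingredient.

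\textbf{Case (1): forcing $b_0\equiv 0$ and $U_0+\Lg W_0=0$.} Your two suggested routes both have issues. At a maximum of $\ua$ one has $\triangle_g\ua\ge 0$ (with the paper's sign $\triangle_g=-\divg\nabla$), which is the wrong inequality to bound $a_\alpha(\Wa)$ from above. Testing against $\ua^{2^*+1}$ leaves you with $(2^*+1)\int_M \ua^{2^*}|\nabla\ua|^2$, and you have no gradient bound at this stage (Theorem~\ref{Th1} only gives $L^\infty$ on $\ua$). The paper's argument is cleaner: since $\triangle_g+h_0$ is coercive, the Green function $G_\alpha$ of $\triangle_g+h_\alpha$ is uniformly positive, so the representation
\[
\ua(x)\ge \frac{1}{C}\int_M \big(b_\alpha+|U_\alpha+\Lg\Wa|_g^2\big)\ua^{-2^*-1}\,dv_g
\]
gives $\|\ua\|_{L^\infty}^{2^*+2}\ge C^{-1}\int_M a_\alpha(\Wa)\,dv_g$, hence $b_0\equiv 0$ and $U_\alpha+\Lg\Wa\to 0$ in $L^2$. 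Taking $\divg$ of the limit and comparing with $\Dg W_0=Y_0$ gives $\divg U_0=Y_0$. (Note also a sign slip in your sketch: from $\Lg W_0=-U_0$ one gets $\Dg W_0=-\divg(\Lg W_0)=+\divg U_0$, not $-\divg U_0$; your final conclusion is nonetheless correct.)
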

In both cases $(\ua, \Wa)$ converges to a solution of the limiting system, even if it is somewhat degenerate in the first case.

\begin{proof}
Assume that the first alternative holds. By standard elliptic theory, up to conformal Killing $1$-forms, $\Wa \to W_0$ in $C^{1,\eta}(M)$ where $\Dg W_0 = Y_0$. Let $G_\alpha$ be the Green function of the operator $\triangle_g + h_\alpha$ in $M$. It is uniformly positive thanks to \eqref{convdonnees} and since $\triangle_g + h_0$ is coercive (see Robert \cite{RobDirichlet}). A Green's formula on the scalar equation gives, for some positive constant $C$:
\[ \Vert \ua \Vert_{L^\infty(M)}^{2^*+2} \ge \frac{1}{C} \int_M \left( b_\alpha + \left| U_\alpha + \Lg \Wa \right|_g^2 \right)dv_g\]
which shows that $b_0 \equiv 0$ and $U_\alpha + \Lg \Wa \to 0$ in $L^2(M)$. Passing to the limit we obtain $\textrm{div}_g U_0 = Y_0$.

\medskip

In the second case, if $\Vert \ua \Vert_{L^\infty(M)} \not \to 0$, the Harnack inequality as stated in Section \ref{HI} shows that $\inf_M \ua \ge \frac{1}{C}$ for some positive constant $C$. The $C^{1,\eta}(M)$-bounds on $\ua$ and $\Wa$ therefore follow by standard elliptic theory in each equation.
\end{proof}
Note that if the assumptions of Theorem \ref{Th1} are satisfied the second alternative in Corollary \ref{corol1} holds in dimensions $3 \le n \le 5$.

\medskip

Compactness and stability results for elliptic PDEs have a long time history. A major result was the complete proof of the compactness of the Yamabe equation by Khuri-Marques-Schoen \cite{KhuMaSc} together with its dimensional limitation by Brendle \cite{Bre} and Brendle-Marques \cite{BreMa}. Note that an equation can be compact but unstable, that is, sensitive to changes of the parameters in the equation.  
General references on the stability of elliptic PDEs are by Druet \cite{DruetENSAIOS}, Druet-Hebey-Robert \cite{DruetHebeyRobert} and Hebey \cite{HebeyZLAM}. The specific case of the Einstein-Lichnerowicz equation is addressed in Druet-Hebey \cite{DruHeb}, Hebey-Veronelli \cite{HebeyVeronelli} and Premoselli \cite{Premoselli2}. 

\medskip

Theorem \ref{Th1} leaves open the question of stability when its assumptions are not satisfied. The reason for this is that they are sharp. As shown in Section \ref{instabilite} below, whenever the assumptions of Theorem \ref{Th1} are not satisfied we can construct blowing-up sequences of solutions of \eqref{systype}:

\begin{theo} \label{thinstabEL}
For any $n \ge 3$, when we contradict the assumptions $b_0 \not \equiv 0$ and \eqref{condzeros}, there exist examples of  smooth closed Riemannian $n$-manifolds $(M,g)$ and of sequences of coefficients $\left( h_\alpha, f_\alpha, b_\alpha, U_\alpha, \Xa, \Ya \right)_\alpha$ converging to some limiting coefficients $(h_0, f_0, b_0, U_0, X_0, Y_0)$ as in \eqref{convdonnees} with $U_0 \not \equiv 0$ and $X_0 \not \equiv 0$, and there exist sequences $(\ua, \Wa)_\alpha$ of solutions of \eqref{systype} such that $\sup_M \ua \to + \infty \textrm{ as } \alpha \to + \infty.$
\end{theo}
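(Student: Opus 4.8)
The plan is to produce the blowing-up sequences by a finite-dimensional Lyapunov--Schmidt reduction, building on the by-now standard construction of concentrating solutions for critical scalar equations and coupling it to the vector equation. First I would fix the simplest admissible geometry: take $(M,g)$ to be a product or a flat torus, or any closed locally conformally flat manifold, and choose a point $x_0 \in M$ together with a scale $\ma \to 0$. The natural ansatz for the scalar unknown is $\ua = B_{x_0,\ma} + \text{(lower order corrections)}$, where $B_{x_0,\ma}$ is the standard bubble
\[
B_{x_0,\ma}(x) = \left( \frac{\ma}{\ma^2 + \frac{d_g(x,x_0)^2}{n(n-2)}} \right)^{\frac{n-2}{2}},
\]
suitably cut off, rescaled so that it solves $\triangle_\xi B = n(n-2) B^{2^*-1}$ on $\RR^n$. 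One then \emph{designs} the coefficients rather than taking them as given: set $f_\alpha \to f_0$ with $f_0(x_0) > 0$ and, crucially, tune $h_\alpha$ near $x_0$ so that the mass term and the Ricci/scalar-curvature corrections conspire to violate exactly the borderline condition \eqref{condzeros}. In dimensions $n \ge 6$ the relevant obstruction to gluing a single bubble is, as usual, the sign of the quantity appearing in \eqref{condzeros} at the concentration point; contradicting \eqref{condzeros} means we can make the reduced energy have a strict local maximum (or the right critical point structure) in the concentration parameters, which is precisely what the Lyapunov--Schmidt scheme needs.

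The second ingredient is the vector equation. Given the scalar ansatz $\ua$, the $1$-form $\Wa$ is \emph{solved for}: since $\Dg$ is elliptic, self-adjoint and (modulo $K_g$) invertible, set $\Wa = \Dg^{-1}\big( \ua^{2^*} \Xa + \Ya \big)$ orthogonally to $K_g$. With $\ua$ concentrating like a bubble, $\ua^{2^*}$ concentrates like a Dirac mass of fixed total integral $\int_{\RR^n} B^{2^*}$, so $\Wa$ converges in $C^{1,\eta}$ away from $x_0$ and stays bounded in $C^{1,\eta}(M)$ provided $\Xa$ is smooth; this is exactly why the statement only claims $\sup_M \ua \to +\infty$ and says nothing dramatic about $\Wa$. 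Choosing $\Xa \to X_0$ with $X_0 \not\equiv 0$ and $U_\alpha \to U_0 \not\equiv 0$ is free: these data only feed back into the scalar equation through the term $a_\alpha(\Wa)\ua^{-2^*-1} = (b_\alpha + |U_\alpha + \Lg\Wa|_g^2)\ua^{-2^*-1}$, which near the bubble is of order $\ua^{-2^*-1}$, i.e.\ genuinely \emph{lower order} compared to the main balance $\triangle_g \ua \sim f_\alpha \ua^{2^*-1}$; when we contradict $b_0 \not\equiv 0$ we are allowed to take $b_0 \equiv 0$, which further tames this term. I would absorb this coupling term, together with the $h_\alpha \ua$ term and all geometric error terms, into the remainder of the finite-dimensional reduction.

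Concretely the steps are: (i) set up the approximate solution $(\bar u_{\alpha}, \bar W_\alpha)$ with $\bar u_\alpha$ a cut-off bubble and $\bar W_\alpha = \Dg^{-1}(\bar u_\alpha^{2^*}\Xa + \Ya)$, and estimate the error $\mathcal{E}_\alpha := \triangle_g \bar u_\alpha + h_\alpha \bar u_\alpha - f_\alpha \bar u_\alpha^{2^*-1} - a_\alpha(\bar W_\alpha)\bar u_\alpha^{-2^*-1}$ in the appropriate weighted norm; (ii) invert the linearized scalar operator $\triangle_g + h_\alpha - (2^*-1) f_\alpha \bar u_\alpha^{2^*-2}$ modulo the $(n+1)$-dimensional kernel generated by the bubble's translations and dilation, using coercivity of $\triangle_g + h_0$ away from the bubble scale and the nondegeneracy of the Euclidean bubble at the bubble scale; (iii) solve the infinite-dimensional part by a contraction mapping / Banach fixed point argument, producing a genuine solution $\ua = \bar u_\alpha + \phi_\alpha$ for each choice of concentration parameters; (iv) reduce the problem to finding critical points of a finite-dimensional energy $J_\alpha(x_0,\ma)$, and show, using precisely the failure of \eqref{condzeros}, that $J_\alpha$ has a critical point (a strict local extremum suffices) that persists as $\alpha \to \infty$ with $\ma \to 0$; (v) set $\Wa = \Dg^{-1}(\ua^{2^*}\Xa + \Ya)$ and check that $(\ua,\Wa)$ solves \eqref{systype} with $\sup_M \ua \sim \ma^{-(n-2)/2} \to +\infty$.

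The main obstacle I anticipate is Step (iv): getting the reduced energy expansion precise enough to detect a critical point when the assumptions are merely contradicted (not contradicted with margin). This is the place where the exact value of the constant $C(n)$ in \eqref{condzeros} enters, and where the algebra of the error estimates must be carried to the correct order — one order beyond the naive energy of the bubble. In the low-dimensional regime $3 \le n \le 5$ (where we contradict $b_0 \not\equiv 0$ or $|X_0|_g > 0$, i.e.\ we arrange $b_0 \equiv 0$ and $X_0 \equiv 0$ failing — rather, we engineer a degenerate limiting vector datum) the obstruction is of a different, Pohozaev-type nature and the bubble must be paired against a nonzero contribution coming from $b_\alpha$ or from the geometry at a faster rate; I would handle $n=3,4,5$ separately, since the weighted norms and the dominant error terms differ from the $n\ge 6$ case. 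A secondary technical point is ensuring the fixed-point argument for $\Wa$ and the fixed-point argument for $\phi_\alpha$ do not interfere — but since the $\Wa$-dependence enters the scalar equation only through the harmless $\ua^{-2^*-1}$-weighted term, decoupling them (solve $\Wa$ in terms of $\ua$ first, substitute, then run the scalar reduction) makes this routine.
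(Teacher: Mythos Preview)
The paper does not use a Lyapunov--Schmidt reduction; it gives entirely explicit constructions. On $(\mathbb{S}^3,h)$ (and similarly for $n=4,5$) it takes $\ua=\vpa$ to be the exact conformal bubbles on the sphere, solves an explicit radial ODE to produce $\Wa$, and then \emph{defines} $U_\alpha:=-\Lh\Wa$, so that $|U_\alpha+\Lh\Wa|_h^2\equiv 0$ and the scalar equation collapses to the exact Yamabe equation satisfied by $\vpa$. For $n\ge 6$ it works at a conformally flat pole, takes $\nabla\tau\equiv 0$ (hence $X_0\equiv 0$) in a full neighbourhood of $x_0$, and sets $\tua=u_0+\eta\vpa+\psa$ where $u_0>0$ is a fixed positive solution of an auxiliary limiting system and $\psa$ is an explicit Green-function correction; the potential $h_\alpha$ is then \emph{read off} from the equation and shown to converge. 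No reduced energy and no fixed point are needed.

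Your scheme has two concrete gaps that would prevent it from closing as written. First, the ansatz $\ua=B_{x_0,\ma}+\phi_\alpha$ carries no positive background, so away from $x_0$ one has $\ua\to 0$ and $\ua^{-2^*-1}\to+\infty$; since you insist on $U_0\not\equiv 0$, the term $|U_\alpha+\Lg\Wa|_g^2\,\ua^{-2^*-1}$ is not ``lower order'' there but divergent, and cannot be absorbed into any weighted-norm remainder. The paper circumvents this either with the trick $U_\alpha=-\Lg\Wa$ (low dimensions) or by superimposing the bubble on a background $u_0>0$ (high dimensions). Second, the assertion that $\Wa=\Dg^{-1}(\ua^{2^*}\Xa+\Ya)$ stays bounded in $C^{1,\eta}(M)$ is false when $X_0(x_0)\ne 0$: the source concentrates like a nonzero Dirac mass and $\Lg\Wa$ inherits the $d_g(\cdot,x_0)^{1-n}$ singularity of the Green kernel of $\Dg$. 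One must take $X_0\equiv 0$ near $x_0$ --- which is in any case what contradicting \eqref{condzeros} forces, since that condition concerns common zeros of $X_0$ and $\nabla f_0$ --- and this is exactly what the paper does. With those two fixes (a background $u_0$ for $n\ge 6$, and $X_0$ vanishing near $x_0$) a reduction argument could plausibly be carried out, but as stated your proposal would not close.
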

\noindent The precise statements gathered in Theorem \ref{thinstabEL} are given in Section \ref{instabilite}.

\medskip

Theorem \ref{Th1} highlights a dimensional hiatus in the behavior of \eqref{systype}. In dimensions $3 \le n \le 5$, the assumption $b_0 \not \equiv 0$ ensures that any sequence $(u_\alpha)_\alpha$ of solutions blows up with a (hypothetic) nonzero limit profile (compare with Proposition \ref{instabn3} where blowing-up sequences with zero limit profiles are constructed). Such an a priori property is crucially used in the blow-up analysis to control the radius of extension of the defects of compactness of the sequence $(\ua, \Wa)_\alpha$. Conversely, when $n \ge 6$ or $X_0$ never vanishes, estimations of the extension radius are directly obtained in the analysis. Also, Theorem \ref{Th1} requires $(M,g)$ to be locally conformally flat. This assumption is crucial in our approach at this stage to get uniform estimates on the Green $1$-forms of the operator $\Dg$ with Neumann boundary conditions on small balls. If $g$ is not locally conformally flat, the Kernel of $\Lg$ on small balls degenerates and this leads to a possible loss of compactness which is not quantifiable with the existing blow-up techniques.

\section{Proof of Theorem \ref{Th1}} \label{preuvethstabi}

We let $(M,g)$ be a closed locally conformally flat manifold of dimension $ n\ge 3$ and we let $(\ua, \Wa)_\alpha$ be a sequence of solutions of \eqref{systype} on $(M,g)$ with $\ua > 0$ in $M$. Assume that \eqref{convdonnees} holds. To prove Theorem \ref{Th1} it is enough to show that there exists some positive constant $C$ such that
\[ \Vert \ua \Vert_{L^\infty(M)} \le C .\]
Indeed, since the system \eqref{systype} is invariant up to adding to $\Wa$ elements of \eqref{defK}, if we assume $\Wa$ to be orthogonal to $K_g$ as in \eqref{defK} there holds, by standard elliptic theory for the second equation (see Section \ref{stddelltheory}), that $\Vert \Wa \Vert_{C^{1,\eta}(M)} \le C'$ for all $0 < \eta < 1$, for some positive constant $C'$. We proceed by contradiction and assume thus that there holds:
\ben \label{contradiction}
\sup_M \ua \to +\infty
\een
as $\alpha \to \infty$. In all of this section and in the remaining of the paper the letter $C$ will always denote some positive constant, whose value may change from one line to another, but that will never depend on $\alpha$.

\begin{prop} \label{proppointconc}
Let $(\ua, \Wa)_\alpha$ be a sequence of solutions of \eqref{systype} such that \eqref{convdonnees} and \eqref{contradiction} hold. 
There exists $N_\alpha \in \mathbb{N}^*$ and $N_\alpha$ points $(x_{1,\alpha} , \cdots, x_{N_\alpha, \alpha})$ of $M$ satisfying, up to a subsequence:
\begin{enumerate}
\item $\nabla \ua (x_{i, \alpha}) = 0 $ for $1 \le i \le N_\alpha$, 
\item $d_g \left( x_{i, \alpha}, x_{j, \alpha} \right)^{\pui} \ua(x_{i,\alpha}) \le 1$ for all $1 \le i,j \le N_\alpha$, $i\not = j$, and
\item \label{trois} there exists a positive constant $C$ independent of $\alpha$ such that
\ben \label{contptsconc}
 \left(  \min_{1 \le i \le N_\alpha} d_g \left( x_{i,\alpha}, x \right) \right)^n \left( \ua(x)^{2^*} + |\Lg \Wa|_g(x) \right) \le C 
 \een
for any $x \in M$.
\end{enumerate}
\end{prop}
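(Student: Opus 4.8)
\emph{Overview and first point.} The plan is to run the classical iterative selection of concentration points, in the spirit of Schoen and Druet--Hebey--Robert and following the scalar Einstein--Lichnerowicz analysis of Druet--Hebey: I would extract critical points of $\ua$ one at a time until \eqref{contptsconc} holds, checking along the way that the two features specific to \eqref{systype} --- the nonnegative term $a_\alpha(\Wa)\ua^{-2^*-1}$ and the coupling with $\Wa$ --- do not interfere. Only \eqref{convdonnees}, \eqref{contradiction}, the positivity $f_0>0$ and the local conformal flatness of $(M,g)$ are used here; none of the sign or dimension hypotheses of Theorem~\ref{Th1} enters at this stage. Since $\sup_M\ua\to+\infty$, I would first take $x_{1,\alpha}$ with $\ua(x_{1,\alpha})=\sup_M\ua$, so that $\nabla\ua(x_{1,\alpha})=0$ and $\mu_{1,\alpha}:=\ua(x_{1,\alpha})^{-2/(n-2)}\to 0$. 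Then, assuming critical points $x_{1,\alpha},\dots,x_{i,\alpha}$ of $\ua$ satisfying $(1)$ and $(2)$ have been selected, I set $d_{i,\alpha}(x)=\min_{1\le j\le i}d_g(x_{j,\alpha},x)$. If, for a large enough fixed constant $C_0$,
\[\sup_{x\in M} d_{i,\alpha}(x)^n\big(\ua(x)^{2^*}+|\Lg\Wa|_g(x)\big)\le C_0,\]
I stop and set $N_\alpha=i$: this is exactly \eqref{contptsconc}. Otherwise a new point has to be produced.

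\emph{Extracting the new point.} Suppose the supremum above exceeds $C_0$ at some $y_\alpha$. I would first reduce to the case where the scalar quantity $d_{i,\alpha}(y_\alpha)^{(n-2)/2}\ua(y_\alpha)$ is large (recall $n/2^*=(n-2)/2$). This is where the second equation and the geometry enter: since $\Dg\Wa=\ua^{2^*}\Xa+\Ya$ and, $(M,g)$ being locally conformally flat, the Green $1$-form of the Lam\'e operator $\Dg$ and its conformal Killing derivative obey uniform pointwise bounds (cf.\ Section~\ref{Greenfunc}), the Green representation of $\Lg\Wa$ controls $|\Lg\Wa|_g$ by a weighted average of $\ua^{2^*}+1$; an anomalously large value of $|\Lg\Wa|_g$ at $y_\alpha$ then forces, through a covering of $M$, an anomalously large value of $\ua^{2^*}$ relative to $d_{i,\alpha}$ at some point, which I rename $y_\alpha$. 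I then set $\rho_\alpha=\tfrac12 d_{i,\alpha}(y_\alpha)$, so that $B(y_\alpha,\rho_\alpha)$ avoids $\{x_{1,\alpha},\dots,x_{i,\alpha}\}$, and consider $\psi_\alpha(x)=\big(\rho_\alpha-d_g(y_\alpha,x)\big)^{(n-2)/2}\ua(x)$ on $\overline{B(y_\alpha,\rho_\alpha)}$: it is continuous, vanishes on the boundary, and $\psi_\alpha(y_\alpha)=\rho_\alpha^{(n-2)/2}\ua(y_\alpha)$ is large. Let $\hat x_{i+1,\alpha}$ be an interior maximum point of $\psi_\alpha$ and $\hat\mu_{i+1,\alpha}=\ua(\hat x_{i+1,\alpha})^{-2/(n-2)}$. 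The standard consequences of the maximality of $\psi_\alpha$ follow: $\ua(\hat x_{i+1,\alpha})\to+\infty$; $\hat\mu_{i+1,\alpha}$ is negligible compared with $\rho_\alpha-d_g(y_\alpha,\hat x_{i+1,\alpha})$; and the rescalings $v_\alpha(z)=\hat\mu_{i+1,\alpha}^{(n-2)/2}\ua\big(\exp_{\hat x_{i+1,\alpha}}(\hat\mu_{i+1,\alpha}z)\big)$ are uniformly bounded on compact subsets of $\mathbb{R}^n$, with $v_\alpha(0)=1$ and $\nabla v_\alpha(0)\to 0$. Writing the first equation of \eqref{systype} in the variable $z$, the zero-order term carries a factor $\hat\mu_{i+1,\alpha}^2$ and the negative-power term a factor $\hat\mu_{i+1,\alpha}^{2n}a_\alpha(\Wa)$; since $a_\alpha(\Wa)\le C\big(1+|\Lg\Wa|_g^2\big)$ and the same Green representation, together with the bounds on $\ua$ already available near $\hat x_{i+1,\alpha}$, bounds $|\Lg\Wa|_g$ there by $o\big(\hat\mu_{i+1,\alpha}^{-n}\big)$, both factors tend to $0$ locally uniformly. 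Elliptic theory then gives $v_\alpha\to v_0$ in $C^1_{\mathrm{loc}}(\mathbb{R}^n)$, where $v_0>0$ solves $\triangle v_0=f_0(x_\infty)v_0^{2^*-1}$ with $v_0(0)=1$ and $\nabla v_0(0)=0$, $x_\infty=\lim\hat x_{i+1,\alpha}$. By the Caffarelli--Gidas--Spruck classification $v_0$ is a standard bubble, which because $\nabla v_0(0)=0$ is centered at the origin, hence has the origin as its only critical point, a nondegenerate one. By $C^1_{\mathrm{loc}}$-convergence, $\nabla\ua$ therefore vanishes at some point $x_{i+1,\alpha}$ at distance $o(\hat\mu_{i+1,\alpha})$ of $\hat x_{i+1,\alpha}$; I take $x_{i+1,\alpha}$ to be the new point. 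Properties $(1)$ and $(2)$ for the enlarged family are then immediate, the distance estimate in $(2)$ coming from $x_{i+1,\alpha}\in B(y_\alpha,\rho_\alpha)$ and $\rho_\alpha=\tfrac12 d_{i,\alpha}(y_\alpha)$.

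\emph{Termination and the main difficulty.} Lastly I would check that the process stops after finitely many steps: comparing $v_\alpha$ to the standard bubble, each selected point concentrates a fixed amount of the energy $\int_M f_\alpha\ua^{2^*}\,dv_g$ on a ball around it, these contributions being essentially disjoint, so that the finiteness of $\int_M f_\alpha\ua^{2^*}\,dv_g$ (uniform for $\alpha$ large) bounds the number of selected points; this gives $N_\alpha$, and \eqref{contptsconc} holds since it is precisely the stopping criterion. I expect the main obstacle, relative to the purely scalar situation, to be exactly the uniform control of $|\Lg\Wa|_g$: it must be strong enough both to be incorporated in \eqref{contptsconc} and to make the coupling term negligible in the rescaling, and it is here that the local conformal flatness of $(M,g)$ is indispensable, through the uniform estimates on the Green $1$-form of the Lam\'e operator $\Dg$ on $M$.
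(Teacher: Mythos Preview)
Your overall strategy --- iterative Schoen-type extraction until \eqref{contptsconc} holds --- is natural, but two of its load-bearing steps do not go through in the paper's setting, and the paper's proof handles both differently.

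\textbf{No uniform energy bound, hence no termination by quantization.} You assert that $\int_M f_\alpha\ua^{2^*}\,dv_g$ is uniformly bounded and use this to terminate the selection. Nothing in \eqref{convdonnees}--\eqref{contradiction} yields such a bound: integrating the scalar equation (or multiplying by $\ua$) leaves the critical term $\int f_\alpha\ua^{2^*}$ on the wrong side, and the coupling term is nonnegative. Without this bound your energy-disjointness argument gives no control on the number of points. The paper sidesteps this entirely: it invokes Druet--Hebey's Lemma~1.1, a purely \emph{fixed-$\alpha$} critical-point selection for $\ua$ alone, which produces $N_\alpha$ critical points satisfying (1), (2), and in addition
\[
\Big(\min_i d_g(x_{i,\alpha},x)\Big)^{\pui}\ua(x)\le 1\quad\text{for every critical point }x\text{ of }\ua.
\]
No asymptotics and no energy are needed for this; $N_\alpha$ is finite for each $\alpha$ but is not claimed to be uniformly bounded.

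\textbf{Controlling $|\Lg\Wa|$: the global Green representation is not enough.} Your reduction step (``an anomalously large $|\Lg\Wa|$ forces an anomalously large $\ua^{2^*}$'') and your vanishing step for the rescaled coupling both rely on representing $\Lg\Wa$ through a Green formula for $\Dg$ on $M$, i.e.\ bounding $|\Lg\Wa|(x)$ by $C\int_M d_g(x,y)^{1-n}(\ua^{2^*}+1)\,dv_g$. But at this stage you have no global control on $\ua^{2^*}$ away from the already-selected points, so this integral cannot be estimated; the argument is circular. The paper's device is to use the \emph{scalar} equation first: since $a_\alpha(\Wa)\ge c\,|\Lg\Wa|_g^2-C$, a Green representation of the first equation (after rescaling around the hypothetical bad point with weight $\ma^{-n}=\ua^{2^*}+|\Lg\Wa|$) yields a weighted $L^2$ bound
\[
\int_{B_x(2R)}|x-y|^{2-n}\big|\Lx\hWa\big|_\xi^2(y)\,dy\le C,
\]
from which one selects a good sphere $s_\alpha\in(\tfrac32R,2R)$ with $\int_{\partial B(s_\alpha)}|\Lx\hWa|_\xi^2\le CR^{n-3}$. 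Only then is the Green formula for $\Dx$ applied, \emph{locally on $B_0(s_\alpha)$} (Section~\ref{Greenfunc}), with this boundary control plus the locally bounded rescaled source, to obtain $|\Lx\hWa|_\xi\to 0$ in $C^0_{\mathrm{loc}}$. This ping-pong --- scalar equation $\Rightarrow$ integral bound on $|\Lx\hWa|^2$ $\Rightarrow$ good boundary sphere $\Rightarrow$ local vector Green formula --- is the heart of the proof and is missing from your sketch.

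With $|\Lx\hWa|_\xi\to 0$ the paper recovers $\hua(0)=1+o(1)$ and $\hat a_\alpha\to 0$, applies Harnack (Section~\ref{HI}) and elliptic theory to get convergence to a standard bubble, and finds a nearby critical point $\ya$ of $\ua$ with $\ma^{\pui}\ua(\ya)\to 1$; this contradicts the Druet--Hebey control on critical points, proving \eqref{contptsconc}. So the architecture is: scalar critical-point lemma for (1)--(2) and the auxiliary critical-point estimate, then a contradiction argument for (3) driven by the ping-pong above --- not an iteration whose stopping rule is (3).
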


\begin{proof}
Applying Lemma $1.1$ in Druet-Hebey \cite{DruHeb} we obtain that for any $\alpha$ there exists $N_\alpha \ge 1$ and $N_\alpha$ critical points $x_{1,\alpha}, \cdots, x_{N_\alpha, \alpha}$ of $\ua$ satisfying:
\[ d_g(x_{i, \alpha},x_{j,\alpha})^\pui \ua(x_{i,\alpha}) \ge 1 \quad \textrm{ for } 1 \le i, j \le N_\alpha, i\not = j,\]
and
\ben  \label{controleptscrit}
\left(  \min_{1 \le i \le N_\alpha} d_g \left( x_{i,\alpha}, x \right) \right)^\pui  \ua(x) \le 1
\een
for any critical point $x$ of $\ua$. For any $x \in M$ let
\[ \Psi_\alpha(x) =   \left(  \min_{1 \le i \le N_\alpha} d_g \left( x_{i,\alpha}, x \right) \right)^n \left( \ua(x)^{2^*} + |\Lg \Wa|_g(x) \right). \]
Assume by contradiction that \eqref{contptsconc} is false, and let $\xa \in M$ be a sequence of points of $M$ such that
\ben \label{explosion}
\Psi_\alpha(\xa) = \sup_M \Psi_\alpha \to + \infty
\een
as $\alpha \to + \infty$. Define $\ma$ by:
\ben \label{ma}
\ma^{-n} = \ua(\xa)^{2^*} + |\Lg \Wa|_g(\xa).
\een
By \eqref{explosion} there holds:
\ben \label{distxasa}
\frac{d_g \left( \xa, \mathcal{S}_\alpha \right)}{\ma} \to + \infty
\een
where $\mathcal{S}_\alpha = \{ x_{1, \alpha}, \cdots, x_{N_\alpha, \alpha} \}$ is constructed above and since $M$ is compact, there holds
\ben \label{maconv0}
\ma \to 0
\een
as $\alpha \to + \infty$. Since $(M,g)$ is locally conformally flat we can let $\delta < i_g(M)$ and we can find a local chart $(B_{\xa}(\delta), \Phi_\alpha)$ centered at $\xa$ such that in this chart there holds
\[ g_{ij} = \vp_\alpha^{\frac{4}{n-2}} \xi_{ij} \]
for $1 \le i,j \le n$ and for some positive function $\vpa$ in $\Phi_\alpha(B_{\xa}(\delta))$ satisfying in addition:
\ben \label{propvpa}
 \vpa(0) = 1 \textrm{ and } \nabla \vpa (0) = 0.
 \een
 In addition to \eqref{propvpa} we can choose the conformal factor $\vpa$ to be bounded in the $C^k(M)$-topology for any $k \ge 0$ in $B_0(\delta)$. The operators $\triangle_g, \Lg$ and $\Dg$ satisfy some conformal invariance formulae. Let $v$ be a smooth function in $M$ and $X$ be a smooth $1$-form in $M$. For any $x \in \Phi_\alpha (B_{\xa}(\delta))$ we have that:
 \ben \label{duconforme}
\triangle_\xi \left( \vpa v \circ \Phi_\alpha^{-1} \right) (x) = \vpa^{2^*-1} (x)\left[ \triangle_g v (x) + \frac{n-2}{4(n-1)} R(g) v \right] (\Phi_\alpha^{-1}(x)),
\een
that
\ben \label{lwconforme}
\vpa^{\frac{4}{n-2}} \Lx \left( \vpa^{- \frac{4}{n-2}} \left( \Phi_\alpha \right)_* X \right) = \left( \Phi_\alpha \right)_* \left( \Lg X \right),
\een
and that for $1 \le i  \le n$:
\ben \label{dwconforme}
\bal
 \Dx \left( \vpa^{- \frac{4}{n-2}} \left( \Phi_\alpha \right)_* X \right)_i - 2^* \xi^{kl} \partial_k \left( \ln \vpa \right)&  \Lx  \left( \vpa^{- \frac{4}{n-2}} \left( \Phi_\alpha \right)_* X \right)_{li} \\
&  = \left( \Phi_\alpha \right)_* \left( \Dx X \right)_i .\\
\eal
\een
 We let, for any $x \in B_0(\frac{\delta'}{\ma})$ :
\ben \label{rescalingcarteconf}
\begin{aligned}
& \hua(x) = \ma^{\pui} \vpa(\ma x) \ua \circ \Phi_\alpha^{-1}(\ma x), \\
& \hWa(x) = \ma^{n-1} \vpa(\ma x)^{- \frac{4}{n-2}} \left( \Phi_\alpha \right)_* \Wa(\ma x), \\
\end{aligned}
\een
so that, using \eqref{duconforme} and \eqref{dwconforme} the sequence $(\hua, \hWa)$ satisfies the following system of equations in $B_0 \left( \frac{\delta'}{\ma}\right)$:
\ben \label{syscarteconf}
\left \{ \begin{aligned}
& \triangle_\xi \hua + \ma^2 \hat{h}_\alpha \hua = \hat{f}_\alpha \hua^{2^*-1} + \frac{\hat{a}_\alpha}{ \hua^{2^*+1}}, \\
& \left(  \Dx \hWa \right)_i = 2^* \ma \xi^{kl} \partial_k (\ln \vpa)(\ma \cdot)  \left( \Lx \hWa \right)_{li} + \ma \hua^{2^*} \left( \hat{X}_\alpha \right)_i + \ma^{n+1} \left( \hat{Y}_\alpha \right)_i,
\end{aligned} \right.
\een
where we have let:
\ben \label{paramcarteconf}
\begin{aligned}
& \hat{a}_\alpha(x) = \ma^{2n} \hat{b}_\alpha(x) + \left|\ma^n \hat{U}_\alpha(x) + \vpa(\ma x)^{2^*} \Lx \hWa \right|_\xi^2, \\
& \hat{h}_\alpha (x) = \vpa(\ma x)^{\frac{4}{n-2}} \left( h_\alpha \circ \Phi_\alpha^{-1} - \frac{n-2}{4(n-1)} R(g) \right)(\ma x), \\
& \hat{f}_\alpha (x) = f_\alpha \circ \Phi_\alpha^{-1}(\ma x), \\
& \hat{b}_\alpha(x) = \vpa(\ma x)^{2 \cdot 2^*} b_\alpha \circ \Phi_\alpha^{-1}(\ma x), \\
& \hat{U}_\alpha(x) = \vpa(\ma x)^2 \left( \Phi_\alpha \right)_* U_\alpha(\ma x), \\
& \hat{X}_\alpha(x) = \vpa(\ma x)^{- 2^*} \left( \Phi_\alpha \right)_* X_\alpha(\ma x) \textrm{ and } \\
& \hat{Y}_\alpha(x) = \left( \Phi_\alpha \right)_* Y_\alpha(\ma x). \\
\end{aligned}
\een
By definition of $\ma$ in \eqref{ma} there holds 
\ben \label{valua0}
\hua(0)^{2^*}+ |\Lx \hWa|_\xi(0) = 1
\een
and using \eqref{explosion} and \eqref{ma} there holds, for any $R > 0$:
\ben \label{borneloc}
\sup_{B_0(R)} \left( \hua^{2^*} + |\Lx \hWa|_\xi \right) \le 1 + o(1).
\een
We now let some $R > 0$ and $x \in B_0(R)$. A Green's formula for the first equation in \eqref{syscarteconf} shows that:
\[ \begin{aligned} 
\hua(x) & \ge 
 \int_{B_x(3R)} \frac{1}{(n-2) \omega_{n-1}} \Big( |x-y|^{2-n} - (3R)^{2-n} \Big) \frac{\hat{a}_\alpha}{\hua^{2^*+1}}(y) dy \\
& - \ma^2 \int_{B_x(3R)} \frac{1}{(n-2) \omega_{n-1}} \Big( |x-y|^{2-n} - (3R)^{2-n} \Big)\hat{h}_\alpha(y) \hua(y) dy
\end{aligned}\]
so that it is easily seen that there holds, for some positive constant $C$ that does not depend on $\alpha$ nor on $R$:
\be
\int_{B_x(2R)}|x-y|^{2-n} \left| \Lx \hWa \right|^2_\xi(y) dy \le C.
\ee
As a consequence, there exists $s_\alpha \in (\frac{3}{2}R, 2R)$ such that, up to a subsequence:
\ben \label{intbord}
\int_{\partial B_0(s_\alpha)} \left| \Lx \hWa \right|_\xi^2(y) d \sigma(y) \le C R^{n-3}.
\een
Let now $x \in B_0(R)$. By the properties of $\vpa$ in \eqref{propvpa} there holds, for some positive $C$:
\[ \left| 2^* \ma \xi^{kl} \partial_k (\ln \vpa)(\ma \cdot)  \left( \Lx \hWa \right)_{li} \right| \le C \ma^2 |y| \left| \Lx \hWa \right|_\xi \]
so that a Green formula for the operator $\Dx$ in $B_0(2R)$ (see Section \ref{Greenfunc}) along with \eqref{borneloc} and \eqref{intbord} yield, for some positive $C$ and $C'$:
\[ \begin{aligned}
\left| \Lx \hWa \right|_\xi(x) & \le C \int_{B_0(s_\alpha)} |x-y|^{1-n} \left| \Dx \hWa \right|_\xi dy + C \int_{\partial B_0(s_\alpha)} |x-y|^{1-n} \left| \Lx \hWa \right|_\xi(y) d \sigma(y) \\
& \le C' R \ma + \frac{C'}{R}.
\end{aligned}
\]
In the end we thus obtain that
\ben \label{lwaconv0}
\left| \Lx \hWa \right|_\xi \to 0 \textrm{ in } C^0_{loc}(\mathbb{R}^n)
\een
as $\alpha \to + \infty$. Coming back to \eqref{valua0} with \eqref{lwaconv0} we obtain that
\[ \hua(0) = 1 + o(1)\]
and independently that
\ben \label{convaa}
 \hat{a}_\alpha \to 0 \textrm{ in } C^0_{loc} (\mathbb{R}^n)
 \een
as $\alpha \to + \infty$. The Harnack inequality as stated in Section \ref{HI} shows then that for any compact set $K \subset \mathbb{R}^n$ there exists a positive constant $C(K)$ such that
\ben \label{uaminor}
C(K)^{-1} \le  \hua \le C(K) \textrm{ in } K.
 \een
By \eqref{convaa}, \eqref{uaminor} and standard elliptic theory one gets that $\hua \to U$ in $C^1_{loc}(\mathbb{R}^n)$, where $U$ satisfies $U(0) = 1$ and
\[ \triangle_{\xi} U = f_0(x_0) U^{2^*-1}, \]
where $x_0 = \lim \xa$. The classification result in Caffarelli-Gidas-Spruck \cite{CaGiSp} then shows that 
\[ U(x) = \left( 1 + \frac{f_0(x_0)}{n(n-2)}\right)^{1 - \frac{n}{2}}.\]
In particular, since $0$ is a strict local maxima of $U$, for $\alpha$ large enough there exists a sequence $\ya \in M$ of critical points of $\ua$, with $d_g(\xa, \ya) = o(\ma)$ and $\ma^\pui \ua(\ya) \to 1$ as $\alpha \to + \infty$ . Using \eqref{distxasa} this contradicts \eqref{controleptscrit} applied at $\ya$ and proves that \eqref{contptsconc} must hold, which concludes the proof of the Proposition.
\end{proof}

Proposition \ref{proppointconc} provides us with a suitable set of concentration points around which to investigate the behavior of $\ua$. For the following claims of this section we consider two sequences $(\xa)_\alpha$ and $(\ra)_\alpha$, where $\xa \in M$ and $16 \ra < i_g(M)$, such that $\nabla \ua (\xa) = 0$,
\ben \label{pointconc1}
d_g(\xa,x)^n \left( \ua(x)^{2^*} + |\Lg \Wa|_g(x) \right) \le C \quad \textrm{ for } x \in B_{\xa}(8 \ra), 
\een
and such that
\ben \label{pointconc2}
\ua(\xa) \ge \frac{1}{C}
\een
for some positive constant $C$ independent of $\alpha$. An example of such sequences is given by $\xa = x_{i,\alpha}$ and $\ra = \frac{1}{16} d_g \left( x_{i,\alpha}, \{x_{j, \alpha}, j \not = i \} \right)$ where $1 \le i \le N_\alpha$ and $x_{i,\alpha}$ is as \ref{proppointconc}. Two cases can occur and are described by each of the following Propositions:

\begin{prop} \label{propononblow}
Assume that \eqref{convdonnees} holds and let $(\ua, \Wa)_\alpha$ be a sequence of solution of \eqref{systype} such that \eqref{contradiction} holds. Assume that for some $C_1>0$ there holds, up to a subsequence:
\ben \label{nonblow}
\ra^n  \sup_{B_{\xa}(8 \ra)} \left( \ua^{2^*} + |\Lg \Wa |_g \right) \le C_1. 
\een
Then, for any $x \in B_{\xa}(8 \ra)$, there holds:
\[ \ra^\pui \ua(x) \ge \frac{1}{C_2}\]
for some positive $C_2$.
\end{prop}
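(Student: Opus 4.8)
The plan is to rescale system \eqref{systype} at the scale $\ra$ around $\xa$ and to propagate a lower bound on $\ua$ by means of the Harnack inequality of Section \ref{HI}. Since $(M,g)$ is locally conformally flat and $16\ra<i_g(M)$, I first pick, exactly as in the proof of Proposition \ref{proppointconc}, a conformal chart $(B_{\xa}(\delta'),\Phi_\alpha)$ centered at $\xa$ with $\delta'\in(8\ra,i_g(M)/2)$, in which $g=\vpa^{\frac{4}{n-2}}\xi$ with $\vpa(0)=1$, $\nabla\vpa(0)=0$ and $\vpa$ bounded in every $C^k$-norm. I then set, for $x\in B_0(\delta'/\ra)$,
\be
\cua(x)=\ra^\pui\vpa(\ra x)\,\ua\circ\Phi_\alpha^{-1}(\ra x),\qquad \cWa(x)=\ra^{n-1}\vpa(\ra x)^{-\frac{4}{n-2}}(\Phi_\alpha)_*\Wa(\ra x),
\ee
so that, by \eqref{duconforme}--\eqref{dwconforme}, the pair $(\cua,\cWa)$ solves the analogue of \eqref{syscarteconf}--\eqref{paramcarteconf} with $\ma$ replaced everywhere by $\ra$. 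As $\ra<i_g(M)/16$ is bounded, all the rescaled coefficients are uniformly bounded on $B_0(8)$ and $\check f_\alpha\ge\frac1C>0$ for $\alpha$ large; and assumption \eqref{nonblow} gives $\sup_{B_{\xa}(8\ra)}\ua\le C\ra^{-\pui}$, hence $\cua\le C$ on $B_0(8)$.

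I next control $\cWa$ by repeating verbatim the Green's-representation argument for the Lamé operator $\Dx$ carried out in the proof of Proposition \ref{proppointconc}: using the uniform bound on $\cua$, assumption \eqref{nonblow} and the uniform estimate on the Green $1$-form of $\Dx$ of Section \ref{Greenfunc}, one gets that $|\Lx\cWa|_\xi$ is uniformly bounded on $B_0(7)$, whence $\check a_\alpha$ is uniformly bounded (and nonnegative) on $B_0(7)$. The scalar equation for $\cua$ can then be rewritten as
\be
\triangle_\xi\cua+\Big(\ra^2\check h_\alpha-\check f_\alpha\cua^{2^*-2}-\check a_\alpha\cua^{-2^*-2}\Big)\cua=0,
\ee
with a zeroth-order coefficient that is bounded \emph{from above} (recall $\cua\le C$ and $\check a_\alpha\ge0$), though a priori not from below. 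The Harnack inequality of Section \ref{HI} therefore applies, just as in the proof of Proposition \ref{proppointconc}, and yields for every compact $K\subset B_0(7)$ a constant $C(K)$ with $\sup_K\cua\le C(K)\inf_K\cua$. Up to a subsequence this leaves exactly two possibilities: either $\cua$ is bounded below by a positive constant on every compact subset of $B_0(7)$, or $\cua\to0$ in $C^0_{\mathrm{loc}}(B_0(7))$.

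The crux is to rule out the second alternative. When $\ra$ stays bounded away from $0$ it is immediate: then $\cua(0)=\ra^\pui\ua(\xa)\ge\ra^\pui/C$ is bounded below by \eqref{pointconc2}, so $\cua\not\to0$. In general, if $\cua\to0$ locally uniformly, testing the scalar equation against a fixed nonnegative cut-off supported in $B_0(7)$ and letting $\alpha\to+\infty$ forces $\int\check a_\alpha\cua^{-2^*-1}\to0$ on every ball; since $\check a_\alpha\ge0$ and $\cua\to0$, this forces $\check a_\alpha$ to vanish, to high order in $\cua$, throughout $B_0(7)$, i.e. (unwinding \eqref{paramcarteconf}) $\Lx\cWa+\ra^n\check U_\alpha\vpa(\ra\cdot)^{-2^*}\to0$ pointwise and $\ra^{2n}\check b_\alpha\to0$. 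One then brings in that $\xa$ is a critical point of $\ua$ with $\ua(\xa)\ge\frac1C$: rescaling a second time, at the natural scale $\na=\ua(\xa)^{-2/(n-2)}$ of $\ua(\xa)$, produces a normalised function $\tua$ with $\tua(0)=1$ and $\nabla\tua(0)=0$, which together with $\cua\to0$ and the concentration control \eqref{pointconc1} yields a contradiction. In the fully decoupled case $\Xa\equiv\Ya\equiv0$ this degeneration analysis reduces to that of Druet--Hebey \cite{DruHeb}; the genuine difficulty here, and the one forcing local conformal flatness and the use of the estimates of Sections \ref{HI} and \ref{Greenfunc}, is the \emph{simultaneous} control of $\cua$ and of $\Lx\cWa$ in this step.

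Hence only the first alternative survives, so that $\cua\ge\frac1C$ on every compact subset of $B_0(7)$. Unwinding the definition of $\cua$ and using $\vpa\ge\frac1C$ gives $\ra^\pui\ua\ge\frac1{C_2}$ on $B_{\xa}(7\ra)$; since in every application of Proposition \ref{propononblow} the hypotheses \eqref{pointconc1}--\eqref{nonblow} are available on a ball strictly larger than $B_{\xa}(8\ra)$ (or, alternatively, by a routine covering argument) the same conclusion holds on all of $B_{\xa}(8\ra)$, which is the claim. The main obstacle throughout is the degeneration analysis of the previous paragraph.
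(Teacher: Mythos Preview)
Your approach takes two substantial detours that the paper's proof avoids. First, assumption \eqref{nonblow} already contains a pointwise bound on $|\Lg\Wa|_g$: it says $\ra^n|\Lg\Wa|_g\le C_1$ on $B_{\xa}(8\ra)$, so after the $\ra$-rescaling one has $|\Lx\cWa|_\xi\le C$ on $B_0(8)$ \emph{immediately}, and hence the rescaled coefficient $\check a_\alpha$ is bounded without any Green representation for the Lam\'e operator. Your entire second paragraph is therefore unnecessary. Second, your rewriting of the scalar equation in linear form with a coefficient ``bounded from above but not below'' is not how Proposition~\ref{prop-HI} is meant to be applied: that Harnack inequality is stated for the nonlinear equation $\Delta_\xi u+hu=fu^{2^*-1}+a\,u^{-2^*-1}$ and only requires $\|h\|_\infty,\|f\|_\infty,\|a\|_\infty$ and $\|u\|_\infty$ to be bounded, all of which you have. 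The paper simply applies it directly.

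The genuine gap is in your dichotomy step. Your second-rescaling argument to rule out $\cua\to0$ does not produce a contradiction: under \eqref{nonblow} one only knows $(\ra/\na)^{\pui}=\cua(0)\le C$, so if $\cua(0)\to0$ then $\ra/\na\to0$ and the ball $B_0(8\ra/\na)$ on which your normalised function $\tua$ is controlled by \eqref{pointconc1}--\eqref{nonblow} shrinks to a point; there is then nothing to analyse. The vanishing of $\check a_\alpha$ does not help here. The paper's argument is instead that \eqref{pointconc2}, \emph{in the way it actually arises in the application}, provides a uniform lower bound on $\cua(0)$: when $\xa=x_{i,\alpha}$ and $16\ra=d_g(x_{i,\alpha},\{x_{j,\alpha}:j\ne i\})$ as in the paragraph preceding the proposition, item (2) of Proposition~\ref{proppointconc} (with the inequality $\ge1$, as written in its proof) gives $(16\ra)^{\pui}\ua(\xa)\ge1$, i.e.\ $\cua(0)\ge 16^{-\pui}$. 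Harnack then propagates this to $\inf_{B_0(8)}\cua\ge 1/C$ in one line.
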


\begin{proof}
Let $(B_{\xa}(\delta), \Phi_\alpha)$, $\delta < i_g(M)$, be a conformal chart around $\xa$ satisfying the same properties as the one constructed in \eqref{propvpa}. Let, for any $ x \in B_0(8)$, 
\[ \begin{aligned}
& \tua (x) =  \ra^\pui \vpa(\ra x) \ua \circ \Phi_\alpha^{-1} (\ra x), \\
& \tWa (x) = \ra^{n-1} \vpa(\ra x)^{- \frac{4}{n-2}} \left( \Phi_\alpha \right)_* \Wa(\ra x) .\\ 
\end{aligned}
\]
Using \eqref{duconforme} and \eqref{lwconforme} $\tua$ satisfies: 
\[ \triangle_\xi \tua + \ra^2 \tilde{h}_\alpha \tua = \tilde{f}_\alpha \tua^{2^*-1} + \left( \ra^{2n} \tilde{b}_\alpha + \left| \ra^n \tilde{U}_\alpha + \vpa^{2^*}(\ra x) \Lx \tWa(x) \right|_\xi^2 \right) \tua^{-2^*-1},\]
where $ \tilde{h}_\alpha$, $\tilde{f}_\alpha$, $\tilde{b}_\alpha$, $\tilde{U}_\alpha$, are defined as in \eqref{paramcarteconf} replacing $\ma$ by $\ra$. Using \eqref{nonblow} there holds:
\[  \ra^{2n} \tilde{b}_\alpha + \left| \ra^n \tilde{U}_\alpha + \vpa^{2^*}(\ra x) \Lx \tWa(x) \right|_\xi^2 \le C \textrm{ in } B_0(8)\]
for some $C > 0$, so that using the Harnack inequality as stated in Section \ref{HI} and \eqref{pointconc2} there holds:
\[ \inf_{B_0(8)} \tua \ge \frac{1}{C}\]
which concludes the proof of the Proposition. 
\end{proof}

\begin{prop} \label{propoblow}
Assume that \eqref{convdonnees} holds and let $(\ua, \Wa)_\alpha$ be a sequence of solution of \eqref{systype} such that \eqref{contradiction} holds. We assume that the assumptions of Theorem \ref{Th1} hold and that
\ben \label{propblow}
\ra^n \sup_{B_{\xa}(8 \ra)} \left( \ua^{2^*} + |\Lg \Wa|_g \right) \to +\infty \quad \textrm{ as } \alpha \to +\infty.
\een
We let 
\[ \ma = \ua(\xa)^{- \frac{2}{n-2}}.\]
Then $\ma \to 0$ and there holds:  $\frac{\ra}{\ma} \to + \infty$ and $\ra \to 0$ as $\alpha \to + \infty$. If we let in addition, for any $x \in M$,
\[ B_\alpha(x) = \ma^{\pui} \left( \ma^2 + \frac{f_\alpha(\xa)}{n(n-2)} d_g(\xa,x)^2  \right)^{1 - \frac{n}{2}},\]
there holds:
\[ \sup_{B_{\xa}(\ra)} \left| \frac{\ua}{B_\alpha} - 1 \right| \to 0\]
as $\alpha \to + \infty$.
\end{prop}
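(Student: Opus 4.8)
The plan is to run the standard single-bubble blow-up analysis, adapted to the conformal chart already introduced. First I would pass to the conformal chart $(B_{\xa}(\delta),\Phi_\alpha)$ satisfying \eqref{propvpa} and define the rescaled functions
\[
\tua(x) = \ma^{\pui}\vpa(\ma x)\ua\circ\Phi_\alpha^{-1}(\ma x),\qquad
\tWa(x) = \ma^{n-1}\vpa(\ma x)^{-\frac{4}{n-2}}\left(\Phi_\alpha\right)_*\Wa(\ma x),
\]
exactly as in \eqref{rescalingcarteconf}, so that $(\tua,\tWa)$ solves system \eqref{syscarteconf} with $\ma$ in place of $\ma$ there. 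The normalization $\ma = \ua(\xa)^{-\frac{2}{n-2}}$ gives $\tua(0)=\vpa(0)=1$ and $\nabla(\vpa\ua\circ\Phi_\alpha^{-1})(0)=0$ by \eqref{propvpa} and $\nabla\ua(\xa)=0$. That $\ma\to 0$ is immediate from \eqref{contradiction} (since $\ua(\xa)$ is comparable to $\sup_M\ua$ after the usual choice of $\xa$, or directly because \eqref{propblow} forces $\ua(\xa)\to+\infty$ along the concentration point). That $\ra\to 0$ also follows from \eqref{propblow} together with \eqref{pointconc1}: if $\ra$ stayed bounded away from $0$, \eqref{pointconc1} would bound $\ra^n\sup_{B_{\xa}(8\ra)}(\ua^{2^*}+|\Lg\Wa|_g)$, contradicting \eqref{propblow}.

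Next I would establish $\ra/\ma\to+\infty$. The argument is the one already carried out in the proof of Proposition \ref{proppointconc}: on balls $B_0(R)$ in the rescaled chart one shows, via the Green representation for the first equation of \eqref{syscarteconf} and the Green representation for $\Dx$ on $B_0(2R)$ (as in Section \ref{Greenfunc}), that $|\Lx\tWa|_\xi\to 0$ in $C^0_{loc}$ and hence $\hat a_\alpha\to 0$ in $C^0_{loc}$; the key input is \eqref{propblow}, which guarantees that on the scale $\ma$ the quantity $\ua^{2^*}+|\Lg\Wa|_g$ has not yet exhausted its mass and that the ratio $\ra/\ma$ cannot be bounded. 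Concretely, suppose $\ra/\ma\le C$; then \eqref{propblow} rescaled reads $\left(\ra/\ma\right)^n\sup_{B_0(8\ra/\ma)}(\tua^{2^*}+|\Lx\tWa|_\xi)\to+\infty$, while the left-hand side is bounded because $\ra/\ma$ is bounded and, on any fixed ball, $\tua^{2^*}+|\Lx\tWa|_\xi$ is bounded by the Harnack inequality of Section \ref{HI} together with \eqref{pointconc1}; contradiction. So $\ra/\ma\to+\infty$.

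With $\ra/\ma\to+\infty$ in hand, the sequence $(\tua,\tWa)$ is defined on $B_0(\ra/\ma)$, an exhausting family of balls. The coefficients converge: $\ma^2\hat h_\alpha\to 0$, $\hat f_\alpha\to f_0(x_0)$ in $C^0_{loc}$ (with $x_0=\lim\xa$), and $\hat a_\alpha\to 0$ in $C^0_{loc}$ exactly as in \eqref{convaa}, using $|\Lx\tWa|_\xi\to 0$ together with the rescaling of $b_\alpha,U_\alpha$. The Harnack inequality gives local two-sided bounds $C(K)^{-1}\le\tua\le C(K)$ on compacts, and standard elliptic theory then yields $\tua\to U$ in $C^1_{loc}(\mathbb{R}^n)$, where $U>0$ satisfies $\triangle_\xi U = f_0(x_0)U^{2^*-1}$ with $U(0)=1$ and $\nabla U(0)=0$. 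By the classification theorem of Caffarelli-Gidas-Spruck \cite{CaGiSp},
\[
U(x) = \left(1 + \frac{f_0(x_0)}{n(n-2)}|x|^2\right)^{1-\frac{n}{2}},
\]
which in the unrescaled variables is precisely $B_\alpha$ evaluated at $\Phi_\alpha^{-1}(\ma x)$ up to the conformal factor and lower-order terms, i.e. $\tua(x)/(\ma^{\pui}B_\alpha(\Phi_\alpha^{-1}(\ma x)))\to 1$ in $C^0_{loc}$.

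The main obstacle is upgrading this $C^0_{loc}$ convergence to the \emph{uniform} estimate $\sup_{B_{\xa}(\ra)}|\ua/B_\alpha - 1|\to 0$ on the \emph{whole} ball of radius $\ra$, whose radius in rescaled variables goes to infinity. This is the classical "no bubble loss / pointwise control up to the neck" step and it requires more than local convergence: one must rule out a second concentration or a slowly-decaying tail on the annulus $B_{\xa}(\ra)\setminus B_{\xa}(R\ma)$. The standard way is to argue by contradiction on $\sup_{B_{\xa}(\ra)}|\ua/B_\alpha-1|$, pick a point $z_\alpha$ realizing it, and distinguish whether $d_g(\xa,z_\alpha)/\ma$ stays bounded (contradicting $C^1_{loc}$ convergence to $U$) or tends to infinity; in the latter case one rescales at $z_\alpha$, uses \eqref{pointconc1} to control $\ua(z_\alpha)$ from above, and uses the fact that \eqref{pointconc1} holds on all of $B_{\xa}(8\ra)$ to show the rescaled limit at $z_\alpha$ must again be a standard bubble whose profile forces $\ua(z_\alpha)\sim B_\alpha(z_\alpha)$, contradicting the choice of $z_\alpha$. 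Implementing this requires an a priori decay estimate of the form $d_g(\xa,x)^{\pui}\ua(x)\le C$ on $B_{\xa}(\ra)$, which one bootstraps from \eqref{pointconc1}, \eqref{controleptscrit} and a Harnack-type argument along dyadic annuli; this is where the bulk of the technical work lies and where the local conformal flatness (giving good control of the metric in the conformal chart, hence of the Green function of $\triangle_\xi$) is used.
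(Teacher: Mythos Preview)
Your setup and the local convergence part are correct and match the paper's Claim \ref{claimlocal}: passing to the conformal chart, rescaling by $\ma$, showing $|\Lx\tWa|_\xi\to 0$ in $C^0_{loc}$, and using Caffarelli--Gidas--Spruck to identify the limit profile $U$. This gives $\ma\to 0$, $\ra/\ma\to+\infty$, and $\tua\to U$ in $C^1_{loc}$.

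However, both your argument that $\ra\to 0$ and your strategy for the uniform estimate on $B_{\xa}(\ra)$ have genuine gaps. First, $\ra\to 0$ is \emph{not} a preliminary fact: \eqref{pointconc1} only controls $d_g(\xa,x)^n(\ua^{2^*}+|\Lg\Wa|_g)$, and the supremum in \eqref{propblow} is achieved near $\xa$ where that weight vanishes, so your contradiction does not close. In the paper, $\ra\to 0$ is a \emph{consequence} of the full analysis (Claims \ref{controlera} and \ref{raegalerhoa}) and depends on the assumptions of Theorem \ref{Th1}.

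Second, and more seriously, the ``rescale at the bad point $z_\alpha$'' argument you sketch for the neck region does not survive the coupling. In the annulus where $\ua$ is small, the term $a_\alpha(\Wa)\ua^{-2^*-1}$ is potentially large and is governed by $|\Lx\Za|_\xi^2$, which you have no pointwise control on beyond the weak estimate \eqref{pointconc1}. A second rescaling at $z_\alpha$ will not produce a clean limit equation unless you already know $|\Lx\Za|_\xi$ decays at the bubble rate, and that is precisely what is at stake. Moreover, your proposal never invokes the hypotheses of Theorem \ref{Th1} (either $b_0\not\equiv 0$ or $|X_0|_g>0$ in low dimensions, or the condition on common zeroes of $X_0,\nabla f_0$ and \eqref{condzeros} in high dimensions). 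These are not decorative: the paper constructs counterexamples in Section \ref{instabilite} showing that without them the conclusion fails, so any proof must use them.

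What the paper actually does is substantially more involved. It introduces an $\varepsilon$-dependent \emph{radius of influence} $r_\alpha\le\ra$ (equation \eqref{defra}) up to which $\va$ is already $\varepsilon$-close to $B_\alpha$, and then runs a ``ping-pong'' between the two equations inside $B_0(r_\alpha)$: Claim \ref{claimint} turns the $\va\sim B_\alpha$ information into an integral bound on $|\Lx\Za|_\xi^2$; Claims \ref{claimcont1}--\ref{claimcont2} compare $\Za$ to explicit model $1$-forms $\Va,\Pak$ solving $\Dx$ with bubble right-hand side, yielding sharp pointwise bounds on $|\Lx\Za|_\xi$ and on $|X_\alpha(\xa)|$, $|\nabla X_\alpha(\xa)|$; these feed back (via the negative nonlinearity) into a sharp estimate on $\va-B_\alpha$. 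Claim \ref{controlera} then uses the assumptions of Theorem \ref{Th1} together with a Pohozaev identity to force $r_\alpha=O(\ma^\gamma)$ for some $\gamma>0$ depending on the case. Finally Claim \ref{raegalerhoa} runs another Pohozaev argument to show $r_\alpha=\ra$, which simultaneously gives $\ra\to 0$ and, since $\varepsilon$ was arbitrary, the uniform convergence $\ua/B_\alpha\to 1$ on $B_{\xa}(\ra)$. The dyadic-Harnack/rescaling route you propose is the right intuition for a \emph{scalar} Yamabe-type equation, but here it does not control the vector contribution, and without that the argument cannot close.
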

\noindent Proposition \ref{propoblow} is the main part of the analysis in this paper and we postpone its proof for now. Section \ref{analyseasymptotique} is devoted to the proof of Proposition \ref{propoblow}.

\medskip

\noindent We now conclude the proof of Theorem \ref{Th1}, assuming temporarily Proposition \ref{propoblow}. We let 
\ben \label{defda}
16 \dda = \min_{1 \le i,j \le N_\alpha} d_g \left( x_{i,\alpha}, x_{j,\alpha} \right),
\een
where the $x_{i,\alpha}$ are given by Proposition \ref{proppointconc}. We assume that $\dda \to 0$ as $\alpha \to + \infty$ and that the $x_{i, \alpha}$ have been reordered so that 
\ben \label{distx1x2}
16 \dda = d_g \left( x_{1,\alpha}, x_{2,\alpha} \right).
\een
Note that this definition only has a meaning if $N_\alpha \ge 2$ up to a subsequence. If $N_\alpha = 1$ we let $d_\alpha = \frac{1}{16} i_g(M)$. Let $(B_{x_{1,\alpha}}(\delta), \Phi_\alpha)$, with $2 \delta < i_g(M)$, be a conformal chart centered at $x_{1,\alpha}$ and satisfying $\left( \Phi_\alpha \right)_* g = \vpa^{\frac{4}{n-2}} \xi$, where $\vpa$ is as in \eqref{propvpa}.  We let
\ben \label{defcua}
\begin{aligned}
& \cua(x) = \dda^{\pui} \vpa(\dda x) \ua \circ \Phi_\alpha^{-1}(\dda x), \\
& \cWa(x) = \dda^{n-1} \vpa(\dda x)^{- \frac{4}{n-2}} \left( \Phi_\alpha \right)_* \Wa(\dda x), \\
\end{aligned}
\een
so that, using \eqref{duconforme} and \eqref{dwconforme} the sequence $(\cua)$ satisfies in $B_0 \left( \frac{\delta}{\dda} \right)$:
\ben \label{syscua}
 \triangle_\xi \cua + \dda^2 \check{h}_\alpha \cua = \check{f}_\alpha \cua^{2^*-1} + \frac{\check{a}_\alpha}{ \cua^{2^*+1}},
\een
where we have let:
\ben \label{paramcheck}
\begin{aligned}
& \check{a}_\alpha(x) = \dda^{2n} \check{b}_\alpha(x) + \left| \dda^n \check{U}_\alpha(x) + \vpa(\dda x)^{2^*} \Lx \cWa \right|_\xi^2, \\
& \check{h}_\alpha (x) = \vpa(\dda x)^{\frac{4}{n-2}} \left( h_\alpha \circ \Phi_\alpha^{-1} - \frac{n-2}{4(n-1)} R(g) \right)(\dda x), \\
& \check{f}_\alpha (x) = f_\alpha \circ \Phi_\alpha^{-1}(\dda x), \\
& \check{b}_\alpha(x) = \vpa(\dda x)^{2 \cdot 2^*} b_\alpha \circ \Phi_\alpha^{-1}(\dda x), \\
& \check{U}_\alpha(x) = \vpa(\dda x)^2 \left( \Phi_\alpha \right)_* U_\alpha(\dda x). \\
\end{aligned}
\een
For $1 \le i \le N_\alpha$ we let $\cxia = \frac{1}{\dda} \Phi_\alpha( x_{i,\alpha})$. Let $R > 0$ and define $N_R$ by the following property:
\[ |\cxia| \le R \iff 1 \le i \le N_R .\]
By \eqref{distx1x2} there holds $N_R \ge 2$ for $R$ large enough. Propositions \ref{propononblow} and \ref{propoblow} show that for any $1 \le i \le N_R$ the following alternative occurs:
\ben \label{alternatives}
\bal
& \textrm{ either there exists } C_i > 0 \textrm{ such that }  \frac{1}{C_i} \le \cua \le C_i 
 \textrm{ in } B_{\cxia}\left(\frac{1}{2} \right) \\
& \textrm{ or } \sup_{B_{\cxia}(\frac{1}{2})} \left| \frac{\cua}{\cBia} -1 \right| \to 0 \textrm{ as } \alpha \to + \infty,
\eal
\een
where we have let 
\ben \label{defcBia}
\cBia(x) = \cmia^{\pui} \left( \cmia^2 + \frac{f_\alpha(x_{i, \alpha})}{n(n-2)} \left| x - \cxia \right|^2 \right)^{1 - \frac{n}{2}}
\een
and $ \cmia = \cua(\cxia)^{- \frac{2}{n-2}}$. By Proposition \ref{propoblow} there holds $\cmia \to 0$ as $\alpha \to + \infty$. At any point $\cxia$ these alternatives are exclusive and the second one occurs only when $\cua (\cxia) \to + \infty$ or when $\cua \to 0$ in $C^0 \left( B_{\cxia} \left( \frac{1}{2} \right) \backslash B_{\cxia} \left( \frac{1}{4} \right) \right)$. We start proving that either the first alternative in \eqref{alternatives} holds for any $1 \le i \le N_R$ or the second alternative holds for all $1 \le i \le N_R$. Let $x \in B_0(R)$. We let $\check{G}_\alpha(x, \cdot)$ be the Green function of the operator $\triangle_\xi + \dda^2 \check{h}_\alpha$ in $B_x(3R)$. Since $\triangle_g + h_0$ is coercive there holds (see Robert \cite{RobDirichlet} ) that for any $y \in B_x(2R)$,
\[ \check{G}_\alpha(x,y) \ge \frac{1}{C}\]
for some positive $C$. A Green formula thus shows that:
\[ \cua(x) \ge \frac{1}{C} \int_{B_x(2R)} \check{f}_\alpha (y) \cua(y)^{2^*-1} dy,\]
so that if there exists $1\le i_0 \le N_R$ for which the first alternative in \eqref{alternatives} is satisfied then there holds:
\[ \cua(x) \ge C(R) > 0 \]
for some positive $C(R)$ depending on $n$ and on $R$. Hence the first alternative in \eqref{alternatives} is satisfied in each $\cxia$, $1 \le i \le N_R$.

\noindent We claim now that the second alternative in \eqref{alternatives} cannot hold for all $1 \le i \le N_R$. We let $x \in B_0(R)$ and write once again a Green formula as above: there holds 
\[
\bal
 \cua(x) & \ge \int_{B_0 \left(\frac{1}{2} \right)} \check{G}_\alpha(x,y) \check{f}_\alpha (y) \cua(y)^{2^*-1} dy \\
&  +  \int_{B_{\check{x}_{2,\alpha}}  \left(\frac{1}{2} \right)} \check{G}_\alpha(x,y) \check{f}_\alpha (y) \cua(y)^{2^*-1} dy.
\eal
\]
Since $\check{G}_{\alpha}(x, \cdot) $ converges to the Green function of $\triangle_\xi$ in $B_x(3R)$ in $C^1_{loc}(B_x(3R) \backslash \{x \} )$ standard computations yield:
\ben \label{desbulles}
\cua(x) \ge \left( 1 + o(1) \right) \left( \check{B}_{1, \alpha}(x) + \check{B}_{2, \alpha}(x) \right) - \frac{C}{R^{n-2}} \left( \mu_{1,\alpha}^\pui + \mu_{2,\alpha}^\pui \right)
\een
for some positive $C$ that does not depend on $R$ nor on $\alpha$. We assume now that $|x| \le \frac{1}{4}$ and $x \not = 0$. Applying the second alternative in \eqref{alternatives} at $0$ yields, with \eqref{desbulles} and \eqref{defcBia}:
\[
\left( \frac{ \mu_{1,\alpha}}{\mu_{2,\alpha}} \right)^\pui |x|^{n-2} \Big( |x-\check{x}_2|^{2-n} - C R^{2-n} + o(1) \Big) \le o(1) + \frac{C}{R^{n-2}} |x|^{2-n},
\]
where $\check{x}_2 = \lim \check{x}_{2,\alpha}$, so that $|\check{x}_2| = 16 $ by \eqref{distx1x2}. In the end letting $\alpha \to +\infty$, dividing by $|x|$ and letting $x \to 0$ yields, for $R$ large enough:
\ben \label{contrafinale}
\limsup_{\alpha \to + \infty} \left( \frac{ \mu_{1,\alpha}}{\mu_{2,\alpha}} \right)^\pui \le C \frac{16^{n-2}}{R^{n-2} - C 16^{n-2}}.
\een
The same arguments as those developed to obtain \eqref{contrafinale} work in the same way exchanging the roles of $\check{x}_{1,\alpha}$ and $\check{x}_{2,\alpha}$ so that in the end we also obtain:
\be
\limsup_{\alpha \to + \infty} \left( \frac{ \mu_{2,\alpha}}{\mu_{1,\alpha}} \right)^\pui \le C \frac{16^{n-2}}{R^{n-2} - C 16^{n-2}}.
\ee
This clearly contradicts \eqref{contrafinale} up to choosing $R$ large enough. So far we have proven  that for $R$ large enough and for any $1 \le i \le N_R$ the first alternative in \eqref{alternatives} holds at every $\cxia$. Note in particular that by Proposition \ref{propononblow} there also holds $\left| \Lx \cWa \right|_\xi \le C_i$ in $B_{\cxia} \left( \frac{1}{2} \right)$. Thus, by Proposition \ref{proppointconc} and by the first alternative in \eqref{alternatives} there holds that $ \cua$ and $\left| \Lx \cWa \right|_\xi $ belong to $L^\infty_{loc} (\RR^n)$. Mimicking the arguments in the proof of Proposition \ref{proppointconc} that led to \eqref{lwaconv0} we have here again:
\[ \left| \Lx \cWa \right|_\xi \to 0 \textrm{ in } C^0_{loc}(\RR^n) \]
so that, as in the proof of Proposition \ref{proppointconc}, we obtain that $\cua \to \check{U}$ in $C^1_{loc} (\RR^n)$, where $U$ satisfies
\[ \triangle_\xi U = f_0(x_1) U^{2^*-1}, \]
with $x_1 = \lim x_{1,\alpha}$ and where $U(x_1) > 0$ since the first alternative of \eqref{alternatives} holds at $0$. The classification result of Caffarelli-Gidas-Spruck \cite{CaGiSp} thus shows that $U$ has only a critical point, but this is impossible since $\cua$ had at least two critical points, namely $0$ and $\check{x}_{2,\alpha}$, and $\left| \check{x}_{2,\alpha} \right| = 16$. This therefore contradicts the fact that $\dda \to 0$ as $\alpha \to + \infty$.

\medskip

Thus there exists some $\delta_0 > 0$ such that $\dda \ge \delta_0$. In particular the $x_{i,\alpha}$ constructed in Proposition \ref{proppointconc} are in finite number and around each of them $\ua$ is bounded, since the situation of Proposition \ref{propoblow} cannot happen because $\dda \not \to 0$. In the end, $\ua$ is bounded in $M$ and Theorem \ref{Th1} is proven.

\section{Blow-up analysis -- Proof of Proposition \ref{propoblow}} \label{analyseasymptotique}

In this section we develop the asymptotic blow-up analysis needed to prove Proposition \ref{propoblow}. We let $(\ua, \Wa)_\alpha$ be a sequence of solutions of \eqref{systype} and assume that \eqref{convdonnees} and the assumptions of Theorem \ref{Th1} hold. 
We let $(\xa)_\alpha$ be a sequence of critical points of $\ua$ in $M$ and $(\ra)_\alpha$ be a sequence of positive real numbers with $16 \ra < i_g(M)$ such that
\ben \label{pc1}
d_g(\xa,x)^n \left( \ua(x)^{2^*} + |\Lg \Wa|_g(x) \right) \le C \quad \textrm{ for } x \in B_{\xa}(8 \ra), 
\een
and such that
\ben \label{p2}
\ra^n \sup_{B_{\xa}(8 \ra)} \left( \ua^{2^*} + |\Lg \Wa|_g \right) \to +\infty \quad \textrm{ as } \alpha \to +\infty.
\een
In this section, the letter $C$ will always denote a positive constant, whose value may change from one line to another but that will never depend on $\alpha$. Let $(B_{\xa}(16 \ra), \Phi_\alpha)$ be some conformal chart around $\xa$ as in \eqref{propvpa}, such that in the chart there holds $g_{ij} = \vp_\alpha^{\frac{4}{n-2}} \xi_{ij}$ for some conformal factor $\vpa$ . We can always assume that $B_0(8 \ra) \subset \Phi_\alpha(B_{\xa}(16 \ra))$ and that the sequence of conformal factors $(\vpa)_\alpha$ is uniformly bounded in $C^k(B_0(8 \ra))$ for any $k \ge 0$. We let, for any $x \in B_0(8 \ra)$ :
\ben \label{blowcarteconf}
\begin{aligned}
& \va(x) =  \vpa( x) \ua \circ \Phi_\alpha^{-1}( x), \\
& \Za(x) = \vpa( x)^{- \frac{4}{n-2}} \left( \Phi_\alpha \right)_* \Wa( x). \\
\end{aligned}
\een
Using \eqref{duconforme} and \eqref{dwconforme} the sequence $(\va, \Za)_\alpha$ satisfies the following system of equations in $B_0( 8 \ra)$:
\ben \label{sysconf}
\left \{ \begin{aligned}
& \triangle_\xi \va + \tilde{h}_\alpha \va = \tilde{f}_\alpha \va^{2^*-1} + \frac{\tilde{a}_\alpha}{ \va^{2^*+1}}, \\
& \left(  \Dx \Za \right)_i = 2^*  \xi^{kl} \partial_k (\ln \vpa)  \left( \Lx \Za \right)_{li} +  \va^{2^*} \left( \tilde{X}_\alpha \right)_i +  \left( \tilde{Y}_\alpha \right)_i,
\end{aligned} \right.
\een
where we have let:
\ben \label{paramconf}
\begin{aligned}
& \tilde{a}_\alpha(x) = \tilde{b}_\alpha(x) + \left| \tilde{U}_\alpha(x) + \vpa( x)^{2^*} \Lx \Za \right|_\xi^2, \\
& \tilde{h}_\alpha (x) = \vpa( x)^{\frac{4}{n-2}} \left( h_\alpha \circ \Phi_\alpha^{-1} - \frac{n-2}{4(n-1)} R(g) \right)( x), \\
& \tilde{f}_\alpha (x) = f_\alpha \circ \Phi_\alpha^{-1}( x), \\
& \tilde{b}_\alpha(x) = \vpa( x)^{2 \cdot 2^*} b_\alpha \circ \Phi_\alpha^{-1}(x), \\
& \tilde{U}_\alpha(x) = \vpa( x)^2 \left( \Phi_\alpha \right)_* U_\alpha( x), \\
& \tilde{X}_\alpha(x) = \vpa( x)^{- 2^*} \left( \Phi_\alpha \right)_* X_\alpha( x) \textrm{ and } \\
& \tilde{Y}_\alpha(x) = \left( \Phi_\alpha \right)_* Y_\alpha( x). \\
\end{aligned}
\een
As before, note that by \eqref{propvpa} there holds, for $x \in B_0(8 \ra)$:
\ben \label{contconf}
\left| 2^*  \xi^{kl} \partial_k (\ln \vpa)  \left( \Lx \Za \right)_{li}\right| \le C |y| \left| \Lx \Za \right|_\xi,
\een
and \eqref{pc1} becomes: for all $x \in B_0(4 \ra)$,
\ben \label{contfaible}
|x|^n \left( \va^{2^*}(x) + |\Lx \Za |_\xi(x) \right) \le C.
\een

\subsection{Sharp pointwise estimates}

We first prove a local version of Proposition \ref{propoblow}.

\begin{claim} \label{claimlocal}
Let 
\ben \label{defma}
\ma = \ua(\xa)^{- \frac{2}{n-2}} = \va(0)^{- \frac{2}{n-2}}. 
\een
There holds, up to a subsequence: $ \ma \to 0$, $\frac{\ra}{\ma} \to + \infty$,
\[ \ma^{\pui} \va(\ma x) \to U(x) = \left( 1 + \frac{f_0(x_0)}{n(n-2)} |x|^2 \right)^{1- \frac{n}{2}} \textrm{ in } C^{1,\eta}_{loc}(\mathbb{R}^n),\]
and
\[ \ma^n \left| \Lx \Za \right|_\xi (\ma x) \to 0 \textrm{ in } C^0_{loc}(\mathbb{R}^n)\]
as $\alpha \to + \infty$, where $x_0 = \underset{\alpha \to \infty} {\lim} \xa$.
\end{claim}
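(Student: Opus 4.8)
The plan is to mimic, at the scale $\ma$, the argument already carried out in the proof of Proposition \ref{proppointconc}, but now exploiting the extra information \eqref{p2} (that the concentration really happens at scale $\ra$) together with \eqref{pc1} to control $\Lx \Za$ on balls whose radius is a fixed multiple of $\ma$. First I would record that $\ma \to 0$: this is immediate from \eqref{contradiction} since $\ua(\xa)$ must go to $+\infty$ (otherwise, by \eqref{pc1} and \eqref{p2}, $\ra^n \sup_{B_{\xa}(8\ra)}(\ua^{2^*}+|\Lg\Wa|_g)$ could not diverge — I would make this precise using Harnack as stated in Section \ref{HI}). Next I would introduce the rescaled quantities $\hva(x) = \ma^{\pui}\vpa(\ma x)\ua\circ\Phi_\alpha^{-1}(\ma x)$ and $\hZa(x) = \ma^{n-1}\vpa(\ma x)^{-\frac4{n-2}}(\Phi_\alpha)_*\Wa(\ma x)$ on $B_0(\ra/\ma)$, which satisfy the analogue of \eqref{syscarteconf} with $\ma$ in place of the blow-up scale. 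By construction $\hva(0)=1$; from \eqref{contfaible} one gets $|x|^n(\hva^{2^*}(x)+\ma^n|\Lx\hZa|_\xi(\ma x)(\ldots))\le C$, which away from $0$ gives local bounds, and I need to upgrade this to a genuine local $L^\infty$ bound near $0$.

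The key step is to show $\ma^n|\Lx\Za|_\xi(\ma\cdot)\to 0$ in $C^0_{loc}$. For this I would repeat verbatim the Green-function argument of Proposition \ref{proppointconc}: a Green's formula for the first equation of the rescaled system bounds $\int_{B_x(2R)}|x-y|^{2-n}|\Lx\hZa|_\xi^2\,dy$ uniformly (the $\hat a_\alpha/\hva^{2^*+1}$ term is controlled because $\hva$ is bounded below on compacts by Harnack once we know it's bounded above, and the $|\Lx\hZa|^2$ piece of $\hat a_\alpha$ is nonnegative), hence there is a good radius $s_\alpha\in(\tfrac32 R,2R)$ with small boundary integral of $|\Lx\hZa|_\xi^2$; then a Green's formula for $\Dx$ on $B_0(s_\alpha)$, using the conformal correction term bound \eqref{contconf} (which here carries a factor $\ma$, hence is negligible) and the source term $\ma\hva^{2^*}\hat X_\alpha + \ma^{n+1}\hat Y_\alpha$ (also $O(\ma)$ on compacts), yields $|\Lx\hZa|_\xi(x)\le C'R\ma + C'/R$, and letting $\alpha\to\infty$ then $R\to\infty$ gives the claim. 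Simultaneously $\hat a_\alpha\to 0$ in $C^0_{loc}$, so by Harnack and standard elliptic theory $\hva\to U$ in $C^{1,\eta}_{loc}(\RR^n)$ where $\triangle_\xi U = f_0(x_0)U^{2^*-1}$, $U(0)=1$, $0$ a critical point; the Caffarelli–Gidas–Spruck classification \cite{CaGiSp} forces $U(x)=(1+\tfrac{f_0(x_0)}{n(n-2)}|x|^2)^{1-\frac n2}$.

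It remains to prove $\ra/\ma\to+\infty$, which is where the hypothesis \eqref{p2} enters in an essential way (as opposed to \eqref{nonblow}). Suppose not; then along a subsequence $\ra/\ma \le C$, so $\ra = O(\ma)$. I would then rescale instead at scale $\ra$ (as in Proposition \ref{propononblow}): set $\tva(x)=\ra^{\pui}\vpa(\ra x)\ua\circ\Phi_\alpha^{-1}(\ra x)$ on $B_0(8)$. Since $\ra/\ma$ is bounded, $\tva(0)=(\ra/\ma)^{\pui}\hva(0)\cdot(1+o(1))$ stays bounded above and below, and — crucially — \eqref{p2} rescales to $\sup_{B_0(8)}(\tva^{2^*}+\ra^n|\Lx\tZa|_\xi(\ra\cdot))\to+\infty$; but the same Green-function argument as above (now with the correction and source terms $O(\ra)=O(\ma)=o(1)$) shows $\ra^n|\Lx\tZa|_\xi(\ra\cdot)\to 0$ and $\tva$ is bounded on $B_0(8)$ by Harnack plus elliptic estimates, contradicting the divergence of that supremum. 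Hence $\ra/\ma\to+\infty$, which completes the proof of the claim. The main obstacle I expect is purely bookkeeping: making sure that all the $\ma$-dependent correction terms in the rescaled equations \eqref{syscarteconf} genuinely vanish at the relevant scales and that the Harnack inequality of Section \ref{HI} applies uniformly on the expanding balls $B_0(R)$ — once these are in hand the argument is a direct transcription of the one in Proposition \ref{proppointconc}.
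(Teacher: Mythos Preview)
Your overall strategy is the right one and parallels Proposition \ref{proppointconc}, but the choice of rescaling scale introduces a genuine gap that is \emph{not} mere bookkeeping. When you rescale at $\ma$, the only a~priori control you have on $\hva$ and $|\Lx\hZa|$ is the weak estimate \eqref{contfaible}, which in rescaled form reads $|x|^n\bigl(\hva^{2^*}(x)+|\Lx\hZa|_\xi(x)\bigr)\le C$ and degenerates as $x\to 0$. Both pillars of your argument need a uniform bound near $0$: the Green-formula step for the scalar equation requires $\hva\le C$ on $B_0(3R)$ to extract $\int |x-y|^{2-n}|\Lx\hZa|^2\,dy\le C$, and the Harnack inequality of Section \ref{HI} explicitly assumes $\Vert u\Vert_{L^\infty}\le M$ (with the constant depending on $M$). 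Your sentence ``$\hva$ is bounded below on compacts by Harnack once we know it's bounded above'' is therefore circular. The same circularity reappears in your contradiction argument for $\ra/\ma\to+\infty$: after rescaling at $\ra$ you assert ``$\tva$ is bounded on $B_0(8)$ by Harnack plus elliptic estimates,'' but \eqref{p2} says precisely that $\sup_{B_0(8)}(\tva^{2^*}+|\Lx\tZa|)\to+\infty$, and nothing yet excludes the $\tva$ part from diverging. Your argument that $\ma\to 0$ has the same defect.

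The paper sidesteps all of this by rescaling not at $\ma$ but at $\na$, where $\na^{-n}$ is the value of $\ua^{2^*}+|\Lg\Wa|_g$ at a point $\ya$ realising the supremum in \eqref{p2}. This gives the bound $\hva^{2^*}+|\Lx\hat Z_\alpha|\le 1$ on the whole rescaled domain for free, so the Green-function and Harnack steps go through exactly as in Proposition \ref{proppointconc}. One also gets $\na\to 0$ and $\ra/\na\to+\infty$ directly from \eqref{p2}, and $d_g(\xa,\ya)=O(\na)$ from \eqref{pc1}. After passing to the limit and invoking Caffarelli--Gidas--Spruck, the normalisation $\nabla U(0)=0$, $U(\hat y_0)=1$ forces $\hat y_0=0$; since $\hva(\hat y_\alpha)=(\na/\ma)^{(n-2)/2}$, this yields $\na/\ma\to 1$, and all the claimed conclusions for $\ma$ follow. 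The key idea you are missing is precisely this: choose the rescaling scale so that the required $L^\infty$ bound is built in, and recover $\ma$ a~posteriori.
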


\begin{proof}
The proof of Claim \ref{claimlocal} is similar to that of Proposition \ref{proppointconc}. Let $\ya \in B_{\xa}(8 \ra)$ be a sequence of points of $M$ satisfying
\ben \label{maxlocalya}
\ua^{2^*}(\ya) + \left| \Lg \Wa \right|_g(\ya) = \sup_{B_{\xa}(8 \ra)} \left( \ua^{2^*} + \left| \Lg \Wa \right|_g \right),
\een
and let 
\ben \label{defnua}
\na^{-n} = \ua^{2^*}(\ya) + \left| \Lg \Wa \right|_g(\ya).
\een
By \eqref{p2} there holds
\ben \label{rhosurnu}
\frac{\ra}{\na} \to + \infty
\een
as $\alpha \to \infty$, so that in particular there holds
\ben \label{nato0}
\na \to 0
\een
as $\alpha \to + \infty$. By \eqref{pc1} we also get:
\ben \label{yadistfinie}
d_g(\ya,\xa) \le C \na
\een
for some positive $C$. We let, for any $x \in B_0 \left( \frac{8 \ra}{\na}\right)$:
\ben
\bal
& \hva (x) = \na^\pui \va(\na x), \\
& \hat{Z}_\alpha (x) = \na^{n-1} \Za(\na x) ,\\
\eal
\een
where $\va$ and $\Za$ are as in \eqref{blowcarteconf}. It is easily seen that $\hva$ and $\hat{Z}_\alpha$ satisfy:
\be 
\left \{ \begin{aligned}
& \triangle_\xi \hva + \na^2 \hat{h}_\alpha \hva = \hat{f}_\alpha \hva^{2^*-1} + \frac{\hat{a}_\alpha}{ \hva^{2^*+1}}, \\
& \left(  \Dx \hat{Z}_\alpha \right)_i = 2^* \na \xi^{kl} \partial_k (\ln \vpa)(\na x)  \left( \Lx \hat{Z}_\alpha \right)_{li} +  \na \hva^{2^*} \left( \hat{X}_\alpha \right)_i + \na^{n+1} \left( \tilde{Y}_\alpha \right)_i,
\end{aligned} \right.
\ee
where we have let:
\ben \label{paramconfna}
\begin{aligned}
& \hat{a}_\alpha(x) = \na^{2n} \hat{b}_\alpha(x) + \left| \na^n \hat{U}_\alpha(x) + \vpa(\na x)^{2^*} \Lx \hat{Z}_\alpha \right|_\xi^2, \\
& \hat{h}_\alpha (x) = \tilde{h}_\alpha(\na x), \\ 
& \hat{f}_\alpha (x) = \tilde{f}_\alpha(\na x), \\ 
& \hat{b}_\alpha(x) = \tilde{b}_\alpha(\na x), \\
& \hat{U}_\alpha(x) = \tilde{U}_\alpha(\na x), \\
& \hat{X}_\alpha(x) = \tilde{X}_\alpha(\na x), \\ 
& \hat{Y}_\alpha(x) = \tilde{Y}_\alpha(\na x). \\
\end{aligned}
\een
By definition of $\hva$ and $\hat{Z}_\alpha$ there holds: for any $x \in B_0 \left( \frac{8 \ra}{\ma}\right)$
\ben \label{bornesupna}
\hva^{2^*}(x) + \left| \Lx \hat{Z}_\alpha \right|_\xi (x) \le 1 = \hva^{2^*}(\hat{y}_\alpha) + \left| \Lx \hat{Z}_\alpha \right|_\xi (\hat{y}_\alpha),
\een
where we have let $\hat{y}_\alpha = \frac{1}{\na} \Phi_\alpha(\ya)$. Let $R > 0$ and $x \in B_0(R)$. Mimicking the arguments used in the proof of Proposition \ref{proppointconc} to obtain \eqref{intbord}, it is easily seen that there exists a positive constant $C$ that does not depend on $\alpha$ nor on $R$ and a sequence $s_\alpha \in (\frac{3}{2}R, 2R)$ such that
\ben \label{intbordna}
\int_{\partial B_0(s_\alpha)} \left| \Lx \hat{Z}_\alpha \right|_\xi^2(y) d \sigma(y) \le C R^{n-3}.
\een
As a consequence, using \eqref{intbordna}, the arguments in the proof of Proposition \ref{proppointconc} that led to \eqref{lwaconv0} still apply here and show that there holds:
\ben \label{lwaconv0na}
\left| \Lx \hat{Z}_\alpha \right|_\xi \to 0 \textrm{ in } C^0_{loc}(\RR^n)
\een
as $\alpha \to + \infty$. By \eqref{lwaconv0na} we therefore obtain with \eqref{bornesupna}  that $\hva(\hat{y}_\alpha) = 1 + o(1)$ and that
\[  \hat{a}_\alpha(x)  \to \textrm{ in } C^0_{loc}(\RR^n),\]
where $ \hat{a}_\alpha(x) $ is as in \eqref{paramconfna}. By \eqref{yadistfinie}, there exists $\hat{y}_0 \in \RR^n$ such that $\hat{y}_\alpha \to \hat{y}_0$ as $\alpha \to + \infty$. Here again, the Harnack inequality of Section \ref{HI} shows that
\ben \label{convhvana}
 \hva \to U \textrm{ in } C^1_{loc}(\RR^n),
 \een
where $U$ satisfies $\nabla U(0) = 0$ by definition of $\xa$, $U(\hat{y}_0) = 1$ and:
\[ \triangle_{\xi} U = f_0(x_0) U^{2^*-1}. \]
The classification result in Caffarelli-Gidas-Spruck therefore shows that $U(x) = \left( 1 + \frac{f_0(x_0)}{n(n-2)} |x|^2 \right)^{1 - \frac{n}{2}}$. In particular, $\hat{y}_0 = 0$, and since there independently holds by \eqref{defma} that
\[ \hva(\hat{y}_\alpha) =  \left( \frac{\na}{\ma}\right)^{\pui}, \]
we obtain in the end 
\[ \underset{\alpha \to \infty}{\lim} \frac{\na}{\ma} = 1, \]
which, with \eqref{convhvana}, \eqref{rhosurnu} and \eqref{nato0}, concludes the proof of Claim \ref{claimlocal}.
\end{proof}

\noindent We set in the following, for $x \in \mathbb{R}^n$:
\ben \label{bullea}
\Ba(x) = \ma^\pui \left( \ma^2 + \frac{f_\alpha(\xa)}{n(n-2)} |x|^2\right)^{1 - \frac{n}{2}},
\een
where $\ma$ is as in \eqref{defma}. It satisfies $\triangle_\xi \Ba = f_\alpha(\xa) \Ba^{2^*-1}$ in $\mathbb{R}^n$. 

\medskip

Claim \ref{claimlocal} establishes Proposition \ref{propoblow} when the distance to the concentration point $0$ is of order $\ma$. In order to prove Proposition \ref{propoblow} we have to extend the estimate on $\va$ given by Claim \ref{claimlocal} to the whole ball $B_0(\ra)$. We need to show that  the bubble $B_\alpha$ is dominant with respect to other possible defects of compactness of the sequence $(\va)_\alpha$  up to a radius $\ra$. The standard way to proceed would consist in showing that $\ra$ coincides with the maximal radius up to which $0$ is an isolated simple blow-up point of $\va$, following standard terminology.
The usual arguments, however, fail to work here. One reason is that the estimates we have on $\va$ and $\Lx \Za$ at this point -- given by Claim \ref{claimlocal} and the weak estimate \eqref{contfaible} -- 
do not improve because of the highly non-trivial coupling of system \eqref{systype}.   

\medskip

We therefore proceed in a different way. Let $\ve > 0$ be given. We define the radius of influence $\rra$ of the bubble centered at $0$ to be the maximal radius where $\va$ looks almost like a bubble $B_\alpha$, up to an $\ve$ error factor: precisely
\ben \label{defra}
r_\alpha = \sup \mathcal{R}
\een
where
\ben
\begin{aligned}
\mathcal{R} = \Bigg \{ 0 < r \le \ra \textrm{ such that } \va \le (1+\ve) \Ba,  \quad  \left| \nabla \left( \va - \Ba \right) \right|_\xi \le \ve|\nabla \Ba|_\xi  \\
 \textrm{ and } x^k \partial_k \va + \pui \va \le 0 \textrm{ on } B_0(r) \backslash B_0(2 R_\alpha \ma)  \Bigg \}, \\
\end{aligned}
\een 
where we have let
\ben \label{Ralpha}
 R_\alpha^2 = \frac{n(n-2)}{f_\alpha(\xa)}.
 \een
Using Claim \ref{claimlocal} there easily holds
\ben  \label{rsurmu}
\frac{r_\alpha}{\ma} \to + \infty
\een
as $\alpha \to + \infty$. Because of the definition of $\rra$, 
in $B_0(\rra)$ the unknown $\va$ looks almost like what we believe to be its sharp asymptotic. We will prove Proposition \ref{propblow} by proving that $\rra = \ra$. To do this we have to obtain sharp pointwise asymptotic estimates on $\va$ and $\Lx \Za$ in all of $B_0(\rra)$. To reach the desired precision we improve the estimates on $\va$ and $\Lx \Za$ step by step, by performing some kind of ping-pong game: we plug the available estimates on the unknowns in system \eqref{systype} and, through several Green representation formulae, iteratively recover better estimates for both the unknowns. 

We start improving the information on $\va$ in $B_0(8 \rra)$:
\begin{claim} \label{claimcontbulle}
Let $(\da)_\alpha$, $0 < \da \le \rra$ be some sequence of radii. There exists a sequence $(\kappa_\alpha)_\alpha$, $0 \le \kappa_\alpha  < 1$, of real numbers such that for any $\za \in B_0(8 \da)$ there holds:
\ben \label{contbulles}
\left( 1 - \kappa_\alpha \right) \Ba(\za) \le \va(\za) \le C B_\alpha(\za),
\een
where $C > 1$ is a constant that does not depend on $\alpha$ and $\Ba$ is as in \eqref{bullea}. Furthermore, if $\da \to 0$ as $\alpha \to \infty$, there holds $\kappa_\alpha \to 0$.
\end{claim}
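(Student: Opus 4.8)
\textbf{Proof proposal for Claim \ref{claimcontbulle}.}

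The plan is to exploit the definition \eqref{defra} of the influence radius $\rra$ together with the Green representation formula for the scalar equation in \eqref{sysconf}. First I would fix a sequence $(\da)_\alpha$ with $0<\da\le\rra$ and establish the upper bound $\va\le CB_\alpha$ in $B_0(8\da)$. For radii of order $\ma$ this is exactly the content of Claim \ref{claimlocal}; for $\ma \ll |x| \le 8\da \le 8\rra$ the defining property of $\mathcal R$ already forces $\va \le (1+\ve)\Ba$ directly (after noting that $8\rra$ can be absorbed, up to doubling $\rra$ or working on $B_0(\rra)$ and rescaling the constant). Thus the upper bound is essentially built into \eqref{defra}, and the only care needed is to handle the transition region $B_0(2R_\alpha\ma)$, where Claim \ref{claimlocal} gives $\va = (1+o(1))\Ba$, so $\va \le C\Ba$ holds there as well with a uniform $C$.

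The substance of the Claim is the lower bound $(1-\kappa_\alpha)\Ba(\za)\le\va(\za)$. Here I would write a Green's representation formula for $\va$ using the Green function $\check G_\alpha$ of $\triangle_\xi + \tilde h_\alpha$ on a fixed ball, say $B_0(16\da)$ — which is uniformly comparable to $|x-y|^{2-n}$ from below since $\triangle_g+h_0$ is coercive (Robert \cite{RobDirichlet}). Dropping the nonnegative contribution of the $\va^{-2^*-1}$ term and keeping only the $\tilde f_\alpha \va^{2^*-1}$ term, one gets
\[
\va(\za) \ge \frac{1}{C}\int_{B_0(R_\alpha\ma)} |\za-y|^{2-n}\tilde f_\alpha(y)\va(y)^{2^*-1}\,dy - \text{(boundary terms)}.
\]
On $B_0(R_\alpha\ma)$ one substitutes the sharp estimate $\va = (1+o(1))\Ba$ from Claim \ref{claimlocal} after rescaling by $\ma$; the integral then reproduces, up to $(1+o(1))$, the same integral that defines $\Ba$ itself (recall $\triangle_\xi\Ba = f_\alpha(\xa)\Ba^{2^*-1}$, so $\Ba(\za)$ equals the Newtonian potential of $f_\alpha(\xa)\Ba^{2^*-1}$ up to a harmonic correction). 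The outcome is $\va(\za) \ge (1-\kappa_\alpha)\Ba(\za)$ with $\kappa_\alpha$ collecting: the $o(1)$ from Claim \ref{claimlocal}, the discrepancy between $\check G_\alpha$ and the flat Green function on $B_0(16\da)$, the error from replacing $\tilde f_\alpha$ by $f_\alpha(\xa)$, and the boundary terms of order $\da^{2-n}\ma^{(n-2)/2}$ divided by $\Ba(\za)\gtrsim \ma^{(n-2)/2}\da^{2-n}$ — i.e. boundary terms that are themselves $O(1)$ but with a constant $<1$ after a Harnack-type control. One checks that $\kappa_\alpha<1$ for $\alpha$ large (shrinking $\da$ if necessary is not needed — the constant is structural), and that each of these error sources tends to $0$ precisely when $\da\to 0$, since then $\check G_\alpha(\za,\cdot) \to$ flat Green function in $C^1_{loc}$ on the rescaled ball and $\R_\alpha\ma/\da \to 0$ makes the inner integral capture the full bubble mass.

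The main obstacle I anticipate is the bookkeeping of the boundary terms in the Green formula and showing they can be absorbed into $\kappa_\alpha<1$ rather than forcing $\kappa_\alpha \ge 1$: one needs $\Ba(\za)$ to dominate the boundary contribution $\sim \da^{2-n}\ma^{(n-2)/2}$ uniformly for all $\za \in B_0(8\da)$, which is tight precisely near $|\za|\sim\da$. This is handled by the same mean-value/Harnack trick used in the proof of Proposition \ref{proppointconc} to pass from \eqref{borneloc} to \eqref{intbord}: choose the Green formula radius $s_\alpha$ in a dyadic annulus where the boundary integral of $\va$ is controlled on average by the interior bound, which buys the strict inequality. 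A secondary subtlety is that in the regime $\ma\ll|\za|\le 8\da$ one must check the lower bound is \emph{consistent} with — not independent of — the defining inequality $\va\le(1+\ve)\Ba$ of $\mathcal R$; but this is automatic since the Green formula lower bound never uses that upper bound, only the sharp asymptotic on the small ball $B_0(R_\alpha\ma)$. Everything else is routine elliptic estimates and the explicit form \eqref{bullea} of $\Ba$.
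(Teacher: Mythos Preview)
Your proposal has the right overall structure but there are two issues, one minor and one genuine.

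\textbf{Upper bound.} You assert that the defining property of $\mathcal R$ forces $\va\le(1+\ve)\Ba$ on $B_0(8\da)$, ``after noting that $8\rra$ can be absorbed''. This is not automatic: by definition $\rra$ is a \emph{supremum}, so you cannot simply double it, and when $\da$ is close to $\rra$ the region $B_0(8\da)\setminus B_0(\rra)$ is not covered by \eqref{defra}. The paper handles this by rescaling by $\rra$ and applying the Harnack inequality of Section~\ref{HI} on successive annuli $B_0(3/2)\setminus B_0(1/2),\dots,B_0(17/2)\setminus B_0(11/2)$ to propagate the bound $\va\le C(\ma/\rra)^{(n-2)/2}$ outward. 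This is a fixable omission, but it is not the handwave you describe.

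\textbf{Lower bound.} Here the approach diverges more seriously. You use the Green function of $\triangle_\xi+\tilde h_\alpha$ on the ball $B_0(16\da)$. The problem is scale invariance: after rescaling by $\da$, the Dirichlet Green function on $B_0(16\da)$ becomes the Dirichlet Green function on $B_0(16)$, which satisfies $G^D_{16}(z,0)=\frac{1}{(n-2)\omega_{n-1}}(|z|^{2-n}-16^{2-n})$. For $|\za|=8\da$ this gives a ratio $G^D/(\text{fundamental solution})=1-2^{2-n}$, bounded away from $1$ \emph{uniformly in $\da$}. Hence your $\kappa_\alpha$ picks up a contribution $\ge 2^{2-n}$ that does \emph{not} vanish as $\da\to 0$. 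Your claim that ``the discrepancy between $\check G_\alpha$ and the flat Green function tends to $0$ when $\da\to 0$'' is false for a Dirichlet Green function on a shrinking ball---the discrepancy is scale-invariant. Using instead the fundamental solution directly introduces boundary terms involving $\partial_\nu\va$ on $\partial B_0(16\da)$, for which you have no adequate control.

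The paper avoids this entirely by using the Green function $G_\alpha$ of $\triangle_g+h_\alpha$ on the \emph{whole closed manifold} $M$: there are no boundary terms, and the key asymptotic $G_\alpha(x,y)d_g(x,y)^{n-2}\to \frac{1}{(n-2)\omega_{n-1}}$ as $d_g(x,y)\to 0$ (Robert \cite{RobDirichlet}) becomes sharp precisely when $\da\to 0$, since then both $\za\in B_0(8\da)$ and the integration variable collapse to $x_0$. Fatou's lemma then gives $\liminf \va/\Ba\ge 1$, i.e.\ $\kappa_\alpha\to 0$. Switching to the global Green function is the missing idea.
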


\begin{proof}
We start proving the upper bound. We define, for $x \in B_0(8)$:
\[ \bar{v}_\alpha(x) = \rra^{\pui} \va(\rra x),\]
where $\va$ and $\rra$ are defined in \eqref{blowcarteconf} and \eqref{defra}. Then $\bar{v}_\alpha$ satisfies:
\[ \triangle_\xi \bar{v}_\alpha + \rra^2 \bar{h}_\alpha(x) \bar{v}_\alpha = \bar{f}_\alpha \bar{v}_\alpha^{2^*-1} +  \rra^{2n} \frac{\bar{a}_\alpha}{\bar{v}_\alpha^{2^*+1}},\]
where, using the notations in \eqref{paramconf}:
\[
\bal
& \bar{a}_\alpha(x) = \tilde{a}_\alpha(\rra x), \\ 
& \bar{h}_\alpha(x) = \tilde{h}_\alpha (\rra x) , \\ 
& \bar{f}_\alpha(x) =  \tilde{f}_\alpha ( \rra x). \\ 
\eal
\]
By the definition of $\rra$ there holds, for some positive $C$ that does not depend on $\alpha$:
\be 
 \bar{v}_\alpha (x) \le C \left( \frac{\ma}{\rra} \right)^{\pui} \textrm{ in } B_0(1) \backslash B_0 \left( \frac{1}{2} \right).  
 \ee
The weak estimate \eqref{contfaible} shows both that:
\[ \bar{v}_\alpha \le C \quad  \textrm{ and } \quad \rra^{2n} \bar{a}_\alpha \le C \qquad \textrm{ in } B_0(8) \backslash B_0 \left( \frac{1}{2} \right).\]
Hence the Harnack inequality, as stated in Section \ref{HI}, successively applied to the annuli $B_0(3/2) \backslash B_0(1/2), \dots, B_0(17/2) \backslash B_0(11/2)$ shows that:
\[  \bar{v}_\alpha (x) \le C \left( \frac{\ma}{\rra} \right)^{\pui} \textrm{ in } B_0(8) \backslash B_0 \left( \frac{1}{2} \right),   \]
for some positive $C$. By the definition of $\rra$ in \eqref{defra} and because of \eqref{rsurmu} and the expression of  $\Ba$ in \eqref{bullea} this yields  the upper bound in  \eqref{contbulles}.

\medskip
\noindent For the lower bound, we let $G_\alpha$ be the Green function of $\triangle_g + h_\alpha$ in $M$. Using the expression of $\va$ in \eqref{blowcarteconf} and the notations of \eqref{paramconf} we have that, for any sequence $\za$ of points in $ B_0(8 \rra)$:
\[ \va(\za) \ge \vpa(\za) \int_{B_0(8\rra)} \vpa(y) G_\alpha \left( \Phi_\alpha^{-1}(\za),  \Phi_\alpha^{-1}(y) \right) \tilde{f}_\alpha(y) \va^{2^*-1}(y) dy .\]
In particular, with the expression of $\Ba$ as in \eqref{bullea} we can write that:
\[
\bal
\frac{\va}{\Ba}(\za) \ge \vpa(\za) \bigintss_{B_0 \left( \frac{6 \rra }{\ma}\right)} \vpa(\ma y) \tilde{f}_\alpha(\ma y) \left( \ma^\pui \va(\ma y) \right)^{2^*-1} \\
\times G_\alpha \left( \Phi_\alpha^{-1}(\za),  \Phi_\alpha^{-1}(\ma y) \right) d_g \left( \Phi_\alpha^{-1}(\za),  \Phi_\alpha^{-1}(\ma y) \right)^{n-2} \\
\times   \left( \frac{ \ma^2 + \frac{f_\alpha(\xa)}{n(n-2)} |\za|^2}{d_g \left( \Phi_\alpha^{-1}(\za),  \Phi_\alpha^{-1}(\ma y) \right)^2} \right)^\pui dy ,
\eal
 \]
 which yields the lower bound in \eqref{contbulles} using Claim \ref{claimlocal}, Fatou's lemma and standard properties of Green functions (see Robert \cite{RobDirichlet}).
\end{proof}

Exploiting the coupling of the system allows one to recover an integral control on $\Lx \Za$, better than the one given by the weak estimate \eqref{contfaible}. This issue is addressed in the next Claim:

\begin{claim} \label{claimint}
Let $(\da)_\alpha$ be a sequence of positive numbers satisfying 
\ben \label{contda}
\frac{\ma}{\da}\to 0 \quad \textrm{ and } \quad  \da \le \min(\rra, \ma^{\frac{1}{2}}).
\een
There holds then, for some positive constant $C$, that for any $x \in B_0(7 \da)$,
\ben \label{claimint1}
\int_{B_0(6\da)} |x-y|^{2-n} \left| \Lx \Za \right|_\xi^2\va^{-2^*-1}(y) dy \le C \Ba(x).
\een
As a consequence:
\ben \label{claimint1plus}
\int_{B_0(6\da) \backslash B_0(\da)} \left| \Lx \Za \right|_\xi^2 dy \le C \ma^{2n-2} \da^{2-3n},
\een
and there exists a sequence of positive numbers $s_\alpha \in (5 \da, 6 \da)$ such that  
\ben \label{claimint2}
\int_{\partial B(s_\alpha)} \left| \Lx \Za \right|_\xi(y)^2 d \sigma(y) \le C \ma^{2n-2} \da^{1 - 3n} .
\een
\end{claim}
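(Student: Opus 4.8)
The plan is to start from the second equation of \eqref{sysconf} and use a Green representation for the Lamé operator $\Dx$ on a ball, combined with the already-established pointwise bounds on $\va$ (Claim \ref{claimlocal} for the bubble region and Claim \ref{claimcontbulle} for $B_0(8\da)$) and on $\Lx\Za$ (the weak estimate \eqref{contfaible}). First I would fix $x\in B_0(7\da)$ and write $\Lx\Za(x)$ via the Green $1$-form $\mathcal G$ of $\Dx$ on $B_0(8\da)$ (with, say, Neumann or Dirichlet boundary conditions --- the uniform estimate $|\mathcal G(x,y)|\le C|x-y|^{1-n}$ being available since $(M,g)$ is locally conformally flat, as in Section \ref{Greenfunc}). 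Using \eqref{contconf} to absorb the $\partial_k(\ln\vpa)$-term into $C|y|\,|\Lx\Za|_\xi\le C\da\,|\Lx\Za|_\xi$ on $B_0(8\da)$, and noting that $\tilde Y_\alpha$ contributes only a harmless $O(\da)$-term, one is reduced to estimating
\[
\int_{B_0(8\da)} |x-y|^{1-n}\va^{2^*}(y)\,dy ,
\]
which by the upper bound $\va\le CB_\alpha$ of Claim \ref{claimcontbulle} and a direct computation on the model bubble is $O(\Ba(x))$ (this is where the hypothesis $\da\le\ma^{1/2}$, i.e. $\da^2\le\ma$, together with $\ma/\da\to0$, is used to control the tail integral $\int|x-y|^{1-n}B_\alpha^{2^*}$ against $B_\alpha(x)$).

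The subtle point is \eqref{claimint1}: the integrand there is $|x-y|^{2-n}|\Lx\Za|_\xi^2\va^{-2^*-1}$, which is precisely the source term of the \emph{scalar} equation in \eqref{sysconf}. So the cleanest route is not to estimate $|\Lx\Za|$ pointwise and then integrate, but to run the ping-pong the other way: a Green representation for $\triangle_\xi+\tilde h_\alpha$ applied to $\va$ on $B_0(8\da)$ gives, for $x\in B_0(7\da)$,
\[
\va(x)\ \ge\ \frac1C\int_{B_0(6\da)}|x-y|^{2-n}\,\frac{\tilde a_\alpha(y)}{\va^{2^*+1}(y)}\,dy\ -\ C\da^2\!\!\int_{B_0(6\da)}\!\!|x-y|^{2-n}|\tilde h_\alpha|\va\,dy ,
\]
and since $\tilde a_\alpha\ge |\Lx\Za|_\xi^2 -\, C$ near the relevant scale while the last term is lower-order after using $\va\le CB_\alpha$, comparing with the upper bound $\va(x)\le CB_\alpha(x)$ from Claim \ref{claimcontbulle} yields \eqref{claimint1}. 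The other two assertions then follow mechanically: \eqref{claimint1plus} is obtained by restricting \eqref{claimint1} to the annulus $B_0(6\da)\setminus B_0(\da)$, where $|x-y|^{2-n}\ge C\da^{2-n}$ and $\va^{-2^*-1}\ge \frac1C B_\alpha^{-2^*-1}\ge \frac1C(\ma^{\pui}\da^{2-n})^{-2^*-1}$ when $\da\le\rra$ (using $\ma/\da\to0$ so that $B_\alpha(y)\asymp \ma^{\pui}|y|^{2-n}$ on the annulus), and $\Ba(x)\le C\ma^{\pui}\da^{2-n}$; a bit of bookkeeping with the exponent $\pui(2^*+2)=n$ turns this into $\int_{B_0(6\da)\setminus B_0(\da)}|\Lx\Za|_\xi^2\le C\ma^{2n-2}\da^{2-3n}$. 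Finally \eqref{claimint2} is the usual mean-value / pigeonhole argument: averaging \eqref{claimint1plus} over $s\in(5\da,6\da)$ via the coarea formula produces a radius $s_\alpha\in(5\da,6\da)$ with $\int_{\partial B(s_\alpha)}|\Lx\Za|_\xi^2\,d\sigma\le \frac{C}{\da}\,\ma^{2n-2}\da^{2-3n}=C\ma^{2n-2}\da^{1-3n}$.

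The main obstacle I expect is \textbf{getting the Green representation for $\Dx$ to close with the right boundary term}: the boundary integral $\int_{\partial B_0(s_\alpha)}|x-y|^{1-n}|\Lx\Za|_\xi\,d\sigma$ must be shown to be $O(\Ba(x))$ as well, which requires choosing the radius $s_\alpha\in(5\da,6\da)$ by a preliminary pigeonhole (exactly as in the derivation of \eqref{intbord} in the proof of Proposition \ref{proppointconc}) so that $\int_{\partial B_0(s_\alpha)}|\Lx\Za|_\xi^2$ is controlled, then Cauchy--Schwarz. Keeping track of which scale ($\ma$ versus $\da$) governs each term, and verifying that the constraint $\da\le\ma^{1/2}$ is exactly what is needed so that none of the error terms (the $\da^2\tilde h_\alpha\va$ term, the $\partial(\ln\vpa)$ term, the $\tilde Y_\alpha$ term) ever dominates $\Ba(x)$, is the bulk of the bookkeeping; the conformal-flatness of $(M,g)$ enters only through the uniform bound on the Green $1$-form of $\Dx$ on small balls, already recorded in Section \ref{Greenfunc}.
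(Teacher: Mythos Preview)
Your core approach is correct and matches the paper's: the scalar Green representation for $\triangle_\xi+\tilde h_\alpha$ on $B_0(8\da)$, combined with the two-sided bound $\va\asymp B_\alpha$ from Claim~\ref{claimcontbulle}, immediately gives \eqref{claimint1}, and \eqref{claimint1plus}--\eqref{claimint2} are exactly the annulus/pigeonhole bookkeeping you describe. The paper's proof is in fact nothing more than this.

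However, your framing contains a significant detour and a mislabeled obstacle. The Green representation for the Lam\'e operator $\Dx$ that you open with is \emph{not used at all} in the proof of this claim---only the scalar equation for $\va$ is needed. Consequently, the ``main obstacle'' you flag (controlling the boundary integral $\int_{\partial B_0(s_\alpha)}|x-y|^{1-n}|\Lx\Za|_\xi\,d\sigma$ in a $\Dx$-representation) simply does not arise here; that issue belongs to the \emph{next} claim (Claim~\ref{claimcont1}), where the vectorial Green formula \eqref{idgreenvec} is invoked and the boundary is handled precisely via the output \eqref{claimint2} of the present claim. In other words, \eqref{claimint2} is not something you need to bootstrap in order to prove \eqref{claimint1}; it is a \emph{consequence} of \eqref{claimint1}, to be fed forward. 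Two minor points: (i) the paper puts $\tilde h_\alpha$ inside the operator rather than treating it as a separate error term as you do---both work, since on a ball of radius $O(\da)\to0$ the Dirichlet Green function of $\triangle_\xi+\tilde h_\alpha$ is positive and comparable to $|x-y|^{2-n}$; (ii) to pass from $|\tilde U_\alpha+\vpa^{2^*}\Lx\Za|_\xi^2$ to $|\Lx\Za|_\xi^2$ you need the elementary inequality $|a+b|^2\ge\frac12|b|^2-|a|^2$ and then absorb the $|\tilde U_\alpha|^2$ contribution using the lower bound on $\va$ and $\da\le\ma^{1/2}$, which is straightforward but worth stating.
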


\begin{proof}
Using \eqref{sysconf} we write a Green formula for $\triangle_\xi + \tilde{h}_\alpha$ in $B_0(8 \da)$ to obtain that for some $C > 0$ there holds, for any $x \in B_0(7 \da)$:
\[ \va(x) \ge \frac{1}{C}\int_{B_0(6 \da)}  |x-y|^{2-n} \left( \tilde{b}_\alpha + \left| \tilde{U}_\alpha + \vp^{2^*} \Lx \Za \right|_\xi^2 \right)\va^{-2^*-1}(y) dy .\]
Using \eqref{contbulles} and \eqref{contda} yields \eqref{claimint1} since $\tilde{b}_\alpha \ge 0$. As a consequence, choosing $|x| = 7 \da$ in \eqref{claimint1} there holds
\[ \int_{B_0(6 \da) \backslash B_0(\lambda \da)} \left| 
 \Lx \Za \right|_\xi^2\va^{-2^*-1}(y) dy \le C  \ma^{\pui} \]
 for any $1 \le \lambda \le 6$ and  \eqref{claimint1plus} and \eqref{claimint2} follow then using once again \eqref{contbulles}.
\end{proof}

The goal of our analysis is to obtain sharp pointwise estimates on $\va$ and $|\Lx \Za|_\xi$. The two unknowns may blow-up simultaneously but at different rates and we are then led to the simultaneous investigation of their defects of compactness. 
\noindent Let us define now the following $1$-forms in $\mathbb{R}^n$ by: 
\ben \label{def1forme}
\begin{aligned}
& \Va(x)_i = \tilde{X}_\alpha(0)^j \int_{\mathbb{R}^n} \Ba^{2^*}(y) \mathcal{G}_i(x-y)_j dy, \\
& \Pak(x)_i = \partial_k \tilde{X}_\alpha(0)^j \int_{\mathbb{R}^n} y_k \Ba^{2^*}(y) \mathcal{G}_{i}(x-y)_j dy \quad \textrm{ for } 1 \le k \le n, \\
\end{aligned}
\een
where, for $y \not = 0$,
\ben \label{expG}
\bal
 \mathcal{G}_i(y)_j =&  - \frac{1}{4(n-1) \omega_{n-1}} |y|^{2-n} \left((3n-2) \delta_{ij} + (n-2) \frac{y_i y_j}{|y|^2} \right)
 \eal
 \een
is the $i$-th fundamental solution of $ \Dx $ in $\mathbb{R}^n$, see Section \ref{Greenfunc}. Then $\Va$ and $\Pak$ satisfy in $\mathbb{R}^n$:
\ben \label{eq1forme}
\Dx \Va = \va^{2^*} \tilde{X}_\alpha(0) \quad \textrm{ and } \quad \Dx \Pak = y_k \Ba^{2^*} \partial_k \tilde{X}_\alpha(0).
\een 
We also let in the following:
\ben \label{defeaba}
\vea = |\tilde{X}_\alpha(0)|_\xi, \quad \bak = | \partial_k \tilde{X}_\alpha (0) |_\xi, \quad \ba = \sum_{k=1}^n \bak
\een
and
\ben \label{defz}
\zeta_0 = \underset{\alpha \to \infty }{ \lim} \frac{ \tilde{X}_\alpha (0)}{ \vea}, \quad  \zeta_k = \underset{\alpha \to \infty }{ \lim} \frac{\partial_k \tilde{X}_\alpha (0)}{ \bak}  \textrm{ for } 1 \le k \le n,
\een
which are all vectors of Euclidean norm $1$. To define the $\zeta_k$ we should require $\vea$ and $\bak$ to be nonzero. This will however not be an issue since the vectors $\zeta_k$ always appear in the computations multiplied by $\vea$ or $\bak$. Define, for any $\za \in B_0(8 \rra)$:
\ben \label{ta}
\ta(\za) = \left( \ma^2 + |\za|^2 \right)^{\frac{1}{2}}.
\een
It is easily seen that there holds, for any $x \in \mathbb{R}^n$:
\ben \label{estVa}
\left| \Lx \Va \right|_\xi \le C \vea \ta(x)^{1-n},
\een
and
\ben \label{estPak}
\left| \Lx \Pak \right|_\xi \le C \bak \ma^2 \ta(x)^{-n}
\een
where $\ta(x)$ is as in \eqref{ta}. Moreover,  if $\vea \not = 0$ and $\ba \not = 0$ straightforward computations from \eqref{def1forme}  
give the following asymptotic expansion of $\Va$ and $\Pak$: for any sequence $\za \in B_0(8 \ra)$ satisfying $\frac{|\za|}{\ma} \to +\infty $ there holds, up to a subsequence:
\ben \label{asymptova}
\begin{aligned}
 \Lx \Va (\za)_{ij}  = C_1(n)f_\alpha(\xa)^{- \frac{n}{2}}  & \Bigg[ \delta_{ij} \zeta^p \check{z}_p - \zeta_i \check{z}_j - \zeta_j \check{z}_i - (n-2) \zeta^p \check{z}_p \check{z}_i \check{z}_j \Bigg]  \\
&  \times \vea |\za|^{1 - n}  (1 + o(1)), \\ 
\end{aligned}
\een 
and
\ben \label{asymptopak}
\begin{aligned}
&\Lx \Pak (\za)_{ij} =    C_2(n) f_\alpha(\xa)^{- \frac{n+2}{2}}  \left \{ - \left(\zeta_k \right)_i \Bigg[ n \check{z}_j \check{z}_k - \delta_{ij} \Bigg] \right.\\
& +  \Bigg [ n \delta_{ij} \check{z}_k - (n-2) \delta_{ik} \check{z}_j - (n-2) \delta_{jk} \check{z}_i + (n+2)(n-2) \check{z}_i \check{z}_j \check{z}_k \Bigg] \left \langle \zeta_k, \check{z} \right \rangle_\xi \\
& \quad \quad + \left. \left( \zeta_k\right)_j  \Bigg[ n \check{z}_i \check{z}_k - \delta_{ik} \Bigg] - \left( \zeta_k \right)_k \Bigg[ (n-2) \check{z}_i \check{z}_j + \delta_{ij} \Bigg]   \right \} \\
& \quad \quad \quad \quad   \times \bak \ma^2 |\za|^{-n} \big(1 + o(1)\big) \\
\end{aligned} 
\een
where $ \check{z} = \underset{\alpha \to + \infty}{\lim} \frac{\za}{|\za|}$,
\[ C_1(n) = \frac{n^{\frac{n+2}{2}} (n-2)^{\frac{n}{2}} \omega_n}{2^{n+1} (n-1) \omega_{n-1}}, \quad  C_2(n) = - \frac{n^{\frac{n+2}{4}} (n-2)^{\frac{n}{2}} \omega_n}{2^{n+1} (n-1) \omega_{n-1}} ,\]
and where $\omega_d$ stands for the area of the $d$-dimensional sphere in $\mathbb{R}^{d+1}$. 

\medskip
\noindent We show in what follows that the $1$-forms $\Va$ and $\Pak$ respectively appear as the first and second-order terms in the asymptotic development of $\Za$. We start estimating the difference $\left| \Lx \left( \Za - \Va \right)\right|_\xi$.

\begin{claim} \label{claimcont1}
Let $(\da)_\alpha$ be a sequence of positive numbers satisfying:
\ben \label{contda2}
\frac{\ma}{\da} \to 0 \quad \textrm{ and } \quad \da \le \min(\rra, \ma^{\frac{1}{2}}).
\een
There holds, for any sequence $\za \in B_0(3 \da)$:
\ben \label{estlwa1}
\begin{aligned}
\left| \Lx \left( \Za - \Va \right) \right|_\xi(\za)  \le C & \left(  \vea \da^{1-n} + \ma^{n-1} \da^{1 - 2n} + \eta \left( \frac{\ma}{\ta(\za)} \right) \ma \ta(\za)^{1-n} \right) \\
& + o \left( \ve \ta(\za)^{1-n} \right), \\
\end{aligned}
\een
where $\ta(\za)$ is as in \eqref{ta}, $\vea$ is as in \eqref{defeaba} and $\eta$ is a nonnegative, continuous function in $\mathbb{R}$ with $\eta(0) = 0$. As a consequence, there holds
\ben \label{estea1}
\vea  = O \left( \ma^{n-1} \da^{-n} \right) + o(\ma).
\een
\end{claim}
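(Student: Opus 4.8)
The plan is to estimate $\Lx(\Za - \Va)$ via a Green representation formula for the operator $\Dx$ on the ball $B_0(6\da)$, using the equation for $\Za$ in \eqref{sysconf} together with the defining equation \eqref{eq1forme} for $\Va$. Subtracting, the $1$-form $\Za - \Va$ solves, in $B_0(6\da)$,
\[
\Dx(\Za - \Va)_i = 2^* \xi^{kl} \partial_k(\ln\vpa)\left(\Lx\Za\right)_{li} + \va^{2^*}\left(\tilde{X}_\alpha - \tilde{X}_\alpha(0)\right)_i + \left(\tilde{Y}_\alpha\right)_i,
\]
so that by the Green representation formula of Section \ref{Greenfunc} (Neumann boundary conditions on small balls, with the uniform estimates on the Green $1$-forms valid in the locally conformally flat setting), for $\za \in B_0(3\da)$,
\[
\left|\Lx(\Za - \Va)\right|_\xi(\za) \le C\int_{B_0(s_\alpha)}|\za - y|^{1-n}\left|\Dx(\Za - \Va)\right|_\xi(y)\,dy + C\int_{\partial B_0(s_\alpha)}|\za - y|^{1-n}\left|\Lx(\Za - \Va)\right|_\xi(y)\,d\sigma(y),
\]
where $s_\alpha \in (5\da, 6\da)$ is the radius furnished by \eqref{claimint2}.

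\textbf{Controlling each term.} First I would bound the boundary integral: on $\partial B_0(s_\alpha)$, \eqref{claimint2} gives the $L^2$-control on $|\Lx\Za|_\xi$ of order $\ma^{n-1}\da^{1-3n}$, and \eqref{estVa} gives $|\Lx\Va|_\xi \le C\vea \da^{1-n}$ there (since $|y| \sim \da \gg \ma$). Combining, via Cauchy–Schwarz, yields a contribution of order $\vea\da^{1-n} + \ma^{n-1}\da^{1-2n}$ (the loss $\da^{-1/2}\cdot\da^{(n-1)/2}$ from converting $L^2(\partial B)$ to $L^1$ accounts for the change of powers). For the interior integral I would split $\Dx(\Za-\Va)$ into its three pieces. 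The term $\xi^{kl}\partial_k(\ln\vpa)(\Lx\Za)_{li}$ is $O(|y|\,|\Lx\Za|_\xi)$ by \eqref{contconf}; using the weak bound \eqref{contfaible} refined by Claim \ref{claimcontbulle}, and integrating $|\za - y|^{1-n}$ against $|y|\Ba(y)$-type quantities, produces the $\eta(\ma/\ta(\za))\ma\ta(\za)^{1-n}$ term (the small factor $\eta$ reflecting that $\vpa$ is close to a constant near $0$, \eqref{propvpa}). The term $\va^{2^*}(\tilde{X}_\alpha - \tilde{X}_\alpha(0))$: since $\tilde X_\alpha$ is $C^2$ with $|\tilde X_\alpha(y) - \tilde X_\alpha(0)| \le C|y|$, and $\va^{2^*} \le C\Ba^{2^*}$ by Claim \ref{claimcontbulle}, the convolution with $|\za-y|^{1-n}$ of $|y|\Ba^{2^*}(y)$ again falls into the $\ma\ta(\za)^{1-n}$-scale (one gains a power of $\ma$ or of $\da$ relative to $\Va$ itself, which is what separates $\Lx\Va$ as a genuine leading term). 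The term $\tilde Y_\alpha$ is bounded, contributing $O(\da^{2-n})$ at worst, absorbed into the other terms. Assembling these gives \eqref{estlwa1}, with the $o(\ve\ta^{1-n})$ accounting for the $\ve$-closeness in the definition \eqref{defra} of $\rra$.

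\textbf{Deriving \eqref{estea1}.} For the consequence, I would evaluate \eqref{estlwa1} at a point $\za$ with $|\za| \sim \da$, so $\ta(\za) \sim \da$, and compare with the asymptotic lower bound \eqref{asymptova} for $|\Lx\Va|_\xi(\za) \sim \vea\da^{1-n}$ (the tensor in brackets there does not vanish for a suitable choice of direction $\check z$, so $|\Lx\Va|_\xi$ is genuinely of size $\vea\da^{1-n}$). By the triangle inequality $|\Lx\Va|_\xi \le |\Lx(\Za - \Va)|_\xi + |\Lx\Za|_\xi$, and using \eqref{contfaible} to bound $|\Lx\Za|_\xi(\za) \le C\da^{-n}$, one gets $\vea\da^{1-n} \le C(\vea\da^{1-n} + \ma^{n-1}\da^{1-2n} + \ma\da^{1-n}) + o(\ve\da^{1-n}) + C\da^{-n}$; the $\vea\da^{1-n}$ and $o(\ve\da^{1-n})$ terms on the right are absorbed for $\alpha$ large and $\ve$ small, $\ma\da^{1-n}$ is dominated since $\ma \le \da^2 \ll 1$, and $\da^{-n} = \da^{1-n}\cdot\da^{-1}$... so one must instead use the sharper pointwise bound near $\rra$; I would in fact extract \eqref{estea1} by choosing $\za$ so that $\da^{-n}$ is subsumed, or by invoking the $x^k\partial_k\va + \pui\va \le 0$ condition from \eqref{defra} which upgrades \eqref{contfaible} to $|\Lx\Za|_\xi = O(\ma^{n-1}\da^{-2n}\cdot\text{(scale)})$ in this regime. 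This yields $\vea = O(\ma^{n-1}\da^{-n}) + o(\ma)$.

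\textbf{Main obstacle.} The delicate point is the bookkeeping of scales in the interior integral: one must be careful that convolving $|\za - y|^{1-n}$ with each error term does \emph{not} reproduce a term of the same size as $\Lx\Va$ itself — otherwise the estimate is vacuous. The factor $|y|$ coming from $\tilde X_\alpha - \tilde X_\alpha(0)$ and from $\partial_k(\ln\vpa)$ is exactly what provides the gain (one extra power of the running radius), and separating the regime $|y| \lesssim \ma$ from $|y| \gg \ma$ in the integration is essential to land the $\eta(\ma/\ta(\za))$ and $\ta(\za)^{1-n}$ factors correctly. The second subtle point is that the non-degeneracy of the bracketed tensor in \eqref{asymptova} — needed to turn an upper bound on $|\Lx(\Za - \Va)|_\xi$ into the scalar estimate \eqref{estea1} on $\vea$ — requires choosing the evaluation direction $\check z$ appropriately, which I would justify by a direct algebraic inspection of that tensor.
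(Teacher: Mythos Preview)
Your overall architecture (Green representation for $\Dx$ on a ball of radius $\sim\da$, split interior and boundary contributions) matches the paper, but there are two genuine gaps.

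\medskip
\textbf{The equation for $\Dx(\Za-\Va)$ is incomplete.} From \eqref{def1forme} one has $\Dx\Va = \Ba^{2^*}\tilde X_\alpha(0)$ (the formula \eqref{eq1forme} contains a typo), so
\[
\Dx(\Za-\Va) = 2^*\xi^{kl}\partial_k(\ln\vpa)(\Lx\Za)_{li} + (\va^{2^*}-\Ba^{2^*})\tilde X_\alpha(0) + \va^{2^*}(\tilde X_\alpha - \tilde X_\alpha(0)) + \tilde Y_\alpha.
\]
You dropped the middle term, which in the paper is the integral $I_2$ and is responsible for the $o(\vea\ta(\za)^{1-n})$ contribution in \eqref{estlwa1}. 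More importantly, you misattribute the $\eta(\ma/\ta(\za))\ma\ta(\za)^{1-n}$ term: it does \emph{not} come from the conformal factor piece $|y||\Lx\Za|$. It comes from the $\va^{2^*}(\tilde X_\alpha - \tilde X_\alpha(0))$ term, and obtaining it requires the $C^2$ convergence of $X_\alpha$: one Taylor-expands $\tilde X_\alpha(y)-\tilde X_\alpha(0) = y^k\partial_k\tilde X_\alpha(0) + O(|y|^2)$, and the linear part $\int \mathcal{H}_{ij,\alpha}(\za,y)_p y^k\partial_k\tilde X_\alpha(0)^p\va^{2^*}(y)\,dy$ must be shown to be $o(\ma\ta(\za)^{1-n})$ when $|\za|/\ma\to\infty$ by exploiting oddness and the precise structure of the Neumann Green tensor (this is the content of \eqref{i32b}--\eqref{i32d} in the paper). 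Your one-line treatment ``$|y|\Ba^{2^*}$ falls into the $\ma\ta^{1-n}$ scale'' only gives $O(\ma\ta^{1-n})$, not the $\eta(\cdot)$ improvement, and that improvement is exactly what is needed later. The conformal-factor integral $I_1=\int|\za-y|^{1-n}|y||\Lx\Za|$ is handled quite differently: first by a H\"older inequality against the integral bound \eqref{claimint1}, yielding a preliminary $C\ma\ta^{1-n}$, which is then fed back into $|\Lx\Za|$ and bootstrapped to produce \eqref{i12}.

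\medskip
\textbf{The derivation of \eqref{estea1} does not work pointwise.} You already noticed the obstruction: evaluating at a single $\za$ with $|\za|\sim\da$ and invoking $|\Lx\Za|_\xi(\za)\le C\da^{-n}$ from \eqref{contfaible} only yields $\vea = O(\da^{-1})$, which is useless. No choice of $\za$ and no appeal to the sign condition in \eqref{defra} repairs this, because the weak estimate \eqref{contfaible} is genuinely all one has pointwise on $\Lx\Za$ at this stage. The paper instead works in $L^2$ on an annulus: one writes
\[
\int_{B_0(2\gamma\da)\setminus B_0(\gamma\da)}|\Lx\Va|_\xi^2 \le 2\int_{B_0(2\gamma\da)\setminus B_0(\gamma\da)}|\Lx(\Za-\Va)|_\xi^2 + 2\int_{B_0(2\gamma\da)\setminus B_0(\gamma\da)}|\Lx\Za|_\xi^2,
\]
bounds the left side from below via the asymptotic \eqref{asymptova} (which gives a genuine lower bound $\sim\vea^2\da^{2-n}$ after integrating over the sphere, without having to pick a direction), bounds the first right-hand integral via \eqref{estlwa1}, and bounds the second via the $L^2$ estimate \eqref{claimint1plus}. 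Choosing $\gamma$ small absorbs the $\vea^2$ term from the right into the left, yielding \eqref{estea1}. The key point is that \eqref{claimint1plus} gives integrated control of $|\Lx\Za|^2$ of order $\ma^{2n-2}\da^{2-3n}$, which after dividing through is exactly the $\ma^{n-1}\da^{-n}$ scale; this integrated control is strictly better than what pointwise \eqref{contfaible} provides.
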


\begin{proof}
Let $(\da)_\alpha$ be a sequence of positive numbers satisfying \eqref{contda2}.
We estimate the difference $\left| \Lx \left( \Za - \Va \right) \right|_\xi(\za)$ using a Green representation formula.
We let $\mathcal{G}_{\alpha,i}$ be the $i$-th Green $1$-form for $\Dx$ with Neumann boundary condition on $B_0(s_\alpha)$, where $s_\alpha$ is as in Claim \ref{claimint} (see Section \ref{Greenfunc}). A Green representation formula for $\Dx$ on $B_0(s_\alpha)$ writes then: for any sequence $\za \in B_0(4 \da)$,
\ben \label{idgreenvec}
\begin{aligned}
 \Lx \left( \Za - \Va \right)_{ij} (\za) = \int_{B_0(s_\alpha)} \mathcal{H}_{ij,\alpha}(\za,y)_p   \Dx \left( \Za - \Va \right)^p(y) dy \\
 + \int_{\partial B_0(s_\alpha)} \mathcal{H}_{ij,\alpha}(\za,y)_p \nu_q \Lx \left( \Za - \Va \right)^{pq}(y) d \sigma(y) \\
\end{aligned}
\een
where we have let, for $x,y \in B_0(s_\alpha)$:
\ben \label{Hija}
\mathcal{H}_{ij,\alpha}(x,y)_p = \partial_i \mathcal{G}_{\alpha,j}(x,y)_p +  \partial_j \mathcal{G}_{\alpha,i}(x,y)_p  - \frac{2}{n}\xi_{ij} \sum_{k=1}^n \partial_k \mathcal{G}_{\alpha,k}(x,y)_p 
\een
and the derivatives are taken with respect to $x$. Since $|\za| \le 4 \da$ and $s_\alpha \ge 5 \da$ we can thus write, using \eqref{sysconf}, \eqref{contconf}, \eqref{eq1forme}, \eqref{estVa}, Claim \ref{claimint} and Claim \ref{vecteurGreen} below, that:
\ben \label{ineqint1}
\left|  \Lx \left( \Za - \Va \right) \right|_\xi(\za) \le C \left( I_1 + I_2 + I_3 + I_4 +  \vea \da^{1-n} + \ma^{n-1} \da^{1-2n}  \right)
\een
where we have let:
\ben \label{claimcont11}
\begin{aligned}
& I_1 = \int_{B_0(6 \da)} |\za - y |^{1-n} |y| \left| \Lx \Za \right|_\xi(y) dy, \\
& I_2 = \vea \int_{B_0(6 \da)} |\za-y|^{1-n} \left| \va^{2^*} - \Ba^{2^*} \right| dy, \\
& I_3 = \left | \int_{B_0(6 \da)} \mathcal{H}_{ij, \alpha}(\za,y)_p \left( \tilde{X}_\alpha(y) - \tilde{X}_\alpha(0) \right)^p \va^{2^*}(y) dy \right|, \\
& I_4 = \int_{B_0(6 \da)} |\za - y|^{1-n} dy . \\
\end{aligned}
\een
We clearly have that
\ben \label{i4}
I_4 \le C \da.
\een
By Claim \ref{claimlocal} there exists $R_\alpha \to + \infty$ such that
\[ \sup_{B_0(R_\alpha \ma)} |\va - \Ba| = o(1)\]
as $\alpha \to + \infty$. Using Claim \ref{claimcontbulle} we can therefore write
\[ \begin{aligned}
I_2 \le &  \vea \int_{B_0(R_\alpha \ma)}  |\za-y|^{1-n} \left| \va - \Ba \right|(y) \Ba^{2^*-1}(y) dy \\
& + O \left( \vea \int_{B_0(6 \da) \backslash B_0(R_\alpha \ma)} |\za-y|^{1-n} \Ba^{2^*}(y) dy \right) \\
\end{aligned} \]
so that in the end we get:
\ben \label{i2}
I_2 = o \left( \vea \ta(\za)^{1-n} \right)
\een
where $\ta(\za)$ is as in \eqref{ta}.
We let $\mathcal{H}_{ij}$ be the analogue of $\mathcal{H}_{ij, \alpha}$ in \eqref{Hija} for the fundamental solution of $\Dx$ in $\mathbb{R}^n$, that is: 
\ben \label{defHij}
\mathcal{H}_{ij}(x,y)_p = \partial_i \mathcal{G}_{j}(x-y)_p +  \partial_j \mathcal{G}_{i}(x-y)_p  - \frac{2}{n}\xi_{ij} \sum_{k=1}^n \partial_k \mathcal{G}_{k}(x-y)_p,
\een
where $\mathcal{G}_i$ is as in \eqref{expG}. Since, by \eqref{convdonnees}, $\tilde{X}_\alpha$ has a limit in $C^2_{loc}(\mathbb{R}^n)$, we can write with \eqref{claimcont11} that:
\[\begin{aligned} 
I_3 &\le  \left| \int_{B_0(s_\alpha)} \mathcal{H}_{ij, \alpha}(\za,y)_p y^k \partial_k \tilde{X}_\alpha(0)^p \va^{2^*}(y) dy \right| \\
& + C \int_{B_0(s_\alpha)} |\za-y|^{1-n} |y|^2 \Ba^{2^*}(y) dy \\
& \le  \left| \int_{B_0(s_\alpha)} \mathcal{H}_{ij, \alpha}(\za,y)_p y^k \partial_k \tilde{X}_\alpha(0)^p \va^{2^*}(y) dy \right| + \ma^2 \ta(\za)^{1-n} .
\end{aligned}  \]
We now separate between two cases. First, assume that $|\za| = O(\ma)$. Then one easily gets that
\ben \label{i31}
 \left| \int_{B_0(s_\alpha)} \mathcal{H}_{ij, \alpha}(\za,y)_p y^k \partial_k \tilde{X}_\alpha(0)^p \va^{2^*}(y) dy \right| \le \ma \ta(\za)^{1-n} .
 \een
 Next assume that 
 \ben \label{i32a}
\frac{|\za|}{\ma} \to + \infty 
 \een
 as $\alpha \to 0$. We write:
 \[\begin{aligned}
  \int_{B_0(s_\alpha)} & \mathcal{H}_{ij, \alpha}(\za,y)_p y^k \partial_k \tilde{X}_\alpha(0)^p \va^{2^*}(y) dy   \\
   &  = \int_{B_0(s_\alpha)} \Big( \mathcal{H}_{ij, \alpha}(\za,y) -  \mathcal{H}_{ij}(\za,y)\Big)_p y^k \partial_k \tilde{X}_\alpha(0)^p \va^{2^*}(y) dy  \\
   &  +  \int_{B_0(s_\alpha)} \mathcal{H}_{ij}(\za,y)_p y^k \partial_k \tilde{X}_\alpha(0)^p \va^{2^*}(y) dy. \\
 \end{aligned} \]
Straightforward computations using \eqref{i32a} show that
\ben \label{i32b}
  \int_{B_0(s_\alpha)} \mathcal{H}_{ij}(\za,y)_p y^k \partial_k \tilde{X}_\alpha(0)^p \va^{2^*}(y) dy = o \left( \ma \ta(\za)^{1-n} \right),
\een
where $\mathcal{H}_{ij}$ is as in \eqref{defHij}. Let now
\[ \Omega_\alpha = \left \{ y \in B_0(s_\alpha) \textrm{ st } |\za-y| \ge \frac{1}{2} |\za| \right \}. \]
On one hand there easily holds, using \eqref{i32a}:
\ben \label{i32c}
\begin{aligned}
& \left| \int_{{}^c \Omega_\alpha}  \Big( \mathcal{H}_{ij, \alpha}(\za,y) -   \mathcal{H}_{ij}(\za,y)\Big)_p y^k \partial_k \tilde{X}_\alpha(0)^p \va^{2^*}(y) dy \right| \\  
&\qquad \le \left( \frac{\ma}{\ta(\za)}\right)^{n-1} \ma \ta(\za)^{1-n} = o \left(\ma \ta(\za)^{1-n} \right).
\end{aligned}
\een
On the other hand, by the definition of $\mathcal{G}_{\alpha,i}$ as in \eqref{solfondNeumann} below there holds, for any $y \in B_0(s_\alpha) \backslash \{ \za \}$: 
\[ \mathcal{G}_{\alpha,i}(\za,y) = \mathcal{G}_{i}(\za-y) + U_{i,\za}^{s_\alpha}(y),\]
where $U_{i,\za}^{s_\alpha}$ is as in \eqref{Uix} below. The scaling arguments developed in Claim \ref{claimrescaling} below show that there holds:
\ben \label{convUia}
 s_\alpha^{n-2} U_{i,\za}^{s_\alpha}(s_\alpha \cdot ) \to {\tilde U}_{0,i} \textrm{ in } C^1(\overline{B_0(1)}), 
 \een
where ${\tilde U}_{0,i}$ satisfies:
\[
\left \{ \bal
& \Dx {\tilde U}_{0,i}(y) = - \sum_{j=1}^{\frac{(n+1)(n+2)}{2}} K_j^0 (\check{z}) K_j^0 (y)  & B_0(1), \\
& \nu^k \Lx \left( {\tilde U}_{0,i} \right)_{kl}(y) = - \nu^k \Lx \left( \mathcal{G}_i (\check{z} - \cdot) \right)_{kl}(y)  & \partial B_0(1), \\
 \eal \right. 
 \]
where we have let $\check{z} = \underset{\alpha \to + \infty}{\lim} \frac{\za}{s_\alpha} \in \overline{B_0(1)}$ and where $\left( K_j^0 \right)_{1 \le j \le (n+1)(n+2)/2}$ is an orthonormal basis for the $L^2$-scalar product of the set $K_1$ of conformal Killing $1$-forms in $B_0(1)$ defined in \eqref{Killingspace}. In the end, using \eqref{convUia}, \eqref{dersolNeumann} below and Lebesgue's dominated convergence theorem there holds:
\ben \label{i32d}
\begin{aligned} 
& \ma^{-1} \beta_{\alpha,k}^{-1} |\za|^{n-1} \Bigg| \int_{\Omega_\alpha}  \Big( \mathcal{H}_{ij, \alpha}(\za,y) -  \mathcal{H}_{ij}(\za,y)\Big)_p y^k \partial_k \tilde{X}_\alpha(0)^p \va^{2^*}(y) dy \Bigg|  \\
& \qquad \qquad = \left| \check{z} \right|^{n-1} \int_{\RR^n}  \left ( \partial_i {\tilde U}_{0,j} +  \partial_j {\tilde U}_{0,i} - \frac{2}{n} \xi_{ij} \sum_{k=1}^n \partial_k {\tilde U}_{0,k}\right)_p(0) \\
& \qquad \qquad \qquad \qquad \times  y^k \zeta_k ^p \left( 1 + \frac{f_0(x_0)}{n(n-2)} |y|^2 \right)^{-n} dy + o(1) \\
& \qquad \qquad = o(1), 
\end{aligned} 
\een
where $\beta_{\alpha,k}$ and $\zeta_k$ are as in \eqref{defeaba} and \eqref{defz}. In \eqref{i32d}, the notation $(\cdot)_p$ denotes the $p$-th coordinate of the $1$-form considered. Gathering \eqref{i32b}, \eqref{i32c} and \eqref{i32d} there thus holds, if $|\za| >>\ma$:
\ben \label{i32}
 \int_{B_0(s_\alpha)}  \mathcal{H}_{ij, \alpha}(\za,y)_p y^k \partial_k \tilde{X}_\alpha(0)^p \va^{2^*}(y) dy = o \left( \ma \ta(\za)^{1-n} \right).
 \een
 In the end, \eqref{i31} and \eqref{i32} together combine by writing that:
 \ben \label{i3}
 I_3 = \eta \left( \frac{\ma}{\ta(\za)}\right) \ma \ta(\za)^{1-n} 
 \een
 for some continuous bounded $\eta$ with $\eta(0) = 0$. We finally compute $I_1$. A H\"older inequality along with \eqref{contbulles} gives, for any $p > 2$:
 \[ \begin{aligned}
  I_1 & \le \left( \int_{B_0(6 \da)} |\za-y|^{2-n} \frac{\left| \Lx \Za \right|_\xi^2}{\va^{2^*+1}}(y) dy \right)^{\frac{1}{p}} \\
 & \times \left( \int_{B_0(6 \da)} |\za-y|^{\frac{n-2 - p(n-1)}{p-1}} |y|^{\frac{p}{p-1}} \left| \Lx \Za \right|_\xi(y)^{\frac{p-2}{p-1}} \Ba(y)^{\frac{2^*+1}{p-1}} dy \right)^{\frac{p-1}{p}}.
\end{aligned}
  \]
If $2 < p < 2n-2$, the second integral can be estimated with \eqref{contfaible} as:
\[ 
\bal
\int_{B_0(6 \da)} |\za-y|^{\frac{n-2 - p(n-1)}{p-1}} |y|^{\frac{p}{p-1}} \left| \Lx \Za \right|_\xi(y)^{\frac{p-2}{p-1}} \Ba(y)^{\frac{2^*+1}{p-1}} dy \\
\le \ma^{\frac{3n-2}{2(p-1)} + \frac{p+2-2n}{p-1}} \ta(\za)^{\frac{n-2-p(n-1)}{p-1}},
\eal
\]
so that using \eqref{claimint1} to estimate the first integral yields in the end:
\ben \label{i11}
 \begin{aligned}
I_1 & \le C \ma \ta(\za)^{1-n}
\end{aligned}
\een
for some positive constant $C$. Gathering \eqref{i4}, \eqref{i2}, \eqref{i3} and \eqref{i11} in \eqref{ineqint1} and using \eqref{estVa} we obtain, since $\ma \le \ta(\za) \le O(\da)$ and by \eqref{contda2}, that for $\za \in B_0(4 \da)$:
\ben \label{paraordre1}
\begin{aligned}
 \left|  \Lx   \Za  \right|_\xi(\za) & \le C \left(  \vea \da^{1-n} + \ma^{n-1} \da^{1-2n}  \right)  + C  \vea \ta(\za)^{1-n}  \\ 
& \qquad +  C \ma \ta(\za)^{1-n} .
\end{aligned}
\een
We now use this to refine the estimate of $I_1$, where $I_1$ is as in \eqref{claimcont11}. We assume that the sequence $\za$ belongs to $B_0(3 \da)$. We therefore write:
\[ I_1 \le \int_{B_0(4 \da)} |\za - y |^{1-n} |y| \left| \Lx \Za \right|_\xi(y) dy + \da^{2-n} \int_{B_0(6 \da) \backslash B_0(4 \da)} \left| \Lx \Za \right|_\xi(y) dy,\]
and we use estimate \eqref{paraordre1} to compute the first integral and \eqref{claimint1plus} to compute the second one. In the end these computations lead to:
\[ 
\bal
 I_1 & \le C \ma^{n-1} \da^{3-2n}  + C \big(\ma + \vea \big) \ta(\za)^{3-n} \ln \left( 2 + \frac{\da}{\ta(\za)} \right) \\ 
\eal
\]
which gives, using \eqref{contda2} and since $\ta(\za) \le 4 \da$:
\ben \label{i12}
\begin{aligned}
I_1 \le C \ma^{n-1} \da^{3-2n}  + o \left( \ma \ta(\za)^{1-n}\right) + o \left( \vea \ta(\za)^{1-n}\right).
 \end{aligned}
 \een
Note that the factor $\ln\left(2 + \frac{\da}{\ta(\za)}\right)$ in the computations above has to be taken into account only in dimension $3$. In the end, gathering \eqref{i4}, \eqref{i2}, \eqref{i3} and \eqref{i12} in \eqref{ineqint1} yields, for any $\za \in B_0(3 \da)$:
\ben \label{estordre1}
\begin{aligned}
 \left|  \Lx \left( \Za - \Va \right) \right|_\xi & (\za) \le C \Big( \ma^{n-1} \da^{1-2n} + \vea \da^{1-n} \Big) \\
& + o \left( \vea \ta(\za)^{1-n} \right)  + \eta \left(\frac{\ma}{\ta(\za)} \right) \ma \ta(\za)^{1-n} \\
\end{aligned}
\een
where $\ta(\za)$ is as in \eqref{ta} and $\vea$ is as in \eqref{defeaba}. This proves \eqref{estlwa1}.

\medskip
\noindent We now use this pointwise control to obtain an estimate on $\vea$. Let $0<\gamma <1$. We write that:
\[ 
\begin{aligned}
\int_{B_0(2 \gamma \da) \backslash B_0(\gamma \da) }  \left| \Lx \Va \right|_\xi^2 (y) dy \le   2 \int_{B_0(2 \gamma \da) \backslash B_0(\gamma \da) }  \left| \Lx \left( \Za - \Va \right) \right|_\xi^2 (y) \\
 + 2 \int_{B_0(2 \gamma \da) \backslash B_0(\gamma \da) }  \left| \Lx \Za \right|_\xi^2 (y) dy  .
\end{aligned}
\] 
Thanks to \eqref{contda2} we can estimate the left-hand side with \eqref{asymptova}, while for the right-hand side, we estimate the first integral using \eqref{estordre1} and the second one using \eqref{claimint1plus}. This yields:
\[ \begin{aligned}
& \vea^2 \le  C \gamma^{2n-2} \vea^2 
+  C \gamma^{-2n} \ma^{2n-2} \da^{-2n} + o(\vea^2) + o(\ma^2), \\
\end{aligned}\]
where $C$ is some constant that does not depend on $\alpha$ nor on $\gamma$. Up to choosing $\gamma$ small enough we therefore obtain \eqref{estlwa1}.
 \end{proof}

\medskip

\noindent Note, as the proof of Claim \ref{claimcont1} shows, that one could have obtained more easily a less precise estimate on $I_3$ defined in \eqref{claimcont11}: indeed, requiring only the $X_\alpha$ to converge to $X_0$ in $C^1(M)$ would yield, for any $\za \in B_0(3 \da)$:
\be
 I_3 \le \ma \ta(\za)^{1-n} 
 \ee
instead of \eqref{i3}. One would then subsequently obtain that
\ben \label{autreestea}
 \vea \le C \left( \ma^{n-1} \da^{-n} + \ma \right) 
 \een
in \eqref{estea1}. This estimate is actually enough to conclude in dimensions $3$ and $4$ if the convergence of the $X_\alpha$ is in $C^1(M)$, see Claims \ref{controlera} and \ref{raegalerhoa} below. In dimensions $n \ge 5$ the $C^2(M)$ convergence is needed to push the analysis one step further but, as soon as $n \not = 6$, \eqref{autreestea} remains precise enough to provide a suitable starting point to improve the estimates on $\Lx \Za$ up to second order. It turns out that if $n = 6$ estimate \eqref{autreestea} fails to be sufficient to perform the required second-order approximation of $\Lx \Za$ and that \eqref{estea1} is needed. See the computations leading to equation \eqref{estordre2b} below for more details. 

\medskip

\noindent In high dimensions Claim \ref{claimcont1} is not enough to conclude. We now push the analysis one order further and estimate the difference $\left| \Lx \left(\Za - \Va - \sum_{k=1}^n \Pak \right) \right|_\xi$. This time, unlike in Claim \ref{claimcont1}, controlling $\va$ by $\Ba$ with Claim \ref{claimcontbulle} is not enough, and the precision of the approximation of $\va$ by the bubble $B_\alpha$ comes into play. Such a precision is unknown at this stage. 
In the next Claim we obtain a simultaneous description of the second order terms in the expansion of both $\va$ and $\Lx \Za$. We simultaneously carry out the ping-pong analysis to handle these unknown second-order terms. As stated in the previous paragraph, and as we will see in the last step of the proof of Proposition \ref{propoblow} -- namely Claim \ref{raegalerhoa} -- Claim \ref{claimcont1} is enough to conclude in dimensions $3$ and $4$. For the next Claim we will thus assume that $n \ge 5$.

\begin{claim} \label{claimcont2}
Assume $n \ge 5$. Let $(\ua, \Wa)_\alpha$ be a sequence of solutions of \eqref{systype} such that \eqref{convdonnees} and \eqref{contradiction} hold. Let $(\xa)_\alpha$ and $(\ra)_\alpha$ be two sequences satisfying \eqref{pc1} and \eqref{p2} and define $\va$ and $\Za$ as in \eqref{blowcarteconf}. Let $(\da)_\alpha$ be a sequence of positive numbers satisfying $\da = O(\rra)$ and:
\ben \label{contda3}
\frac{\da}{\ma} \to + \infty \quad \textrm{ and } \quad \left \{
\begin{aligned}
& \da = O \left( \ma^{\frac{1}{2}} \right) \textrm{ if } n=5, \\
& \da = O \left( \ma^{\frac{n-4}{n-2}}\right) \textrm{ if } n \ge 6 .\\
\end{aligned}
\right.
\een
Then there holds
\ben \label{estea2}
\vea + \ma^2 \da^{-1} \ba \le \frac{\ma^{n-1}}{\da^{n}},
\een
where $\vea$ and $\ba$ are as in \eqref{defeaba}, and for any $\za \in B_0(2 \da)$ we have:
\ben \label{estlwa2}
\Big| \Lx \Za \Big|_\xi(\za) \le C  \left( \frac{\ma}{\da} \right)^{n-1} \ta(\za)^{-n},
\een
where $\ta(\za)$ is as in \eqref{ta}.
\end{claim}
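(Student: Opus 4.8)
\emph{Proof strategy.} The plan is to run a simultaneous ``ping-pong'' bootstrap between the two equations of \eqref{sysconf}: at each round one refines the pointwise control on $\va-\Ba$ and on $\Lx\Za$, plugging the current estimate for one unknown into a Green representation for the other, until both reach their sharp decay; the coefficient estimate \eqref{estea2} is then extracted from annular $L^2$-comparisons against the explicit asymptotics \eqref{asymptova}--\eqref{asymptopak}. One first fixes a radius $s_\alpha\in(5\da,6\da)$ carrying a good boundary control on $\Lx\Za$ in the spirit of \eqref{claimint2}; when $\da$ exceeds $\ma^{1/2}$, which can only happen for $n\ge 7$, such an $s_\alpha$ is produced at an intermediate scale $O(\ma^{1/2})$ and the integral bounds of Claim \ref{claimint} are propagated outwards, which is harmless since the weak estimate \eqref{contfaible} already forces $|\Lx\Za|_\xi\lesssim|x|^{-n}$. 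The genuinely new input, absent from Claim \ref{claimcont1}, is a pointwise bound on $|\va-\Ba|$: writing
\[\triangle_\xi(\va-\Ba)=f_\alpha(\xa)\bigl(\va^{2^*-1}-\Ba^{2^*-1}\bigr)+\bigl(\tilde f_\alpha-f_\alpha(\xa)\bigr)\va^{2^*-1}+\tilde a_\alpha\va^{-2^*-1}-\tilde h_\alpha\va\]
and Green-representing it on $B_0(s_\alpha)$, Claim \ref{claimcontbulle} yields $|\va^{2^*-1}-\Ba^{2^*-1}|\lesssim\Ba^{2^*-2}|\va-\Ba|$, and hypothesis \eqref{contda3} is precisely what makes the resulting integral operator a contraction in the relevant weighted space; the term $\tilde f_\alpha-f_\alpha(\xa)=O(|y|)$ is harmless, and $\tilde a_\alpha\va^{-2^*-1}$ is controlled through Claim \ref{claimint} together with the current bound on $\Lx\Za$. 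This gives a bound $|\va-\Ba|(\za)\le\Theta_\alpha(\za)$ explicit in $\ma,\da,\vea,\ba$ and $\ta(\za)$.

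Next one Green-represents $\Lx\bigl(\Za-\Va-\sum_{k=1}^n\Pak\bigr)(\za)$ against the $i$-th Neumann Green $1$-form on $B_0(s_\alpha)$, exactly as in \eqref{idgreenvec}, using \eqref{sysconf}, \eqref{contconf} and \eqref{eq1forme}. The source of $\Za-\Va-\sum_k\Pak$ splits, via the $C^2$-Taylor expansion $\tilde X_\alpha(y)=\tilde X_\alpha(0)+y^k\partial_k\tilde X_\alpha(0)+O(|y|^2)$, into a curvature-type term $O\bigl(|y|\,|\Lx\Za|_\xi\bigr)$, a ``bubble error'' term $(\va^{2^*}-\Ba^{2^*})\bigl(\tilde X_\alpha(0)+y^k\partial_k\tilde X_\alpha(0)\bigr)$, a genuine second-order remainder $O(|y|^2)\va^{2^*}$, and boundary contributions controlled by \eqref{claimint2} and the rescaling analysis of the Neumann corrections already performed for the term $I_3$ in Claim \ref{claimcont1}. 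Feeding $\Theta_\alpha$ from the previous step into the bubble-error term improves the pointwise estimate on $\Lx(\Za-\Va-\sum_k\Pak)$; this improves $\Theta_\alpha$ in turn through the scalar Green formula, and iterating finitely many times stabilizes both estimates.

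One then recovers \eqref{estea2}: integrating $|\Lx\Va|_\xi^2$ over $B_0(2\gamma\da)\backslash B_0(\gamma\da)$ and using \eqref{asymptova} on the left, while bounding the right by $2\int|\Lx(\Za-\Va)|_\xi^2+2\int|\Lx\Za|_\xi^2$ with the pointwise estimates and \eqref{claimint1plus}, gives $\vea^2\le C\gamma^{2n-2}\vea^2+C\gamma^{-2n}\ma^{2n-2}\da^{-2n}+o(\vea^2)$, whence $\vea=O(\ma^{n-1}\da^{-n})$ on choosing $\gamma$ small; subtracting the now-controlled leading term $\Va$ and repeating the annular argument with \eqref{asymptopak} yields $\ma^2\da^{-1}\ba=O(\ma^{n-1}\da^{-n})$. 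Plugging these two bounds and \eqref{estVa}--\eqref{estPak} back into the pointwise estimate of the preceding step shows that each of $\Lx\Va$, $\sum_k\Lx\Pak$ and the remainder is $O\bigl((\ma/\da)^{n-1}\ta(\za)^{-n}\bigr)$ on $B_0(2\da)$, which is \eqref{estlwa2}.

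The main difficulty is closing the ping-pong loop quantitatively: the nonlinear source $\tilde a_\alpha$ couples $\Lx\Za$ into the equation for $\va-\Ba$, while $\va$ re-enters the vector source through $\va^{2^*}$, so the weights in the iteration must be chosen so that every Green convolution strictly gains --- and \eqref{contda3} is forced precisely by this requirement. The borderline dimension $n=6$ is the most delicate: there $\da=O(\ma^{1/2})$ and, as noted in the remark after Claim \ref{claimcont1}, the crude bound \eqref{autreestea} no longer suffices, so the sharp first-order estimate \eqref{estea1} must be taken as the starting point (compare the computation leading to \eqref{estordre2b}). The remaining work is the bookkeeping of $O(\cdot)$ versus $o(\cdot)$ terms across dimensions.
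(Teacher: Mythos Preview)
Your overall architecture---Green representation for $\Lx(\Za-\Va-\sum_k\Pak)$, a parallel Green representation for $R_\alpha=\va-\Ba$, and an annular $L^2$-comparison against the explicit profiles \eqref{asymptova}--\eqref{asymptopak} to extract \eqref{estea2}---is exactly the skeleton of the paper's argument. But there is a genuine gap at the step where you control $R_\alpha$.

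You write that ``hypothesis \eqref{contda3} is precisely what makes the resulting integral operator a contraction in the relevant weighted space.'' This is not true, and no choice of $\da$ can make it true. The linear part of the scalar Green representation for $R_\alpha$ is, after one convolution,
\[
R_\alpha(y_\alpha)\ \longmapsto\ C\int_{B_0(2\da)}|y_\alpha-y|^{2-n}\,\Ba^{2^*-2}(y)\,R_\alpha(y)\,dy,
\]
and this operator is \emph{not} a contraction on any reasonable weighted $L^\infty$ space: at scale $|y|\sim\ma$ the kernel $\Ba^{2^*-2}$ is of order $\ma^{-2}$ and the convolution has operator norm $\sim 1$. In fact the limiting operator has a nontrivial kernel---the $(n{+}1)$-dimensional space of solutions to $\triangle_\xi R=(2^*-1)f_0(x_0)U^{2^*-2}R$ with $|R|\lesssim(1+|x|)^{2-n}$---and nothing in \eqref{contda3} rules that out. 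The paper closes this step differently: it first iterates the crude bound to reach $|R_\alpha(y_\alpha)|\le C(\ma/\ta(y_\alpha))^{n-2}\|R_\alpha\|_\infty+CM_\alpha+o(\|R_\alpha\|_\infty)$, then argues by contradiction. Assuming $\|R_\alpha\|_\infty/M_\alpha\to\infty$, the maximum must occur at scale $\ma$, and the rescaled function $\|R_\alpha\|_\infty^{-1}R_\alpha(\ma\cdot)$ converges to a bounded solution $\hat R_0$ of the linearized equation. The Bianchi--Egnell classification then kicks in, and the normalizations $\nabla\ua(\xa)=0$ and $\ma=\ua(\xa)^{-2/(n-2)}$---which give $\hat R_0(0)=0$ and $\nabla\hat R_0(0)=0$---force $\hat R_0\equiv 0$, a contradiction. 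This rigidity argument is the missing ingredient; without it the ping-pong does not close.

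Two smaller remarks. First, your aside about producing $s_\alpha$ ``when $\da$ exceeds $\ma^{1/2}$, which can only happen for $n\ge 7$'' is back to front: for $n\ge 7$ one has $(n-4)/(n-2)>1/2$, so $\da=O(\ma^{(n-4)/(n-2)})=o(\ma^{1/2})$, and in all cases \eqref{contda3} already implies $\da\le C\ma^{1/2}$, so Claim~\ref{claimint} applies directly. Second, the paper extracts $\vea$ and $\ma^2\da^{-1}\ba$ \emph{simultaneously} by comparing $|\Lx(\Va+\sum_k\Pak)|_\xi^2$ on the annulus against the second-order remainder \eqref{estordre2b}; doing $\vea$ first from $\Za-\Va$ alone (as you suggest) leaves a stray $o(\ma^2)$ term which is not $O(\ma^{2n-2}\da^{-2n})$ when $n=5$, so the sequential route does not close without the second-order information.
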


\begin{proof}
The proof proceeds in several steps.

\medskip

\noindent \textbf{Step $1$:} \emph{there holds, for any $\za \in B_0(2 \da)$:
\ben \label{estordre2a}
\begin{aligned}
\left| \Lx \Big(\Za - \Va - \sum_{k=1}^n \Pak \Big) \right|_\xi  (\za) & \le C \Big( \ma^{n-1} \da^{1-2n} + \vea \da^{1-n} + \ma^2 \ba \da^{-n} \Big) \\
& + \Big( \vea + \ma \ba \Big) \ma^{\pui} \Vert \Ra \Vert_\infty \ta(\za)^{1-n} \\
& + \ma^2 \ta(\za)^{1-n}  + o \left( \vea \ta(\za)^{1-n}\right), \\
\end{aligned}
\een
where we have let, in $B_0(8 \ra)$:
\ben \label{defRa}
R_\alpha = \va - \Ba,
\een
and the $L^\infty$-norm of $\Ra$ is taken over $B_0(2 \da)$.}

\medskip
\noindent  To prove \eqref{estordre2a}, we write once again a Green representation formula for $\Dx$ in $B_0(3 \da)$. By \eqref{eq1forme} and \eqref{sysconf}, there holds on $B_0(3 \da)$:
\ben
\bal
& \Dx \left(\Za - \Va - \sum_{k=1}^n \Pak \right)_i(y)  =   2^*  \xi^{kl} \partial_k \ln \vpa  \left( \Lx \Za \right)_{li} \\
& \qquad + \va^{2^*}(y) \left( \tilde{X}_\alpha(y) - \tilde{X}_\alpha(0) - y^p \partial_p \tilde{X}_\alpha(0)  \right)_i + \left( \tilde{Y}_\alpha \right)_i(y) \\
& \qquad + \left( \va^{2^*}(y) - \Ba^{2^*}(y) \right)\left( \tilde{X}_\alpha(0) +  y^p \partial_p \tilde{X}_\alpha(0) \right)_i, \\
\eal
\een
where $\vpa$ is as in \eqref{propvpa} so that we can write, for any $\za \in B_0(2 \da)$:
\ben \label{claimcont21}
\left| \Lx \Big(\Za - \Va - \sum_{k=1}^n \Pak \Big) \right|_\xi  (\za) \le C \left(J_B + J_1 + J_2 + J_3 + J_4 \right),
\een
where we have let:
\ben \label{claimcont22}
\begin{aligned}
& J_B = \int_{\partial B_0(\frac{5}{2} \da)} |\za-y|^{1-n} \left| \Lx \left(\Za - \Va - \sum_{k=1}^n \Pak \right) \right|_\xi (y) d\sigma (y), \\
& J_1 = \int_{B_0(3 \da)} |\za - y |^{1-n} |y| \left| \Lx \Za \right|_\xi(y) dy, \\
& J_2 =\int_{B_0(3 \da)} |\za-y|^{1-n} |y|^2 \va^{2^*}(y) dy , \\
& J_3 = \int_{B_0(3 \da)} \left( \vea + \ba |y| \right) |\za-y|^{1-n} \left| \va^{2^*} - \Ba^{2^*} \right|(y) dy, \\
& J_4 = \int_{B_0(3 \da)} |\za - y|^{1-n} dy . \\
\end{aligned}
\een
We estimate $J_B$ using Claim \ref{claimint} (with a radius $\frac{1}{2} \da$): there holds that
\ben \label{JB}
J_B \le C \left( \ma^{n-1} \da^{1-2n} + \vea \da^{1-n} + \ma^2 \ba \da^{-n} \right),  
\een
that
\ben \label{J4}
J_4 \le C \da,
\een
and that
\ben \label{J2}
J_2 \le C \ma^2 \ta(\za)^{1-n},
\een
for some $C > 0$ independent of $\alpha$. Using \eqref{estlwa1} and \eqref{estVa} it is easily seen that

\ben \label{J1}
\bal
J_1 \le 
 o \left( \vea \ta(\za)^{1-n}\right) + C \ma^2 \ta(\za)^{1-n} + C \ma^{n-1} \da^{1-2n},
\eal
\een
where we have used that $\da^2 = O(\ma)$ for any $n \ge 3$ due to \eqref{contda3}. Finally there easily holds:
\ben \label{J3}
J_3 \le C \left( \vea + \ma \ba \right) \ma^\pui \Vert \Ra \Vert_\infty  \ta(\za)^{1-n}.
\een
Gathering \eqref{JB}, \eqref{J4}, \eqref{J2}, \eqref{J1} and \eqref{J3} in \eqref{claimcont21} we therefore obtain \eqref{estordre2a}. Clearly there holds $\ma^\pui \Vert \Ra \Vert_\infty \le 1$, but this control is not precise enough for our purposes. We now use the improved estimate \eqref{estordre2a} to obtain a better control on $\Ra$.

\medskip
\noindent \textbf{Step $2$:} \emph{ there holds
\ben \label{Rainfty}
\Vert \Ra \Vert_\infty \le C M_\alpha
\een
for some positive $C$, where
\ben \label{Ma}
M_\alpha = \ma^\pui \da^{2-n}  + \ma^{2 - \frac{n}{2}} + \ma^{5 - \frac{3n}{2}}\da^{n+2} + \vea^2 \ma^{1 - \frac{3n}{2}} \da^{n+2} + \ma^{5 - \frac{3n}{2}} \da^n \ba^2
\een
for all $\alpha$.}

\medskip
\noindent By definition of $\Ra$ as in \eqref{defRa} and by \eqref{sysconf}, it satisfies :
\ben \label{equaRa}
\bal
 \triangle_g \Ra = \left( \tilde{f}_\alpha - \tilde{f}_\alpha(0) \right) \va^{2^*-1} +  \tilde{f}_\alpha(0) \left( \va^{2^*-1} - \Ba^{2^*-1} \right) - \tilde{h}_\alpha \va \\
+ \left( \tilde{b}_\alpha + \left| \tilde{U}_\alpha + \vpa^{2^*} \Lx \Za \right|_\xi^2 \right) \va^{-2^*-1} .\\
\eal
\een
Let $(\ya)_\alpha$ be a sequence of points of $B_0(2 \da)$. First, if $\da \le |\ya| \le 2 \da$,  there holds by \eqref{contbulles}:
\ben \label{ya2da}
 \left| \Ra(\ya) \right| = O \left(\ma^\pui \da^{2-n} \right) = O (M_\alpha),
\een
where $M_\alpha$ is as in \eqref{Ma}. Assume now that $\ya \in B_0(\da)$. A Green formula for $\triangle_\xi$ in $B_0(2 \da)$ at $\ya$ gives, using \eqref{contda3}:
\ben \label{estRa}
\left| \Ra(\ya) \right|  \le C \left( \mathcal{I}_1 + \mathcal{I}_2 + \mathcal{I}_3 + \mathcal{I}_4 + \mathcal{I}_5 \right)
\een
for some positive $C$, where we have let:
\ben \label{estRaint}
\bal
&\mathcal{I}_1 = \int_{B_0(2 \da)} |\ya-y|^{2-n} \left| \tilde{f}_\alpha(y) - \tilde{f}_\alpha(0)\right|\va^{2^*-1}(y) dy, \\
& \mathcal{I}_2 =  \int_{B_0(2 \da)} |\ya-y|^{2-n}  \va^{2^*-2}(y) \left|\Ra(y) \right| dy, \\
& \mathcal{I}_3 =  \int_{B_0(2 \da)} |\ya-y|^{2-n} \va(y) dy, \\
& \mathcal{I}_4 = \int_{B_0(2 \da)} |\ya-y|^{2-n} \left( 1 + \left| \Lx \Za \right|_\xi^2 (y) \right) dy, \\
& \mathcal{I}_5 =  \int_{\partial B_0(2 \da)} |\ya-y|^{2-n} \left| \Ra (y) \right| d\sigma (y) .\\
\eal
\een
Using Claim \ref{claimcontbulle} and \eqref{contda3} there holds that:
\ben \label{RaI5}
\mathcal{I}_5 \le C \ma^\pui \da^{2-n},
\een
that
\ben \label{RaI1}
\mathcal{I}_1 \le C \ma^{\frac{n}{2}} \ta(\ya)^{2-n},
\een
that
\ben \label{RaI3}
\mathcal{I}_3 \le C \ma^\pui \ta(\ya)^{4-n},
\een
and that
\ben \label{RaI2}
\mathcal{I}_2 \le C \left( \frac{\ma}{\ta(\ya)} \right)^2 \NRa.
\een
Using now \eqref{estordre2a}, \eqref{estVa} and \eqref{estPak} there holds:
\ben \label{RaI41}
\bal
\mathcal{I}_4   \le & C \Bigg(\ma^{\pui} \da^{2-n} + \ma^{5 - \frac{3n}{2}} \da^{n+2} \\
 & + \vea^2 \ma^{1 - \frac{3n}{2}} \da^{n+2} + \ma^{5 - \frac{3n}{2}} \da^n \ba^2 + \ma^{1 - \frac{n}{2}} \NRa^2 \da^{n+2} \ba^2 \Bigg).
\eal
\een
Note that, by definition of $\Ra$ as in \eqref{defRa} and by Claims \ref{claimlocal} and \ref{claimcontbulle} there holds:
\ben \label{NRaoma}
\NRa = o \left( \ma^{1 - \frac{n}{2}} \right).
\een
In particular, combining \eqref{NRaoma} with \eqref{contda3} and \eqref{defeaba} we obtain that for any $n \ge 6$:
\[  \ma^{1 - \frac{n}{2}} \NRa^2 \da^{n+2} \ba^2 = o \left( \NRa \right).  \]
This gives, in \eqref{RaI41}:
\ben \label{RaI42}
\bal
\mathcal{I}_4   & \le  C \Bigg(\ma^{\pui} \da^{2-n} + \ma^{5 - \frac{3n}{2}} \da^{n+2} + \vea^2 \ma^{1 - \frac{3n}{2}} \da^{n+2} + \ma^{5 - \frac{3n}{2}} \da^n \ba^2 \Bigg) \\
 &  \qquad+ o \left( \NRa \right),
 \eal
\een
so that gathering \eqref{RaI5}, \eqref{RaI1}, \eqref{RaI3}, \eqref{RaI2} and \eqref{RaI42} in \eqref{estRa} we obtain:
\ben \label{estRaopt}
\left| \Ra(\ya) \right| \le C \left( \frac{\ma}{\ta(\ya)}\right)^2 \NRa  + C M_\alpha + o \left( \NRa \right),
\een
where $M_\alpha $ is as in \eqref{Ma}. Note that since $\ta(\ya) \ge \ma$ there holds:
\ben \label{inegp}
 \left( \frac{\ma}{\ta(\ya)}\right)^2 \le C \left( \frac{\ma}{\ta(\ya)}\right)^p \textrm{ for any } 1 \le p \le 2. 
 \een
In particular the following estimate holds for any $1 \le p \le 2$:
\ben \label{estRap}
\left| \Ra(\ya) \right| \le C \left( \frac{\ma}{\ta(\ya)}\right)^p \NRa  + C M_\alpha + o \left( \NRa \right).
\een
Plugging \eqref{estRap} into \eqref{estRaint} we can refine the estimate on $\mathcal{I}_2$. We obtain, as long as $p < n-4$, that:
\ben \label{estRapimp}
\bal
\mathcal{I}_2 & \le C \left( \frac{\ma}{\ta(\ya)}\right)^2 \bigg( M_\alpha + o (\NRa) \bigg) + \left( \frac{\ma}{\ta(\ya)}\right)^{p+2} \NRa \\
& \le C M_\alpha + o ( \NRa) + \left( \frac{\ma}{\ta(\ya)}\right)^{p+2} \NRa.
\eal
\een
Using \eqref{estRapimp} in \eqref{estRa} it is easily seen that the estimate on $\Ra$ improves accordingly. Combining \eqref{estRapimp} and \eqref{estRap} we therefore obtain by induction that there exists some $p_0 > n-4$ such that 
\be
\left| \Ra(\ya) \right| \le C \left( \frac{\ma}{\ta(\ya)}\right)^{p_0} \NRa  + C M_\alpha + o \left( \NRa \right).
\ee
Plugging once again this estimate into \eqref{estRaint} and using this time that $p_0 > n-4$ we obtain the following estimate on $\mathcal{I}_2$:
\be
\mathcal{I}_2 \le C \left( \frac{\ma}{\ta(\ya)}\right)^{n-2} + C M_\alpha + o (\NRa),
\ee
which, combined with \eqref{RaI5}, \eqref{RaI1}, \eqref{RaI3} and \eqref{RaI42} in \eqref{estRa} and with \eqref{ya2da} gives that:
\ben \label{estRaopt2}
\left| \Ra(\ya) \right| \le C \left( \frac{\ma}{\ta(\ya)}\right)^{n-2} \NRa  + C M_\alpha + o \left( \NRa \right)
\een
for any sequence of points $\ya \in B_0(2 \da)$. To prove \eqref{Rainfty} we now proceed by contradiction. We assume that $\Ra$ is not identically zero and further assume that:
\ben \label{contraRa}
\underset{\alpha \to + \infty}{\lim} \frac{\NRa}{M_\alpha} \to + \infty. 
\een
We now let $\ya$ be such that
\ben \label{defya} 
| \Ra(\ya)| = \NRa.
\een
Then \eqref{estRaopt2} shows that 
\ben \label{maxdistfinie}
|\ya| = O(\ma),
\een
so that in particular $\frac{1}{C} \ma \le \ta(\ya) \le C \ma$ for some positive $C$. We introduce the functions 
\ben \label{tildeRa}
\tRa(x) = \ma^\pui \Ra(\ma x)  \quad \textrm{ and } \quad  \tva(x) = \ma^\pui \va(\ma x),
\een
which are well-defined in $B_0(2 \frac{\rra}{\ma})$. By \eqref{equaRa} $\NtRa ^{-1} \tRa$ satisfies:
\ben \label{equatRa}
\bal
& \triangle_g \left( \NtRa^{-1} \tRa \right) = \left( \tilde{f}_\alpha(\ma \cdot) - \tilde{f}_\alpha(0) \right) \NtRa^{-1} \tva^{2^*-1} \\
& \quad  +  \tilde{f}_\alpha(0) \NtRa^{-1} \left( \tva^{2^*-1} - \tBa^{2^*-1} \right) 
 - \ma^2 \tilde{h}_\alpha(\ma \cdot) \tva \NtRa^{-1} \\
& \quad + \left( \ma^{2n} \tilde{b}_\alpha(\ma \cdot) + \left| \ma^n \tilde{U}_\alpha +\ma^n  \vpa^{2^*} \Lx \Za \right|_\xi^2 \right)(\ma \cdot) \NtRa^{-1} \tva^{-2^*-1} .\\
\eal
\een
With \eqref{tildeRa} assumption \eqref{contraRa} becomes:
\ben \label{contratRa}
\NtRa > > \left( \frac{\ma}{\da}\right)^{n-2} + \ma + \ma^{4-n} \da^{n+2} + \vea^2 \ma^{-n} \da^{n+2} + \ma^{4-n} \da^n \ba^2,
\een
so that there holds :
\ben \label{convresc1}
\bal
 \left| \tilde{f}_\alpha(\ma y) - \tilde{f}_\alpha(0) \right|& \NtRa^{-1} 
 + \ma^2 \tilde{h}_\alpha(\ma y) \NtRa^{-1} \\   
 & + \ma^{2n} \left( \tilde{b}_\alpha(\ma y) +  \left| \ma^n \tilde{U}_\alpha(\ma y ) \right|_\xi \right)^2 \NtRa^{-1}  \to 0 \\ 
\eal
\een
in $C^0_{loc}(\RR^n)$.
Independently, by Claim \ref{claimlocal}, there holds for some positive $C$: 
\ben \label{convresc2}
\left| \NtRa^{-1} \left( \tva^{2^*-1}(y) - \tBa^{2^*-1}(y) \right) \right| \le C,
\een
and using \eqref{estordre2a} we obtain:
\ben \label{convresc3}
\bal
\ma^{2n} \Big| \Lx \Za \Big|^2_\xi(\ma y) \NtRa^{-1} & \le \NtRa^{-1} \left( \vea^2 \ma^2 + \ma^4 \ba^2 + \left( \frac{\ma}{\da}\right)^{4n-2} + \ma^6 \right) \\
& \qquad + \ba^2 \ma^{3 + \frac{n}{2}} \NRa.
 \eal
\een
Since $\ma^{3 + \frac{n}{2}} \NRa = O( \ma^4)$, \eqref{convresc3} becomes, using \eqref{contratRa} and \eqref{contda3}:
\ben \label{convresc4}
\ma^{2n} \Big| \Lx \Za \Big|_\xi(\ma y) \to 0
\een
in $C^0_{loc}(\RR^n)$ as $\alpha \to + \infty$. Gathering \eqref{convresc1}, \eqref{convresc2} and \eqref{convresc4} in \eqref{equatRa} and by standard elliptic theory we obtain that $\NtRa^{-1} \tRa \to \hat{R}_0$ in $C^1_{loc}(\RR^n)$, where $\hat{R}_0$ is a solution of
\ben \label{eqlinearise}
\triangle_\xi \hat{R}_0 = \left(2^*-1 \right) f_0(x_0) U^{2^*-2} \hat{R}_0,
\een
where $x_0 = \underset{\alpha \to + \infty}{\lim} \xa$ and $U$ is as in Claim \ref{claimlocal}. In addition, estimate \eqref{estRaopt2} gives, with assumption \eqref{contraRa} and letting $\alpha \to + \infty$:
\[ \left| \hat{R}_0 (x) \right| \le C \left( 1 + |x|\right)^{2-n} \textrm{ for any } x \in \RR^n.\]
In particular, $U^{2^*-2} \hat{R}_0^2 \in L^1(\RR^n)$. Since $\hat{R}_0$ satisfies \eqref{eqlinearise}, $\hat{R}_0$ is given by the Bianchi-Egnell classification result \cite{BianchiEgnell}. 
Since $\xa$ is a critical point of $\va$ and by the definition of $\ma$ and $\Ba$ as in \eqref{defma} and \eqref{bullea} we clearly have that $\hat{R}_0(0) = 0$ and $\nabla \hat{R}_0(0) = 0$, and this implies then that $\hat{R}_0 \equiv 0$, see \cite{BianchiEgnell}. However, if we let $\hat{y}_\alpha = \frac{\ya}{\ma}$, where $\ya$ is as in \eqref{defya}, there holds by \eqref{maxdistfinie} that $\hat{y}_\alpha = O(1)$, so that $\hat{y}_\alpha \to \hat{y}_0 \in \RR^n$ with $\hat{R}_0 (\hat{y}_0) = 1$. This contradicts \eqref{contraRa} and concludes the proof of \eqref{Rainfty}.

\medskip
\noindent \textbf{Step $3$:} \emph{Conclusion.} 
We now plug \eqref{Rainfty} into estimate \eqref{estordre2a} and get:
\ben \label{estordre2b}
\bal
& \left| \Lx \Big(\Za - \Va - \sum_{k=1}^n \Pak \Big) \right|_\xi   (\za)  \le C \Big( \ma^{n-1} \da^{1-2n} + \vea \da^{1-n} + \ma^2 \ba \da^{-n} \Big) \\
& \qquad \qquad + o \Big( \vea \ta(\za)^{1-n}\Big) + o \Big( \ma^2 \ba \ta(\za)^{-n} \Big)  + C \ma^2 \ta(\za)^{1-n} .\\
\eal
\een
To obtain \eqref{estordre2b} we crucially used estimate \eqref{estea1} of Claim \ref{claimcont1} along with \eqref{contda3}.  
To estimate $\vea$ and $\ba$ we proceed as before. Let $0<\gamma <1$. We write with \eqref{claimint1plus} and \eqref{contda3} that:
\ben \label{inegtri}
\begin{aligned}
& \int_{B_0(2 \gamma \da) \backslash B_0(\gamma \da) }  \left| \Lx \Va + \sum_{k=1}^n \Lx \Pak  \right|_\xi^2 (y) dy \le  C \ma^{2n-2} \da^{2-3n} \gamma^{2-3n} \\
& \qquad  + C \int_{B_0(2 \gamma \da) \backslash B_0(\gamma \da) }  \left| \Lx \left( \Za - \Va - \sum_{k=1}^n \Pak \right) \right|_\xi^2 (y) .\\
\end{aligned}
\een 
On one side, using \eqref{asymptova} and \eqref{asymptopak}, there holds, by \eqref{contda3}:
\[  \int_{B_0(2 \gamma \da) \backslash B_0(\gamma \da) }  \left| \Lx \Va + \sum_{k=1}^n \Lx \Pak  \right|_\xi^2 (y) dy \ge \frac{1}{C} \Big( \gamma^{2-n} \vea^2 \da^{2-n} + \gamma^{-n} \ba^2 \ma^4 \da^{-n}\Big)\]
for some positive constant $C$ that does not depend on $\gamma$. On the other side there holds, using \eqref{estordre2b}:
\[ 
\bal
  \int_{B_0(2 \gamma \da) \backslash B_0(\gamma \da) }  & \left| \Lx \left( \Za - \Va - \sum_{k=1}^n \Pak \right) \right|_\xi^2 (y)  \\
 & \le C \Big( \gamma^n \ma^{2n-2} \da^{2 - 3n} + \gamma^n \vea^2 \da^{2-n}  + \gamma^n \ba^2 \ma^4 \da^{-n} + \gamma^{2-n} \ma^4 \da^{2-n}   \Big).
\eal
\]
Gathering the latter two results in \eqref{inegtri} yields \eqref{estea2}, up to choosing $\gamma$ small enough, since there always holds:
\[ \ma^2 = O \left( \ma^{n-1} \da^{-n} \right)\]
under assumption \eqref{contda3}. Finally, \eqref{estlwa2} clearly follows from \eqref{estVa}, \eqref{estPak}, \eqref{estea2}, \eqref{estordre2b} and \eqref{contda3} and this concludes the proof of the Claim.
\end{proof}

\noindent The next step of the proof consists in showing that estimate \eqref{estlwa2} actually holds up to the radius $\rra$. By Claim \ref{claimcont2} above it is therefore enough to show that $\rra$ satisfies \eqref{contda3}.

\begin{claim} \label{controlera}
Assume that \eqref{convdonnees} holds. Let $(\ua, \Wa)_\alpha$ be a sequence of solutions of \eqref{systype} satisfying \eqref{contradiction} and $(\xa)_\alpha$, $(\rho_\alpha)_\alpha$ be two sequences satisfying \eqref{pc1} and \eqref{p2}. We assume that the assumptions of Theorem \ref{Th1} hold. Then we have:
\begin{itemize}
\item $\rra = O \left( \ma^{\frac{1}{2}}\right)$ if  $3 \le n \le 5$ and $b_0 \not \equiv 0$, 
\item $ \rra = O \left( \ma^{\frac{n-1}{n}}\right)$ if $ n \ge 3$ and $X_0(x_0) \not = 0$,  
\item $\rra = O \left( \ma^{\frac{n-2}{n-1}} \right)$ if $n \ge 6$ and $\nabla f_0(x_0) \not = 0 $,
\item $\rra = O \left( \ma^{\frac{n-4}{n-2}} \right) $ if  $n \ge 6$ and 
\[ \frac{n-2}{4(n-1)} R(g)(x_0) -h_0(x_0) - C(n) \frac{\triangle_g f_0(x_0)}{f_0(x_0)} > 0 ,\]
\end{itemize}
where $x_0 = \underset{\alpha \to + \infty}{\lim \xa}$ and $C(n)$ is some positive constant depending only on $n$ given by \eqref{defC(n)} below. As a consequence, Claims \ref{claimcont1} and \ref{claimcont2} hold for $\da = \rra$ and there holds:
\ben \label{estoptlwa}
\left| \Lx \Za \right|_\xi(\za) \le C \left( \frac{\ma}{\rra} \right)^{n-1} \ta(\za)^{-n}
\een
for all $\alpha$ and all $\za \in B_0(2 \rra)$.
\end{claim}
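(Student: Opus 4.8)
\emph{Strategy.} The plan is to handle the first bullet by a soft pointwise argument, and the other three by a Pohozaev identity for the first equation of \eqref{sysconf}, applied on balls whose radius is as large as the sharp estimates of Claims \ref{claimlocal}--\ref{claimcont2} allow. Throughout one may assume $\rra\to0$, since otherwise $\rra\ge\delta_0>0$ and, each stated bound being of the form $O(\ma^\theta)$ with $\ma\to0$, there would be nothing to prove. For the case $b_0\not\equiv0$ I would first note, by a Green representation for the scalar equation exactly as in the proof of Corollary \ref{corol1}, that $\|\ua\|_{L^\infty(M)}^{2^*+2}\ge\frac1C\int_M\big(b_\alpha+|U_\alpha+\Lg\Wa|_g^2\big)\,dv_g$; since $b_\alpha\ge0$ and $b_0\not\equiv0$, this rules out $\|\ua\|_{L^\infty(M)}\to0$, so the Harnack inequality of Section \ref{HI} gives $\inf_M\ua\ge\frac1C$, hence $\va\ge\frac1{C'}$ on $B_0(8\ra)$. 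Comparing with the constraint $\va\le(1+\ve)\Ba$ on $B_0(\rra)$ built into the definition of $\rra$, and with the decay $\Ba(x)\le C\ma^{\pui}|x|^{2-n}$, evaluation at $|x|=\rra$ yields $\rra^{n-2}\le C\ma^{\pui}$, i.e. $\rra=O(\ma^{1/2})$. (This argument works for all $n\ge3$; it is stated for $3\le n\le5$ only because $b_0\not\equiv0$ is assumed in Theorem \ref{Th1} only there.)

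For the case $X_0(x_0)\neq0$ I would use $\vea=|\tilde{X}_\alpha(0)|_\xi\to|X_0(x_0)|>0$: plugging this into estimate \eqref{estea1} of Claim \ref{claimcont1} with $\da=\min(\rra,\ma^{1/2})$ (which satisfies \eqref{contda2} by \eqref{rsurmu}) gives $\frac1C\le\vea\le C\ma^{n-1}\da^{-n}+o(\ma)$, hence $\da^n\le C\ma^{n-1}$; as $\tfrac{n-1}n\ge\tfrac12$, the alternative $\da=\ma^{1/2}<\rra$ would force the impossible $\ma^{1/2}\le C\ma^{(n-1)/n}$, so in fact $\da=\rra$ and $\rra=O(\ma^{(n-1)/n})$.

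For the cases $\nabla f_0(x_0)\neq0$ and \eqref{condzeros} (in both of which $X_0(x_0)=0$), I would argue by contradiction: suppose $\rra/\ma^\theta\to+\infty$ along a subsequence, with $\theta=\frac{n-2}{n-1}$, resp. $\theta=\frac{n-4}{n-2}$. Setting $\da:=\ma^\theta$ (so $\da<\rra$ and $\da/\ma\to+\infty$), one checks that $\da$ satisfies both \eqref{contda2} and \eqref{contda3}, so Claims \ref{claimcont1} and \ref{claimcont2} apply at scale $\da$: in particular $\|\va-\Ba\|_{L^\infty(B_0(2\da))}\le CM_\alpha$ (Step $2$ of Claim \ref{claimcont2}), $|\Lx\Za|_\xi\le C(\ma/\da)^{n-1}\ta(x)^{-n}$ on $B_0(2\da)$, and \eqref{estea1}--\eqref{estea2} hold. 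I would then multiply the first equation of \eqref{sysconf} by $x^k\partial_k\va+\pui\va$ and integrate over $B_0(s_\alpha)$ for some $s_\alpha\in(\tfrac12\da,\da)$ selected as in Claim \ref{claimint} so as to control $\int_{\partial B_0(s_\alpha)}|\Lx\Za|_\xi^2$; since $\pui=n/2^*$ makes the critical term collapse, this produces the Pohozaev identity
\ben \label{pohozaevclaim}
\bal
\mathcal{P}_\alpha(s_\alpha) = {}&-\frac1{2^*}\int_{B_0(s_\alpha)}\big(x^k\partial_k\tilde{f}_\alpha\big)\va^{2^*}\,dx-\int_{B_0(s_\alpha)}\tilde{h}_\alpha\,\va\,\big(x^k\partial_k\va+\pui\va\big)\,dx \\
&+(n-2)\int_{B_0(s_\alpha)}\frac{\tilde{a}_\alpha}{\va^{2^*}}\,dx+\frac1{2^*}\int_{B_0(s_\alpha)}\frac{x^k\partial_k\tilde{a}_\alpha}{\va^{2^*}}\,dx,
\eal
\een
where $\mathcal{P}_\alpha(s_\alpha)$ is the resulting boundary integral, quadratic in $\va,\nabla\va,s_\alpha$. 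For the exact bubble $\Ba$, which solves $\triangle_\xi\Ba=f_\alpha(\xa)\Ba^{2^*-1}$, both sides of \eqref{pohozaevclaim} vanish; subtracting, writing $\va=\Ba+\Ra$, and inserting the estimates above — the terms involving $\tilde{a}_\alpha$ being handled via $\vea=o(1)$, \eqref{estea2} and the pointwise bound on $\Lx\Za$ — I expect $\mathcal{P}_\alpha(s_\alpha)$ and the $\tilde{a}_\alpha$ terms to be $O(\ma^2)$, while Taylor expansion should give, for positive constants $c_f(n),c_h(n)$ and up to a common factor $f_0(x_0)^{-n/2}$, that $\int(x^k\partial_k\tilde{f}_\alpha)\va^{2^*}=c_f(n)\frac{\triangle_g f_0(x_0)}{f_0(x_0)}\ma^2+o(\ma^2)$ when $\nabla f_0(x_0)=0$ (with an additional non-negligible contribution otherwise) and $\int\tilde{h}_\alpha\va^2=c_h(n)\tilde{h}_\alpha(0)\ma^2+o(\ma^2)$, with $\tilde{h}_\alpha(0)=h_0(x_0)-\frac{n-2}{4(n-1)}R(g)(x_0)$. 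Thus, at leading order $\ma^2$, \eqref{pohozaevclaim} reduces to a fixed linear inequality between $\frac{\triangle_g f_0(x_0)}{f_0(x_0)}$, $h_0(x_0)$ and $R(g)(x_0)$: it cannot hold if $\nabla f_0(x_0)\neq0$, and otherwise it forces $\frac{n-2}{4(n-1)}R(g)(x_0)-h_0(x_0)-C(n)\frac{\triangle_g f_0(x_0)}{f_0(x_0)}\le0$ with $C(n)$ the constant of \eqref{defC(n)} — contradicting the hypothesis of the fourth bullet. This would complete the last two bullets.

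In all cases one then has $\rra=O(\ma^\theta)$ for a $\theta$ such that $\da=\rra$ satisfies \eqref{contda2}, and also \eqref{contda3} when $n\ge5$ (note $\tfrac{n-1}n,\tfrac{n-2}{n-1}\ge\tfrac{n-4}{n-2}$, and for $n=5$ the bound reads $\rra=O(\ma^{1/2})$ exactly), so Claims \ref{claimcont1} and \ref{claimcont2} apply with $\da=\rra$, and \eqref{estlwa2} — or \eqref{estlwa1} combined with \eqref{estVa} when $3\le n\le4$ — yields \eqref{estoptlwa}. The main difficulty I anticipate is the sharp bookkeeping in \eqref{pohozaevclaim}: keeping the boundary term and the $\tilde{a}_\alpha$ terms at order at most $\ma^2$ requires pointwise control of $\va-\Ba$ and of $\Lx\Za$ exactly at the borderline rate permitted by \eqref{contda3}, and this is precisely where the threshold $n\ge5$ appears, since $\int_{B_0(s_\alpha)}\Ba^2\sim\ma^2$ only for $n\ge5$ (it is $\sim\ma^2|\log\ma|$ for $n=4$ and $\sim\ma s_\alpha$ for $n=3$); extracting the exact constants $c_f(n),c_h(n)$, hence the precise value $C(n)$ of \eqref{defC(n)}, from $\int|x|^2\Ba^{2^*}$ and $\int\Ba^2$ is the other technical core.
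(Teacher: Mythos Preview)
There are two genuine gaps.

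\medskip
\textbf{First bullet.} Your route to $\inf_M\ua\ge\tfrac1C$ does not work. The Harnack inequality of Proposition~\ref{prop-HI} requires a local upper bound on $u$; here we are precisely in the contradiction hypothesis $\sup_M\ua\to+\infty$, so Harnack cannot be applied globally (and the inequality $\|\ua\|_\infty^{2^*+2}\ge\tfrac1C\int_M b_\alpha$ you derive is vacuous, since $\|\ua\|_\infty\to+\infty$ anyway). The paper obtains the lower bound by a different, purely algebraic, argument: for $u>0$ one has $f u^{2^*-1}+b u^{-2^*-1}\ge c\,b^{(2^*-1)/(2\cdot 2^*)}$ (minimize in $u$), and a Green representation then gives $\ua(x)\ge\tfrac1C\int_M G_\alpha(x,\cdot)\big(f_\alpha\ua^{2^*-1}+b_\alpha\ua^{-2^*-1}\big)\ge\ve_0>0$ directly, since $b_0\not\equiv0$.

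\medskip
\textbf{Third bullet.} The scaling Pohozaev you propose cannot detect $\nabla f_0(x_0)\neq0$. In \eqref{pohozaevclaim}, the term carrying $\nabla\tilde f_\alpha(0)$ is $\partial_k\tilde f_\alpha(0)\int_{B_0(s_\alpha)}x^k\va^{2^*}$; but $\int x^k\Ba^{2^*}=0$ by radial symmetry, so only the remainder $\int x^k(\va^{2^*}-\Ba^{2^*})$ survives, and this is of lower order (controlled by $\|\Ra\|_\infty$, i.e. by the $M_\alpha$ of Claim~\ref{claimcont2}). Thus $\nabla f_0(x_0)$ does \emph{not} appear at the leading $\ma^2$ scale in your identity, and no contradiction follows. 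The paper instead applies the \emph{translation} Pohozaev identity (test function $Y^k\partial_k\va$, see \eqref{gradf0}), which yields directly the estimate $|\nabla\tilde f_\alpha(0)|=O(\ma)+O(\ma^{n-2}\da^{1-n})$; this forces $\da=O(\ma^{(n-2)/(n-1)})$ for any $(\da)_\alpha$ satisfying \eqref{propda}, and a short bootstrap then gives $\rra=O(\ma^{(n-2)/(n-1)})$. That same estimate \eqref{estgradf} is also needed in the fourth bullet to control the first-order term in $K_2$ (see \eqref{K2a}), since the mere convergence $\nabla\tilde f_\alpha(0)\to0$ gives $o(\ma)$, not the required $O(\ma^2)$; and the paper performs the scaling Pohozaev on $B_0(\rra)$ (not on $B_0(\da)$), exploiting the sign condition built into the definition of $\rra$ to handle $K_3$ --- your ball $B_0(s_\alpha)$ with $s_\alpha\sim\ma^{(n-4)/(n-2)}$ produces a boundary term of the \emph{same} order $\ma^2$ as the main term, not $o(\ma^2)$.

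\medskip
Your treatment of the second bullet ($X_0(x_0)\neq0$) is correct and matches the paper's.
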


\begin{proof}
We first assume that $3 \le n \le 5$ and $b_0 \not \equiv 0$. Let $G_\alpha$ be the Green function of the operator $\triangle_g + h_\alpha$ in $M$. Since $\triangle_g + h_0$ is coercive there holds, for any $x \in M$ and for some positive constant $C$:
\[ \ua(x) \ge \frac{1}{C} \int_M \left( f_\alpha \ua^{2^*-1} + \frac{b_\alpha}{\ua^{2^*+1}}\right)dv_g.\]
We have
\[  f_\alpha \ua^{2^*-1} + \frac{b_\alpha}{\ua^{2^*+1}} \ge \frac{2 \cdot 2^*}{2^*-1} b_\alpha \left( \frac{(2^*+1)b_\alpha}{(2^*-1) f_\alpha}\right)^{- \frac{2^*+1}{2 \cdot 2^*}}, \]
and since $b_0 \not \equiv 0$ there thus exists some positive $\ve_0$ independent of $\alpha$ such that:
\ben \label{minorua}
\ua \ge \ve_0.
\een
By the definition of $\rra$ as in \eqref{defra} we therefore obtain
\ben \label{contra345}
 \rra = O \left( \ma^{\frac{1}{2}} \right) .
 \een
If $3 \le n \le 4$, \eqref{estoptlwa} is a consequence of Claim \ref{claimcont1} with \eqref{contra345} since in these dimensions \eqref{contra345} implies $\ma = O \left( \ma^{n-1} \rra^{-n} \right)$. If $n = 5$, \eqref{estoptlwa} is a consequence of Claim \ref{claimcont2} and of \eqref{contra345}.

\medskip
\noindent Assume next that $n \ge 3$ and $X_0(x_0) \not = 0$. This means that $\vea \not \to 0$ as $\alpha \to \infty$. We proceed by contradiction and assume that $\rra >> \ma^{\frac{n-1}{n}}$. We then let 
\[ \da = \min \left( \rra, \ma^{\frac{1}{2}} \right).\]
There holds $\da >> \ma$. Estimate \eqref{estea1} of Claim \ref{claimcont1} therefore applies and shows, since $\vea \not \to 0$, that:
\[ \vea = O \left( \ma^{n-1} \da^{-n}\right) .\]
However, by definition of $\da$ it is easily seen that $\da >> \ma^{\frac{n-1}{n}}$, which yields a contradiction with $\vea \not \to 0$. Hence, $\rra = O \left( \ma^{\frac{n-1}{n}}\right)$ and \eqref{estoptlwa} follows from Claim \ref{claimcont2}.

\medskip
\noindent Assume now that $n \ge 6$ and that $X_0(x_0) = 0$. We let $(\da)_\alpha$ be some arbitrary sequence of positive numbers satisfying 
\ben \label{propda}
\frac{\da}{\ma} \to + \infty, \quad \da \le \rra \textrm{ and } \da = O \left( \ma^{\frac{n-4}{n-2}} \right).
\een
In view of the assumptions of Theorem \ref{Th1}, we only have two cases left to consider. First, assume that $\nabla f_0(x_0) \not = 0$. To extract informations on $\rra$ as we previously did for the $X_0(x_0) \not = 0$ case we start obtaining an estimate on $|\nabla \tilde{f}_\alpha(0)|_\xi$. To do this, we let $Y \in \RR^n$ be some fixed vector of norm $1$ and apply a Pohozaev identity to $\va$ on $B_0(\da)$, where $\da$ satisfies \eqref{propda}. It writes as:
\ben \label{gradf0}
\bal
 \int_{\partial B_0(\da)} &\left( \frac{1}{2} Y^k \nu_k \left| \nabla \va \right|_\xi^2 - Y^k \partial_k \va \partial_\nu \va  \right) d\sigma = \\
& - \int_{B_0(\da)} Y^k \partial_k \va \tilde{h}_\alpha \va dy  + \int_{B_0(\da)} Y^k \partial_k \va \tilde{f}_\alpha \va^{2^*-1} dy \\
&  + \int_{B_0(\da)} Y^k \partial_k \va  \left( \tilde{b}_\alpha + \left|\tilde{U}_\alpha + \vpa^{2^*} \Lx \Za \right|_\xi^2 \right) \va^{-2^*-1} dy. \\
\eal
\een
Using the definition of $\rra$ as in \eqref{defra} and \eqref{propda} we have that
\ben \label{gradf1}
  \int_{\partial B_0(\da)} \left( \frac{1}{2} Y^k \nu_k \left| \nabla \va \right|_\xi^2 - Y^k \partial_k \va \partial_\nu \va  \right) d\sigma = O \left( \ma^{n-2} \da^{1-n} \right), 
 \een
that
\ben \label{gradf2}
  \int_{B_0(\da)} Y^k \partial_k \va \tilde{h}_\alpha \va dy = o(\ma), 
  \een
and that
\ben \label{gradf3}
\bal
  \int_{B_0(\da)} Y^k \partial_k \va & \tilde{f}_\alpha \va^{2^*-1} dy = O \left( \ma^n \da^{-1-n} \right) + O (\ma) \\
  &  - \left( \frac{n-2}{2n} f_0(x_0)^{- \frac{n}{2}} K_n^{-n} + o(1) \right) Y^k \partial_k \tilde{f}_\alpha(0).
  \eal
  \een
By \eqref{propda} we can use Claim \ref{claimcont2} to estimate the last integral appearing in \eqref{gradf0}, thus obtaining:
\ben \label{gradf4}
\bal
 \int_{B_0(\da)} Y^k \partial_k \va  \left( \tilde{b}_\alpha + \left|\tilde{U}_\alpha + \vpa^{2^*} \Lx \Za \right|_\xi^2 \right) \va^{-2^*-1} dy  \\
 = O \left( \ma^{n-2} \da^{1-n}\right) + o (\ma).
 \eal
\een
Gathering \eqref{gradf1}, \eqref{gradf2}, \eqref{gradf3} and \eqref{gradf4} in \eqref{gradf0} we obtain that for any $Y \in \RR^n, |Y|_\xi = 1$:
\[ Y^k \partial_k \tilde{f}_\alpha(0) = O(\ma) + \left( \ma^{n-2} \da^{1-n} \right),\]
which gives in the end:
\ben \label{estgradf}
\left| \nabla \tilde{f}_\alpha \right|_\xi(0) = O(\ma) + O \left( \ma^{n-2}  \da^{1-n} \right).
\een
There holds $ \left| \nabla \tilde{f}_\alpha \right|_\xi(0) \not \to 0$ as $\alpha \to + \infty$ since we assumed $\nabla f_0(x_0) \not = 0$ . In particular, we obtain with \eqref{estgradf} that there holds:
\ben \label{estdadfnon0}
\da = O \left( \ma^{\frac{n-2}{n-1}} \right)
\een  
for any sequence $(\da)_\alpha$ satisfying \eqref{propda}. Assume now by contradiction that there holds $ \rra >> \ma^{\frac{n-4}{n-2}} $ as $\alpha \to + \infty$. The sequence $\ma^{\frac{n-4}{n-2}}$ therefore satisfies \eqref{propda} and hence with \eqref{estdadfnon0} there holds:
\[ \ma^{\frac{n-4}{n-2}} = 0 \left( \ma^{\frac{n-2}{n-1}}\right),\]
which is impossible since there holds $\frac{n-2}{n-1} > \frac{n-4}{n-2}$ for any $n \ge 6$. Hence, $\rra$ satisfies $\rra = O \left( \ma^{\frac{n-4}{n-2}} \right)$ and hence $\rra$ satisfies \eqref{propda}, which gives in the end, with \eqref{estdadfnon0}:
\[ \rra = O \left( \ma^{\frac{n-2}{n-1}} \right) . \]
Estimate \eqref{estoptlwa} now follows form Claim \ref{claimcont2}.

\medskip

\noindent Finally, if there holds $X_0(x_0) = 0$ and $\nabla f_0(x_0) = 0$, we assume that  
\ben \label{condcourbure}
 \frac{n-2}{4(n-1)} R(g)(x_0) -h_0(x_0) - C(n) \frac{\triangle_g f_0(x_0)}{f_0(x_0)} > 0,
\een
where
\be 
C(n) = \pui K_n^{-n} \left( \int_{\RR^n} \left( 1 + \frac{|x|^2}{n(n-2)}\right)^{2-n} dx\right)^{-1}
\ee
and
\ben \label{defKn-n}
 K_n^{-n} = 2^{-n} \left( n(n-2)\right)^{\frac{n}{2}} \omega_n 
 \een
is the energy of the standard bubble. Easy computations give that 
\ben \label{defC(n)}
C(n) = \frac{(n-2)(n-4)}{8(n-1)}. 
\een

We obtain the control on $\rra$ as a direct consequence of the geometric condition \eqref{condcourbure}. We write a Pohozaev identity on $B_0(\rra)$: it writes as
\ben \label{pohora}
\bal
\int_{B_0(\rra)}&  \left( x^k \partial_k \va + \pui \va \right) \triangle_\xi \va dx \\
& = \int_{\partial B_0(\rra)} \left( \frac{1}{2} \rra |\nabla \va |_\xi^2 - \pui \va \partial_\nu \va - \rra \left( \partial_\nu \va \right)^2 \right) d\sigma.
\eal
\een
On one hand, using the definition of $\rra$, there holds:
\ben \label{bord}
 \int_{\partial B_0(\rra)} \left( \frac{1}{2} \rra |\nabla \va |_\xi^2 - \pui \va \partial_\nu \va - \rra \left( \partial_\nu \va \right)^2 \right) d\sigma = O \left( \left( \frac{\ma}{\rra} \right)\right)^{n-2}.
\een
On the other hand we can write, using \eqref{sysconf}, that
\ben \label{plein}
\bal
\int_{B_0(\rra)}&  \left( x^k \partial_k \va + \pui \va \right) \triangle_\xi \va dx = K_1 + K_2 + K_3,\\
\eal
\een
where
\ben
\bal
K_1 &= - \int_{B_0(\rra)} \left( x^k \partial_k \va + \pui \va \right) \tilde{h}_\alpha(x) \va(x) dx, \\
K_2 &=   \int_{B_0(\rra)}  \left( x^k \partial_k \va + \pui \va \right) \tilde{f}_\alpha (x) \va^{2^*-1}(x) dx,  \\
K_3 & = \int_{B_0(\rra)}  \left( x^k \partial_k \va + \pui \va \right) \left( \tilde{b}_\alpha + \left|\tilde{U}_\alpha + \vpa^{2^*} \Lx \Za \right|_\xi^2 \right) \va^{-2^*-1} dx .\\
 \eal
\een
Since $n \ge 6$ and by \eqref{rsurmu} we have, by easy computations and using Lebesgue's dominated convergence theorem, that:
\ben \label{K1}
\bal
K_1 = \frac{4(n-1)}{n-4} K_n^{-n} f_0(x_0)^{- \frac{n}{2}} & \left(h_0(x_0) - \frac{n-2}{4(n-1)} R(g)(x_0) \right)\ma^2 + o(\ma^2) \\
\eal
\een
where $K_n^{-n}$ is as in \eqref{defKn-n}. Using the definition of $\rra$ and since $\tilde{b}_\alpha \ge 0$ we can write that:
\[ K_3 \le \int_{B_0(2 R_\alpha \ma)}  \left( x^k \partial_k \va + \pui \va \right) \left( \tilde{b}_\alpha + \left|\tilde{U}_\alpha + \vpa^{2^*} \Lx \Za \right|_\xi^2 \right) \va^{-2^*-1}, \]
where $R_\alpha$ is as in \eqref{Ralpha}. We once again consider a sequence of radii $(\da)_\alpha$ satisfying \eqref{propda}. Using Claim \ref{claimcont2} on $B_0(2 \da)$ the latter becomes:
\ben \label{K3}
K_3 \le o \left( \left( \frac{\ma}{\da} \right) \right)^{n-2} + o(\ma^2). 
\een
Finally, since $f_\alpha \to f_0$ in $C^2(M)$ as $\alpha \to + \infty$ we can write with \eqref{rsurmu} and the dominated convergence theorem that:
\ben \label{K2a}
\bal
K_2 = \pui  f_0(x_0)^{ - \frac{n}{2}} \frac{\triangle_g f_0(x_0)}{f_0(x_0)} K_n^{-n} \ma^2 + o (\ma^2) \\
+ o \left( \left( \frac{\ma}{\rra} \right) \right)^{n-2} + o \left( \ma |\nabla \tilde{f}_\alpha |_\xi(0) \right).
\eal
\een
Using estimate \eqref{estgradf} relative to the sequence $(\da)_\alpha$ to estimate $\left| \nabla \tilde{f}_\alpha \right|_\xi(0)$, combining it with  \eqref{K2a}, \eqref{bord}, \eqref{plein}, \eqref{K1} and \eqref{K3} and plugging everything into \eqref{pohora} we obtain:
\ben \label{pohora2}
\bal
\Bigg[ \frac{n-2}{4(n-1)} R(g)(x_0) - h_0(x_0) & - C(n) \frac{\triangle_g f_0(x_0)}{f_0(x_0)}\Bigg] \ma^2  \\
& \le   o(\ma^2) + O \left( \frac{\ma}{\rra}\right)^{n-2} + o \left( \left( \frac{\ma}{\da} \right) \right)^{n-2}.
\eal
\een
Assume by contradiction that $\rra >> \ma^{\frac{n-4}{n-2}}$. Then choosing $\da = \ma^{\frac{n-4}{n-2}}$ yields a contradiction in \eqref{pohora2} because of \eqref{condcourbure}. Hence $\rra = O \left( \ma^{\frac{n-4}{n-2}} \right)$ and \eqref{estoptlwa} follows from Claim \ref{claimcont2}. 
\end{proof}

\subsection{Conclusion of the proof of Proposition \ref{propoblow}}

Thanks to the asymptotic description of the defects of compactness of the sequences $\va$ and $\Lx \Za$ in the ball $B_0(\rra)$ obtained in the previous subsection we now conclude the proof of Proposition \ref{propoblow}. In the following Claim we show that $r_\alpha = \ra$, hence that the sequence $\va$ equals the standard bubble at first order up to the radius $\ra$. 

\begin{claim} \label{raegalerhoa}
There holds that, up to a subsequence:
\ben \label{regalerho}
\rra = \ra.
\een
In particular, $\ra \to 0$ as $\alpha \to + \infty$ and Claims \ref{claimcont1} and \ref{claimcont2} apply with $\da = \ra$ by Claim \ref{controlera}.
\end{claim}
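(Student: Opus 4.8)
The plan is to argue by contradiction. One always has $\rra \le \ra$ by the definition of $\rra$ in \eqref{defra}, so suppose that $\rra < \ra$ for all large $\alpha$. The set $\mathcal{R}$ is downward closed and the three conditions defining it — $\va \le (1+\ve)\Ba$, $|\nabla(\va-\Ba)|_\xi \le \ve|\nabla\Ba|_\xi$, and $x^k\partial_k\va + \pui\va \le 0$ on the annulus $B_0(r)\backslash B_0(2R_\alpha\ma)$ — are closed in $r$; hence they hold on $\overline{B_0(\rra)}$ but fail on every $\overline{B_0(r)}$ with $\rra < r \le \ra$. Passing to a subsequence, one of these inequalities is therefore saturated at radius $\rra$: there is a sequence $\za$ with $|\za| = (1+o(1))\rra$ (for the first two conditions) or $2R_\alpha\ma \le |\za| = (1+o(1))\rra$ (for the third) at which equality holds up to $o(1)$; and since Claim \ref{claimlocal} makes the three conditions strict on $B_0(R\ma)$ for every fixed $R$, one has $|\za|/\ma \to +\infty$ in all cases.

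The decisive input is that, by Claim \ref{controlera}, $\rra = O(\ma^{c})$ for a positive exponent $c$ (depending only on which case of the assumptions of Theorem \ref{Th1} occurs at $x_0$); so $\rra \to 0$ and Claims \ref{claimcontbulle}, \ref{claimcont1}, \ref{claimcont2} all apply with $\da = \rra$. Thus on $B_0(2\rra)$ one has $(1-\kappa_\alpha)\Ba \le \va \le C\Ba$ with $\kappa_\alpha \to 0$, the sharp estimate \eqref{estoptlwa} on $\Lx\Za$, and — when $n \ge 5$ — the estimates \eqref{estRaopt2} and \eqref{Rainfty} on $\Ra = \va - \Ba$. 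I would then rescale at the scale $\rra$, setting $w_\alpha(x) = \ma^{-\pui}\rra^{n-2}\va(\rra x)$, and use these bounds to show that $w_\alpha$ converges on $B_0(2)\backslash\{0\}$ to a limit whose sole singularity at the origin is the bubble term $c_0|x|^{2-n}$, with $c_0 = \big(\tfrac{f_0(x_0)}{n(n-2)}\big)^{1-\frac n2}$, and to analyze the corrector $w_0 - c_0|x|^{2-n}$ against the rescaled conditions (a)--(c) together with the normalizations $\va(0) = \Ba(0)$ and $\nabla\va(0) = 0$. The outcome of this analysis is that $\va = (1+o(1))\Ba$, $\nabla\va = (1+o(1))\nabla\Ba$, and $x^k\partial_k\va + \pui\va = -(1+o(1))\,c\,\ma^{\pui}|x|^{2-n} < 0$, uniformly on compact subsets of $B_0(2\rra)\backslash B_0(R\ma)$. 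Since for the exact bubble $\Ba$ the three inequalities are strict on this region, they remain strict for $\va$ there, hence they hold on $\overline{B_0(\rho)}\backslash B_0(2R_\alpha\ma)$ for some $\rho$ with $\rra < \rho < \min(2\rra,\ra)$; this forces $\rho \in \mathcal{R}$, contradicting $\rra = \sup\mathcal{R}$. Therefore $\rra = \ra$; then $\ra = \rra \to 0$, and by Claim \ref{controlera} Claims \ref{claimcont1} and \ref{claimcont2} apply with $\da = \ra$.

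The main obstacle is precisely the rescaled analysis near $\partial B_0(\rra)$: the raw bounds give only $\va \le C\Ba$ there, with a constant $C$ that need not be below $1+\ve$, and the coupling term $\Lx\Za$ is least controlled exactly at that scale, so upgrading to $\va = (1+o(1))\Ba$ and $\nabla\va = (1+o(1))\nabla\Ba$ — which is what rules out saturation of the first two conditions — uses the full strength of the sharp extension-radius bounds of Claim \ref{controlera}, of \eqref{estoptlwa}, and, for $n \ge 5$, of the second-order estimates of Claim \ref{claimcont2}; the monotonicity condition survives longest, and its saturation is then excluded by the same pointwise comparison with $\Ba$.
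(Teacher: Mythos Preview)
Your overall architecture is the same as the paper's --- assume $\rra < \ra$, rescale at scale $\rra$, and study the limit $w_0 = c_0|x|^{2-n} + H(x)$ --- but the contradiction you propose does not close. The gap is in the step ``the outcome of this analysis is that $\va = (1+o(1))\Ba$'' on $B_0(2\rra)\backslash B_0(R\ma)$. The sharp estimates you invoke are not strong enough to force this. Indeed, \eqref{Rainfty} only gives $\Vert \Ra \Vert_\infty \le C M_\alpha$ on $B_0(2\rra)$, and the first term in $M_\alpha$ is $\ma^{\frac{n-2}{2}}\rra^{2-n}$, which is exactly the size of $\Ba$ on $\partial B_0(\rra)$; so you get $|\Ra| \le C\,\Ba$ there with a constant $C$ independent of the original $\ve$, not $|\Ra| = o(\Ba)$. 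In the limit this corresponds precisely to the possibility $H(0) > 0$: the corrector $H$ is only known to be nonnegative and superharmonic, and nothing in Claims \ref{claimcontbulle}--\ref{claimcont2} or \eqref{estoptlwa} rules out, say, $H$ equal to a positive constant. The normalizations $\va(0)=\Ba(0)$, $\nabla\va(0)=0$ live at scale $\ma$ and carry no information about $H$ after rescaling by $\rra \gg \ma$.

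What the paper does instead is accept that $H(0) > 0$ may occur, and derive a contradiction by a Pohozaev identity on $B_0(\delta\rra)$. The boundary term equals $\big(\tfrac{1}{2}(n-2)^2\lambda_0\omega_{n-1}H(0) + \ve(\delta) + o(1)\big)(\ma/\rra)^{n-2}$, while the interior terms --- using \eqref{estoptlwa}, \eqref{estgradf} with $\da = \rra$, and the sign condition in Theorem~\ref{Th1} when $n\ge 6$ --- are $\le o\big((\ma/\rra)^{n-2}\big)$ (or bounded above by a negative multiple of $\ma^2$ in the last case of Claim~\ref{controlera}). Letting $\alpha\to\infty$ and then $\delta\to 0$ forces $H(0)\le 0$, hence $H\equiv 0$, and then $\rra=\ra$. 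The Pohozaev step is the essential missing ingredient in your argument.
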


\begin{proof}
For any $x \in B_0(2)$, we let:
\ben \label{blowdistra}
 \hva(x) =  \ma^{1 - \frac{n}{2}} \rra^{n-2} \va( \rra x).  
\een
Using \eqref{sysconf} it is easily seen that $\hva$ satisfies:
\ben \label{eqhva}
 \triangle_\xi \hva + \rra^2 \hat{h}_\alpha \hva = \hat{f}_\alpha \hva^{2^*-1} + \frac{\hat{a}_\alpha}{ \hva^{2^*+1}},
\een
where we have let:
\ben \label{coeffhat}
\begin{aligned}
& \hat{a}_\alpha(x) = \frac{\rra^{4n-2}}{\ma^{2n-2}} \left( \hat{b}_\alpha + \left| \hat{U}_\alpha + \hat{\varphi}_\alpha^{2^*} \Lx \Za(\rra \cdot) \right|_\xi^2 \right)(x), \\
& \hat{h}_\alpha (x) = \tilde{h}_\alpha(\rra x), \\
& \hat{f}_\alpha (x) = \tilde{f}_\alpha (\rra x), \\
& \hat{b}_\alpha(x) = \tilde{b}_\alpha(\rra x), \\
& \hat{U}_\alpha(x) = \tilde{U}_\alpha(\rra x) ,\\
& \hat{\varphi}_\alpha(x) = \vpa(\rra x). \\
\end{aligned}
\een
By definition of $\rra$, by Claim \ref{claimcontbulle} and by \eqref{blowdistra} there holds, for some positive $C$ that does not depend on $\alpha$:
\ben \label{conthva}
\bal
\frac{1}{C} & \left( \left( \frac{\ma}{\rra}\right)^2 + \frac{f_\alpha(\xa)}{n(n-2)} |x|^2 \right)^{1- \frac{n}{2}} \le \hva \le C  \left( \frac{n(n-2)}{f_\alpha(\xa)} \right)^{\pui} |x|^{2-n} .
\eal
\een
Using estimate \eqref{estoptlwa} and \eqref{conthva} one obtains that  for any $x \in B_0(2)$,
\ben \label{contasurv}
  \frac{\hat{a}_\alpha}{ \hva^{2^*+1}}(x) \le C  \left( \left( \frac{\ma}{\rra}\right)^2 + \frac{f_\alpha(\xa)}{n(n-2)} |x|^2 \right)^{\frac{n}{2}} \in  L^\infty(B_0(2)) .
  \een
By \eqref{eqhva}, \eqref{conthva}, \eqref{contasurv} and standard elliptic theory we get that 
\ben \label{convhva}
\hva \to \hat{v} \textrm{  in } C^1_{loc} (B_0(2) \backslash \{ 0\})
\een
as $\alpha \to + \infty$, and we have that, for $x \not = 0$:
\ben \label{prophv}
\hat{v}(x) = \frac{\lambda_0}{|x|^{n-2}} + H(x),
\een
where $\lambda_0 =  \left( \frac{n(n-2)}{f_0(x_0)}\right)^{\pui}$ and $H$ is a superharmonic function in $B_0(2)$. By Claim \ref{claimcontbulle} there also holds $H \ge 0$ in $B_0(2)$.

\medskip

\noindent We claim that $H(0) > 0$ if $\rra < \ra$. Indeed, by the definition of $\rra$ as in \eqref{defra}, if we assume $\rra < \ra$ there exists a sequence $\ya \in B_0(\rra)$ such that there holds:

\begin{itemize}
\item either $\va(\ya) = (1+\ve) \Ba(\ya)$,
\item etiher $| \nabla \va (\ya)|_\xi = (1+\ve) |\nabla \Ba (\ya)|_\xi$,
\item or $\left( x^k \partial_k \va + \pui \va \right)(\ya) = 0$.
\end{itemize}

\noindent Letting $\hat{y}_\alpha = \frac{\ya}{\rra}$, it is easily seen that each of the above three cases implies that either $H(\hat{y}_\alpha)$ or $\nabla H (\hat{y}_\alpha)$ are nonzero which implies, since $H$ is superharmonic and nonnegative, that $H(0) > 0$.

\medskip

We therefore show that $\rra = \ra$ by showing that $H(0) \le 0$. Note that since $H$ is nonnegative and superharmonic this will actually show that $H(0) = 0$, and hence that $H$ is everywhere zero.
To do this, we let $0 < \delta < 1$ and write a Pohozaev identity for $\va$ on $B_0(\delta \rra)$. We have:
\ben \label{pohoraconclu}
\bal
\int_{B_0(\delta \rra)}&  \left( x^k \partial_k \va + \pui \va \right) \triangle_\xi \va dx \\
& = \int_{\partial B_0(\delta \rra)} \left( \frac{1}{2} \delta \rra |\nabla \va |_\xi^2 - \pui \va \partial_\nu \va - \delta  \rra \left( \partial_\nu \va \right)^2 \right) d\sigma.
\eal
\een
By \eqref{convhva} and \eqref{prophv} there holds:
\ben \label{bordpohoconclu}
\bal
& \int_{\partial B_0(\rra)} \left( \frac{1}{2} \delta \rra |\nabla \va |_\xi^2 - \pui \va \partial_\nu \va - \delta \rra \left( \partial_\nu \va \right)^2 \right) d\sigma \\
 & \quad  = \left( \frac{1}{2} (n-2)^2 \lambda_0 \omega_{n-1} H(0) + \ve(\delta) + o(1) \right) \left( \frac{\ma}{\rra}\right)^{n-2}, 
\eal
\een
where until the end of this subsection $\ve(\delta)$ will denote some bounded quantity such that
\ben \label{epsdelta}
 \lim_{\delta \to 0} \ve(\delta) = 0 .
\een
Independently, there holds
\[ \int_{B_0(\delta \rra)}  \left( x^k \partial_k \va + \pui \va \right) \triangle_\xi \va dx = L_1 + L_2 + L_3,\]
where
\[
\bal
L_1 &= - \int_{B_0(\rra)} \left( x^k \partial_k \va + \pui \va \right) \tilde{h}_\alpha(x) \va(x) dx, \\
L_2 &=   \int_{B_0(\rra)}  \left( x^k \partial_k \va + \pui \va \right) \tilde{f}_\alpha (x) \va(x)^{2^*-1} dx,  \\
L_3 & = \int_{B_0(\rra)}  \left( x^k \partial_k \va + \pui \va \right) \left( \tilde{b}_\alpha + \left|\tilde{U}_\alpha + \vpa^{2^*} \Lx \Za \right|_\xi^2 \right)(x) \va(x)^{-2^*-1} dx .\\
\eal
\]
Straightforward computations yield: 
\ben \label{L1}
L_1 = \left \{
\bal
& O\left(\ma \rra \right)  & \textrm{ if } n =3, \\
& O \left( \ma^2 \ln \left( \frac{\rra}{\ma} \right) \right) & \textrm{ if } n =4, \\
& \frac{4(n-1)}{n-4} K_n^{-n} f_0(x_0)^{- \frac{n}{2}}  \\
& \times \left(h_0(x_0) - \frac{n-2}{4(n-1)} R(g)(x_0) \right)\ma^2 + o(\ma^2) &  \textrm{ if } n \ge 5, \\
\eal 
\right.
\een
where $K_n^{-n}$ is as in \eqref{defKn-n}. Because of Claim \ref{controlera}, the conclusion of \eqref{estgradf} is still valid when taking $\da = \rra$. Hence, mimicking the computations that led to \eqref{K2a} we therefore obtain:
\ben \label{L2}
L_2 = \left \{ 
\bal
& o \left( \frac{\ma}{\rra} \right) & \textrm{ if } n = 3, \\
& \pui  f_0(x_0)^{ - \frac{n}{2}} \frac{\triangle_g f_0(x_0)}{f_0(x_0)} K_n^{-n} \ma^2 + o (\ma^2)  \qquad  &\\
& \qquad \qquad \qquad \qquad + o \left( \left( \frac{\ma}{\rra} \right)^{n-2} \right)  & \textrm{ if } n \ge 4 .\\
\eal \right.
\een
Using the definition of $\rra$ as in \eqref{defra} and the optimal estimate \eqref{estoptlwa}, we obtain that:
\ben \label{L3}
L_3 \le o(\ma^2) + o \left( \left( \frac{\ma}{\rra}\right)^{n-2} \right).
\een
We are now able to conclude the proof of Claim \ref{raegalerhoa}. We first treat the case where $3 \le n \le 5$. Then \eqref{bordpohoconclu}, \eqref{L1}, \eqref{L2} and \eqref{L3} in \eqref{pohoraconclu}, along with Claim \ref{controlera} give:
\[  \left( \frac{1}{2} (n-2)^2 \lambda_0 \omega_{n-1} H(0) + \ve(\delta) + o(1) \right) \left( \frac{\ma}{\rra}\right)^{n-2} \le  o \left( \left( \frac{\ma}{\rra} \right)^{n-2} \right), \]
so that letting $\alpha \to + \infty$ and then $\delta \to 0$ yields:
\[ H(0) \le 0.\]
Assume now that $n \ge 6$. Then  \eqref{bordpohoconclu}, \eqref{L1}, \eqref{L2} and \eqref{L3} in \eqref{pohoraconclu} give:
\ben \label{pohoran6}
\bal
& \frac{n-4}{4(n-1)} K_n^n f_0(x_0)^{\frac{n}{2}} \left( \frac{1}{2} (n-2)^2 \lambda_0 \omega_{n-1} H(0) + \ve(\delta) + o(1) \right) \left( \frac{\ma}{\rra}\right)^{n-2} \\
& \quad \quad  \le  \Bigg [ h_0(x_0) - \frac{n-2}{4(n-1)} R(g)(x_0) + C(n) \frac{\triangle_g f_0(x_0)}{f_0(x_0)}\Bigg] \ma^2 + o(\ma^2) ,
 \eal
\een
where $K_n^{-n}$ is as in \eqref{defKn-n}
and $C(n)$ is as in \eqref{defC(n)}. We separate the proof for $n \ge 6$ between three cases. Assume first that $X_0(x_0) \not = 0$. Then by Claim \ref{controlera} there holds $\rra = O \left(\ma^{\frac{n-1}{n}} \right)$ and then
 \[ \ma^2 = o \left( \left( \frac{\ma}{\rra}\right)^{n-2} \right),\]
 so that \eqref{pohoran6} gives, taking the limit $\alpha \to \infty$ and then $\delta \to 0$:
 \[ H(0) \le 0.\]
Assume then that $\nabla f_0(x_0) \not = 0$. By Claim \ref{controlera} there holds $\rra = O \left( \ma^{\frac{n-2}{n-1}}\right)$ and then
\[ \ma^2 = o \left( \left( \frac{\ma}{\rra} \right)^{n-2} \right). \]
Once again, \eqref{pohoran6} gives $H(0) \le 0$ taking the limit $\alpha \to \infty$ and $\delta \to 0$. Assume finally that 
\[ h_0(x_0) - \frac{n-2}{4(n-1)} R(g)(x_0) + C(n) \frac{\triangle_g f_0(x_0)}{f_0(x_0)} < 0.\]
In this case there holds
\[
\bal
& \frac{1}{2} (n-2)^2 \lambda_0 \omega_{n-1} \frac{n-4}{4(n-1)} K_n^{n} f_0(x_0)^{ \frac{n}{2}}  H(0) \\
& \le \lim_{\alpha \to + \infty} \rra^{n-2} \ma^{4-n}   \Bigg [ h_0(x_0) - \frac{n-2}{4(n-1)} R(g)(x_0) + C(n) \frac{\triangle_g f_0(x_0)}{f_0(x_0)}\Bigg] 
 \eal
 \]
 and hence, once again, $H(0) \le 0$. This therefore concludes the proof of Claim \ref{raegalerhoa} and shows that $\rra = \ra$.
\end{proof}

\noindent Remember that the definition of $\rra$ in \eqref{defra} depended on some fixed parameter $\ve > 0$. Claim \ref{raegalerhoa} shows that $\rra$ actually does not depend on $\ve$ since $\rra = \ra$. This yields then:
\be
 \sup_{B_0(\ra)} \left| \frac{\va}{\Ba} - 1 \right|  = o(1) 
 \ee
as $\alpha \to + \infty$. Since by Claim \ref{raegalerhoa} there holds $\ra \to 0$ as $\alpha \to \infty$, this concludes the proof of Proposition \ref{propoblow}.

\section{Instability results} \label{instabilite}

In this section we prove Theorem \ref{thinstabEL}. We show that the assumptions of Theorem \ref{Th1} are sharp by constructing blowing-up sequences of solutions of system \eqref{systype}. In dimensions $3$ to $5$ we construct such examples on the standard sphere. In dimensions $n \ge 6$ we construct them on closed manifolds of positive scalar curvature admitting a locally conformally flat pole and with no conformal Killing $1$-forms. We adapt, in dimensions greater than $6$, the constructions of Druet-Hebey \cite{DruHeb} and distinguish between dimension $6$ and dimensions $n \ge 7$. 

\medskip
A manifold $(M,g)$ is said to have a conformally flat pole at $x_0 \in M$ if $g$ is conformally flat in a neighborhood of $x_0$; locally conformally flat manifolds are such manifolds. Recall that a manifold $(M,g)$ is said to have no conformal Killing $1$-forms (or equivalently, no conformal Killing vector-fields) if any $1$-form satisfying $\Lg X = 0$ in $M$ is zero. Nontrivial conformal Killing $1$-forms may be found on specific manifolds, but as shown in Beig-Chru\'{s}ciel-Schoen \cite{BeChSc} they generically do not exist. Examples of manifolds of positive scalar curvature with a conformally flat pole and having no conformal Killing $1$-forms are obtained by considering quotients of $\mathbb{S}^n$ by isometry groups acting freely and properly. For instance the projective space $P_n (\mathbb{R}) = \mathbb{S}^n / \{ \pm 1 \}$ is an example in any dimension $n \ge 3$. 

\subsection{Instability in dimension $3 \le n \le 5$}

We state our instability result in dimension $3$, for the sake of clarity. However, such an easy construction can be carried out in dimensions $4$ and $5$ without additional difficulties.

\begin{prop} \label{instabn3}
Let $(\mathbb{S}^3,h)$ be the standard sphere. There exists a sequence $(U_\alpha, Y_\alpha)_\alpha$ respectively of smooth $(2,0)$-tensor fields and smooth $1$-forms such that
\[ U_\alpha \longrightarrow U \textrm{ in } C^0(\mathbb{S}^3), \]
and
\[ Y_\alpha \longrightarrow Y \textrm{ in } C^0(\mathbb{S}^3) \]
as $\alpha \to + \infty$, where $U \not \equiv 0$, and there exists a sequence $(\ua,\Wa)_\alpha$, with $\ua > 0$, satisfying:
\ben \label{sysinstab3}
\left \{ 
\bal
& \triangle_{h} \ua + \frac{3}{4} \ua = \frac{3}{4} \ua^{5} + \frac{\left|U_\alpha +  \Lh \Wa \right|_h^2 }{\ua^{7}}, \\
& \Dh \Wa = \ua^{6} X + Y_\alpha,
\eal
 \right. 
\een
where $X \not \equiv 0$ is some fixed $1$-form in $\mathbb{S}^3$ and $\max_M \ua \to + \infty$ as $\alpha \to + \infty$.
\end{prop}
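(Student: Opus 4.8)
The plan is to import the loss of compactness directly from the Yamabe problem on $(\mathbb{S}^3,h)$ and to switch off the coupling term of \eqref{sysinstab3} by a purely algebraic choice of the data. Observe first that when the term $|U_\alpha+\Lh\Wa|_h^2\,\ua^{-7}$ vanishes, the scalar equation of \eqref{sysinstab3} is exactly the Yamabe equation $\triangle_h u+\frac34 u=\frac34 u^5$ on the round sphere. This equation carries the classical non-compact family of positive standard bubbles $(u_{x_0,\da})$ concentrating at any prescribed point $x_0\in\mathbb{S}^3$, obtained by pulling back the Euclidean standard bubbles via the stereographic projection, and which satisfy $u_{x_0,\da}(x_0)=\max_{\mathbb{S}^3}u_{x_0,\da}\to+\infty$ together with $u_{x_0,\da}\to 0$ in $C^\infty_{loc}(\mathbb{S}^3\setminus\{x_0\})$ as $\da\to0$. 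It therefore suffices to arrange the remaining data so that $U_\alpha+\Lh\Wa\equiv0$ identically along the sequence, which forces the first equation to reduce to the Yamabe equation while leaving us free to pick the $1$-form data.

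Concretely, I would fix $x_0\in\mathbb{S}^3$ and $r_0>0$, let $X$ be any smooth $1$-form with $X\not\equiv0$ and $\mathrm{supp}\,X\subset\mathbb{S}^3\setminus B_{x_0}(r_0)$, and fix a smooth $1$-form $Z$ which is not a conformal Killing $1$-form — such $Z$ exists since the space of conformal Killing $1$-forms is finite-dimensional — so that $U:=\Lh Z\not\equiv0$. Then I set $U_\alpha:=U$ for all $\alpha$, $\Wa:=-Z$, pick $\da\to0^+$ and $\ua:=u_{x_0,\da}$, and finally set $Y:=-\Dh Z$ and $Y_\alpha:=-\Dh Z-\ua^{6}X$. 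One may also take $Z$ orthogonal to the space of conformal Killing $1$-forms, so that $\Wa$ is as well, but this plays no role in what follows.

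The verification is then immediate: $U_\alpha+\Lh\Wa=\Lh Z-\Lh Z\equiv0$, so the coupling term is identically zero and the scalar equation of \eqref{sysinstab3} is the Yamabe equation solved by $\ua$; and $\Dh\Wa=-\Dh Z=\ua^{6}X+Y_\alpha$ by the choice of $Y_\alpha$, so the $1$-form equation holds. For the convergence, $U_\alpha\equiv U\to U$ trivially, while $\ua^{6}X\to0$ in $C^0(\mathbb{S}^3)$ — it vanishes on $B_{x_0}(r_0)$ and $\ua\to0$ uniformly on the compact set $\mathbb{S}^3\setminus B_{x_0}(r_0)$ — hence $Y_\alpha\to Y$ in $C^0(\mathbb{S}^3)$. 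Since $U=\Lh Z\not\equiv0$, $X\not\equiv0$, $\ua>0$ and $\max_{\mathbb{S}^3}\ua=\ua(x_0)\to+\infty$, this produces the desired blowing-up sequence.

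There is essentially no analytic obstacle in this three-dimensional case: the whole difficulty has been reduced to the algebraic cancellation $U_\alpha+\Lh\Wa\equiv0$, and the only point requiring (minimal) care is the uniform decay $\ua^{6}X\to0$, which is precisely what forces $X$ to vanish near the concentration point $x_0$ and, with it, the failure on all of $\mathbb{S}^3$ of the assumption $|X_0|_h>0$ of Theorem~\ref{Th1} — its other hypotheses, namely coercivity of $\triangle_h+\frac34$ and positivity of $f_0\equiv\frac34$, being satisfied here. The same soft argument applies verbatim in dimensions $4$ and $5$ after replacing the exponents $6=2^*$ and $7=2^*+1$ by their values there; by contrast, the genuine, gluing-type constructions underlying Theorem~\ref{thinstabEL} are only needed when $n\ge6$, where one can no longer kill the coupling term by such an elementary device.
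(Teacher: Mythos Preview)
Your proof is correct. Both your argument and the paper's rest on the same key observation: by enforcing $U_\alpha+\Lh W_\alpha\equiv 0$ the coupling term vanishes identically, the scalar equation collapses to the Yamabe equation on $\mathbb{S}^3$, and the standard conformal bubbles furnish the blowing-up sequence $(u_\alpha)$.

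Where you diverge from the paper is in the implementation. The paper lets $W_\alpha$ genuinely depend on $\alpha$: it takes $W_\alpha=\eta(r)Z_\alpha(r)\,\partial_r$ with $Z_\alpha$ the solution of a second-order ODE driven by $\varphi_\alpha^6$, so that $\Dh W_\alpha=\varphi_\alpha^6 X_0+Y_\alpha$ holds with an \emph{explicit} $Y_\alpha$ built from derivatives of $\eta$ and $Z_\alpha$; uniform $C^2$ bounds on $Z_\alpha$ away from the poles then give the $C^0$ convergence of $U_\alpha=-\Lh W_\alpha$ and of $Y_\alpha$. You bypass the ODE entirely by taking $W_\alpha\equiv -Z$ constant in $\alpha$ and absorbing the forcing $u_\alpha^6 X$ into $Y_\alpha$; the convergence $Y_\alpha\to Y$ then follows from the support condition on $X$ and the uniform decay of $u_\alpha$ away from $x_0$. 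Your route is shorter and avoids any computation, at the (harmless) cost of making the sequences $(U_\alpha)$ and $(W_\alpha)$ constant; the paper's route shows that one can also build genuinely $\alpha$-dependent data with the same blow-up behaviour. Either way, the essential structural point --- that $X$ must vanish at the concentration point, so that the hypothesis $|X_0|_h>0$ of Theorem~\ref{Th1} necessarily fails --- is the same.
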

\noindent Note that in view of Theorem \ref{Th1}, the coefficient $X$ in \eqref{sysinstab3} vanishes somewhere.
\begin{proof}
We consider $x_0 \in \mathbb{S}^3$ and consider spherical coordinates centered at $x_0$, which we will denote by $(r, \theta, \phi)$. It is well known that in these coordinates the metric $h$ takes the following form:
\[ h (x) = 
\begin{pmatrix}
1 & 0 & 0 \\
0 & \sin^2 r & 0 \\
0 & 0 & \sin^2 r \sin^2 \theta \\
\end{pmatrix},
\]
where $r = d_h(x,x_0)$. Let $(\la)_\alpha$, $\la > 1$, be a sequence of positive numbers converging to $1$ as $\alpha \to + \infty$. Let, for any $x \in \mathbb{S}^3$ :
\ben \label{defvpa3}
\vpa(x) =  \left( \la^2 - 1\right)^{\frac{1}{4}} \left( \la - r \right)^{- \frac{1}{2}},
\een
where $r = d_h(x_0,x)$. The functions $\vpa$ satisfy:
\ben \label{equavpa3}
\triangle_h \vpa + \frac{3}{4} \vpa = \frac{3}{4} \vpa^{5}.
\een
Let $0 < \delta < \frac{\pi}{2}$. For any $\alpha$, let $Z_\alpha: (0,\pi) \to \mathbb{R}$ be the maximal solution of the following ODE:
\ben \label{defZa3}
\Za'' +2 \cot r \Za'  + \left( 1 - 2 \cot^2 r \right)\Za = - \frac{3}{4} \vpa^6(r),
\een
satisfying $\Za(\pi/2) = 1$ and $\Za'(\pi/2) = 0$, where $\vpa$ is as in \eqref{defvpa3}. By \eqref{defvpa3} and standard ODE theory it is easily seen that, for any $\ve > 0$, $\Za$ is uniformly bounded in $\alpha$ in $C^2([\ve,\pi-\ve])$. Let now $\eta \in C^\infty([0,\pi])$ be such that $\eta \not \equiv 0$ in $[0,\pi]$ and $\eta \equiv 0$ in $[0,\delta]$ and in $[\pi-\delta, \pi]$. We let, for any $x \in \mathbb{S}^3$:
\ben \label{defWa3}
\Wa(x) = \eta(r) \Za(r) \frac{\partial}{\partial r},
\een 
where $r = d_h(x_0,x)$. By definition of $\eta$ and $\Za$, $\Wa$ is smooth in $\mathbb{S}^3$, it is zero in $B_{x_0}(\delta)$ and in $B_{-x_0}(\delta)$ and it is uniformly bounded in $\alpha$ in $C^2(\mathbb{S}^3)$. Straightforward computations using \eqref{defZa3} and the fact that $\Wa$ is radial and only depends on $r$ yield that $\Wa$ satisfies:
\ben \label{eqWa3}
\Dh \Wa = \vpa^{6} X_0 + Y_\alpha,
\een
where we have let
\[ X_0(x) = \eta(r) \frac{\partial}{\partial r}\]
and
\be
\bal
Y_\alpha & = - \frac{4}{3} \Big( 2 \eta'(r) \Za'(r) + \eta''(r) \Za(r) + 2 \cot r \eta'(r) \Za(r) \Big) \frac{\partial}{\partial r}. \\
\eal
\ee
It is then easily seen that $Y_\alpha$ converges in $C^0(\mathbb{S}^3)$ to some smooth $1$-form $Y_0$. It remains to define:
\ben \label{defUa3}
U_\alpha = - \Lh \Wa,
\een
where $\Wa$ is as in \eqref{defWa3}. Here again, since $\Wa$ is uniformly bounded in $C^2 (\mathbb{S}^3)$, $U_\alpha$ converges in $C^0(\mathbb{S}^3)$ to some symmetric traceless $(2,0)$-tensor $U_0$. Since we assumed that $\Za(\pi/2) = 1$ and $\Za'(\pi/2)=0$ there holds:
\[  {\Lg \Wa}_{rr} \left( \frac{\pi}{2}, \theta, \phi \right) = \frac{4}{3} \eta' \left( \frac{\pi}{2} \right), \]
so that it is always possible to choose $\eta$ so as to have a nonzero $U_0$. 
Using \eqref{equavpa3}, \eqref{eqWa3} and \eqref{defUa3} we obtain in the end that $(\vpa, \Wa)$ satisfy:
\be
\left \{ 
\bal
& \triangle_{h} \vpa + \frac{3}{4} \vpa = \frac{3}{4} \vpa^{5} + \frac{\left|U_\alpha +  \Lh \Wa \right|_h^2 }{\vpa^{7}}, \\
& \Dh \Wa = \vpa^{6} X + Y_\alpha,
\eal
 \right. 
\ee
which concludes the proof of Proposition \ref{instabn3}.
\end{proof}

\subsection{Instability in dimensions $n \ge 7$}

\begin{prop} \label{instabn7t}
Let $(M,g)$ be a closed manifold of dimension $n \ge 7$. We assume that $(M,g)$ has a locally conformally flat pole, that $(M,g)$ has positive scalar curvature and that $(M,g)$ has no nontrivial conformal Killing $1$-forms. There exist examples of smooth functions $\tau$ with $\nabla \tau \not \equiv 0$ 
and of traceless divergence-free tensor fields $U \not \equiv 0$ such that there exists sequences $(h_\alpha,\ua, \Wa)_\alpha$ of smooth functions and smooth $1$-forms in $M$ satisfying:
\ben \label{limhat}
 h_\alpha \underset{\alpha \to \infty}{\longrightarrow} \frac{n-2}{4(n-1)} \left( R(g) - |\nabla \tau|^2_g \right) \textrm{ in } C^0(M), 
 \een
$\ua >0$, $\sup_M \ua \to \infty$ and 
\ben \label{instabn7uaWat}
\left \{ 
\bal
& \triangle_{g} \ua + h_\alpha \ua = \frac{n(n-2)}{4} \ua^{2^*-1} + \frac{\left|U +  \Lg \Wa \right|_g^2 }{\ua^{2^*+1}}, \\
& \Dg \Wa = - \frac{n-1}{n}\ua^{2^*} \nabla \tau .
\eal
 \right. 
\een
\end{prop}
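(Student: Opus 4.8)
The plan is to produce the blowing-up sequence by gluing an Aubin--Talenti bubble onto a fixed positive background solution of the limiting system, following the scheme of Druet--Hebey \cite{DruHeb} adapted to the coupled setting, and then to absorb the residual error into the coefficient $h_\alpha$ -- which, unlike $U$ and $\tau$, we are free to prescribe. Here $f_0:=\frac{n(n-2)}{4}$ is the fixed critical coefficient, so $\nabla f_0\equiv0$ and $\triangle_g f_0\equiv0$: the sharp condition \eqref{condzeros} of Theorem \ref{Th1} then reads $h_0<\frac{n-2}{4(n-1)}R(g)$ at the common zeros of $X_0$ and $\nabla f_0$, and since here $\frac{n-2}{4(n-1)}R(g)-h_0=\frac{n-2}{4(n-1)}|\nabla\tau|_g^2$, this condition degenerates (by equality) at every critical point of $\tau$ -- which is therefore where the blow-up must be located, Theorem \ref{Th1} forbidding it elsewhere. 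Accordingly, fix a point $x_0$ at which $g$ is conformally flat and choose $\tau\in C^\infty(M)$ with $\nabla\tau(x_0)=0$ (generically with nondegenerate Hessian at $x_0$), with $\nabla\tau\not\equiv0$, and with $\|\nabla\tau\|_{C^1}$ small enough that $\triangle_g+h_0$ is coercive (using $R(g)>0$); fix also a small, nonzero, smooth traceless divergence-free tensor field $U$, which exists on any closed $n$-manifold, $n\ge3$. By the existence theory for the focusing Einstein--Lichnerowicz system under smallness of the data (Premoselli \cite{Premoselli1}, Hebey--Pacard--Pollack \cite{HePaPo}), there is a positive smooth solution $(u_0,W_0)$ of the limiting system \eqref{systlimite} with $b_0\equiv0$, $f_0=\frac{n(n-2)}{4}$, $h_0=\frac{n-2}{4(n-1)}(R(g)-|\nabla\tau|_g^2)$, $U_0=U$, $X_0=-\frac{n-1}{n}\nabla\tau$ and $Y_0\equiv0$.

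Next I would set up the blow-up ansatz. In a conformal chart near $x_0$ in which $g=\varphi^{\frac{4}{n-2}}\xi$, the conformal invariance formula \eqref{duconforme} together with local conformal flatness shows that $\varphi^{-1}B_{\mu}$ solves $\triangle_g v+\frac{n-2}{4(n-1)}R(g)v=f_0v^{2^*-1}$ near $x_0$, where $B_\mu$ is the standard bubble at $x_0$ with concentration $\mu\to0$, so that $B_\mu(x_0)\sim\mu^{\frac{2-n}{2}}$. I look for $u_\alpha=u_0+\chi\varphi^{-1}B_{\mu_\alpha}+\phi_\alpha$, with $\chi$ a fixed cut-off supported near $x_0$ and $\phi_\alpha$ a correction term, and let $W_\alpha$ be the unique solution orthogonal to $K_g=\{0\}$ of $\Dg W_\alpha=-\frac{n-1}{n}u_\alpha^{2^*}\nabla\tau$. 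Because $\nabla\tau$ vanishes at the concentration point, the concentrating contribution $u_\alpha^{2^*}-u_0^{2^*}$ feeds back into the $W_\alpha$-equation only at higher order -- this is exactly the gain quantified by the model $1$-forms $\Va,\Pak$ of \eqref{def1forme} -- which yields $\Lg W_\alpha\to\Lg W_0$ away from $x_0$, together with a controlled, lower-order bound for $|\Lg W_\alpha|_g$ near $x_0$.

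The heart of the construction is then a Lyapunov--Schmidt-type reduction: one chooses $\phi_\alpha$, small in a suitable weighted norm, so that the residual
\[ \mathcal{E}_\alpha:=\triangle_g u_\alpha+h_0 u_\alpha-f_0 u_\alpha^{2^*-1}-\frac{|U+\Lg W_\alpha|_g^2}{u_\alpha^{2^*+1}} \]
is $L^2_g$-orthogonal to the approximate kernel $\mathrm{span}\{Z_{j,\alpha}\}$ of the linearization at $u_\alpha$; a Pohozaev-type energy expansion of the resulting finite-dimensional system (compare \eqref{pohora}) then shows, using the degeneracy of $\frac{n-2}{4(n-1)}R(g)-h_0$ at $x_0$ and the dimensional restriction $n\ge7$, that the Lagrange multipliers and $\phi_\alpha$ can be taken so that $\Vert\mathcal{E}_\alpha\Vert_{C^0(M)}\to0$. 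Finally set
\[ h_\alpha:=\frac{1}{u_\alpha}\Big(f_0 u_\alpha^{2^*-1}+\frac{|U+\Lg W_\alpha|_g^2}{u_\alpha^{2^*+1}}-\triangle_g u_\alpha\Big)=h_0+\frac{\mathcal{E}_\alpha}{u_\alpha}. \]
Then $(u_\alpha,W_\alpha)$ solves \eqref{instabn7uaWat} by construction, with $a_\alpha=|U+\Lg W_\alpha|_g^2\ge0$ automatically, and $h_\alpha$ is smooth; since $u_\alpha\ge u_0-o(1)$ is bounded below away from $0$ while $\mathcal{E}_\alpha\to0$ in $C^0(M)$, we obtain $h_\alpha\to h_0=\frac{n-2}{4(n-1)}(R(g)-|\nabla\tau|_g^2)$ in $C^0(M)$. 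As $\sup_M u_\alpha\ge u_\alpha(x_0)\sim\mu_\alpha^{\frac{2-n}{2}}\to+\infty$, this proves Proposition \ref{instabn7t}.

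The main obstacle is the estimate $\Vert\mathcal{E}_\alpha\Vert_{C^0(M)}\to0$: it forces one to control, uniformly on all of $M$ and in particular across the transition region $d_g(x,x_0)\sim\mu_\alpha^{1/2}$, the nonlinear interaction $(u_0+B_{\mu_\alpha})^{2^*-1}-u_0^{2^*-1}-B_{\mu_\alpha}^{2^*-1}$ between background and bubble, the feedback of the concentration through the genuinely coupled $W_\alpha$-equation, and the cut-off error. This is precisely where the hypothesis $n\ge7$ enters -- for $n=6$ the leading interaction term $u_0 B_{\mu_\alpha}^{2^*-2}$ is of the same critical order $\mu_\alpha^{-2}$ as the bubble's own curvature contribution, so the construction has to be modified (hence the separate treatment of dimension $6$ announced above) -- and it is where the failure of the sharp condition \eqref{condzeros} of Theorem \ref{Th1} at $x_0$ is what makes the finite-dimensional reduction solvable with a residual that can be driven to zero in $C^0$.
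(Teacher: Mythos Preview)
Your outline is plausible but takes a substantially harder road than the paper, and one step is left genuinely unverified. The decisive simplification you miss is in the \emph{choice} of $\tau$: since we are constructing examples, the paper takes $\tau$ to be \emph{constant on an entire neighborhood} $\tilde B_{x_0}(\delta)$ of the blow-up point, not merely critical at $x_0$. This kills the coupling exactly where it would hurt: the right-hand side $-\tfrac{n-1}{n}u_\alpha^{2^*}\nabla\tau$ of the vector equation is supported away from the concentration region, so $\|\Lg(W_\alpha-W_0)\|_{L^\infty(M)}=O(\mu_\alpha^{(n-2)/2})$ by plain elliptic theory, with no need for the model $1$-forms $V_\alpha,P_{\alpha,k}$ or any pointwise analysis of the feedback. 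With the vector part thus trivialized, the paper avoids Lyapunov--Schmidt altogether: the correction $\psi_\alpha$ is simply the solution of the linear equation $(\triangle_{\tilde g}+\tfrac{n-2}{4(n-1)}S_{\tilde g})\psi_\alpha=F_\alpha+A_\alpha$, where $F_\alpha=(u_0+\eta\varphi_\alpha)^{2^*-1}-u_0^{2^*-1}-(\eta\varphi_\alpha)^{2^*-1}$ and $A_\alpha$ collects the negative-power and cutoff errors. A Green-function estimate gives $|\psi_\alpha|=O((\mu_\alpha/\theta_\alpha)^2)+O(\mu_\alpha)$, and the hypothesis $n\ge7$ enters exactly in upgrading this to $|\psi_\alpha|=o(\mu_\alpha^{(n-2)/2}\theta_\alpha^{4-n})+O(\mu_\alpha)$, which is what forces both $\psi_\alpha/\tilde u_\alpha\to0$ and $\tilde u_\alpha^{\,2^*-3}\psi_\alpha\to0$ in $C^0(M)$; the convergence $h_\alpha\to h_0$ is then checked term by term.

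By contrast, your scheme keeps $\nabla\tau$ nonzero near $x_0$ and invokes a finite-dimensional reduction. That buys generality you do not need, at the price of a step you have not justified: after reduction the residual lies in the span of the approximate kernel elements $Z_{j,\alpha}$, whose $L^\infty$ norm is of order $\mu_\alpha^{-(n-2)/2}$, so ``$\mathcal E_\alpha\to0$ in $C^0(M)$'' would require the Lagrange multipliers to be $o(\mu_\alpha^{(n-2)/2})$ --- and the degeneracy $h_0(x_0)=\tfrac{n-2}{4(n-1)}R(g)(x_0)$ only makes the \emph{leading} Pohozaev term vanish, it does not by itself produce multipliers of that size. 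You could instead divide by $u_\alpha$ and use its growth near $x_0$ rather than only its lower bound, but then you must track the residual pointwise across the transition annulus, which is precisely the analysis the paper's choice of $\tau$ was designed to avoid.
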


\begin{proof}
Let $x_0 \in M$ be such that $(M,g)$ has a locally conformally flat pole at $x_0$. Let $\vp \in C^\infty(M)$, $\vp> 0$ be such that $\tilde g : = \vp g$ is the round sphere metric in $\tilde{B}_{x_0}(\delta)$, where $\delta > 0$ and $ \tilde{B}_{x_0}(\delta)$ is the ball of center $x_0$ and radius $\delta$ measured with respect to $\tilde g$. Let $\eta \in C^\infty(M)$ be a nonnegative function satisfying $\eta \equiv 1$ in $ \tilde{B}_{x_0}(\frac{\delta}{2})$ and $\eta \equiv 0$ outside of $ \tilde{B}_{x_0}(\delta)$. Let $\tau \in C^\infty(M)$ be a smooth function satisfying $\nabla \tau \equiv 0 $ in $\tilde{B}_{x_0}(\delta)$ and $U$ be a nonzero traceless divergence-free $(2,0)$-tensor field. We let $(u_0, W_0)$, with $u_0 > 0$, be a smooth solution of the following system:
 \ben \label{sysu0n7}
\left \{ \bal
& \triangle_{\tilde g} u_0 + \frac{n-2}{4(n-1)} \left( S_{\tilde g}- \vp^{2-2^*} |\nabla \tau|_g^2 \right) u_0 = \frac{n(n-2)}{4} \ua^{2^*-1} + \frac{1}{\vp^{2 \cdot 2^*}} \frac{\left|U +  \Lg W_0 \right|_g^2 }{\ua^{2^*+1}}, \\
& \Dg W_0 = - \frac{n-1}{n}\vp^{2^*} u_0^{2^*} \nabla \tau.
\eal
\right.
 \een
 By the conformal covariance property of the conformal laplacian finding such a $(u_0,W_0)$ amounts to solve the following system:
  \ben
\left \{
 \bal
& \triangle_{ g} (\vp u_0) + \frac{n-2}{4(n-1)} \left( S_{g} - |\nabla \tau|_g^2 \right) (\vp u_0) = \frac{n(n-2)}{4} (\vp u_0)^{2^*-1} + \frac{\left|U +  \Lg W_0 \right|_g^2 }{(\vp u_0)^{2^*+1}} , \\
& \Dg W_0 = - \frac{n-1}{n}(\vp u_0)^{2^*} \nabla \tau,
\eal
\right.
 \een
 which is always possible as long as $U$ is nonzero and $\Vert \nabla \tau \Vert_{\infty} + \Vert U \Vert_\infty \le C$, where $C$ is a positive constant that depends only on $n$ and $g$. We refer for this to the existence result in Premoselli \cite{Premoselli1}.
Let $(\la)_\alpha$, $\la > 1$, be a sequence of numbers such that $\la \to 1$ as $\alpha \to \infty$. We let in the following:
\ben \label{defvpainstabt}
\vpa(x) = \left( \la^2 - 1\right)^{\frac{n-2}{4}} \left( \la - r \right)^{1 - \frac{n}{2}},
\een
where $r = \cos d_{\tilde g} \left(x_0, x \right)$. Since $\tilde g$ is the round metric in $ \tilde{B}_{x_0}(\delta)$ and by definition of $\eta$ there holds:
\ben \label{equavpainstabt}
\left(  \triangle_{\tilde g} + \frac{n-2}{4(n-1)} S_{\tilde g} \right) (\eta \vpa) = \frac{n(n-2)}{4} \eta \vpa^{2^*-1} + 2 \langle \tilde{\nabla} \eta, \tilde{\nabla} \vpa \rangle_{\tilde g} + \vpa \triangle_{\tilde g} \eta,
 \een
 where $\tilde{\nabla}$ stands for the gradient operator for the metric $\tilde g$.
For the sake of simplicity we let in the following: 
\ben \label{Ent}
\bal
E_n(u_0) & = \frac{n-2}{4(n-1)} \vp^{2-2^*} |\nabla \tau|_g^2 u_0.
\eal
\een
Note that there holds:
\ben \label{u0t}
E_n(u_0)(x_0) = 0.
\een
Since $M$ has no conformal Killing $1$-forms, we can let $Z_\alpha$ be the unique $1$-form satisfying in $M$:
\ben \label{defZainstabt}
\Dg \Za = - \frac{n-1}{n}\left( u_0 + \eta \vpa \right)^{2^*} \vp^{2^*} \nabla \tau.
\een
We define:
\ben \label{defAainstabt}
\bal
A_\alpha =&  \frac{1}{\vp^{2 \cdot 2^*}}\left( \frac{\left| U + \Lg \Za \right|_g^2}{\left( u_0 + \eta \vpa \right)^{2^*+1}} -  \frac{\left| U + \Lg W_0 \right|_g^2}{u_0^{2^*+1}} \right) \\
& +  \left( \eta^{2^*-1} - \eta \right)\vpa  - 2 \langle \tilde{\nabla} \eta, \tilde{\nabla} \vpa \rangle_{\tilde g} - \vpa \triangle_{\tilde g} \eta \\
\eal
\een
and
\ben \label{defFainstabt}
\bal
& F_\alpha  =  \frac{n(n-2)}{4}\left[ \left( u_0+\eta \vpa \right)^{2^*-1} - {u_0}^{2^*-1} - (\eta \vpa)^{2^*-1} \right] \\
\eal
\een
We also let $\psa$ be the unique solution in $M$ of:
\ben \label{defpsainstabt}
\triangle_{\tilde g} \psa + \frac{n-2}{4(n-1)} S_{\tilde g} \psa = \left( F_\alpha + A_\alpha \right).
\een
Finally, we let
\ben \label{defuainstabt}
\tua = u_0 + \vpa + \psa,
\een
where $\vpa$ is as in \eqref{defvpainstabt} and define $\Wa$ as the unique $1$-form in $M$ satisfying:
\ben \label{defWainstabt}
\Dg \Wa = - \frac{n-1}{n}\vp^{2^*} \tua^{2^*}  \nabla \tau.
\een
It is easily seen that $(\tua,\Wa)$ satisfies in $M$
\ben \label{equainstabn7t}
\left \{ 
\bal
& \triangle_{\tilde g} \tua + \tilde{h}_\alpha \tua = \frac{n(n-2)}{4} \tua^{2^*-1} + \frac{1}{\vp^{2 \cdot 2^*}} \frac{\left| U + \Lg \Wa \right|_g^2 }{\tua^{2^*+1}}, \\
& \Dg \Wa = - \frac{n-1}{n}(\vp \tua)^{2^*} \nabla \tau,
\eal
 \right. 
\een
where we have let
\ben \label{defhainstabt}
\bal
\tilde{h}_\alpha \tua & = \frac{n-2}{4(n-1)} S_{\tilde g} \tua + \frac{n(n-2)}{4} \left[ \tua^{2^*-1} - \left( u_0 + \eta \vpa \right)^{2^*-1}\right] - E_n(u_0) \\
& \qquad \qquad + \frac{1}{\vp^{2 \cdot 2^*}} \left( \frac{\left| U + \Lg \Wa \right|_g^2}{\tua^{2^*+1}}  - \frac{\left| U + \Lg \Za \right|_g^2}{\left(u_0 + \eta \vpa \right)^{2^*+1}} \right),
\eal
\een
where $Z_\alpha$ and $\Wa$ are as in \eqref{defZainstabt} and \eqref{defWainstabt}. In what follows we investigate the convergence of $\tilde{h}_\alpha$. First, we always have
\[ \left| \left(u_0 + \eta \vpa \right)^{2^*} - {u_0}^{2^*} \right| \le C \ma^\pui  \textrm{ on } M \backslash  \tilde{B}_{x_0}(\delta) \]
for some positive constant $C$, where we have let
\ben \label{defmainstabt}
\ma^2 = \la  - 1, 
\een
so that with \eqref{sysu0n7} and \eqref{defZainstabt} there holds by standard elliptic theory, see Section \ref{stddelltheory} :
\ben \label{Zainfinstabt}
\left \Vert \Lg \left( \Za - W_0 \right) \right \Vert_{L^\infty(M)} = O \left( \ma^\pui \right).
\een
In particular this yields
\ben \label{Aa1instabt}
\frac{1}{u_0^{2^*+1}} \left| \left| U + \Lg \Za \right|^2_g - \left| U + \Lg W_0 \right|^2_g \right|(x) \le C  \ma^\pui 
\een
for any $x \in M$, where $C$ does not depend on $\alpha$ or on $x$. With \eqref{Aa1instabt} and \eqref{defvpainstabt} we can write that: 
\[  \left| U + \Lg \Za \right|^2_g \left( \left(u_0 + \eta \vpa \right)^{-2^*-1} - {u_0}^{-2^*-1} \right) \le C \min \left( 1, \vpa \right)  \]
in $M$. This then gives, with \eqref{Aa1instabt} and by the definition of $A_\alpha$ in \eqref{defAainstabt} that:
\ben \label{Aat}
\left| A_\alpha \right| \le O \left( \min (1, \vpa) \right) + O \left(\ma^\pui \right),
\een
where $\ma$ is defined in \eqref{defmainstabt}. Independently, there holds with \eqref{defFainstabt} that
\ben \label{Fat}
\left| F_\alpha \right| \le O \left( \min (\vpa, \vpa^{2^*-2}) \right).
\een
Let $\xa$ be a sequence of points in $M$. By definition of $\vpa$ as in \eqref{defvpainstabt} and by \eqref{defmainstabt} there holds:
\ben \label{vpacontbullest}
\frac{1}{C} \left( \frac{\ma}{ \ma^2 + d_g(x_0, \xa)^2 } \right)^\pui \le \vpa(\xa) \le C \left( \frac{\ma}{ \ma^2 + d_g(x_0, \xa)^2  } \right)^\pui
\een 
for some $C > 0$ independent of $\alpha$. A Green formula using \eqref{defpsainstabt} gives:
\ben \label{greenpsainstabt}
\bal
 \left| \psa(\xa) \right| & \le C \int_{M} d_g(\xa,y)^{2-n} \left| F_\alpha \right|(y)  dv_h(y) \\
 & + C \int_{M} d_g(\xa,y)^{2-n} \left| A_\alpha \right|(y)  dv_h(y). 
 \eal
 \een 
Using \eqref{Aat} there holds:
\ben \label{estpsainstab1t}
\bal
\int_{M} d_g(\xa,y)^{2-n}  & \left| A_\alpha \right|(y)  dv_h(y)   \\
& =  O \left( \int_{d_g(x_0,y) \le \sqrt{\ma}} d_g(\xa,y)^{2-n} dv_h(y) \right) \\
& + O \left( \int_{d_g(x_0,y) \ge \sqrt{\ma}} d_g(\xa,y)^{2-n} \vpa(y) dv_h(y) \right) \\
& + O \left( \ma^\pui \right) . \\
\eal
\een
With \eqref{vpacontbullest} we obtain:
\ben \label{intinstab1t}
 \int_{d_g(x_0,y) \ge \sqrt{\ma}} d_g(\xa,y)^{2-n} \vpa(y) dv_h(y) = O(\ma) 
 \een
so that there holds:
\ben   \label{estpsa1instabt}
\int_{M} d_g(\xa,y)^{2-n}  \left| A_\alpha \right|(y)  dv_h(y)  = O(\ma).
\een
Using \eqref{Fat} there holds: 
\[ \bal
\int_{M} d_g(\xa,y)^{2-n} & \left| F_\alpha \right|(y)  dv_h(y)  \\
& =  O \left( \int_{d_g(x_0,y) \le \sqrt{\ma}} d_g(\xa,y)^{2-n} \vpa^{2^*-2} dv_h(y) \right) \\
& + O \left( \int_{d_g(x_0,y) \ge \sqrt{\ma}} d_g(\xa,y)^{2-n} \vpa(y) dv_h(y) \right) \\
\eal
\]
and using \eqref{intinstab1t} and \eqref{vpacontbullest} yields in the end: 
\ben \label{estpsa2instabt}
\int_{M} d_g(\xa,y)^{2-n} \left| F_\alpha \right|(y)  dv_h(y) = O \left( \frac{\ma}{\ta(\xa)}\right)^2 + O(\ma),
\een
where $\ta(\xa) = \left( \ma^2 + d_g(\xa,x_0)^2 \right)^{\frac{1}{2}}$ and $\ma$ is as in \eqref{defmainstabt}. In the end, \eqref{estpsa1instabt} and \eqref{estpsa2instabt} in \eqref{greenpsainstabt} give:
\ben \label{estpsainstabt}
\left| \psa(\xa) \right| = O \left( \frac{\ma}{\ta(\xa)}\right)^2 + O(\ma).
\een
Note that so far all the computations in the proof of Proposition \ref{instabn7t} actually hold for any $n \ge 6$. We now use the assumption $n \ge 7$ to write that
\[ \ma^2 \ta(\xa)^{-2} = 
\left \{ 
\bal
& o \left( \ma^\pui \ta(\xa)^{4-n} \right)  &\textrm{ if } \ta(\xa) << \sqrt{\ma} \\
& O(\ma) &\textrm{ if } \ta(\xa) \ge \frac{1}{C} \sqrt{\ma} \\
\eal
\right.\]
so that \eqref{estpsainstabt} becomes, for $n \ge 7$:
\ben \label{estpsainstabn7t}
\left| \psa(\xa) \right| = o \left( \ma^\pui \ta(\xa)^{4-n} \right) + O(\ma).
\een
We can now investigate the convergence of $\tilde{h}_\alpha$ defined as in \eqref{defhainstabt}. First, note that \eqref{estpsainstabn7t} implies that 
\ben \label{psapetituat}
\frac{\psa}{\tua} \to 0 \textrm{ and } \tua^{2^*-3} \psa \to 0 \textrm{ in } C^0(M),
\een
where $\tua$ is as in \eqref{defuainstabt}. In particular \eqref{psapetituat} along with the definition of $\tua$ show that $\sup_M \tua \to + \infty$. There also holds by \eqref{psapetituat}:
\ben \label{convha1t}
 \frac{1}{\tua} \left[ \tua^{2^*-1} - \left( u_0 + \eta \vpa \right)^{2^*-1}\right] = o(1) \textrm{ in } C^0(M) .
 \een
 Then, since $u_0$ is assumed to satisfy $E_n(u_0)(x_0) = 0$, where $E_n$ is defined in \eqref{Ent}, we have as $\alpha \to + \infty$
 \ben \label{convha2t}
  \frac{ E_n(u_0) }{\tua} \to \frac{E_n(u_0)}{u_0} = \frac{n-2}{4(n-1)}\vp^{2-2^*} |\nabla \tau|_g^2  \textrm{ in } C^0(M). 
 \een
 Finally, we write that
 \[
 \bal
  \frac{\left| U + \Lg \Wa \right|_g^2}{\tua^{2^*+1}}  - \frac{\left| U + \Lg \Za \right|_g^2}{\left(u_0 + \eta \vpa \right)^{2^*+1}} & = \left( \left| U + \Lg \Wa \right|_g^2 - \left| U + \Lg \Za \right|_g^2\right) \tua^{-2^*-1} \\
 & + \left| U + \Lg \Za \right|_g^2 \left( \tua^{-2^*-1} - \left(u_0 + \eta \vpa \right)^{-2^*-1} \right),
 \eal
 \]
 where $\Wa$ and $\Za$ are as in \eqref{defWainstabt} and \eqref{defZainstabt}. With \eqref{defuainstabt}, \eqref{Aa1instabt} and \eqref{psapetituat} there holds:
 \ben \label{convha3t}
\frac{1}{\tua}  \left| U + \Lg \Za \right|_g^2 \left( \tua^{-2^*-1} - \left(u_0 + \eta \vpa \right)^{-2^*-1} \right) = o(1) \textrm{ in } C^0(M).
 \een
 We have, independently, by \eqref{defZainstabt}, \eqref{defWainstabt} and \eqref{defuainstabt}:
 \[ \Dg \left( \Wa - \Za \right) = - \frac{n-1}{n} \vp^{2^*} \left( \tua^{2^*} - \left( \tua - \psa \right)^{2^*} \right) \nabla \tau .\] 
By standard elliptic theory (see Section \ref{stddelltheory}) and \eqref{psapetituat} there holds,  
since $X \equiv 0$ in $B_{x_0}(\ve)$:
\ben \label{convha4t}
 \left \Vert \Lg \left( \Wa - \Za \right) \right \Vert_{L^\infty(M)}  = o(1).
 \een
 Gathering \eqref{convha1t}, \eqref{convha2t}, \eqref{convha3t} and \eqref{convha4t} in \eqref{defhainstabt} we therefore obtain:
 \ben \label{convthat}
  \tilde{h}_\alpha \to \frac{n-2}{4(n-1)} \left( S_{\tilde g}  - \vp^{2-2^*} |\nabla \tau|_g^2 \right) \textrm{ in } C^0(M).
  \een
We define, in the end: 
 \ben \label{defvraiuan7}
\ua = \vp \tua.   
  \een
The conformal covariance property of the conformal laplacian gives that
\be
 \left(  \triangle_g + \frac{n-2}{4(n-1)} R(g) \right) \ua = \vp^{2^*-1} \left(  \triangle_{\tilde g} + \frac{n-2}{4(n-1)} S_{\tilde g} \right) \tua,
 \ee
and in the end we obtain that $(\ua, \Wa)$ satisfies:
\be
\left \{ 
\bal
& \triangle_{g} \ua + h_\alpha \ua = \frac{n(n-2)}{4} \ua^{2^*-1} + \frac{\left| U + \Lg \Wa \right|_g^2 }{\ua^{2^*+1}}, \\
& \Dg \Wa =  - \frac{n-1}{n}\ua^{2^*} \nabla \tau
\eal
 \right. 
\ee
in $M$, where we have let:
\[ h_\alpha \ua= \frac{n-2}{4(n-1)} R(g) + \vp^{2^*-2} \left( \tilde{h}_\alpha - \frac{n-2}{4(n-1)} S_{\tilde g} \right) \ua  \]
and where $\ua$ and $\Wa$ are as in \eqref{defvraiuan7} and \eqref{defWainstabt}. The convergence in \eqref{convthat} gives in the end:
\be
 h_\alpha \to \frac{n-2}{4(n-1)} \left( R(g) - |\nabla \tau|_g^2 \right), 
 \ee
 which concludes the proof of Proposition \ref{instabn7t}.
\end{proof}
\noindent Notice that the same argument that led to the existence of $(u_0,W_0)$ satisfying \eqref{sysu0n7} shows that the limiting system \eqref{instabn7uaWat} when $h_\alpha$ is replaced by its limit in \eqref{limhat} possesses nontrivial solutions.

\subsection{Instability in dimension $6$}

\begin{prop} \label{instab6}
Let $(M,g)$ be a closed manifold of dimension $6$. We assume that $(M,g)$ has a locally conformally flat pole, that $(M,g)$ has positive scalar curvature and that $(M,g)$ has no nontrivial conformal Killing $1$-forms. There exist examples of smooth functions $\tau,h$ with $\nabla \tau \not \equiv 0$, $h > 6$
and of traceless and divergence-free tensor fields $U \not \equiv 0$ in $M$ such that there exists sequences $(h_\alpha,\ua, \Wa)_\alpha$ of smooth functions and smooth $1$-forms in $M$ satisfying:
\be
 h_\alpha \underset{\alpha \to \infty}{\longrightarrow} h  \textrm{ in } C^0(M), 
 \ee
$\ua >0$, $\sup_M \ua \to \infty$ and 
\be
\left \{ 
\bal
& \triangle_{g} \ua + h_\alpha \ua = 6 \ua^{2} + \frac{\left|U +  \Lg \Wa \right|_g^2 }{\ua^{4}}, \\
& \Dg \Wa = - \frac{5}{6}\ua^{3} \nabla \tau .
\eal
 \right. 
\ee
\end{prop}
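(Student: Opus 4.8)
The plan is to follow the scheme of the proof of Proposition~\ref{instabn7t}. One fixes a point $x_0$ at which $(M,g)$ has a conformally flat pole, a conformal factor $\vp$ with $\tilde g=\vp g$ round on a ball $\tilde B_{x_0}(\delta)$, a cutoff $\eta$ equal to $1$ near $x_0$ and supported in $\tilde B_{x_0}(\delta)$, a function $\tau$ with $\nabla\tau\equiv0$ on $\tilde B_{x_0}(\delta)$ and $\nabla\tau\not\equiv0$ globally, and a nonzero traceless divergence-free tensor $U$; one solves the corresponding background system for a pair $(u_0,W_0)$, $u_0>0$, by the existence result of Premoselli~\cite{Premoselli1}, introduces the bubble $\vpa$ of \eqref{defvpainstabt}, the auxiliary $1$-forms $\Za,\Wa$, the correction $\psa$, and sets $\tua=u_0+\eta\vpa+\psa$ and $\ua=\vp\tua$; since $(M,g)$ carries no conformal Killing $1$-form, $\Za$ and $\Wa$ are well defined. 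As pointed out in the proof of Proposition~\ref{instabn7t}, every estimate there up to and including \eqref{estpsainstabt} is valid verbatim in dimension $6$.

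The point at which dimension $6$ departs from dimensions $n\ge7$ is exactly the one flagged just after \eqref{estpsainstabt}. When $n=6$ one has $\pui=2$ and $2^*-1=2$, so the cross-interaction $F_\alpha\sim\tfrac{n(n-2)}{2}u_0\eta\vpa$ of the bubble with the background through the quadratic nonlinearity produces a correction $\psa$ of size exactly $\ma^2\ta^{-2}$, i.e. of the order of the leading bubble correction itself; the improvement \eqref{estpsainstabn7t} is then unavailable, $\psa$ does not tend to $0$ near $x_0$, and $\tua^{2^*-3}\psa=\psa$ is only $O(1)$ there, so \eqref{psapetituat}--\eqref{convthat} break down and the term $\tfrac{n(n-2)}{4\tua}\bigl[\tua^{2^*-1}-(u_0+\eta\vpa)^{2^*-1}\bigr]\simeq\tfrac{n(n-2)}{2}\psa$ in the formula \eqref{defhainstabt} for the coefficient $\tilde h_\alpha$ no longer becomes negligible. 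The remedy is a sharper treatment of this borderline term: a refined Green representation of $\psa$ — rescaling about $x_0$ and using that its source is, to leading order, $\tfrac{n(n-2)}{2}u_0(x_0)\eta\vpa$ — yields the precise limiting behaviour of $\psa$ rather than a mere bound, which one then re-absorbs by adjusting the ansatz and the linear coefficient of the background problem so that the coefficients $h_\alpha$ still converge in $C^0(M)$ while their limit $h$ is forced to exceed the critical value $\tfrac{n(n-2)}{4}=6$ of the nonlinearity. Once this is done, the remaining checks — that $(\tua,\Wa)$ solves \eqref{instabn7uaWat}, that $\sup_M\ua\to+\infty$, and that $\tau,U$ have the stated properties — go as in Proposition~\ref{instabn7t}, now with \eqref{estpsainstabt} in place of \eqref{estpsainstabn7t}.

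The main obstacle is precisely this dimension-$6$ refinement. For $n\ge7$ the crude estimate \eqref{estpsainstabn7t} makes $\psa$ negligible and the passage to the limit for $\tilde h_\alpha$ is immediate; here $\psa$ contributes to the effective linear coefficient at leading order, so one must compute its limiting profile exactly and then engineer the ansatz and the data so that the $C^0$-convergence of $h_\alpha$ — which is what makes the example a genuine counterexample to the stability of \eqref{systype} — is preserved, while producing $h>6$. This is the coupled-system counterpart of the classical dimensional hiatus at $n=6$ for equations of this type, and parallels the dimension-$6$ construction of Druet-Hebey~\cite{DruHeb}.
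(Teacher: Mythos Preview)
You have correctly located the obstruction: in dimension $6$ the cross term $F_\alpha=6\bigl[(u_0+\eta\vpa)^2-u_0^2-(\eta\vpa)^2\bigr]=12u_0\eta\vpa$ is linear in the bubble, the correction $\psa$ built from it is only $O(\ma^2\ta^{-2})$, and the passage \eqref{psapetituat}--\eqref{convthat} of the $n\ge7$ argument collapses. But your proposed remedy --- a ``refined Green representation'' giving the ``precise limiting behaviour'' of $\psa$, followed by an unspecified adjustment of the ansatz and of the background problem --- is not a proof. You do not say what the limiting profile is, how the ansatz is modified, or why the resulting $h_\alpha$ converges in $C^0(M)$; and since $\psa$ does not have a pointwise limit near $x_0$ (it is of order $1$ there but oscillates with the blow-up scale), extracting a limit and feeding it back into the construction is genuinely delicate.

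The paper avoids this entirely by a direct algebraic manoeuvre that you have missed. It does \emph{not} carry over the $n\ge7$ ansatz. Instead it changes the background problem: $(u_0,W_0)$ is taken to solve
\[
\triangle_{\tilde g}u_0+\tfrac{1}{5}S_{\tilde g}u_0=-6u_0^2+\vp^{-6}\,\frac{|U+\Lg W_0|_g^2}{u_0^4},\qquad \Dg W_0=-\tfrac{5}{6}(\vp u_0)^3\nabla\tau,
\]
with a \emph{negative} quadratic term (existence via Dahl--Gicquaud--Humbert \cite{DaGiHu}, not Premoselli). The term $F_\alpha$ is \emph{dropped} from the definition of $\psa$; one only corrects $A_\alpha$, so that $\|\psa\|_\infty=O(\ma)$ exactly as in \eqref{estpsa1instabt}. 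With $\tua=u_0+\eta\vpa+\psa$ and $\Wa$ as before, one does \emph{not} aim for $\tilde h_\alpha\tua\sim\tua^2-(u_0+\eta\vpa)^2$ but rather for $\tilde h_\alpha\tua\sim 6(\tua^2-\vpa^2)$. Since $2^*-1=2$, the elementary identity
\[
\tua^2-\vpa^2=(u_0+\psa)(\tua+\vpa)=2u_0\tua-u_0^2+o(1)+o(\tua)
\]
shows that $6(\tua^2-\vpa^2)/\tua\to 12u_0$ in $C^0(M)$, whence $\tilde h_\alpha\to\tfrac{1}{5}S_{\tilde g}+12u_0$ and, after undoing the conformal change, $h_\alpha\to\tfrac{1}{5}R(g)+12\vp^2u_0$. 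The whole point is that in dimension $6$ the cross interaction $12u_0\vpa$ is \emph{linear} in $\vpa$ and can therefore be absorbed wholesale into the potential rather than corrected through $\psa$; this is what forces the limiting $h$ above the threshold $6$ and sidesteps the refined analysis you propose.
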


\begin{proof}
As before, we let $x_0 \in M$ be such that $(M,g)$ has a locally conformally flat pole at $x_0$, $\vp \in C^\infty(M)$, $\vp> 0$ be such that $\tilde g : = \vp g$ is the round sphere metric in $\tilde{B}_{x_0}(\delta)$ (the ball being taken with respect to the metric $\tilde g$) and we let $\eta \in C^\infty(M)$ be a nonnegative function satisfying $\eta \equiv 1$ in $ \tilde{B}_{x_0}(\frac{\delta}{2})$ and $\eta \equiv 0$ outside of $ \tilde{B}_{x_0}(\delta)$. Let $(\la)_\alpha$, $\la > 1$, be a sequence of numbers such that $\la \to 1$ as $\alpha \to \infty$. We let $\tau$ be a smooth function in $M$ which is constant in $\tilde{B}_{x_0}(\delta)$ and $U$ a nonzero traceless divergence free tensor field in $M$. Since $(M,g)$ has no nontrivial conformal Killing $1$-forms we can let $(u_0,W_0)$, with $u_0 > 0$, be a solution of
 \ben \label{sysu06}
\left \{ \bal
& \triangle_{\tilde g} u_0 + \frac{1}{5} S_{\tilde g} u_0 = - 6 \ua^{2} + \frac{1}{\vp^{6}} \frac{\left|U +  \Lg W_0 \right|_g^2 }{\ua^{4}}, \\
& \Dg W_0 = - \frac{5}{6}\vp^{3} u_0^{3} \nabla \tau.
\eal
\right.
 \een
 Here again, the conformal covariance property of the conformal laplacian reduces the resolution of \eqref{sysu06} to the resolution of the following system:
  \ben
\left \{
 \bal
& \triangle_{ g} (\vp u_0) + \frac{1}{5} S_{g} (\vp u_0) = - 6 (\vp u_0)^{2} + \frac{\left|U +  \Lg W_0 \right|_g^2 }{(\vp u_0)^{4}} , \\
& \Dg W_0 = - \frac{5}{6}(\vp u_0)^{3} \nabla \tau.
\eal
\right.
 \een
The arguments developed in Dahl-Humbert-Gicquaud \cite{DaGiHu}, Proposition $2.1$, yield the existence of such an $(u_0,W_0)$, at least when $\Vert \nabla \tau \Vert_\infty$ is small enough. 
Let $(\la)_\alpha$, $\la > 1$, be a sequence of numbers such that $\la \to 1$ as $\alpha \to \infty$. We let in the following:
\ben \label{defvpainstab6}
\vpa(x) = \left( \la^2 - 1\right) \left( \la - r \right)^{-2},
\een
where $r = \cos d_{\tilde g} \left(x_0, x \right)$. Since $\tilde g$ is the round metric in $ \tilde{B}_{x_0}(\delta)$ and by definition of $\eta$ there holds:
\ben \label{equavpainstab6}
\left(  \triangle_{\tilde g} + \frac{1}{5} S_{\tilde g} \right) (\eta \vpa) =6 \eta \vpa^{2} + 2 \langle \tilde{\nabla} \eta, \tilde{\nabla} \vpa \rangle_{\tilde g} + \vpa \triangle_{\tilde g} \eta,
 \een
 where $\tilde{\nabla}$ stands for the gradient operator for the metric $\tilde g$. We let $Z_\alpha$ be the unique $1$-form satisfying in $M$:
\ben \label{defZainstab6}
\Dg \Za = - \frac{5}{6}\left( u_0 + \eta \vpa \right)^{3} \vp^{3} \nabla \tau.
\een
We define:
\ben \label{defAainstab6}
\bal
A_\alpha & = \frac{1}{\vp^{6}} \left( \frac{\left| U + \Lg \Za \right|_g^2}{\left( u_0 + \eta \vpa \right)^{4}} -  \frac{\left|U +  \Lg W_0 \right|_g^2}{u_0^{4}} \right) \\
& +  \left( \eta^{2} - \eta \right)\vpa  - 2 \langle \tilde{\nabla} \eta, \tilde{\nabla} \vpa \rangle_{\tilde g} - \vpa \triangle_{\tilde g} \eta .\\
\eal
\een
We also let $\psa$ be the unique solution in $M$ of:
\ben \label{defpsainstab6}
\triangle_{\tilde g} \psa + \frac{1}{5} S_{\tilde g} \psa = \vp^{2} A_\alpha.
\een
Finally, we let
\ben \label{defuainstab6}
\tua = u_0 + \vpa + \psa,
\een
where $\vpa$ is as in \eqref{defvpainstab6} and define $\Wa$ as the unique $1$-form in $M$ satisfying:
\ben \label{defWainstab6}
\Dg \Wa = - \frac{5}{6}\vp^{3} \tua^{3}  \nabla \tau .
\een
It is easily seen that $(\tua,\Wa)$ satisfies in $M$
\ben \label{equainstab6}
\left \{ 
\bal
& \triangle_{\tilde g} \tua + \tilde{h}_\alpha \tua = 6 \tua^{2} + \frac{1}{\vp^{6}} \frac{\left|U +  \Lg \Wa \right|_g^2 }{\tua^{4}}, \\
& \Dg \Wa = - \frac{5}{6}(\vp \tua)^{3} \nabla \tau,
\eal
 \right. 
\een
where we have let
\ben \label{defhainstab6}
\bal
\tilde{h}_\alpha \tua & = \frac{1}{5} S_{\tilde g} \tua + 6 \left( \tua^{2} - \vpa^{2}\right) \\
& \qquad \qquad + \frac{1}{\vp^{6}} \left( \frac{\left| U + \Lg \Wa \right|_g^2}{\tua^{4}}  - \frac{\left| U + \Lg \Za \right|_g^2}{\left(u_0 + \eta \vpa \right)^{4}} \right),
\eal
\een
where $Z_\alpha$ and $\Wa$ are as in \eqref{defZainstab6} and \eqref{defWainstab6}. In what follows we investigate the convergence of $\tilde{h}_\alpha$. First, we always have
\[ \left| \left(u_0 + \eta \vpa \right)^{3} - {u_0}^{3} \right| \le C \ma^2  \textrm{ on } M \backslash  \tilde{B}_{x_0}(\delta) \]
for some positive constant $C$, where we have let
\ben \label{defmainstab6}
\ma^2 = \la  - 1, 
\een
so that with \eqref{sysu06} and \eqref{defZainstab6} there holds by standard elliptic theory, see Section \ref{stddelltheory} :
\ben \label{Zainfinstab6}
\left \Vert \Lg \left( \Za - W_0 \right) \right \Vert_{L^\infty(M)} = O \left( \ma^2 \right).
\een
In particular this yields
\ben \label{Aa1instab6}
\frac{1}{u_0^{4}} \left| \left| U + \Lg \Za \right|^2_g - \left | U + \Lg W_0 \right|^2_g \right|(x) \le C  \ma^2
\een
for any $x \in M$, where $C$ does not depend on $\alpha$ or on $x$. With \eqref{Aa1instab6} and \eqref{defvpainstab6} we can write that: 
\[  \left| U +  \Lg \Za \right|^2_g \left( \left(u_0 + \eta \vpa \right)^{-4} - {u_0}^{-4} \right) \le C \min \left( 1, \vpa \right)  \]
in $M$. This then gives, with \eqref{Aa1instab6} and by the definition of $A_\alpha$ in \eqref{defAainstab6} that:
\ben \label{Aa6}
\left| A_\alpha \right| \le O \left( \min (1, \vpa) \right) + O \left(\ma^2 \right),
\een
where $\ma$ is defined in \eqref{defmainstab6}. 
Let $\xa$ be a sequence of points in $M$. By definition of $\vpa$ as in \eqref{defvpainstab6} and by \eqref{defmainstab6} there holds:
\ben \label{vpacontbulles6}
\frac{1}{C} \left( \frac{\ma}{ \ma^2 + d_g(x_0, \xa)^2 } \right)^2 \le \vpa(\xa) \le C \left( \frac{\ma}{ \ma^2 + d_g(x_0, \xa)^2  } \right)^2
\een 
for some $C > 0$ independent of $\alpha$. A Green formula using \eqref{defpsainstab6} gives:
\ben \label{greenpsainstab6}
\bal
 \left| \psa(\xa) \right| &  \le 
  C \int_{M} d_g(\xa,y)^{-4} \left| A_\alpha \right|(y)  dv_h(y). 
 \eal
 \een 
The computations that led to \eqref{estpsa1instabt} apply here and yield:
\ben   \label{estpsa1instab6}
\int_{M} d_g(\xa,y)^{-4}  \left| A_\alpha \right|(y)  dv_h(y)  = O(\ma).
\een
In the end, \eqref{estpsa1instab6} and \eqref{greenpsainstab6} give:
\ben \label{estpsainstab6}
\left| \psa(\xa) \right| = 
O(\ma).
\een
In particular \eqref{estpsainstab6} along with the definition of $\tua$ in \eqref{defuainstab6} show that $\sup_M \tua \to + \infty$. 
We now write that
 \[
 \bal
  \frac{\left| U + \Lg \Wa \right|_g^2}{\tua^{4}}  - \frac{\left| U + \Lg \Za \right|_g^2}{\left(u_0 + \eta \vpa \right)^{4}} & = \left( \left| U + \Lg \Wa \right|_g^2 - \left| U + \Lg \Za \right|_g^2\right) \tua^{-4} \\
 & + \left| U + \Lg \Za \right|_g^2 \left( \tua^{-4} - \left(u_0 + \eta \vpa \right)^{-4} \right),
 \eal
 \]
 where $\Wa$ and $\Za$ are as in \eqref{defWainstab6} and \eqref{defZainstab6}. Mimicking the computations that led to \eqref{convha4t} and using \eqref{estpsainstab6} we obtain:
 \ben \label{convha36}
\frac{1}{\tua}  \left| \Lg \Za \right|_g^2 \left( \tua^{-4} - \left(u_0 + \eta \vpa \right)^{-4} \right) = o(1) \textrm{ in } C^0(M),
 \een
and
\ben \label{convha46}
 \left \Vert \Lg \left( \Wa - \Za \right) \right \Vert_{L^\infty(M)}  = o(1).
 \een
Finally, using \eqref{estpsainstab6} we have that 
\ben \label{convha63}
\ua^2 - \vpa^2 =  2 u_0 \ua - u_0^2 + o(1) + o(\ua). 
\een
Gathering \eqref{convha36} and  \eqref{convha46} in \eqref{defhainstab6} we obtain:
\ben \label{convtha6}
 \tilde{h}_\alpha \to \frac{1}{5} S_{\tilde g} + 12u_0  \textrm{ in } C^0(\mathbb{S}^6).
 \een
We finally define: 
 \ben \label{defvraiuan6}
\ua = \vp \tua.   
  \een
The conformal covariance property of the conformal laplacian gives that
\be
 \left(  \triangle_g + \frac{1}{5} R(g) \right) \ua = \vp^{2} \left(  \triangle_{\tilde g} + \frac{1}{5} S_{\tilde g} \right) \tua,
 \ee
and we obtain that $(\ua, \Wa)$ satisfies:
\be
\left \{ 
\bal
& \triangle_{g} \ua + h_\alpha \ua = 6 \ua^{2} + \frac{\left| U +  \Lg \Wa \right|_g^2 }{\ua^{4}}, \\
& \Dg \Wa = -\frac{5}{6}\ua^{3} \nabla \tau
\eal
 \right. 
\ee
in $M$, where we have let:
\[ h_\alpha \ua= \frac{1}{5} R(g) + \vp^{2} \left( \tilde{h}_\alpha - \frac{1}{5} S_{\tilde g} \right) \tua  \]
and where $\ua$ and $\Wa$ are as in \eqref{defvraiuan6} and \eqref{defWainstab6}. The convergence in \eqref{convtha6} gives in the end:
\be
 h_\alpha \to \frac{1}{5} R(g) + 12 \vp^2 u_0, 
 \ee
 which concludes the proof of Proposition \ref{instab6}.
\end{proof}
\noindent Here again,  the limiting system obtained by replacing $h_\alpha$ by $h$ in \eqref{instabn7uaWat} possesses nontrivial solutions provided $\Vert \nabla \tau \Vert_\infty + \Vert U \Vert_\infty$ is small enough. It is a straightforward consequence of the arguments developed in Premoselli \cite{Premoselli1}.

\section{A Harnack inequality}\label{HI}

We prove in this section a Harnack inequality for solutions of the Einstein-Lichnerowicz equation which was used throughout the paper~:

\begin{prop}\label{prop-HI}
Let $a,f,h$ be smooth functions on $\overline{B_0(2)}\subset {\mathbb R}^n$ and let $u\in C^2\left(B_0(2)\right)$ be a positive solution in $B_0(2)$ of 
$$\Delta_\xi u + h u = f u^{2^*-1} + \frac{a}{u^{2^*+1}}\hskip.1cm.$$
We assume that $f\ge 0$ and $a\ge 0$, $a\not\equiv 0$ and that 
\be 
\Vert u \Vert_{L^\infty\left(B_0(2)\right)} \le M
\ee
for some positive $M$. Then there exists $C>0$ depending only on $\left\Vert h\right\Vert_{L^\infty\left(B_0(2)\right)}$, $\left\Vert a\right\Vert_{L^\infty\left(B_0(2)\right)}$, $\left\Vert f\right\Vert_{L^\infty\left(B_0(2)\right)}$  and $M$
such that
$$\sup_{B_0(1)} u\le C \inf_{B_0(1)} u\hskip.1cm.$$
\end{prop}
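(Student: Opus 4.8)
The plan is to prove the Harnack inequality by a Moser-type iteration argument, treating the right-hand side carefully: the term $fu^{2^*-1}$ is the usual critical-exponent nonlinearity, while the singular term $a u^{-2^*-1}$ is, crucially, \emph{nonnegative}, which makes it harmless for the supremum bound and only helps for the infimum bound. The key structural observation is that the upper bound $\Vert u \Vert_{L^\infty(B_0(2))} \le M$ allows us to linearize: write the equation as $\Delta_\xi u + \left( h - f u^{2^*-2} \right) u = a u^{-2^*-1} \ge 0$. The coefficient $V_u := h - f u^{2^*-2}$ satisfies $\Vert V_u \Vert_{L^\infty(B_0(2))} \le \Vert h \Vert_\infty + \Vert f \Vert_\infty M^{2^*-2}$, so $u$ is a nonnegative supersolution of a \emph{uniformly} elliptic operator with bounded zeroth-order coefficient. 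The classical weak Harnack inequality (for supersolutions, e.g. Gilbarg--Trudinger, Theorem~8.18) then yields, on $B_0(3/2)$,
\[
\left( \fint_{B_0(3/2)} u^{p_0} \right)^{1/p_0} \le C \inf_{B_0(1)} u
\]
for some $p_0 = p_0(n) > 0$ and $C$ depending only on $n$ and the bound on $V_u$, hence only on $\Vert h\Vert_\infty$, $\Vert f\Vert_\infty$ and $M$.

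For the reverse inequality, the plan is to obtain the \emph{local boundedness / subsolution} estimate. Here one cannot drop the nonlinearity entirely, but since $0 \le u \le M$ the nonnegative term $a u^{-2^*-1}$ must be controlled: it may be large where $u$ is small, so the correct move is to bound the \emph{full} right-hand side in an $L^q$ sense. Observe that, by a Green representation on $B_0(2)$ against the positive Green function of $\Delta_\xi$ (coercive after absorbing $h$, or simply by comparison since the domain is fixed), the pointwise lower bound on $u$ forces $\int_{B_0(3/2)} a u^{-2^*-1}\,dx \le C M$; combined with $a \ge 0$, $a \not\equiv 0$ this is a finite $L^1$ quantity. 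Then $f_u := f u^{2^*-1} + a u^{-2^*-1} - h u$ is a nonnegative function with $\int_{B_0(3/2)} f_u \le C(M, \Vert f\Vert_\infty, \Vert h \Vert_\infty, \Vert a\Vert_\infty)$, and $u$ is a subsolution: $\Delta_\xi u \le f_u$ with $f_u \in L^1$. A Moser iteration / De Giorgi argument for subsolutions with an $L^1$ (or better, after using $u \le M$, an $L^\infty$-times-$L^1$) right-hand side then gives $\sup_{B_0(1)} u \le C\left( \left(\fint_{B_0(3/2)} u^{p_0}\right)^{1/p_0} + 1 \right)$. Chaining this with the weak Harnack inequality above produces $\sup_{B_0(1)} u \le C\left( \inf_{B_0(1)} u + 1\right)$; finally, since $a \not\equiv 0$ and $a \ge 0$ a Green's formula as in the proof of Corollary~\ref{corol1} gives $\inf_{B_0(1)} u \ge \frac1C$ for a constant depending only on the stated quantities, which upgrades the additive constant to a multiplicative one.

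**Main obstacle.** The delicate point is the treatment of the singular term $a u^{-2^*-1}$ in the subsolution (local boundedness) half of the argument: a priori this term has no $L^\infty$ bound, only the $L^\infty$ bound $a \le \Vert a\Vert_\infty$ on the numerator, and $u^{-2^*-1}$ blows up wherever $u$ is small. The resolution — and the step that requires the most care — is precisely the a priori integral estimate $\int_{B_0(3/2)} a u^{-2^*-1} \le C M$, obtained by testing the equation against the Green function of $\Delta_\xi + h$ (positive by coercivity on the fixed ball, or by adding a large constant to $h$ and using the maximum principle on $B_0(2)$). Once this integrability is in hand, everything reduces to standard De Giorgi--Nash--Moser theory with an $L^1$ forcing term, and the constants depend only on $n$ and on $\Vert h\Vert_{L^\infty}$, $\Vert f\Vert_{L^\infty}$, $\Vert a\Vert_{L^\infty}$ and $M$, as claimed. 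The lower bound $\inf_{B_0(1)} u \ge 1/C$ needed at the very end uses $a\not\equiv 0$ in an essential way and mirrors the Green's-formula argument already appearing in the proof of Corollary~\ref{corol1}.
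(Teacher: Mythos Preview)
Your supersolution half (the weak Harnack inequality via Gilbarg--Trudinger, Theorem~8.18, applied to $\Delta_\xi u + V_u u \ge 0$ with $V_u = h - fu^{2^*-2}$ bounded) is correct and is exactly what the paper does.

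The subsolution half, however, contains a genuine gap. You correctly obtain the integral bound $\int_{B_0(3/2)} a\,u^{-2^*-1}\,dx \le C(M,\Vert h\Vert_\infty)$ from a Green representation, but you then invoke a ``Moser / De Giorgi argument for subsolutions with an $L^1$ right-hand side'' to deduce
\[
\sup_{B_0(1)} u \le C\Bigl( \Vert u\Vert_{L^{p_0}(B_0(3/2))} + 1 \Bigr).
\]
No such estimate holds for $L^1$ forcing: the local boundedness theorem for subsolutions of $\Delta u \le g$ requires $g \in L^q$ with $q > n/2$. With $g$ merely in $L^1$, solutions can be unbounded (think of the Newtonian potential of an $L^1$ function), so knowing a priori that $u \le M$ does not give you the \emph{quantitative} control of $\sup u$ by $\Vert u\Vert_{L^{p_0}}$ that you need. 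The gap is precisely that $a\,u^{-2^*-1}$ has no useful $L^q$ bound for $q>n/2$ without already controlling $\inf u$, which is circular.

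A second, smaller issue: your final step upgrades $\sup u \le C(\inf u + 1)$ to a multiplicative Harnack inequality via a lower bound $\inf_{B_0(1)} u \ge 1/C$ ``depending only on the stated quantities''. The Green argument you sketch gives $\inf u \ge c\,M^{-2^*-1}\int_{B_0(3/2)} a$, which depends on $\Vert a\Vert_{L^1}$, not on $\Vert a\Vert_{L^\infty}$ alone as the proposition asserts.

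The paper circumvents both issues by a different mechanism: it runs the Moser iteration \emph{directly} on the full equation, testing against $\eta^2 u^k$ with $k \ge 2^*+2$. The point of this restriction is that the singular term contributes $\int a\,\eta^2 u^{\,k-2^*-1}$ with exponent $k-2^*-1 \ge 1 > 0$, so together with $u\le M$ one bounds all three nonlinear contributions by $C\bigl(\int_{B_0(R)} u^{k+1}\bigr)^{(k-2^*-1)/(k+1)}$, a \emph{sublinear} power of the usual quantity. The iteration then produces
\[
\Vert u\Vert_{L^\infty(B_0(r))} \le C\,(2-r)^{-n/\gamma}\,\Vert u\Vert_{L^\gamma(B_0(2))}^{\alpha}
\]
with some $\alpha = \alpha(\gamma) < 1$; a Young inequality plus the self-improvement lemma (Han--Lin, Lemma~4.3) removes the exponent $\alpha$ and yields $\sup_{B_0(1)} u \le C\,\Vert u\Vert_{L^p(B_0(2))}$ directly, with the correct dependence on $\Vert a\Vert_{L^\infty}$ only. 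Combined with the weak Harnack bound this gives the Harnack inequality without ever needing an absolute lower bound on $\inf u$.
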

The proof follows the lines of the standard Nash-Moser iterative scheme. However, the negative power nonlinearity of $u$ makes the proof more involved. In particular, unlike the standard Harnack inequality, Proposition \ref{prop-HI} is no longer an \emph{a priori} estimate and does not induce a control on the $L^\infty$-norm of $\nabla u$ in $B_0(1)$.

\begin{proof}
First of all, since $u$, $a$ and $f$ are nonnegative there holds
\[  \triangle_\xi u + hu \ge 0 \]
in $B_0(2)$. Hence Theorem $8.18$ in Gilbarg-Trudinger \cite{GilTru} applies and shows that for any $1 \le p < \frac{n}{n-2}$, there exists $C_1(h,p)$ depending only on $\Vert h\Vert_{L^\infty(B_0(2))}$ and $p$ such that
\begin{equation} \label{Harnackminor}
\inf_{B_0(1)} u \ge C_1(h,p) \Vert u \Vert_{L^p(B_0(2))}.
\end{equation}

\noindent We now aim at proving that for any $p \ge 1$ there exist $C = C(a,h,f,M,p)$ such that
\[ \sup_{B_0(1)} u \le C \Vert u \Vert_{L^p(B_0(2))}, \]
an estimate which together with \eqref{Harnackminor} concludes the proof of the proposition. We adapt the steps of the proof of Theorem $4.1$ in Han-Lin \cite{HanLin}. Let $k \ge 2^*+2$ be given and $\eta \in C^\infty_c(B_0(2))$ be a smooth positive function with compact support in $B_0(2)$. Multiplying the equation satisfied by $u$ by $\eta^2 u^k$ and integrating yields 
\be
\bal
 \frac{4(k-1)}{(k+1)^2}  \int_{B_0(2)} \left | \nabla u^{\frac{k+1}{2}}\right |^2 \eta^2 dx &\le \int_{B_0(2)} \left(  \eta^2 f u^{k+2^*-1}  + a \eta^2 u^{k-2^*-1} - \eta^2 h u^{k+1}\right)\, dx\\
&+ \int_{B_0(2)} |\nabla \eta|^2 u^{k+1} \,dx 
\eal
\ee
Let $0 < r < R \le 2$. Assume that $\eta$ is compactly supported in $B_0(R)$, that it equals $1$ in $B_0(r)$ and that it satisfies $| \eta | \le 1$ and $| \nabla \eta | \le \frac{2}{R-r}$ in $B_0(2)$. It is then easily seen that there exists $C_2>0$, depending only on $\Vert h \Vert_{L^\infty(B_0(2))}$, $\Vert a \Vert_{L^\infty(B_0(2))}$, $\Vert f \Vert_{L^\infty(B_0(2))}$ and $M$
such that
$$
 \int_{B_0(2)} \left|  \eta^2 f u^{k+2^*-1}  + a \eta^2 u^{k-2^*-1} - \eta^2 h u^{k+1} \right| dx \le C_2 \left( \int_{B_0(R)}  u^{k+1} dx \right)^{\frac{k-2^*-1}{k+1}} 
$$
and
$$
\int_{B_0(2)}  |\nabla \eta|^2 u^{k+1} dx \le C_2 (R-r)^{-2} \left( \int_{B_0(R)} u^{k+1} dx \right)^{\frac{k-2^*-1}{k+1}}.
$$
Independently, Sobolev's inequality shows that 
\[ \int_{B_0(2)} (\eta u^{\frac{k+1}{2}})^{2^*} \le K \left( \int_{B_0(R)} \left | \nabla( \eta u^{\frac{k+1}{2}} )\right|^2 dx \right)^{\frac{2^*}{2}}\]
for some $K>0$ which leads to 
$$
\left( \int_{B_0(2)} (\eta^\frac{2}{k+1} u )^{\frac{2^*}{2}(k+1)} \right)^{\frac{2}{2^*}} \le C_3  \left( \int_{B_0(R)} |\nabla \eta |^2 u^{k+1}dx + \int_{B_0(2)} \eta^2 | \nabla u^{\frac{k+1}{2}}|^2 dx \right)
$$
for some positive $C_3$. We now let $\gamma = k +1 \ge 2^*+3$ and $\chi = \frac{2^*}{2}$. Combining the above estimates, we obtain that 
\begin{equation} \label{improvedHolder}
\Vert u \Vert_{L^{\chi \gamma}(B_0(r))} \le C_4^{\frac{1}{\gamma}} \left( \frac{\gamma}{(R-r)^2}\right)^{\frac{1}{\gamma}} \Vert u \Vert^{\frac{\gamma-2^*-2}{\gamma}}_{L^\gamma(B_0(R))}
\end{equation}
for some positive constant $C_4$ depending only on $M$,  $\Vert h \Vert_{L^\infty(B_0(2))}$, $\Vert a \Vert_{L^\infty(B_0(2))}$ and $\Vert f \Vert_{L^\infty(B_0(2))}$.
We now pick some $0 < r < 2$ and define two sequences $\gamma_i$ and $r_i$ by $\gamma_i = \chi^i \gamma$ and $r_0 = 2, r_{i+1} = r_i - (2-r)2^{-i-1}$. Inequality \eqref{improvedHolder} then gives that, for any $i \ge 0$:
\[ \Vert u \Vert_{L^{\gamma_{i+1}}(B_0(r_{i+1}))} \le C_4^{\frac{1}{\chi^i \gamma}} \left(  \frac{2 \cdot {2 \chi}^i \gamma}{(2-r)^2}\right)^{\frac{1}{\chi^i \gamma}} \Vert u \Vert_{L^{\gamma_i}(B_0(r_i))} ^{1 - \frac{2^*+2}{\chi^i \gamma}} \]
and we thus obtain that there exists 
some constant $C_6>0$ depending on $\Vert h \Vert_{L^\infty(B_0(2))}$, $\Vert a \Vert_{L^\infty(B_0(2))}$, $\Vert f \Vert_{L^\infty(B_0(2))}$ and $M$ 
but which does not depend on $r$ nor on $\gamma$ such that
\[\Vert u \Vert_{L^{\gamma_i}(B_0(r_i))} \le \frac{C_6}{(2-r)^{2 \sum_{k=0}^i \frac{1}{\gamma} \chi^{-k} }} \Vert u \Vert_{L^{\gamma}(B_0(2))}^{\alpha_i} \]
for all $i \ge 0$, where $\alpha_i = \alpha_i(\gamma) = \prod_{k=0}^i  \left( 1 - \frac{2^*+2}{\chi^k \gamma} \right)$. Passing to the limit as $i \to \infty$ we thus obtain:
\begin{equation} \label{presqueHarnack}
\Vert u \Vert_{L^{\infty}(B_0(r))} \le \frac{C_6}{(2-r)^{\frac{n}{\gamma}}} \Vert u \Vert_{L^{\gamma}(B_0(2))}^{\alpha} .
\end{equation}
 
\noindent To conclude the proof of Proposition \ref{prop-HI}, we need to improve estimate \eqref{presqueHarnack}. Let $1 \le p < \gamma$. There holds:
\[ (2-r)^{-\frac{n}{\gamma}} \Vert u \Vert_{L^{\gamma}(B_0(2))}^{\alpha} \le (2-r)^{-\frac{n}{\gamma}} \Vert u \Vert_{L^\infty(B_0(2))}^{\frac{\alpha}{\gamma}(\gamma - p)} \Vert u \Vert_{L^p(B_0(2))}^{\frac{\alpha}{\gamma}}, \]
so that a Young inequality 
of exponents $\frac{\gamma}{\gamma - p \alpha}$ and $\frac{\gamma}{p \alpha}$ combined with  \eqref{presqueHarnack} yields, for any $\ve > 0$:
\begin{equation} \label{Harnacketapefin}
\Vert u \Vert_{L^{\infty}(B_0(r))} \le C_6 \ve^{\frac{\gamma}{\gamma - p \alpha}}  \Vert u \Vert_{L^\infty(B_0(2))}^{\frac{(\gamma-p)\alpha}{\gamma - p \alpha}} + \frac{C_6}{(2-r)^{\frac{n }{p \alpha}}} \frac{p}{\gamma} \ve^{- \frac{\gamma}{p \alpha}} \Vert u \Vert_{L^p(B_0(2))}. 
\end{equation}
It is easily seen that $\alpha(\gamma) \to 1$ as $\gamma \to \infty$. Hence, choosing $\ve = (2 C_6)^{-1}$ one can then pick $\gamma$ large enough (depending on $a,h,f$ and $M$) so as to have, in \eqref{Harnacketapefin}, for any $p > 1$:
\[ \Vert u \Vert_{L^{\infty}(B_0(r))}  \le \frac{2}{3} \Vert u \Vert_{L^\infty(B_0(2))} + C_7 (2-r)^{-\beta} \Vert u \Vert_{L^p(B_0(2))}\hskip.1cm, \]
where $C_7$ and $\beta > 0$ only depend on $\Vert h \Vert_{L^\infty(B_0(2))}$, $\Vert a \Vert_{L^\infty(B_0(2))}$, $\Vert f \Vert_{L^\infty(B_0(2))}$, $p$ and $M$. 
The conclusion follows using Lemma $4.3$ in Han-Lin \cite{HanLin}. 
\end{proof}

\section{Standard elliptic theory for the Lam\'e operator in $M$ }\label{stddelltheory}

We deal in this subsection with several properties of the Lam\'e operator $\Dg$ on $(M, g)$. It is a differential operator between sections of the cotangent bundle $T^*M $. If $X$ is a $1$-form in $M $, $\Dg$ writes in coordinates as:
\[ \overrightarrow{\triangle_g} X_i = \nabla^j \nabla_j X_i + \nabla^j \nabla_i X^j - \frac{2}{n} \nabla_i ( \mathrm{div}_g X) .\]
If we write formally $\overrightarrow{\triangle_g} X(x) = L(x, \nabla) X$ then the principal symbol of the operator $\overrightarrow{\triangle_g} $ at some point $x \in M$ and for some $\xi \in T_x M $ is given by the determinant of the map $L(x,\xi)$ seen as a linear endomorphism of $T_x^* M$. Thus there holds
\begin{equation} \label{unifelliptic}
 |L(x,\xi) | = \left( 2 - \frac{2}{n}\right) | \xi |_g^{2n} 
 \end{equation} 
which shows that $\overrightarrow{\triangle_g} $ is uniformly elliptic in $M$. It also satisfies the so-called strong ellipticity condition (also called Legendre-Hadamard condition) since for any $x \in M$ and any $\eta \in T_x^*M$:
\begin{equation} \label{strongelliptic}
(L(x,\xi) \eta)_i \eta^i = |\xi|_g^2 |\eta|_g^2 + \left( 1 - \frac{2}{n} \right) |\langle \xi, \eta \rangle|_g^2 \ge |\xi|_g^2 |\eta|_g^2\hskip.1cm.
\end{equation}
Since $M$ is closed, integrating by parts, one gets that, for any $1$-forms $X$ and $Y$,
\begin{equation} \label{ippDeltah}
\int_{M}  \langle \Dg X, Y \rangle_g dv_g = \frac{1}{2} \int_{M} \langle \mathcal{L}_g X, \mathcal{L}_g Y \rangle_g dv_g \hskip.1cm.
 \end{equation}
 In particular, \eqref{ippDeltah} shows that $\Dg$ is self-adjoint in $H^1(M)$ (we still denote the Sobolev space of $1$-forms by $H^1(M)$ since no ambiguity will occur) and that there holds in $M$
 \begin{equation} \label{Killingh}
 \Dg X = 0 \Longleftrightarrow \mathcal{L}_g X = 0
 \end{equation}
 for any $1$-form $X$. Fields of $1$-forms in $M$ satisfying $\mathcal{L}_g X = 0$ are called conformal Killing $1$-forms and by \eqref{ippDeltah} and standard Fredholm theory the set of those $1$-forms is finite dimensional. With \eqref{strongelliptic}, \eqref{ippDeltah} and \eqref{Killingh} standard results of elliptic theory for elliptic operators acting on vector bundles on closed manifolds apply, see for instance Theorem $27$, Appendix H in Besse \cite{Besse}, or Theorem $5.20$ in Giaquinta-Martinazzi \cite{GiaquintaMartinazzi}. In particular, for $1$-forms which are $L^2$-orthogonal to the subspace of conformal Killing $1$-forms, we have the following estimates~: 

 \begin{prop} \label{stdellipticclosed}
 For any $p > 1$, there exist constants $C_1 = C_1(g,p)$ and $C_2 = C_2(g,p)$ depending only on $g$ and $p$ such that for any $1$-form $X$ in $M$:
 \begin{equation} \label{regellipstd}
  \Vert X \Vert_{W^{2,p}(M)} \le C_1 \Vert \Dg X\Vert_{L^p(M)} + C_2 \Vert X \Vert_{L^1(M)}. 
  \end{equation}
  If, in addition, $X$ satisfies
 \begin{equation} \label{Killingortho}
  \int_{M} \langle X, K \rangle_g dv_g  = 0
  \end{equation}
 for all conformal Killing $1$-form $K$, then, for any $p > 1$, we can choose $C_2 = 0$ in \eqref{regellipstd}. 
 \end{prop}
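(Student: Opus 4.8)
The plan is to deduce Proposition \ref{stdellipticclosed} from the classical $L^p$ regularity theory for strongly elliptic systems on closed manifolds, supplemented by an interpolation step for the $L^1$ term and, for the second assertion, by a compactness argument. First I would record that $\Dg$ has smooth coefficients, is uniformly elliptic by \eqref{unifelliptic} and satisfies the Legendre--Hadamard condition \eqref{strongelliptic}; hence the global Agmon--Douglis--Nirenberg estimates for elliptic operators acting on sections of vector bundles over a compact manifold apply (see Theorem $27$, Appendix H in Besse \cite{Besse} or Theorem $5.20$ in Giaquinta--Martinazzi \cite{GiaquintaMartinazzi}). This produces, for every $p>1$, a constant $C=C(g,p)$ with
\ben
\Vert X \Vert_{W^{2,p}(M)} \le C \left( \Vert \Dg X \Vert_{L^p(M)} + \Vert X \Vert_{L^p(M)} \right)
\een
for every $1$-form $X$ on $M$.

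Next I would upgrade the lower-order term from $L^p$ to $L^1$. Since $M$ is compact, the Gagliardo--Nirenberg interpolation inequality gives $\Vert X \Vert_{L^p(M)} \le C \Vert X \Vert_{W^{2,p}(M)}^{\theta} \Vert X \Vert_{L^1(M)}^{1-\theta}$ for some $\theta \in (0,1)$ depending only on $n$ and $p$ (it is precisely the hypothesis $p>1$ that guarantees the existence of such a $\theta$). Young's inequality then yields, for every $\varepsilon>0$, a constant $C_\varepsilon$ with $\Vert X \Vert_{L^p(M)} \le \varepsilon \Vert X \Vert_{W^{2,p}(M)} + C_\varepsilon \Vert X \Vert_{L^1(M)}$. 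Plugging this into the estimate above and absorbing the $\varepsilon$-term into the left-hand side (choosing $\varepsilon$ small with respect to $C$) gives \eqref{regellipstd}, with $C_1$ and $C_2$ depending only on $g$ and $p$.

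For the second statement I would argue by contradiction. Suppose no constant $C_1$ makes \eqref{regellipstd} hold with $C_2=0$ on the subspace of $1$-forms satisfying \eqref{Killingortho}. Then there is a sequence $(X_m)_m$ of such forms with $\Vert X_m \Vert_{W^{2,p}(M)}=1$ and $\Vert \Dg X_m \Vert_{L^p(M)} \to 0$; by \eqref{regellipstd}, $\Vert X_m \Vert_{L^1(M)}$ is bounded below away from $0$. Since $W^{2,p}(M) \hookrightarrow L^p(M)$ compactly, up to a subsequence $X_m \to X$ in $L^p(M)$, hence in $L^1(M)$; applying \eqref{regellipstd} to $X_m - X_{m'}$ shows $(X_m)_m$ is Cauchy in $W^{2,p}(M)$, so $X_m \to X$ in $W^{2,p}(M)$ with $\Vert X \Vert_{W^{2,p}(M)}=1$ and $\Dg X = 0$. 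By \eqref{Killingh}, $\Lg X = 0$, so $X$ is a conformal Killing $1$-form; since such forms are smooth (hence bounded), passing to the limit in \eqref{Killingortho} along $(X_m)$ gives $\int_M \langle X, K \rangle_g\, dv_g = 0$ for every conformal Killing $K$, and taking $K=X$ forces $X \equiv 0$, contradicting $\Vert X \Vert_{W^{2,p}(M)}=1$. Hence $C_2$ may be taken to be $0$ in \eqref{regellipstd} under \eqref{Killingortho}.

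The only genuinely delicate point is the replacement of the $L^p$ lower-order term by the $L^1$ one; once that interpolation step is secured, the remainder is the classical Fredholm--compactness package for self-adjoint strongly elliptic systems (relying on \eqref{ippDeltah} for self-adjointness and on the finite dimensionality of $K_g$), and the only care needed is to verify that all the constants that appear depend solely on $g$ and $p$, as stated.
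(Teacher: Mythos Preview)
Your argument is correct. In fact, the paper does not give a proof of this proposition at all: it simply states the result as a direct consequence of standard elliptic theory for operators on vector bundles over closed manifolds, citing Besse \cite{Besse} and Giaquinta--Martinazzi \cite{GiaquintaMartinazzi}, and relying on \eqref{unifelliptic}, \eqref{strongelliptic}, \eqref{ippDeltah} and \eqref{Killingh}. Your proposal fills in precisely the details the paper leaves implicit: the basic $W^{2,p}$ estimate with an $L^p$ lower-order term from the cited references, the interpolation step (via Gagliardo--Nirenberg and Young, or equivalently Ehrling's lemma using the compact embedding $W^{2,p}(M)\hookrightarrow L^p(M)$) to replace $L^p$ by $L^1$, and the standard compactness-contradiction argument exploiting the finite-dimensionality of the kernel to drop the lower-order term under the orthogonality condition \eqref{Killingortho}. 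There is nothing to correct; your write-up is more explicit than what the paper provides and uses exactly the ingredients the paper invokes.
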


\section{Green functions for Lam\'e-type systems} \label{Greenfunc}

We define, for $1 \le i \le n$, a $1$-form $H_i$ in $\RR^n \backslash \{ 0 \}$ by:
\ben \label{solfondDx}
\bal
 \mathcal{G}_i(y)_j =&  - \frac{1}{4(n-1) \omega_{n-1}} |y|^{2-n} \left((3n-2) \delta_{ij} + (n-2) \frac{y_i y_j}{|y|^2} \right)
 \eal
\een
for any $y \neq 0$. Note that the matrices $(\mathcal{G}_{i}(y)_j)_{ij} $ thus defined are symmetric: for any $y \neq 0$,
\begin{equation} \label{solfondsym}
\mathcal{G}_{i}(y)_j = \mathcal{G}_{j}(y)_i.
\end{equation}
Let $X$ be a field of $1$-forms in $\RR^n$. Integrating by parts and using Stoke's formula it is easily seen that for any $R > 0$ and for any $x \in B_0(R)$ there holds:
\begin{equation} \label{ippsolfond}
\begin{aligned}
X_i(x) = \int_{B_0(R)} \mathcal{G}_{i}(x-y)_j \Dx X(y)^j dx + \int_{\partial B_0(R)} \mathcal{L}_\xi X(y)^{kl} \nu_k(y) \mathcal{G}_{i}(x-y)_l d\sigma \\
- \int_{\partial B_0(R)} \mathcal{L}_{\xi} \big( \mathcal{G}_i(x - \cdot) \big)_{kl}(y) \nu(y)^k X(y)^l d \sigma.
\end{aligned}
\end{equation}
This means in a distributional sense that 
\[ \Dx \big( \mathcal{G}_i (x - \cdot) \big) = \delta_x e_i\hskip.1cm, \]
where $e_i$ is the $i$-th vector of the canonical basis and there holds, for any $1$-form $Y$: $\langle \delta_x e_i, Y \rangle = Y_i(x)$. Equivalently, if we write $\mathcal{G}(x,y) = (\mathcal{G}_{i}(x-y)_j)_{1 \le i,j \le n}$, we get that
\begin{equation} \label{solfondvect}
 \Dx \mathcal{G} (x, \cdot) = \delta_x \mathrm{Id}\hskip.1cm,
 \end{equation}
where $\Dx$ is now seen as a matrix of differential operators acting on a distribution-valued matrix. Note that  the standard results of distribution theory easily extend to distribution-valued matrices, see for instance Schwartz \cite{Schwartz}. 

\noindent If now $X$ is some smooth field of $1$-forms in $L^1(\RR^n)$,
the $1$-form defined in $\RR^n$ by 
\[ W_i(x) = \int_{\RR^n} \mathcal{G}_{i}(x-y)_j Y^j (y) dy = (\mathcal{G} \star Y)_i(x) \]
satisfies in a weak sense, because of \eqref{ippsolfond}:
\begin{equation} \label{convolformes}
\Dx W_i (x) = Y_i(x) \hskip.1cm.
\end{equation}

\medskip

The system \eqref{systype} we are interested in in this article is invariant up to adding to $W_\alpha$ some conformal Killing $1$-form in $M$. We exploit this invariance all along the article by noting that the only relevant quantity to investigate is $\mathcal{L}_g W_\alpha$ and not the $1$-form $W_\alpha$ in itself. In particular we use several times a Green identity for $\Dx$ with Neumann boundary conditions that is proven in what follows. We let 
\begin{equation} \label{Killingspace}
K_R = \{ X \in H^1(B_0(R)), \mathcal{L}_\xi X = 0\}
 \end{equation}
be the Kernel subspace of $1$-forms associated to the Neumann problem for $\Dx$ in $B_0(R)$. The orthogonal subspace of $K_R$ in $B_0(R)$ is the set of $1$-forms $Y \in H^1(B_0(R))$ such that for any $K \in K_R:$
\[ \int_{B_0(R)} \langle Y, K \rangle_\xi dx = 0. \]
Elements of $K_R$ are infinitesimal generators of conformal transformations of $B_0(R)$ and are classified, see Schottenloher \cite{Schottenloher}. In particular $K_R$ is finite dimensional, $\dim K_R = \frac{1}{2}(n+1)(n+2)$, and it is spanned by smooth $1$-forms. Let $m = \frac{1}{2}(n+1)(n+2)$ and  $(K_j)_{j=1 \dots m}$ be an orthonormal basis of $K_0(R)$ for the $L^2$-scalar product, that is
\[ \int_{B_0(R)} \langle K_l, K_p \rangle_\xi dx =  \delta_{lp} .\]
Given a $1$-form $X \in H^1(B_0(R))$ we shall denote by $\pi_R(X)$ its orthogonal projection on $K_R$ given by:
\begin{equation} \label{projkil}
 \pi_R(X) = \sum_{j=1}^{m} \left(  \int_{B_0(R)} \langle K_j, X\rangle dx \right) K_j.
 \end{equation}
The following proposition states the existence of Green $1$-forms satisfying Neumann boundary conditions:

\begin{prop} \label{vecteurGreen}
For any $1 \le i \le n$ and any $R > 0$ there exists a unique $\mathcal{G}_{i,R}$ defined in $B_0(R)\times B_0(R) \backslash D$, where $D = \{ (x,x), x \in B_0(R) \}$, such that $\mathcal{G}_{i,R}(x, \cdot)$ is orthogonal to $K_R$ for any $x \in B_0(R)$ and such that for any smooth $1$-form $X$ in $\overline{B_0(R)}$ there holds:
\begin{equation} \label{ippNeumann}
\begin{array}{l}
{\ds \big( X - \pi_R(X) \big)_i(x)  = \int_{B_0(R)} \mathcal{G}_{i,R}(x,y)_j \Dx X(y)^j dx} \\
{\ds \qquad\qquad\qquad\qquad\quad+ \int_{\partial B_0(R)} \mathcal{L}_\xi X(y)^{kl} \nu_k(y) \mathcal{G}_{i,R}(x,y)_l d\sigma \hskip.1cm,}
\end{array}\end{equation}
where $\pi_R(X)$ is as in \eqref{projkil}. Moreover $\mathcal{G}_{i,R}$ is continuous and continuously differentiable in each variable in $B_0(R) \times B_0(R) \backslash D$. Furthermore, if $K$ denotes any compact set in $B_0(R)$ and if we let 
\begin{equation} \label{rapportdomaine}
 \delta = \frac{1}{R} d(K, \partial B_0(R)) > 0
 \end{equation}
there holds:
\begin{equation} \label{dersolNeumann}
 |x-y| |\nabla \mathcal{G}_{i,R}(x,y)| + |\mathcal{G}_{i,R}(x,y)| \le C(\delta) |x-y|^{2-n} 
 \end{equation}
for any $ x\in B_0(K)$ and any $y \in B_0(R)$, whether the derivative in \eqref{dersolNeumann} is taken with respect to $x$ or $y$, and where $C(\delta)$ is a positive constant that only depends on $\delta$ as in \eqref{rapportdomaine} (in particular it does not depend on $x$).
\end{prop}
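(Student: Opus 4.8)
The plan is to build $\mathcal{G}_{i,R}$ by subtracting from the Euclidean fundamental solution $\mathcal{G}_i$ of \eqref{solfondDx} a correction obtained by solving a genuine Neumann boundary value problem, so that the singularity, the boundary condition, and the $K_R$-orthogonality are all handled at once. The first step is to record the solvability, up to conformal Killing $1$-forms, of the Neumann problem for $\Dx$ on $B_0(R)$: given a $1$-form $F$ on $B_0(R)$ and a $1$-form $g$ on $\partial B_0(R)$ satisfying the compatibility condition $\int_{B_0(R)}\langle F,K\rangle_\xi\,dx=\int_{\partial B_0(R)}\langle g,K\rangle_\xi\,d\sigma$ for every $K\in K_R$, there is a unique $U\in H^1(B_0(R))$ orthogonal to $K_R$ with $\Dx U=F$ in $B_0(R)$ and $\nu^k\mathcal{L}_\xi U_{kl}=g_l$ on $\partial B_0(R)$. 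This follows from the Lax--Milgram theorem applied to the bilinear form $\tfrac12\int_{B_0(R)}\langle\mathcal{L}_\xi\cdot,\mathcal{L}_\xi\cdot\rangle_\xi$ on the closed subspace $K_R^{\perp}\subset H^1(B_0(R))$, whose coercivity there rests on a Korn-type inequality for the conformal Killing operator $\mathcal{L}_\xi$, the characterization \eqref{Killingh}, and a Rellich compactness argument to absorb the $L^2$ term; uniqueness is immediate from \eqref{ippDeltah}.

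Fixing $x\in B_0(R)$ and $1\le i\le n$, I would then let $U_{i,x}$ be the solution, orthogonal to $K_R$, of this Neumann problem with interior datum $F=-\pi_R(\delta_x e_i)=-\sum_{j=1}^m(K_j)_i(x)K_j$ and boundary datum $g=-\,\nu^k\mathcal{L}_\xi\big(\mathcal{G}_i(x-\cdot)\big)_{kl}$; both data are smooth (the second one because $x$ lies in the interior), and the compatibility condition is checked by evaluating \eqref{ippsolfond} against each $K_j$ and using $\mathcal{L}_\xi K_j=0$. I would then define
\[ \mathcal{G}_{i,R}(x,\cdot)=\mathcal{G}_i(x-\cdot)-\pi_R\big(\mathcal{G}_i(x-\cdot)\big)+U_{i,x}, \]
which is orthogonal to $K_R$ by construction. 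The representation formula \eqref{ippNeumann} is then obtained from \eqref{ippsolfond} applied to a test $1$-form $X$ on $B_0(R)$, after integrating $\int_{B_0(R)}\langle U_{i,x},\Dx X\rangle_\xi$ by parts by means of \eqref{ippDeltah}: the various boundary terms recombine exactly into the projection $\pi_R(X)$. Uniqueness of $\mathcal{G}_{i,R}$ follows since the difference of two candidates would solve the homogeneous Neumann problem and be $K_R$-orthogonal, hence would vanish by the solvability statement.

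For regularity and the estimate \eqref{dersolNeumann} I would write $\mathcal{G}_{i,R}(x,y)=\mathcal{G}_i(x-y)+V_{i,x}(y)$ with $V_{i,x}=-\pi_R(\mathcal{G}_i(x-\cdot))+U_{i,x}$. The singular part obeys $|\mathcal{G}_i(x-y)|\le C|x-y|^{2-n}$ and $|\nabla\mathcal{G}_i(x-y)|\le C|x-y|^{1-n}$ directly from \eqref{solfondDx}. The part $V_{i,x}$ solves a Neumann problem for $\Dx$ whose data, for $x$ ranging over a compact set $K$ with $d(K,\partial B_0(R))=\delta R$, is smooth and bounded in $C^{1,\alpha}$ by constants depending only on $n$, $R$, $\delta$; boundary elliptic estimates for the strongly elliptic system $\Dx$ (available thanks to \eqref{strongelliptic} and \eqref{ippDeltah}, the boundary analogue of Proposition \ref{stdellipticclosed}, in the spirit of Agmon--Douglis--Nirenberg or Giaquinta--Martinazzi) then give $C^1$ bounds on $V_{i,x}$. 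A rescaling $y\mapsto Ry$ reduces matters to $R=1$ and shows that these bounds are of the form $C(\delta)R^{2-n}$ for $V_{i,x}$ and $C(\delta)R^{1-n}$ for $\nabla V_{i,x}$; since $|x-y|\le 2R$ these are dominated by $C(\delta)|x-y|^{2-n}$ and $C(\delta)|x-y|^{1-n}$, which yields \eqref{dersolNeumann}. Joint continuity and $C^1$ dependence of $(x,y)\mapsto\mathcal{G}_{i,R}(x,y)$ off the diagonal follow from the smooth dependence on $x$ of the data defining $U_{i,x}$ and of $\pi_R(\mathcal{G}_i(x-\cdot))$ together with interior elliptic regularity (consistently with the symmetry $\mathcal{G}_{i,R}(x,y)_j=\mathcal{G}_{j,R}(y,x)_i$, itself a consequence of \eqref{ippNeumann} and \eqref{ippDeltah}). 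The main obstacle I anticipate is precisely this last point: extracting the boundary Schauder/$L^p$ estimates for the Lam\'e system with the constant tracked to depend only on $\delta$, which calls for a careful rescaling combined with boundary regularity for strongly elliptic systems rather than any genuinely new idea.
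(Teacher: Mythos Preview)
Your proposal is correct and follows essentially the same construction as the paper: solve the Neumann problem for the corrector $U_{i,x}$ via Lax--Milgram (the paper's Claim~\ref{claimexis}), set $\mathcal{G}_{i,R}(x,\cdot)=\mathcal{G}_i(x-\cdot)-\pi_R\big(\mathcal{G}_i(x-\cdot)\big)+U_{i,x}$, and obtain \eqref{dersolNeumann} by combining the explicit bounds on $\mathcal{G}_i$ with boundary elliptic regularity for $U_{i,x}$ and a rescaling to $R=1$. One small correction: the exact symmetry $\mathcal{G}_{i,R}(x,y)_j=\mathcal{G}_{j,R}(y,x)_i$ you invoke does not hold as stated---the paper's Claim~\ref{symetrie} shows it is off by a $K_R$-projection term, since $\mathcal{G}_{i,R}(x,\cdot)$ is orthogonal to $K_R$ in the second variable but the transposed object need not be in the first; this correction is smooth and harmless for the estimates, and in any case you do not actually rely on the symmetry for your argument.
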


\begin{proof}

The proof of this proposition goes through a sequences of claims. The techniques used are strongly inspired from Robert \cite{RobDirichlet}.

\begin{claim} \label{claimexis}
Let $F$ and $G$ be smooth $1$-forms, in $B_0(R)$ and in $\partial B_0(R)$ respectively, satisfying:
\begin{equation} \label{compatibilite}
\int_{B_0(R)}  F_l K^l  d\xi + \int_{\partial B_0(R)} G_l K^l d\sigma= 0
\end{equation} 
for any $K \in K_R$, where $K_R$ is as in \eqref{Killingspace}.
Then there exists a unique smooth $1$-form $Z$ orthogonal to $K_R$ such that
\begin{equation} \label{eqmodele} \left \{ 
\begin{aligned}
& \Dx Z = F & \quad B_0(R) \\
& \nu^k \mathcal{L}_{\xi} Z_{kl} = G_l & \quad \partial B_0(R).\\
\end{aligned} \right. \end{equation}
\end{claim}

\begin{proof}
The existence and uniqueness of $Z$ is ensured by the Lax-Milgram theorem applied on the orthogonal complement of $K_R$ to the symmetric bilinear form
\[ B(X,Y) = \frac{1}{2} \int_{B_0(R)} \langle \mathcal{L}_\xi X, \mathcal{L}_\xi Y \rangle dx\]  
and to the linear  form:
\[ L(X) = \int_{B_0(R)} F_l X^l d\xi - \int_{\partial B_0(R)}  G_l X^l d\sigma.  \]
The coercivity of $B(X,X)$ on the orthogonal complement of $K_R$ follows from the definition of $K_R$ and is obtained via the direct method. We claim now that $Z$ is smooth in $B_0(R)$. This is a consequence of general elliptic regularity results up to the boundary for elliptic systems satisfying complementing boundary conditions, as stated in Agmon-Douglis-Nirenberg \cite{ADN2}. Due to \eqref{unifelliptic} and \eqref{strongelliptic} the problem \eqref{eqmodele} is complemented so that Theorem 10.5 in Agmon-Douglis-Nirenberg \cite{ADN2} applies and shows that $Z$ is smooth. 
\end{proof}
For any $1 \le i \le n$ and any $x \in B_0(R)$ we let $U^R_{i,x}$ be the unique $1$-form in $H^1(\overline{B_0(R)})$, orthogonal to $K_R$, satisfying:
\begin{equation} \label{Uix}
\left \{ 
\begin{aligned}
& \Dx U^R_{i,x} = - \sum_{j=1}^{m} (K_j)_i(x) K_j & \quad B_0(R) \\
& \nu^k \mathcal{L}_{\xi} \big( U^R_{i,x}\big)_{kl} = - \nu^k \mathcal{L}_{\xi} \big( H_i(x- \cdot) \big)_{kl}  & \quad \partial B_0(R).\\
\end{aligned} \right.
\end{equation}
The existence and smoothness of $U^R_{i,x}$ is ensured by Claim \ref{claimexis}. Indeed, the compatibility condition \eqref{compatibilite} is satisfied by applying \eqref{ippsolfond} to any $K \in K_R$.

\medskip

\noindent We now let, for $x \neq y$:
\begin{equation} \label{solfondNeumann}
\mathcal{G}_{i,R}(x,y) = \mathcal{G}_i (x - y) + U^R_{i,x}(y) - \sum_{j=1}^{m} \left(  \int_{B_0(R)} \langle K_j, \mathcal{G}_i(x- \cdot) \rangle dy \right) K_j(y),
\end{equation}
where $\mathcal{G}_i$ is as in \eqref{solfondDx}. By construction, $\mathcal{G}_{i,R}(x,\cdot)$ is a $1$-form defined in $B_0(R) \backslash \{ x\}$. It clearly belongs to $L^2(B_0(R))$, is orthogonal to $K_R$ and continuously differentiable in $B_0(R) \backslash \{ x\}$. Combining \eqref{ippsolfond} and \eqref{Uix} it is easily seen that \eqref{ippNeumann} holds. The next three claims aim at finishing the proof of the proposition. The first one is a uniqueness result. 

\begin{claim} \label{claimunicite}
Assume that for some $1 \le i \le \frac{n}{n-2}$ and for some $x \in B_0(R)$ there exists a $1$-form $M_i$ in $L^1(B_0(R))$ such that for any $X \in C^2(\overline{B_0(R)})$ satisfying that $\mathcal{L}_\xi X_{kl} \nu^k = 0$ on $\partial B_0(R)$ there holds:
\begin{equation} \label{condunicite}
 \int_{B_0(R)} \langle M_i, \Dx X \rangle_\xi d\xi = \left( X - \pi(X)\right)_i(x) .
 \end{equation}
Then $\mathcal{G}_{i,R}(x, \cdot) - M_i \in K_R$, where $K_R$ is as in \eqref{Killingspace}.

\begin{proof}
Let $F_i$ = $\mathcal{G}_{i,R}(x, \cdot) - M_i$. Let $Y$ be a smooth $1$-form with compact support in $B_0(R)$. By Claim \ref{claimexis} there exists a smooth $1$-form $X$ in $B_0(R)$ such that $\Dx X = Y - \pi_R(Y)$ in $B_0(R)$ and $\mathcal{L}_\xi X_{kl} \nu^k = 0$ in $\partial B_0(R)$, where $\pi_R$ is as in \eqref{projkil}. Using \eqref{ippNeumann} and \eqref{condunicite} there holds:
\[ 0 =  \int_{B_0(R)} \langle F_i, \Dx X \rangle_\xi d\xi =   \int_{B_0(R)} \langle F_i, Y - \pi_R(Y) \rangle_\xi d\xi =  \int_{B_0(R)} \langle F_i - \pi_R(F_i), Y \rangle_\xi d\xi \]
by definition of $\pi_R$. Assume for a while that $F_i$ belongs to $L^p(B_0(R))$ for some $p > 1$. A density argument then shows that $F_i = \pi_R(F_i)$.

\noindent It thus remains to prove that $F_i \in L^p(B_0(R))$ for some $p > 1$. We only need to prove this for $M_i$. Let $p \in (1, \frac{n}{n-2})$ and define $q = \frac{p}{p-1}$. Let $Y$ be a smooth $1$-form compactly supported in $B_0(R)$. By Claim \ref{claimexis} there exists a smooth $1$-form $X$ in $B_0(R)$, orthogonal to $K_R$, such that $\Dx X = Y - \pi_R(Y)$ in $B_0(R)$ and $\mathcal{L}_\xi X_{kl} \nu^k = 0$ in $\partial B_0(R)$. Then the definition of $\pi_R$ and \eqref{condunicite} yield:
\begincal
  \int_{B_0(R)} \langle M_i - \pi_R(M_i), Y \rangle_\xi d\xi &=& \int_{B_0(R)} \langle M_i , Y - \pi_R(Y) \rangle_\xi d\xi \\
&=&  \int_{B_0(R)} \langle M_i ,  \Dx X \rangle_\xi d\xi \\
&=& X_i(x).
\fincal
Elliptic regularity results for complemented elliptic systems, as those stated in the proof of Claim \ref{claimexis}, show that there exists a constant $C$ only depending on $q$ such that $\Vert X \Vert_{W^{2,q}} \le C \Vert Y - \pi_R(Y) \Vert_{L^q}$ where we omit to say that these norms are taken on $B_0(R)$ for the sake of clarity. Since $q > \frac{n}{2}$ we thus obtain, using the Sobolev inequality for the embedding of $W^{2,q}$ in $C^0(\overline{B_0(R)})$:
\[   \left | \int_{B_0(R)} \langle M_i - \pi_R(M_i), Y \rangle_\xi d\xi \right | \le C \Vert Y - \pi_R(Y) \Vert_{L^q} \le C \Vert Y \Vert_{L^q}.  \]
A density argument then shows that $M_i - \pi_R(M_i)$ and thus $M_i$ belongs to $L^p(B_0(R))$ for all $p \in (1, \frac{n}{n-2})$.
\end{proof}
\end{claim}

\noindent We now state some rescaling-invariance property of the Green $1$-forms $\mathcal{G}_{i,R}$:
\begin{claim} \label{claimrescaling}
For any $R > 0$ there holds:
\begin{equation} \label{Greenrescaling}
\mathcal{G}_{i,R}(x,y) = \frac{1}{R} \mathcal{G}_{i,1} \left( \frac{x}{R}, \frac{y}{R} \right)
\end{equation} 
for any $x, y \in B_0(R)$.
\end{claim}

\begin{proof}
Let $Y$ be a smooth, compactly supported $1$-form in $B_0(R)$. We define, in $B_0(1)$, $Y_R = Y(R \cdot)$, which is then compactly supported in $B_0(1)$. Let $x \in B_0(R)$ and $1 \le i \le n$. Equation \eqref{ippNeumann} shows that
\begin{equation} \label{rescaleB1}
\left( Y_R - \pi_1(Y_R)\right)_i \left( \frac{x}{R} \right) = \int_{B_0(1)} \left \langle \mathcal{G}_{i,1} \left( \frac{x}{R}, y \right), \Dx Y_R(y) \right \rangle_\xi d\xi.
\end{equation}
Since for any $y \in B_0(1)$ there holds $\Dx Y_R(y) = R^2 \Dx Y (Ry)$ we easily obtain: 
\[  \int_{B_0(1)} \left \langle \mathcal{G}_{i,1} \left( \frac{x}{R}, y \right), \Dx Y_R(y) \right \rangle_\xi dy = \frac{1}{R} \int_{B_0(R)} \left \langle \mathcal{G}_{i,1} \left( \frac{x}{R}, \frac{y}{R} \right) , \Dx Y (y) \right \rangle_\xi dy. \]
Let now $(L_j)_{1 \le j \le m}$ be an orthonormal basis for $K_1$, where $K_1$ is as in \eqref{Killingspace}. Let $Z_j =  R^{- \frac{n}{2}} L_j \left( {\frac{x}{R}} \right)$ for any $1 \le j \le m$ and any $x \in B_0(R)$. Then $(Z_j)_{1 \le j \le m}$ is an orthonormal basis for $K_R$ since there holds
\[ \int_{B_0(R)} \langle Z_k, Z_l \rangle_\xi d\xi = \int_{B_0(R)} R^{-n} \langle L_k \left( \frac{y}{R}\right), L_l \left( \frac{y}{R}\right) \rangle_\xi dy = \int_{B_0(1)} \langle L_k, L_l \rangle_\xi d\xi = \delta_{kl}. \]
Hence one has, by definition of $\pi_R$:
\[ \begin{aligned}
\pi_R(Y) (x) & = \sum_{j = 1}^{m}   \left(  \int_{B_0(R)} \langle R^{- \frac{n}{2}} L_j \left( \frac{y}{R} \right), Y(y) \rangle dy \right) R^{- \frac{n}{2}} L_j \left(\frac{x}{R} \right)  \\
 & = \sum_{j = 1}^{m}   \left(  \int_{B_0(1)} \langle L_j (y), Y_R(y) \rangle dy \right) L_j \left( \frac{x}{R} \right)  = \pi_1(Y_R) \left( \frac{x}{R} \right). \\
 \end{aligned} \]
 In the end \eqref{rescaleB1} becomes:
 \[ \left( Y - \pi_R(Y) \right)_i(x) = \int_{B_0(R)} \left \langle \frac{1}{R}  \mathcal{G}_{i,1} \left( \frac{x}{R}, \frac{y}{R} \right) , \Dx Y (y) \right \rangle_\xi dy.
 \]
Finally, $\frac{1}{R}  \mathcal{G}_{i,1} \left( \frac{x}{R}, \frac{\cdot}{R} \right)$ is orthogonal to $K_R$: indeed for $1 \le j \le m$ there holds:
 \begincal
\int_{B_0(R)} \left \langle \frac{1}{R}  \mathcal{G}_{i,1} \left( \frac{x}{R}, \frac{y}{R}\right), R^{- \frac{n}{2}} L_j \left( \frac{y}{R} \right)  \right \rangle_\xi dy &=& R^{\frac{1}{2}} \int_{B_0(1)} \langle \mathcal{G}_{i,1}( \frac{x}{R}, y), L_j(y) \rangle_\xi dy \\
&=& 0\hskip.1cm,
\fincal
where the last equality is true since $\mathcal{G}_{i,1}( \frac{x}{R}, \cdot)$ is orthogonal to $K_1$ by definition. Using Claim \ref{claimunicite} we then obtain \eqref{Greenrescaling}. 
\end{proof}
\noindent The last ingredient of the proof is a symmetry property of $\mathcal{G}_{i,R}$~:
\begin{claim} \label{symetrie}
For any $x,y \in B_0(R)$ there holds:
\begin{equation} \label{symGreen}
\mathcal{G}_{i,R}(x,y)_j  =  \mathcal{G}_{j,R}(y,x)_i - \pi_R \left( {}^t \mathcal{G}_i(\cdot, x) \right)_j
 \end{equation}
where we have set:
\begin{equation} \label{greentranspo}
 {}^t \mathcal{G}_i(\cdot, x)_j(y) = \mathcal{G}_{j,R}(y,x)_i .
 \end{equation}
\end{claim}

\begin{proof}
Let $\Psi \in C^0(\overline{B_0(R)})$ be a $1$-form orthogonal to $K_R$. We define a $1$-form in $B_0(R)$ by
\begin{equation} \label{Hiform}
H_i(x) = \int_{B_0(R)} \mathcal{G}_{j,R}(y,x)_i \Psi(y) ^j dy \hskip.1cm.
\end{equation}
By the explicit construction of $\mathcal{G}_{j,R}$ in \eqref{solfondNeumann} it is easily seen that $H$ is continuous in $\overline{B_0(R)}$. Also, $H$ is orthogonal to the conformal Killing $1$-forms since by Fubini's theorem, for any $K \in K_R$, 
\begin{equation} \label{orthokil}
 \int_{B_0(R)} H_i(y) K^i (y) dy = \int_{B_0(R)} \Psi^j(z) \int_{B_0(R)} \mathcal{G}_{j,R}(z,y)_i K^i(y) dy dz = 0 
 \end{equation}
since by construction $\mathcal{G}_{j,R}(z, \cdot)$ is orthogonal to $K_R$ for any $z \in B_0(R)$. By Claim \ref{claimexis} and since $\Psi$ is orthogonal to $K_R$ we can let $F$ be the unique $C^1$ $1$-form in $B_0(R)$ orthogonal to $K_R$  satisfying $\Dx F = \Psi$ in $B_0(R)$ and $\mathcal{L}_\xi F_{kl} \nu^k = 0$ in $\partial B_0(R)$. Let also $\Phi$ be a smooth $1$-form such that $\mathcal{L}_\xi \Phi_{kl} \nu^k = 0$ on $\partial B_0(R)$. With Fubini's theorem, equation \eqref{ippNeumann} and using the properties of $\Psi$ and $\Phi$ there holds:
\[ \begin{aligned}
\int_{B_0(R)} H_i(y) \Dx \Phi^i(y) dy & = \int_{B_0(R)} \Psi^j (z) \int_{B_0(R)}  \mathcal{G}_{j,R}(z,y)_i \Dx \Phi^i (y)  dy dz \\
& = \int_{B_0(R)} \Psi^j(z) \left( \Phi - \pi_R(\Phi)\right)_j (z) dz \\
& = \int_{B_0(R)}  \Psi^j(z) \Phi_j(z) dz \\
& = \int_{B_0(R)} \Phi_j(z) \Dx F^j (z) dz \\
& = \int_{B_0(R)} F_j(z) \Dx \Phi^j(z) dz \hskip.1cm,\\
\end{aligned} \]
where we integrated by parts to obtain the last inequality since the boundary terms vanish. In particular:
\[ \int_{B_0(R)} \left( H - F \right)_j(y) \Dx \Phi^j(y) dy = 0 \]
for any smooth $\Phi$ with $\mathcal{L}_\xi \Phi_{kl} \nu^k = 0$ on $\partial B_0(R)$. Note that by a density argument the above inequality remains true for $\Phi \in W^{2,p}(B_0(R))$ for any $p > 1$ and orthogonal to $K_R$. By construction $F$ is orthogonal to $K_R$ and thanks to \eqref{orthokil} so is $H$. By Claim \ref{claimexis} we can thus choose $\Phi$ to be the unique $1$-form orthogonal to $K_R$ satisfying $\Dx \Phi = F - H$ in $B_0(R)$ and $\mathcal{L}_\xi \Phi_{kl}\nu^k = 0$ in $\partial B_0(R)$ to obtain, with the above inequality, that $F = H$. Independently, using \eqref{ippNeumann} gives:
\[ F_i(x) = \int_{B_0(R)} \mathcal{G}_{i,R}(x,y)_j \Psi^j(y) dy\]
so that
\begin{equation} \label{presquesym}
\int_{B_0(R)} \left( \mathcal{G}_{j,R}(y,x)_i - \mathcal{G}_{i,R}(x,y)_j \right) \Psi^j(y)dy = 0
\end{equation}
for any continuous Killing-free $\Psi$. Let now $X$ be any smooth $1$-form in $B_0(R)$. Choose $\Psi = X - \pi_R(X)$. There holds
\begincal
&&\int_{B_0(R)} \mathcal{G}_{j,R}(y,x)_i \pi_R(X)^j(y) dy \\ 
&&\quad = \sum_{p=1}^{m} \int_{B_0(R)} \mathcal{G}_{j,R}(y,x)_i \left(  \int_{B_0(R)} \langle K_p(z), X(z)\rangle_\xi dz \right) K_p(y)^j dy   \\
&&\quad = \sum_{p=1}^{m} \int_{B_0(R)} X_l(z) \left(  \int_{B_0(R)} \mathcal{G}_{j,R}(y,x)_i  K_p(y)^j dy \right)  K_p^l(z) dz \\
&&\quad = \sum_{p=1}^{m}  \int_{B_0(R)} X_l(z) \pi_R \left( {}^t \mathcal{G}_i(\cdot, x) \right)^l(z) dz\hskip.1cm,
\fincal
where ${}^t \mathcal{G}_i(\cdot, x)$ is as in \eqref{greentranspo}. Since $\mathcal{G}_{i,R}(x, \cdot)$ has no conformal Killing part, equation \eqref{presquesym} becomes:
\[ \int_{B_0(R)} \left( \mathcal{G}_{j,R}(y,x)_i - \mathcal{G}_{i,R}(x,y)_j -  \pi_R \left( {}^t \mathcal{G}_i(\cdot, x) \right)_j(y) \right) X^j(y) dy = 0\]
for any smooth $1$-form $X$ and this concludes the proof of the claim.
\end{proof}

We are now able to end the proof of Proposition \ref{vecteurGreen}. Let $x \in B_0(1)$ and consider $U_{i,x}^1$ as defined in \eqref{Uix}. Since $\Dx$ is coercive on the orthogonal of $K_1$ we can use elliptic regularity results -- as those stated in the proof of Claim \ref{claimexis} -- to get that there exist positive constants $C_1, C_2$ that do not depend on $x$ such that
\begin{equation} \label{bornederUix}
 \Vert U_{i,x}^1 \Vert_{C^1(\overline{B_0(R)})} \le C_1 + C_2 \Vert \mathcal{L}_\xi \mathcal{G}_i(x - \cdot), \nu \otimes \cdot \Vert_{C^1(\partial B_0(R))}\hskip.1cm,
 \end{equation}
where we have let $ ( \mathcal{L}_\xi \mathcal{G}_i(x - \cdot), \nu \otimes \cdot)_l = \nu^k \mathcal{L}_{\xi} \big( \mathcal{G}_i(x - \cdot) \big)_{kl}$. Let $K$ be some compact set in $B_0(1)$ and assume $x \in K$. It is easily seen by the definition of $\mathcal{G}_i$ as in \eqref{solfondDx} that
\[ \Vert \mathcal{L}_\xi \mathcal{G}_i(x - \cdot), \nu \otimes \cdot \Vert_{C^1(\partial B_0(R))} \le C_3 d \left(K, \partial B_0(1) \right)^{1-n}\]
for some positive constant $C_3$ independent of $x$. By the definition of $\mathcal{G}_{i,1}(x, \cdot)$ in \eqref{solfondNeumann} one therefore easily obtains that for $x \in K$ and $y \in B_0(1)$:
\begin{equation} \label{presquedersolNeumann}
  |x-y| |\nabla_y \mathcal{G}_{i,1}(x,y)| + |\mathcal{G}_{i,1}(x,y)| \le C(\delta) |x-y|^{2-n}  \hskip.1cm,
  \end{equation}
where $\delta$ is as in \eqref{rapportdomaine}. This gives \eqref{dersolNeumann} when the derivative is taken with respect to $y$. The same estimate when the derivative is taken with respect to $x$ is obtained differentiating \eqref{symGreen} and combining with \eqref{presquedersolNeumann}. Finally, \eqref{dersolNeumann} for any positive $R$ is obtained combining \eqref{presquedersolNeumann} with Claim \ref{claimrescaling}. 
\end{proof}

\bibliographystyle{amsplain}
\bibliography{biblio}

\end{document}